\newcommand\xmathlarger[2][1]{%
  \mbox{\larger[#1]$\displaystyle#2\m@th$}%
}
\definecolor{darkgreen}{rgb}{0,0.5,0}
\definecolor{darkblue}{rgb}{0,0,0.7}
\definecolor{darkred}{rgb}{0.9,0.1,0.1}
\newtheorem{theorem}{Theorem}
\newtheorem{proposition}[theorem]{Proposition}
\newtheorem{lemma}[theorem]{Lemma}
\newtheorem{corollary}[theorem]{Corollary}
\theoremstyle{definition}
\newtheorem{remark}[theorem]{Remark}
\newcommand{\tref}[1]{Theorem~\ref{t.#1}}
\newcommand{\pref}[1]{Proposition~\ref{p.#1}}
\newcommand{\lref}[1]{Lemma~\ref{l.#1}}
\newcommand{\cref}[1]{Corollary~\ref{c.#1}}
\newcommand{\sref}[1]{Section~\ref{s.#1}}
\newcommand{\eref}[1]{(\ref{e.#1})}
\numberwithin{equation}{section}
\numberwithin{theorem}{section}
\newcommand{\Z}{\mathbb{Z}}
\newcommand{\N}{\mathbb{N}}
\newcommand{\R}{\mathbb{R}}
\renewcommand{\subset}{\subseteq}
\newcommand{\mG}{\mathfrak{G}}
\newcommand{\eps}{\varepsilon}
\renewcommand{\fint}{\strokedint}
\newcommand{\test}[1][]{%
\ifthenelse{\equal{#1}{}}{omitted}{given}%
}
\newcommand{\derv}[3]{\partial_x^{\alpha{#1}}\partial_v^{\beta{#2}}Y^{\omega{#3}}}
\newcommand{\der}{\derv{}{}{}}
\renewcommand{\d}[1]{\ensuremath{\operatorname{d}\!{#1}}}
\DeclarePairedDelimiter{\norm}{\lVert}{\rVert}
\DeclareMathOperator{\boot}{boot}
\DeclareMathOperator{\aux}{Aux}
\DeclareMathOperator{\ini}{in}
\newcommand{\jap}[1]{\left\langle {#1} \right\rangle}
\renewcommand{\S}{\mathbb{S}}
\newcommand{\mf}{\mathfrak{f}}
\newcommand{\mg}{\mathfrak{g}}
\newcommand{\mh}{\mathfrak{h}}
\DeclareMathOperator{\sm}{small}
\renewcommand{\bar}{\overline}
\renewcommand{\tilde}{\widetilde}
\renewcommand{\part}{\partial}
\begin{document}
\title[Stability of vacuum for the Boltzmann Equation]{Stability of vacuum for the Boltzmann Equation\break with moderately soft potentials}
\begin{abstract}
We consider the spatially inhomogeneous non-cutoff Boltzmann equation with moderately soft potentials and any singularity parameter $s\in (0,1)$, i.e.~with~$\gamma+2s\in(0,2)$ on the whole space $\R^3$. We prove that if the initial data $f_{\ini}$ are close to the vacuum solution $f_{\text{vac}}=0$ in an appropriate weighted norm then the solution $f$ remains regular globally in time and approaches a solution to a linear transport equation. 

Our proof uses $L^2$ estimates and we prove a multitude of new estimates involving the Boltzmann kernel without angular cut-off. Moreover, we rely on various previous works including those of Gressman--Strain, Henderson--Snelson--Tarfulea and Silvestre.

From the point of view of the long time behavior we treat the Boltzmann collisional operator perturbatively. Thus an important challenge of this problem is to exploit the dispersive properties of the transport operator to prove integrable time decay of the collisional operator. This requires the most care and to successfully overcome this difficulty we draw inspiration from Luk's work [Stability of vacuum for the Landau equation with moderately soft potentials, Annals of PDE (2019) 5:11] and that of Smulevici [Small data solutions of the {V}lasov-{P}oisson system and the vector field method, Ann. PDE, 2(2):Art. 11, 55, 2016]. In particular, to get at least integrable time decay we need to consolidate the decay coming from the space-time weights and the decay coming from commuting vector fields. 
 \end{abstract}

\author[S. Chaturvedi]{Sanchit Chaturvedi}
\address[Sanchit Chaturvedi]{450 Jane Stanford Way, Bldg 380, Stanford, CA 94305}
\email{sanchat@stanford.edu}
\keywords{}
\subjclass[2010]{}
\date{\today}

\maketitle
\section{Introduction}\label{s.Introduction}
We consider the Boltzmann equation, 
\begin{equation}\label{e.boltz}
\part_t f+v_i\part_{x_i}f=Q(f,f)
\end{equation}
for the particle density $f(t,x,v)\geq0$, with position $x\in \R^3$, velocity $v\in \R^3$ and time $t\in \R_{\geq 0}$. The right hand side of \eref{boltz} is the binary Boltzmann collision operator given by,
\begin{equation}\label{e.boltz_kernel}
Q(f,g)=\int_{\R^3}\int_{\S^2} B(v-v_*,\sigma)[f(v_*')g(v')-f(v_*)g(v)]\d \sigma\d v_*.
\end{equation}
In all the expressions above (and in the remainder of the paper), we have used the convention that repeated lower case Latin indices are summed over $i = 1$, $2$, $3$. 

The variables $v_*'$ and $v'$ are the particle velocities after collision and are given by the following formula for $\sigma\in \S^2$, 
\begin{equation}\label{e.col_var}
v'=\frac{v+v_*}{2}+\frac{|v-v_*|}{2}\sigma, \hspace{1em}v'_*=\frac{v+v_*}{2}-\frac{|v-v_*|}{2}\sigma.
\end{equation}

The Boltzmann colission kernel $B(v-v_*,\sigma)$, due to physical considerations, is a non-negative function which depends on the relative velocity $|v-v_*|$ and on the deviation angle $\theta$ through $\cos \theta=\jap{k,\sigma}$ where $k=(v-v_*)/|v-v_*|$. We also assume that $B(v-v_*,\sigma)$ is supported on $\jap{k,\sigma}\geq 0$, i.e. $0\leq \theta\leq \frac{\pi}{2}$. Otherwise we can reduce to this situtation by symmetrizing the kernel as follows,
\begin{equation}\label{e.sym_b}
\bar B= [B(v-v_*,\sigma)+B(v-v_*,-\sigma)]\mathds{1}_{\jap{k,\sigma}\geq 0}.
\end{equation}
See pg. 51 in \cite{Vil02} for more details on this.\\
\textbf{Modelling assumption for the kernel.} We consider the kernel without any angular cut-off and make the following assumptions on $B(v-v_*,\sigma)$ for the rest of the paper
\begin{itemize}
\item We suppose that $B(v-v_*,\sigma)$ can be decomposed as follows $$B(v-v_*,\sigma)=\Phi(|v-v_*|)b(\cos \theta).$$
\item The angular function $b(\cos \theta)$ has a singularity at $\theta=0$ and that for some $c_b$ we have,
$$c_b\theta^{-1-2s}\leq \sin\theta b(\cos\theta)\leq c_b^{-1}\theta^{-1-2s}, \hspace{1em} s\in (0,1), \hspace{1em} \forall \theta\in\left(0,\frac{\pi}{2}\right].$$
\item The kinetic factor $z\to \Phi(|z|)$ satisfies for some $C_\Phi\geq 0$ $$\Phi(|v-v_*|)=C_\Phi|v-v_*|^\gamma, \hspace{1em} \gamma+2s\in(0,2].$$
This restriction on $\gamma$ and $s$ refers to the moderately soft potentials regime. 
\end{itemize}

We refer the interested readers to \cite{Cer88, CerIllPul94, Vil02} and the references therein for a more detailed physics background for the Boltzmann equation.

Our main result is that for sufficiently regular and sufficiently small initial data, we have global existence and uniqueness for \eref{boltz}. That is, we have a unique and non-negative solution to the Cauchy problem for all times.
%

\begin{theorem}\label{t.global}
Fix $s\in (0,1)$, $\gamma$ such that $\gamma+2s\in(0,2)$ and a constant $d_0>0$. Then there exists an $\eps_0=\eps_0(d_0,\gamma,s)>0$ such that if $$\sum_{|\alpha|+|\beta|\leq 10}\norm{(1+|x-v|^2)\part^\alpha_x\part^\beta_v (e^{2d_0(1+|v|^2)} f_{\ini})}_{L^2_xL^2_v}^2\leq \eps^2$$  
for some $\eps\in[0,\eps_0]$ and $f_{\ini}$ is non-negative, then there exists a global solution, $f$ to \eref{boltz} with $f(0,x,v)=f_{\ini}(x,v)$. Further, the solution $f(t,x,v)$ remains non-negative for all times.

In addition, if we let $\delta$ such that $$\delta=\min\left\{\frac{1-s}{4},\frac{1}{10},\frac{\gamma+2s}{8}\right\},$$ we have that $fe^{d_0(1+(1+t)^{-\delta})(1+|v|^2)} $ is unique in the energy space $E^4_T\cap C^0([0,T);Y^4_{x,v})$ for all $T\in (0,\infty).$ Moreover, $fe^{d_0(1+(1+t)^{-\delta})(1+|v|^2)}\in E_T\cap C^0([0,T);Y_{x,v})$ for all $T\in (0,\infty).$ 
\end{theorem}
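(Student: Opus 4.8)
The plan is to close a global-in-time \emph{a priori} estimate for a rescaled unknown and feed it into a continuity argument, following the blueprint of Luk's vacuum-stability proof for Landau and Smulevici's vector-field method for Vlasov--Poisson, the new difficulty being the non-local, angularly singular collision operator. Write $w(t,v)=e^{d_0(1+(1+t)^{-\delta})(1+|v|^2)}$, noting that $w(0,\cdot)$ is exactly the Gaussian weight of the hypothesis and that $w$ decreases in $t$ down to $e^{d_0(1+|v|^2)}$, and set $g=wf$. Then $g$ solves
\begin{equation*}
\part_t g+v_i\part_{x_i}g+\Ll(\deltime\Rr)(1+|v|^2)\,g=w\,Q\Ll(w^{-1}g,\,w^{-1}g\Rr).
\end{equation*}
Contracting against $g$ in $L^2_{x,v}$, the transport term drops and the zeroth-order term contributes $+\Ll(\deltime\Rr)\norm{\jap v\,g}_{L^2_{x,v}}^2$ with a favourable sign; this is the mechanism by which the time-decay built into the Gaussian weight is converted into an extra velocity moment of coercivity at the price of a factor $(1+t)^{-1-\delta}$, which is integrable. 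Separately, the non-cutoff operator furnishes its own coercive term — a fractional-in-$v$ dissipation with weight $\jap v^{\gamma/2+s}$ — which we extract using the coercivity and upper-bound estimates of Gressman--Strain and Silvestre. Local existence, uniqueness, and a continuation criterion in the energy space for \eref{boltz} are provided by the Henderson--Snelson--Tarfulea-type theory, so it suffices to propagate the energy.

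For the \emph{a priori} estimate I would commute the equation with the vector fields adapted to the free streaming operator $\part_t+v_i\part_{x_i}$ — the spatial translations $\part_{x_i}$, the lifted Galilean fields, the scaling field — up to total order $10$, and define an energy $\mathcal E(t)$ and a dissipation $\mathcal D(t)$ from the $L^2_{x,v}$ norms of the commuted unknowns weighted by $(1+|x-(1+t)v|^2)$, the quantity preserved by free transport that at $t=0$ reduces to the $(1+|x-v|^2)$ weight of the hypothesis. The energy identity then reads schematically $\frac{d}{dt}\mathcal E+\mathcal D\lesssim(\text{commutator errors})+(\text{collision errors})$; the commutator errors are genuinely lower order and absorbed by $\mathcal D$ together with integrable weight factors. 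The collision errors are the terms $\int(Yg)\,Y\Ll(w\,Q(w^{-1}g,w^{-1}g)\Rr)$, which one expands by Leibniz into a bilinear sum, using the new weighted Boltzmann bilinear estimates to put the top-order factor into $\mathcal D^{1/2}$ and estimating the remaining low-order factor in a weighted Sobolev norm that decays in time. That decay is obtained by consolidating two effects: the Klainerman--Sobolev inequality in $(x,v)$, by which the vector-field method converts $\mathcal E^{1/2}$ into pointwise $t^{-3}$-type decay, and the space-time weight $(1+|x-(1+t)v|^2)$, on whose support large $|v|$ forces large $|x-(1+t)v|$ off a thin set — the dispersion of free transport. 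Terms whose velocity growth exceeds what $\mathcal D$ controls are instead paid for with the $\Ll(\deltime\Rr)\jap v^2$ coercivity, trading one integrable power of $(1+t)$; here the precise value of $\delta$ is forced. The upshot is an inequality $\frac{d}{dt}\mathcal E+\mathcal D\lesssim(1+t)^{-1-\delta'}\bigl(\mathcal E+\mathcal E^{1/2}\mathcal D^{1/2}+\mathcal E^{3/2}\bigr)$ for some $\delta'>0$; absorbing the cross term into $\mathcal D$, using $\mathcal E(0)\lesssim\eps^2$, and running Grönwall with $\eps_0$ small closes the bootstrap, so $\mathcal E\lesssim\eps^2$ on the maximal interval $[0,T^\ast)$, whence $T^\ast=\infty$ and $g\in E_T$. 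Continuity in time into $Y_{x,v}$ (and into the lower-order space) follows in the standard way from the energy identity and weak continuity, and $g$ approaching a transport solution follows by showing the right-hand side is integrable in time in a suitable norm.

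For non-negativity I would regularize with Grad's angular cutoff: the cutoff equation can be written $\part_t f^\eta+v_i\part_{x_i}f^\eta+f^\eta\,\mathcal R_\eta[f^\eta]=\mathcal Q^+_\eta[f^\eta,f^\eta]$ with $\mathcal R_\eta,\mathcal Q^+_\eta\ge 0$ whenever $f^\eta\ge 0$, so a Duhamel iteration from $f_{\ini}\ge 0$ preserves positivity; the \emph{a priori} estimates above are uniform in $\eta$, since the cutoff only makes the kernel less singular, so $f^\eta\to f$ in the energy space and uniqueness identifies the limit, giving $f\ge 0$. For uniqueness in $E^4_T\cap C^0([0,T);Y^4_{x,v})$: given two such solutions $g_1,g_2$, the difference $h=g_1-g_2$ solves the linearized equation with right-hand side $w\bigl(Q(w^{-1}h,w^{-1}g_1)+Q(w^{-1}g_2,w^{-1}h)\bigr)$, and a four-derivative weighted $L^2$ estimate — the bilinear estimates cost derivatives on only one factor — together with the a priori bounds on $g_1,g_2$ and Grönwall forces $h\equiv 0$ on $[0,T)$; four derivatives suffice because no time decay is needed here, only local-in-time control.

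The step I expect to be the main obstacle is the collision-error estimate inside the bootstrap: proving weighted bilinear estimates for the non-cutoff operator $w\,Q(w^{-1}\cdot,w^{-1}\cdot)$ that simultaneously (i) produce the fractional dissipation so the top-order factor lands in $\mathcal D$, (ii) control the velocity-weight commutators generated by $w$, and (iii) leave a low-order factor whose weighted-Sobolev norm decays at an \emph{integrable} rate — which is precisely where the dispersive decay from the commuting vector fields and the decay from the $(1+|x-(1+t)v|^2)$ and Gaussian weights must be combined, and where the choice $\delta=\min\{(1-s)/4,\,1/10,\,(\gamma+2s)/8\}$ is dictated.
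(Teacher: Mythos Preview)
Your overall architecture matches the paper: set $g=wf$ with the same time-decaying Gaussian, commute with $\partial_x$, $\partial_v$, and the Galilean fields $Y_i=(t+1)\partial_{x_i}+\partial_{v_i}$, carry the space-time weight $\jap{x-(t+1)v}^2$, run a bootstrap, and combine the $(1+t)^{-3/2}$ decay from the $Y$-based Klainerman--Sobolev inequality with the $(1+t)^{-3/2}$ decay from the $L^1_v\!\to\!L^2_v$ embedding via the $\jap{x-(t+1)v}$ weight. You also correctly locate the hard step and the origin of the constraint on $\delta$. Two points, however, deserve correction.

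\textbf{The fractional coercivity is not available near vacuum.} You write that the non-cutoff operator ``furnishes its own coercive term --- a fractional-in-$v$ dissipation'' and that you would place the top-order factor into this $\mathcal D^{1/2}$. That works near a Maxwellian, where $f_*$ is bounded below, but here $f=O(\eps)$, so the quantity $\int\!\!\int B f_*(G'-G)^2$ is $O(\eps)\norm{G}_{H^s}^2$ and cannot absorb anything. The paper's route is the opposite: at top order one writes $\int Q(f,G)G=-\tfrac12 D+\int Q(f,G^2)$, \emph{drops} $D$ (it has the favourable sign), and bounds $\int Q(f,G^2)$ by the cancellation lemma. The only genuine dissipation in the energy is the $\jap v$-weighted $L^2$ term coming from $\dot d(t)\jap v^2$; every $H^s_v$ or $H^{2s}_v$ norm appearing in the error terms is controlled by interpolation between $L^2_v$ and higher $\partial_v$-derivatives already present in the energy (at the cost of the expected $(1+t)$ growth per $\partial_v$), not by coercivity. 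If you try to run your scheme as written you will find the ``coercive'' side and the ``error'' side both scale like $\eps$, with no gain.

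\textbf{Smaller discrepancies.} There is no scaling field in the paper's commuting set; the fields are exactly $\partial_x,\partial_v,Y$, and the energy norm carries a hierarchy weight $(1+T)^{-|\beta|}$ per $\partial_v$ to accommodate the linear growth $[\partial_t+v\cdot\nabla_x,\partial_v]=-\partial_x$. Correspondingly the closure is not a single Gr\"onwall on a scalar $\mathcal E$: one proves, under the bootstrap $\norm{g}_{E_T}\le\eps^{3/4}$, the bound $\norm{\jap{x-(t+1)v}^2\der g}_{L^\infty_TL^2_{x,v}}^2\lesssim\eps^2(1+T)^{2|\beta|}$ by induction on $|\beta|+|\omega|$ (the transport commutator feeds $(\alpha',\beta')=(\alpha{+}1,\beta{-}1)$ back in), and this improves the bootstrap. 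Your non-negativity and uniqueness sketches are fine and consistent with what the paper defers to the local theory.
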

See \sref{notation} for definition of the relevant energy spaces.

\subsection{Related works}  In this section we give a rather inexhaustive list of pertinent results. 
\begin{enumerate}
\item \textbf{Stability of vacuum for collisional kinetic models:} The earlier works were concerned with the Boltzmann equation with an angular cut-off. The first work was by Illner-Shinbrot \cite{IlSh84}. There were many other follow up works of \cite{IlSh84}; see for instance \cite{Ar11,BaDeGo84,BeTo85,Guo01,Ha85,HeJi17,Po88,To86,ToBe84}. Perturbations to travelling global Maxwellians were studied in \cite{AlGa09,Go97,To88, BaGaGoLe16} and it was shown that the long-time dynamics is governed by dispersion.

The first stability problem for vacuum result with long-range interactions was only recently obtained by Luk, in \cite{Lu18}, who proved the result in the case of moderately soft potentials ($\gamma \in (-2,0)$). Luk combined $L^\infty$ and $L^2$ methods to prove global existence of solutions near vacuum. Moreover, Luk also proved that the long-time dynamics is governed by dispersion. The stability of vacuum for Landau equation with hard potentials (i.e. $\gamma\in[0,1]$) was considered by the author in \cite{Cha20}.

Although the present result is comparable to that of Luk in \cite{Lu18} in the sense that the range of potentials considered here ($\gamma+2s\in(0,2)$) is an analogue of the moderately soft potentials for Landau ($\gamma\in (-2,0)$), the Boltzmann kernel poses various technical difficulties which are not present in \cite{Lu18}. Another major difference is that while Luk uses maximum principle in conjunction with energy estimates in \cite{Lu18}, we only use $L^2$ estimates, as in \cite{Cha20}. We also need to combine the vector field approach used by Smulevici in \cite{Sm16} with the space-time weights in our energy norm to get enough time decay for us to close the estimates. 

\begin{remark}
In the case of inhomogeneous equations, this is the first global existence result for the non-cutoff Boltzmann equation for any $s\in(0,1)$ such that the solutions do not converge to a Maxwellian. 
\end{remark}
\item \textbf{Dispersion and stability for collisionless models:} The dispersion properties of the transport operator, which we leverage to prove our global existence result, have been instrumental in proving stability results for close-to-vacuum regime in collisionless models; see \cite{BaDe85,GlSc88,GlSt87} for some early results. Relations between these results and the stability of vacuum for the Boltzmann equation with angular cutoff is discussed in \cite{BaDeGo84}. More recent results can be found in \cite{Bi17, Bi18,Bi19.1, Bi19.2, Bi19.3, FaJoSm17.1,Sm16,Wa18,Wa18.1,Wa18.2,Wo18}. See also \cite{FaJoSm17,LiTa17,Ta17, BiFaJoSmTh20} for proof of the stability of the Minkowski spacetime for the Einstein-Vlasov system. 
\item \textbf{Regularity theory for cut-off and spatially homogeneous Boltzmann \break equation:} Historically, Boltzmann equation has been considered with an angular cut-off. Grad proposed, in \cite{Grad63}, an angular cut-off which requires $b(\cos \theta)$ to be bounded. Since then a lot has been done regarding Boltzmann equation with some sort of an angular cut-off. 

There is also a huge literature on spatially homogeneous Boltzmann equation without cut-off. On the other hand literature for inhomogenous Boltzmann equation without angular cut-off is somewhat sparse.
\begin{itemize}
\item \textbf{Boltzmann equation with angular cut-off:} The theory for the cut-off case has been studied in \cite{AlGa09, BaDe85, BaDeGo84, BaGaGoLe16, BeTo85, Guo01, Guo02, Guo03.2,Guo03,Ha85,HeJi17,IlSh84,Levermore,Po88,StGu04,StGu06, To87, To88,ToBe84, DiLi89, DeVi05}.
\item \textbf{Spatially homogenous Boltzmann equation:} The angular noncut-off case was first considered in the spatially homogeneous case. The striking difference between cut-off and noncut-off case is the regularizing effect; see \cite{Li94,De95,AlEl05, DeWe04, Vil99}. For issues relating existence and uniqueness see \cite{AlDeViWe00, DeMo09, AlVi04}.
\end{itemize}
\item \textbf{Regularity theory for noncut-off and spatially inhomogeneous Boltzmann equation:} The noncut-off case for spatially inhomogeneous collisional equations is much less understood compared to the cut-off case or the spatially homogeneous case. 
\begin{itemize}
\item \textbf{Local existence:} Local existence and regularization effect for the noncut-off case was first studied in \cite{AMUXY10} by Alexandre--Morimoto--Ukai--Xu--Yang (AMUXY). The result focused on the case $s\in (0,\frac{1}{2})$ and $\gamma$ such that $\gamma+2s<1$. See also \cite{AMUXY13} for an existence result with less stringent regularity assumptions. The local existence for the whole singularity range $s\in (0,1)$ was only established recently by Henderson--Snelson--Tarfulea in \cite{HeSnTa19}. In addition to relaxing the singularity restriction, Henderson--Snelson--Tarfulea also considered $\gamma$ such that $\max\{-3,-\frac{3}{2}-2s\}<\gamma<0$. Among many key insights, one of them was a clever integration by parts that we also exploit.
\item \textbf{Conditional regularity:} Recently there have been some results that concern the regularity of solutions to Boltzmann equation assuming a priori pointwise control of the mass density, energy density and entropy density; see \cite{Sil14,ImbSil16,ImbSil18,ImbSil19,ImMoSi18,ImMoSi19}. In particular we rely heavily on the integro-differential kernel formulation of the Boltzmann kernel introduced by Silvestre in \cite{Sil14} and the estimates further developed in \cite{ImbSil16}.
\item \textbf{Global nonlinear stability of Maxwellians:} The stability problem for\break Maxwellians has been solved thanks to the works of Gressman--Strain \cite{GrSt11, GrSt11.2} and the AMUXY group \cite{AMUXY12.3, AMUXY12.2, AMUXY12}. We borrow the dyadic decomposition of the angular singularity in the Boltzmann kernel and various other estimates developed in the \cite{GrSt11}.
\end{itemize}
\end{enumerate}
\subsection{Proof Strategy}
In the near vacuum regime we expect that the long time behavior of the solution to \eref{boltz} is dictated by dispersion coming from the transport operator. Hence we hope to be able to treat the Boltzmann collision operator perturbatively. This requires us to optimally exploit the time decay from the transport operator. To achieve this we  need to amalgamate the space-time weights used by Luk in \cite{Lu18} and the vector field method in the context of transport equations developed by Smulevici \cite{Sm16}. Next we give a general proof strategy employed in this paper.
\begin{enumerate}
\renewcommand{\labelenumi}{(\theenumi)}
\item \textbf{Local existence:} The local existence problem was first studied by the AMUXY group with the restriction that $s\in \left(0,\frac{1}{2}\right)$. In a very recent paper Henderson--Snelson--Tarfulea have been able to treat the whole physical regime as far as the angular singularity is concerned (i.e. $s\in (0,1)$). This is due to the clever integration by parts discovered by the authors in \cite{HeSnTa19}\footnote{In contrast to the Gaussian weights employed by AMUXY, Henderson--Snelson--Tarfulea can prove local existence with just polynomial velocity weights.}.

Two major difficulties one has to face while proving existence for the Boltzmann equation are derivative loss and moment (in velocity) loss. As in \cite{AMUXY10} and \cite{HeSnTa19}, we will also use an $L^2$ based approach to avoid the potential derivative loss issue. However, for the purpose of establishing time decay we need to treat the additional difficulties arising from the space-time weights. We will come back to these issues in \sref{proof_details}. 

An $L^2$ based approach lets us use the subtle integration by parts exploited in \cite{HeSnTa19} to prove existence for all $s\in(0,1)$.To take care of the moment loss issue we use a time-dependent Gaussian in $\jap{v}$ as in \cite{AMUXY10}. Our choice of weights will decrease as $t$ increases, but it needs to decay in a sufficiently slow manner so that it is non-degenerate as $t\to \infty$. More precisely, we define 
\begin{equation}\label{e.g}
g:=e^{d(t)\jap{v}^2}f, \hspace{.5em} d(t):=d_0(1+(1+t)^{-\delta})
\end{equation}
for appropriate $d_0>0$, $\delta>0$ and estimate $g$ instead of $f$, which satisfies the following equation\footnote{For the local problem one can just use $e^{-(d_0-\kappa t)\jap{v}^2}$ instead of our weight. We use our Gaussian weight instead since such a weight would be required for the global problem.},
\begin{equation} \label{e.rough_eq_for_g}
\begin{split}
\partial_tg+v_i\partial_{x_i}g+\frac{\delta d_0}{(1+t)^{1+\delta}}\jap{v}^{\color{black}{2}}g&=Q(f,g)+\text{other terms}.
\end{split}
\end{equation}

\item \textbf{Dispersive properties of the transport equation:}\label{2} The long time limit of the solution of \eref{boltz} in the near-vacuum regime is expected to be governed by dispersion coming from the transport equation. With this in mind, we would like to treat the Boltzmann collision operator perturbatively. To be able to successfully do that we need to establish enough time decay for the collisional kernel.

The essential behaviour of the Boltzmann collision operator can be captured by a fractional flat diffusion. That is, 
$$F\mapsto Q(g,F)\approx -(-\Delta_v)^sF+\text{ lower order terms}.$$
More precisely, the following estimate has been proved in \cite{GrSt11, ImbSil16, AMUXY10} in various forms\footnote{In contrast to \cite{GrSt11} where it is enough to estimate $f$ in $L^2_v$, we need to make sure to esimate it in $L^1$. This is because we hope to gain time decay when we go from $L^1_v$ to $L^2_v$ via our space-time weights.},
\begin{equation}\label{e.boltz_est}
\int_{\R^3}\int_{\R^3}Q(f,g)h\d v\d x\lesssim \norm{f\jap{v}^{\gamma+2s}}_{L^\infty_xL^1_v}\norm{g\jap{v}^{\frac{\gamma}{2}+s}}_{L^2_xH^{2s}_v}\norm{h\jap{v}^{\frac{\gamma}{2}+s}}_{L^2_xL^2_v}.
\end{equation}

Recognizing this we give a heuristic argument as to why we expect in the near-vacuum regime that the solutions to Boltzmann equation approach solutions to the linear transport equation in the long time limit. We first remind ourselves of the decay estimates for the transport equation,
\begin{equation}\label{e.lin_transport}
\part_t f+v_i\part_{x_i}f=0.
\end{equation}
Consider a solution $f_{\text{free}}$ of the transport equation \eref{lin_transport}. We have an explicit solution to \eref{lin_transport} and it is given by 
\begin{equation}\label{e.lin_soln}
f_{\text{free}}(t,x,v)=f_{\text{data}}(x-tv,v).
\end{equation}
Then if the initial data $f_{\text{data}}$ is sufficiently localized in $x$ and $v$, then for all $l\in \mathbb{N}\cup \{0\}$,
\begin{equation}\label{e.lin_Linf}
\norm{\jap{v}^lf_{\text{free}}}_{L^\infty_xL^1_v}\lesssim (1+t)^{-3}, \hspace{1em}|\part^\alpha_x\part_v^\beta f|\lesssim (1+t)^{|\beta|}.
\end{equation}
We also have the following $L^2_v$ bound
\begin{equation}\label{e.lin_L2}
\color{black}{\norm{\jap{v}^l\part^\alpha_x\part^\beta_v f_{\text{free}}}_{L^2_xL^1_v}}\lesssim (1+t)^{-\frac{3}{2}+|\beta|},\hspace{1em} \norm{\jap{v}^l\part^\alpha_x\part^\beta_v f_{\text{free}}}_{L^2_xL^2_v}\lesssim (1+t)^{|\beta|}.
\end{equation}

Note that each $\part_v$ worsens the decay estimate by $t$ but $\part_x$ leaves the decay estimate untouched. 

Now assuming that $f$ and $g$ in \eref{boltz_est} satisfy the linear estimate \eref{lin_Linf} and \eref{lin_L2} \textcolor{black}{and $h$ is bounded in the weighted $L^2_xL^2_v$ norm}, we have for Boltzmann equation,
\begin{equation}\label{e.decay}
\int_{\R^3}\int_{\R^3}Q(f,g) h\d v\d x\lesssim_h (1+t)^{-3}\times (1+t)^{2s}.
\end{equation}
Since $s\in(0,1)$ the time decay in the case of Boltzmann equation is actually integrable!\\
This is in contrast to the non-integrable time decay one would get for the Landau equation, see \cite{Lu18} and \cite{Cha20}. To remedy this borderline non-integrable time decay one has to exploit the null structure of the Landau kernel.
%
%
\item \textbf{Decay and heuristics for the $L^2$ estimates and vector field method:}\label{3}
For the case of Landau equation, Luk in \cite{Lu18} uses $L^\infty$ estimates and the maximum principle to get time decay. Unfortunately using maximum principle with energy estimates as in \cite{Lu18} seems hard in the case of Boltzmann equation. This is because in comparison to Landau equation, where we have a Laplace-like term, Boltzmann equation has a singularity in angle in the collision kernel in place of the Laplace-like term for Landau. Thus, proving maximum prinicple and propagating $L^\infty$ bounds seems like a daunting task in this case. 

Instead of a maximum principle approach we use a purely $L^2$ based approach as in \cite{Cha20}. To close the estimates in \cite{Cha20}, $L^2$ estimates suffice because the nonlinear structure of the Landau equation can be exploited to a greater extent for the hard potentials than for soft potentials. Hence Luk has to resort to $L^\infty$ estimates to get the requisite amount of time decay in \cite{Lu18} for the case of soft potentials. Since we want to treat the moderately soft potentials case with purely $L^2$ estimates we run into major difficulties relating to time decay.

We saw in point (\ref{2}) that we have enough time decay for the Boltzmann collisional operator assuming \eref{lin_Linf}. Since we want to bypass the use of $L^\infty$ estimates, we need to be able to propagate \eref{lin_Linf} assuming only $L^2$ control which does not seem to be enough. Thus the best estimate we can hope for is \eref{lin_L2}. Now note though that using \eref{boltz_est} and \eref{lin_L2} in conjunction with Sobolev embedding we get the decay,
\begin{align*}
\int_{\R^3}\int_{\R^3}&Q(f,g) h\d v\d x\\
&\lesssim_v \norm{f}_{L^\infty_xL^1_v}\norm{g}_{L^2_xH^{2s}_v}\norm{h}_{L^2_xL^2_v}\\
&\lesssim_v \sum_{|\alpha|\leq 2}\norm{\part^\alpha_x f}_{L^2_xL^1_v}\norm{g}_{L^2_xH^{2s}_v}\norm{h}_{L^2_xL^2_v}\\
&\lesssim_h (1+t)^{-\frac{3}{2}}\times (1+t)^{2s}.
\end{align*}
This $(1+t)^{-\frac{3}{2}}$ decay comes from the dispersion due to the transport operator and is enforced for the Boltzmann equation via the space-time weights $\sqrt{1+|x-(t+1)v|^2}$ (see \lref{L1_to_L2}). That said, for $s>\frac{1}{4}$, this decay is not sufficient to close our estimates. We somehow need to get an extra $(1+t)^{-\frac{3}{2}}$ decay to be able to treat all physically relevant singularity, i.e. $s\in(0,1)$. To this end we use the vector field approach used by Smulevici in the context of Vlasov-Poisson equation; \cite{Sm16}. Smulevici's work builds up on vector field method developed by Klainerman in \cite{Kla85}.

More precisely, we use the following bound, 
$$\norm{f}_{L^\infty_xL^2_v}\lesssim (1+t)^{-\frac{3}{2}}\sum_{|\omega|\leq 3}\norm{Y^\omega f}_{L^2_xL^2_v},$$
where $Y_i=(t+1)\part_{x_i}+\part_{v_i}$; see \lref{vect_trick} for a proof. It can be easily checked that $Y$ commutes with the transport operator ($\part_{t}+v_i\part_{x_i}$) and thus taking a $Y$ derivative does not worsen the time decay unlike $\part_v$. This in conjunction with the gain in time decay we get by going from $L^1_v$ to $L^2_v$, thanks to our space-time weights, we are able to treat the main term without needing to propagate any $L^\infty$ bounds\footnote{We believe our $L^2$ approach can also be applied to the case of moderately soft potentials for Landau equation to give a purely $L^2$ based stability of vacuum proof as in \cite{Lu18}. We would still need to use the null structure observed by Luk in \cite{Lu18} but we can bypass the use of $L^\infty$ theory.}. 

\item \textbf{Additional technical difficulties:} There are various other difficulties that we run into, these include- moment loss issue for the space-time weights at the top order, not enough time decay for the commutator term arising due to the space-time weights and a coupled derivative loss and insufficent time decay issue for the top and penultimate order terms. See \sref{proof_details} for further discussions on these difficulties.
\end{enumerate}
\subsection{Paper organization} The remainder of the paper is structured as follows.\\
In \sref{notation}, we introduce some notations that will be in effect throughout the paper. In \sref{proof_details} we discuss the various technical difficulties which we encountered. In \sref{set_up} we set up the energy estimates. In \sref{sing} we review the singularity decomposition from \cite{GrSt11} and the machinery developed in \cite{Sil14} and \cite{ImbSil16}. \sref{main_est} is the main section where we prove the estimates involving the Boltzmann kernel. In \sref{local} we prove the local existence result. In \sref{bootstrap} we state the bootstrap assumptions and the theorem we wish to establish as a part of out bootstrap scheme. In \sref{errors}, we estimate the errors and prove the bootstrap theorem from \sref{bootstrap}. Finally, in \sref{put} we put everything together and prove \tref{global}.
\subsection{Acknowledgements} I would like to thank Jonathan Luk for encouraging me to do this problem and for numerous helpful discussions. I am also very thankful to the anonymous reviewer for their invaluable comments on the manuscript.
\section{Notations and Spaces}\label{s.notation}
We introduce some notations that will be used throughout the paper.

\textbf{Norms}. We will use mixed $L^p$ norms, $1\leq p<\infty$ defined in the standard way:
$$\norm{h}_{L^p_v}:=(\int_{\R^3} |h|^p \d v)^{\frac{1}{p}}.$$
For $p=\infty$, define
$$\norm{h}_{L^\infty_v}:=\text{ess} \sup_{v\in \R^3}|h|(v).$$
For mixed norms, the norm on the right is taken first. For example,
$$\norm{h}_{L^p_xL^q_v}:=(\int_{\R^3}(\int_{\R^3} |h|^q(x,v) \d v)^{\frac{p}{q}} \d x)^{\frac{1}{p}}$$
and
$$\norm{h}_{L^r([0,T];L^p_xL^q_v)}:=(\int_0^T(\int_{\R^3}(\int_{\R^3} |h|^q(x,v) \d v)^{\frac{p}{q}} \d x)^{\frac{r}{p}})^{\frac{1}{r}}$$
with obvious modifications when $p=\infty$, $q=\infty$ or $r=\infty$. All integrals in phase space are over either $\R^3$ or $\R^3\times \R^3$ and the explicit dependence is dropped henceforth.\\

\textbf{Japanese brackets}. Define $$\jap{\cdot}:=\sqrt{1+|\cdot|^2}.$$

\textbf{Multi-indices}. Given a multi-index $\alpha=(\alpha_1,\alpha_2,\alpha_3)\in (\N\cup \{0\})^3$, we define $\part_x^\alpha=\part^{\alpha_1}_{x_1}\part^{\alpha_2}_{x_2}\part^{\alpha_3}_{x_3}$ and similarly for $\part^\beta_v$. Let $|\alpha|=\alpha_1+\alpha_2+\alpha_3$. Multi-indices are added according to the rule that if $\alpha'=(\alpha'_1,\alpha'_2,\alpha'_3)$ and $\alpha''=(\alpha''_1,\alpha''_2,\alpha''_3)$, then $\alpha'+\alpha''=(\alpha'_1+\alpha''_1,\alpha'_2+\alpha''_2,\alpha'_3+\alpha''_3)$.\\

\textbf{Global energy norms}. For any $T>0$ the energy norm we use in $[0,T)\times\R^3\times\R^3$ is as follows 
\begin{equation}\label{e.Energy_norm}
\begin{split}
\norm{h}^2_{E^m_T}&:=\sum_{|\alpha|+|\beta|+|\omega|\leq m}(1+T)^{-2|\beta|}\norm{\jap{x-(t+1)v}^2\der h}^2_{L^\infty([0,T];L^2_xL^2_v)}\\
&\quad +\sum_{|\alpha|+|\beta|+|\omega|\leq m}(1+T)^{-2|\beta|}\norm{(1+t)^{-\frac{1+\delta}{2}}\jap{v}\jap{x-(t+1)v}^2\der h}^2_{L^2([0,T];L^2_xL^2_v)}.
\end{split}
\end{equation}
When \underline{$m=10$}, it is dropped from the superscript, i.e. $E_T=E^{10}_T.$

It will also be convenient to define some other energy type norms. Namely,
$$\norm{h}^2_{Y^m}(T):=\sum \limits_{|\alpha|+|\beta|+|\omega|\leq m} \norm{\jap{x-(t+1)v}^{2}\der h}^2_{L^2_xL^2_v}(T)$$
and 
$$\norm{h}^2_{X^m_{T}}:=\sum \limits_{|\alpha|+|\beta|+|\omega|\leq m} \norm{\jap{x-(t+1)v}^{2}\jap{v}\der h}^2_{L^2([0,T];L^2_xL^2_v)}.$$

\textbf{Instead of writing $g(v_*)$, $g(v_*')$, $g(v)$ and $g(v')$, we write $g_*$, $g_*'$, $g$ and $g'$ respectively}.

\textbf{For two quantitites, $A$ and $B$ by $A\lesssim B$, we mean $A\leq C(d_0,\gamma,s)B$, where $C(d_0,\gamma,s)$ is a positive constant depending only on $d_0$, $\gamma$ and $s$}.
\section{Technical difficulties}\label{s.proof_details}
In addition to the time decay issue which we already discussed in \sref{Introduction} we have various other technical issues that can potentially prevent us from closing our estimates.
\begin{itemize}
\item First recall that $Y_i=(t+1)\part_{x_i}+\part_{v_i}$. Now differentiating \eref{rough_eq_for_g} by $\der$ such that $|\alpha|+|\beta|+|\omega|=10$, we roughly get the following equation,
\begin{equation} \label{e.rough_eq_for_g_diff}
\begin{split}
\partial_t\der g&+v_i\partial_{x_i}\der g+\frac{\delta d_0}{(1+t)^{1+\delta}}\jap{v}^{\color{black}{2}}\color{black}{\der g}\\
&=\sum_{\substack{|\alpha'|+|\alpha''|=|\alpha|\\|\beta'|+|\beta''|=|\beta|\\|\omega'|+|\omega''|=|\omega|}}Q(\derv{'}{'}{'}f,\derv{''}{''}{''}g)+\text{other terms}.
\end{split}
\end{equation}
The estimate \eref{boltz_est} is only helpful when $|\alpha''|+|\beta''|+|\omega''|\leq 8$. That is we have a potential derivative loss issue. Thus we need to treat the top order and the penultimate order separately. In both cases, we use the subtle integration by parts observed in \cite{HeSnTa19} but carried out in a way to get adequate time decay.
\item Another techinal difficulty can be attributed to the space-time weights (which are needed to get time decay). \textcolor{black}{More precisely,} we need to be able to treat the following term,
\begin{align*}
\int_{\R^3}[Q(f,g)\jap{x-(t+1)v}^{2n}h-Q(f,\jap{x-(t+1)v}^{n}g)\jap{x-(t+1)v}^{n}h]\d v.
\end{align*}
In addition, we need to treat the top order and the non top order differently. This is because for non top orders we are able to put a derivative on $g$ while for the top order we need to put the derivative on $f$.
\item We also have a moment loss issue for the space-time weights at the top order. 
\item The Gaussian in velocity weight complicates matter and was the reason for the restriction $\gamma+2s<1$ in \cite{AMUXY10}. To be able to go to push $\gamma+2s$ to $2$, we have to decompose the angular singularity as in \cite{GrSt11} and use the estimates developed there in away from the angular singularity.
\end{itemize} 
\begin{enumerate}
\item \textbf{Main term for the top and the penultimate order.}
For the top order the main term is of the form $Q(f,G)G$. In this case, we are unable to use \eref{boltz_est} without a loss in derivatives. Thus to overcome this issue we use an appropriate variant of classical cancellation lemma (\lref{canc_lemma}) that lets us deal with the angular singularity without having to put any derivative on $G$ and instead puts the derivative on $f$.

For the penultimate order the main term is of the form $Q(\part f,G)\part G$, where \break $\part\in\{\part_{x_i},\part_{v_i},Y_i\}$. Again, we cannot apply \eref{boltz_est} for $s>\frac{1}{2}$\footnote{This is the main reason of the restriction, $s\in \left(0,\frac{1}{2}\right)$ in \cite{AMUXY10}.}. Although the main term is not as symmetric as for the top order term, we still have some symmetry. Henderson--Snelson--Tarfulea deal with the penultimate term using integration by parts in \cite{HeSnTa19} which is in turn motivated by their work on Landau equation in \cite{HeSnTa17}. They use the theory developed by Silverstre in \cite{Sil14} and Imbert--Silvestre in \cite{ImbSil16} and break up $Q(\part f,G)\part G$ into a term with the singularity $Q_1(\part f,G)\part G$ and another term $Q_2(\part f,G)\part G$, which can be treated by the cancellation lemma, \lref{canc_lemma}.

Using integration by parts (see \pref{pen_term_main}) we get that 
\begin{align*}
\int_{\R^3}\int_{\R^3}Q_1(\part f,G)\part G\d v\d x&=\int_{\R^3}\int_{\R^3}\int_{\R^3}K_{\part^2 f}(G'-G)^2\d v'\d v\d x\\
&\qquad+\int_{\R^3}\int_{\R^3}\int_{\R^3}(K_{\part f}-K_{\part f}')(G-G')\part G\d v'\d v\d x.
\end{align*}
The first term can be treated in the same way as in \cite{HeSnTa19}, see \lref{sym_bound_weight}. In \cite{HeSnTa19}, the authors use the cancellation from $K_{\part f}-K'_{\part f}$ by estimating $\part f$ in a H\"{o}lder space in velocity; see Proposition 3.1 (iv) in \cite{HeSnTa19}. For the local problem this poses no issue but for the global problem this is disastrous as far as the time decay is concerned. This is because we would have to apply Morrey's inequality at the cost of more than $\frac{3}{2}$ velocity derivatives and they also have to bound $G$ in $H^s$, which they get by using cancellation from $|G'-G|$. Then by \eref{lin_L2}, we would be losing $(1+t)^{\frac{3}{2}+2s}$ and we only gain $(1+t)^{-3}$ from the vector field method and the velocity averages; see Point \ref{3}. Thus we would be far from being integrable in time for $s\geq \frac{1}{4}$!

To overcome this issue we have to develop estimates similar to \lref{boltz_kernel_approx} for the difference of the kernels $|K_{\part f}-K_{\part f}'|$; see \lref{diff_K_outside_ball}. This lets us estimate $\part f$ in $H^1_v$ (\lref{diff_f_diff_G}) instead of a H\"{o}lder space. Moreover, we can exploit the cancellation from $|G-G'|$ at the cost of $2s-1$ velocity derivatives. Hence, in total we only lose $(1+t)^{2s}$ of the $(1+t)^{-3}$ decay, which means that we can still close the estimates.
\item \textbf{Commutator estimate and need for null structure.} Formally, Landau equation can be thought of as a limit of the Boltzmann equation as $s\to 1$. Thus we can expect an estimate similar to \eref{boltz_est} with $s$ replaced by $1$. Then it is easy to see, at least at a formal level, from \eref{decay} (with $s$ replaced by $1$) that we get barely non-integrable time decay in the case of Landau. Indeed, Luk in \cite{Lu18} and the author in \cite{Cha20} have to exploit the nonlinear structure of the Landau collisional kernel to get enough time decay.

Although the main terms in our analysis for the Boltzmann equation do not require us to exploit the null structure, the commutator terms need special care to establish integrable time decay. Due to our spacetime weights, we are unable to use the commutator estimate established in \cite{AMUXY10} (Lemma 2.4) and used in \cite{HeSnTa19} for the local problem. 

In our decomposition for
\begin{align*}
\int_{\R^3}[Q(f,g)\jap{x-(t+1)v}^{2n}h-Q(f,\jap{x-(t+1)v}^{n}g)\jap{x-(t+1)v}^{n}h]\d v
\end{align*}
we encounter a term of the following form for $n=2$ (see \eref{comm_est_weight_prod_diff}),  
\begin{align*}
\int_{\R^3}\int_{\R^3}\int_{\S^2}&Bf_*g[\jap{x-(t+1)v'}-\jap{x-(t+1)v}]\\
&\qquad\times [\jap{x-(t+1)v'}^2-\jap{x-(t+1)v}^2]\jap{x-(t+1)v'}h'\d \sigma\d v_*\d v.
\end{align*}
To be able to take care of the angular singularity, we need to take advantage of the cancellation from both differences of the weights. Using Taylor's theorem, we have the bound,
\begin{align*}
\jap{x-(t+1)v'}^2-\jap{x-(t+1)v}^2&=|v-v'|\part_{v_i}(\jap{x-(t+1)v}^2)\mid_z\\
&\lesssim|v-v'| (1+t)\jap{x-(t+1)z},
\end{align*}
where $z=\eta v+(1-\eta)v'$ for $\eta\in [0,1]$.\\
Similarly, $$\jap{x-(t+1)v'}-\jap{x-(t+1)v}\lesssim |v-v'|(1+t).$$
Now \eref{col_var} implies that $|v-v'|^2=|v-v_*|^2\sin^2 \frac{\theta}{2}$ with $\cos \theta=\left\langle\frac{v-v_*}{|v-v_*|},\sigma\right\rangle$. This factor of $\sin^2 \frac{\theta}{2}$ makes sure that the integral in $\d \sigma$ is bounded. In total we have the bound, 
\begin{equation}\label{e.show_decay}
\begin{split}
\int_{\R^3}\int_{\R^3}\int_{\S^2}&B(v-v_*,\sigma)f_*g[\jap{x-(t+1)v'}-\jap{x-(t+1)v}]\\
&\quad\times [\jap{x-(t+1)v'}^2-\jap{x-(t+1)v}^2]\jap{x-(t+1)v'}h'\d \sigma\d v_*\d v\\
&\qquad\lesssim (1+t)^2\int_{\R^3}\int_{\R^3}|v-v_*|^{2+\gamma}f_*\jap{x-(t+1)v}g\jap{x-(t+1)v'}^2h'\d v_*\d v,
\end{split}
\end{equation}
where we also used that 
\begin{align*}
\jap{x-(t+1)z}&\lesssim \jap{x-(t+1)v}+\jap{x-(t+1)v'}\\
&\lesssim \color{black}{ \jap{x-(t+1)v}\jap{x-(t+1)v'}}
\end{align*}
which follows from \eref{col_var}.\\
Thus we are able to successfully take care of the angular singularity but unfortunately, we have now gained $(1+t)^2$, which means that we are barely non-integrable as in the case of Landau equation; see \cite{Lu18}. 

We can remedy this by using the null structure for the nonlinearity found by Luk.  Thus we review the null structure that was recognized in \cite{Lu18}. For a sufficiently localized and regular data, the decay estimates \eref{lin_L2} are sharp only when $\frac{x}{t}\sim v$. For $\left|\frac{x}{t}-v\right|\geq t^{-\alpha}$ with $\alpha\in (0,1)$, we have better time decay.

There are three scenarios possible:
\begin{itemize}
\item $\frac{x}{t}$ is not too close to $v$.
\item $\frac{x}{t}$ is not too close to $v_*$.
\item $|v-v_*|$ is small.
\end{itemize}  
For the first two cases we get extra decay by the observation above. For the last case we get extra decay for \eref{show_decay} as $|v-v_*|$ is small and $2+\gamma>0$. This null structure is captured by using $\jap{x-(t+1)v}$ weights in the norm as in \cite{Lu18}. More precisely, we use that 
\begin{align*}
|v-v_*|^\delta&\lesssim (1+t)^{-\delta}|x-(t+1)v-(x-(t+1)v_*)|^\delta\\
&\lesssim (1+t)^{-\delta}\jap{x-(t+1)v}^\delta\jap{x-(t+1)v_*}^\delta,
\end{align*}
for $0<\delta<2+\gamma$. 

Putting all this together, we get
\begin{equation*}
\begin{split}
\int_{\R^3}&\int_{\R^3}\int_{\S^2}B(v-v_*,\sigma)f_*g[\jap{x-(t+1)v'}-\jap{x-(t+1)v}]\\
&\quad\times [\jap{x-(t+1)v'}^2-\jap{x-(t+1)v}^2]\jap{x-(t+1)v'}h'\d \sigma\d v_*\d v\\
&\qquad\lesssim (1+t)^2\int_{\R^3}\int_{\R^3}|v-v_*|^{2+\gamma}f_*\jap{x-(t+1)v}g\jap{x-(t+1)v'}^2h'\d \sigma\d v_*\d v\\
&\lesssim (1+t)^{2-\delta}\int_{\R^3}\int_{\R^3}|v-v_*|^{2+\gamma-\delta}\jap{x-(t+1)v_*}^\delta f_*\jap{x-(t+1)v}^2g\jap{x-(t+1)v'}^2h'\d \sigma\d v_*\d v.
\end{split}
\end{equation*}
\item \textbf{Commutator estimate for the top order and moment loss for the space-time weights.} For the commutator estimate we need to estimate $\jap{x-(t+1)v}g$ in $H^{2s-1}_v$; see \lref{comm_estimates}. We apply this result to $g=\derv{''}{''}{''} g$ and so this estimate poses no issue for all orders other than the top order. For the top order we need to exploit the symmetry as in \cite{HeSnTa19}. But if we proceed in the same way as Henderson--Snelson--Tarfulea, we have
\begin{align*}
&\int_{\R^3}[Q(f,g)\jap{x-(t+1)v}^{2n}g-Q(f,\jap{x-(t+1)v}^{n}g)\jap{x-(t+1)v}^{n}g]\d v\\
&=\int_{\R^3}\int_{\R^3}\int_{\S^2}(\jap{x-(t+1)v'}^n-\jap{x-(t+1)v}^n)f_*g\jap{x-(t+1)v}^ng\d\sigma \d v_*\d v\\
&\quad+ \int_{\R^3}\int_{\R^3}\int_{\S^2}(\jap{x-(t+1)v'}^n-\jap{x-(t+1)v}^n)f_*g\\
&\hspace{7em}\times[\jap{x-(t+1)v'}^ng'-\jap{x-(t+1)v}^ng]\d\sigma \d v_*\d v.
\end{align*}
The first term does not pose much difficulty. For the second term Henderson--Snelson--Tarfulea use Young's inequality to get,
\begin{align*}
\int_{\R^3}&\int_{\R^3}\int_{\S^2}(\jap{x-(t+1)v'}^n-\jap{x-(t+1)v}^n)f_*g[\jap{x-(t+1)v'}^ng'-\jap{x-(t+1)v}^ng]\d\sigma \d v_*\d v\\
&\lesssim \frac{1}{2} \int_{\R^3}\int_{\R^3}\int_{\S^2}f_*[\jap{x-(t+1)v'}^ng'-\jap{x-(t+1)v}^ng]^2\d\sigma \d v_*\d v\\
&\quad +\frac{1}{2} \int_{\R^3}\int_{\R^3}\int_{\S^2}(\jap{x-(t+1)v'}^n-\jap{x-(t+1)v}^n)^2f_*g^2\d\sigma \d v_*\d v
\end{align*}
But then for the second term we will necessarily have a loss of moment issue (for the starred variables)\footnote{The authors in \cite{HeSnTa19} can remedy this by putting $f$ in high weighted $L^\infty_v$ space. We are unable to do so since the proof for propagation of $L^{\infty,m}$ bounds relies heavily on the fact that the weights are exactly $\jap{v}$.}.

Instead, we choose a different route and decompose the commutator term as,
\begin{align*}
&\int_{\R^3}[Q(f,g)\jap{x-(t+1)v}^{2n}g-Q(f,\jap{x-(t+1)v}^{n}g)\jap{x-(t+1)v}^{n}g]\d v\\
&=\int_{\R^3}\int_{\R^3}\int_{\S^2}Bf'_*g' \jap{x-(t+1)v}^{\frac{n}{2}} g[\jap{x-(t+1)v}^{\frac{n}{2}}-\jap{x-(t+1)v'}^{\frac{n}{2}}]\\
&\qquad\times[\jap{x-(t+1)v}^n-\jap{x-(t+1)v'}^n]\d \sigma\d v_*\d v\\
&\quad +\int_{\R^3}\int_{\R^3}\int_{\S^2}B_kf'_*\jap{x-(t+1)v'}^{\frac{n}{2}} g'\jap{x-(t+1)v}^{\frac{n}{2}}  g\\
&\qquad\times[\jap{x-(t+1)v}^n-\jap{x-(t+1)v'}^n]\d \sigma\d v_*\d v.
\end{align*}
It can be easily seen that for $n\leq 2$, we do not have a moment loss issue. This is why we have the restriction on the power of weights that we can propagate\footnote{It is possible that using a hierarchy of weights or finding an appropriate analogue of $L^{\infty,m}$ bound from \cite{HeSnTa19} might help in removing this restriction.}.

The first one we can treat using the cancellation coming from the product of the differences of the weights and by employing the null structure. For the second term we use pre-post collisional change of variables (\lref{pre_post}) to get,
\begin{align*}
&\int_{\R^3}\int_{\R^3}\int_{\S^2}B_k f'_*\jap{x-(t+1)v'} g'\jap{x-(t+1)v}  g\\
&\qquad\times[\jap{x-(t+1)v}^2-\jap{x-(t+1)v'}^2]\d \sigma\d v_*\d v\\
&=\frac{1}{2}\int_{\R^3}\int_{\R^3}\int_{\S^2}B_k(f'_*-f_*)\jap{x-(t+1)v'} g'\jap{x-(t+1)v} g\\
&\qquad\times[\jap{x-(t+1)v}^2-\jap{x-(t+1)v'}^2]\d \sigma\d v_*\d v.
\end{align*}
Thus we use the cancellation coming from $|f_*'-f_*|$ to take care of the angular singularity. Since we already get a $(1+t)$ factor from the difference $|\jap{x-(t+1)v}^2-\jap{x-(t+1)v'}^2|$, we need to make sure that we put $f$ in $W^{1,\eta}$ for $\eta<1$. To achieve this we heavily rely on the machinery from \cite{Sil14} and \cite{ImbSil16}. In addition, we also need to use our adaptation of \lref{boltz_kernel_approx} to the difference of the kernels $|K_{\part f}-K_{\part f}'|$; see \lref{diff_K_outside_ball}. Using this we are indeed able to estimate $f$ in $W^{1,s}$ in \lref{top_order_diff_f}. It is possible to get a similar estimate using Littlewood--Paley theory but then we would have to be satisfied with putting $f$ in $W^{p,s}$ for any $p>1$.
\end{enumerate}
\section{Set-up for the energy estimates}\label{s.set_up}
Let
\begin{equation}\label{e.delta}
\delta=\min\left\{\frac{1-s}{4},\frac{1}{10},\frac{\gamma+2s}{8}\right\}.
\end{equation}

We now derive the equation for $g=e^{d(t)\jap{v}^2}f$. Here $d(t)=d_0(1+(1+t)^{-\delta})$ and $d_0>0$. We let $\mu=e^{-d(t)\jap{v}^2}$.

\begin{equation}\label{e.boltz_gaussian}
\begin{split}
&\part_t g+v_i\part_{x_i} g+\frac{\delta d_0}{(1+t)^{1+\delta}}\color{black}{\jap{v}^2 g}=\Gamma(g,g),\\
&\Gamma(g,g)=\mu^{-1}\int_{\R^3}\int_{\S^2} B(\mu'_*g_*'\mu' g'-\mu_* g_*\mu g)\d \sigma\d v_*.
\end{split}
\end{equation}
By energy conservation we have that $\mu_*=\mu^{-1}\mu_*'\mu'$.\\
Thus we get that, 
\begin{align*}
\Gamma(g,g)&=\int_{\R^3}\int_{\S^2} B \mu_*(g_*'g'-g_*g)\d \sigma\d v_*.
\end{align*}

Now using the change of variables $u=v_*-v$ and $u^{\pm}=(u\pm|u|\sigma)/2$, we have that,
\begin{align*}
\Gamma(g,g)&=\int_{\R^3}\int_{\S^2} B(u,\sigma) \mu(u+v)(g(v+u^+)g(v+u^-)-g(v+u)g(v))\d \sigma\d u,
\end{align*}
where we have suppressed the dependence of the functions on $t$ and $x$.

Now differentiating $\Gamma$ by $\der$ and applying the inverse change of coordinates for $u$, we get $$\der \Gamma(g,g)=\sum_{\substack{|\alpha'|+|\alpha''|=|\alpha|\\|\beta'|+|\beta''|+|\beta'''|=|\beta|\\ |\omega'|+|\omega''|+|\omega'''|=|\omega|}}C^{\alpha,\beta,\omega}_{\alpha',\beta',\omega'}\Gamma_{\beta''',\omega'''}(\derv{'}{'}{'} g,\derv{''}{''}{''} g),$$
where $$\Gamma_{\beta,\omega}(f,h)=\int_{\R^3}\int_{\S^2}B(|v-v_*|,\omega)(\part_{v}^{\beta}Y^{\omega} \mu)_*(f_*'h'-f_*h)\d \sigma\d v_*,$$
where the constant $C^{\alpha,\beta,\omega}_{\alpha',\beta',\omega'}$ denotes the constants coming from the Leibnitz rule.\\
For ease of notation we let $\part_v^\beta Y^\omega \mu=\mu_{\beta,\omega}$.

To set up for the energy estimates we differentiate \eref{boltz_gaussian} by $\der$ to get, \\
\begin{equation}\label{e.diff_boltz}
\begin{split}
&\part_t \der g+v_i\part_{x_i}\der g+\frac{\delta d_0}{(1+t)^{1+\delta}}\jap{v}^2\der g\\ &=[\part_t+v_i\part_{x_i},\der]g-\frac{\delta d_0}{(1+t)^{1+\delta}}(\der(\jap{v}^2 g)-\jap{v}^2\der g)\\
&\quad+\mathlarger{\sum}_{\substack{|\alpha'|+|\alpha''|=|\alpha|\\|\beta'|+|\beta''|+|\beta'''|=|\beta|\\|\omega'|+|\omega''|+|\omega'''|=|\omega|}}C^{\alpha,\beta,\omega}_{\alpha',\beta',\omega'}\Gamma_{\beta''',\omega'''}(\derv{'}{'}{'} g,\derv{''}{''}{''} g).
\end{split}
\end{equation}

\begin{lemma}\label{l.eng_set_up}
Assume that $g$ solves \eref{boltz_gaussian} on $[0,T_*)\times\R^3\times\R^3$. Then for $|\alpha|+|\beta|+|\omega|=10$ and some $T\in[0,T_*)$, we have
\begin{align*}
&\norm{\jap{x-(t+1)v}^2\der g}^2_{L^\infty([0,T];L^2_xL^2_v)}+\norm{(1+t)^{-\frac{1+\delta}{2}}\jap{v}\jap{x-(t+1)v}^2\der g}^2_{L^2([0,T];L^2_xL^2_v)}\\
&\leq \norm{\jap{x-v}^2\der g_{\ini}}^2_{L^2_xL^2_v}+\text{Comm}_1+\text{Comm}_2\\
&\quad +\mathlarger{\sum}_{\substack{|\alpha'|+|\alpha''|=|\alpha|\\|\beta'|+|\beta''|+|\beta'''|=|\beta|\\|\omega'|+|\omega''|+|\omega'''|=|\omega|}}\int_0^{T}\int_{\R^3} \int_{\R^3} \Gamma_{\beta''',\omega'''}(\derv{'}{'}{'} g,\derv{''}{''}{''} g)\jap{x-(t+1)v}^4\der g\d v\d x\d t,
\end{align*}
where,
\begin{equation}\label{e.comm_1}
\text{Comm}_1:=\sum_{\substack{|\alpha'|\leq |\alpha|+1\\ |\beta'|\leq |\beta|-1}}\norm{\jap{x-(t+1)v}^4|\der g||\derv{'}{'}{} g|}_{L^1([0,T];L^1_xL^1_v)},
\end{equation}
and 
\begin{equation}\label{e.comm_2}
\text{Comm}_2:=\sum_{\substack{|\beta'|\leq |\beta|,|\omega'|\leq |\omega|\\ |\beta'|+|\omega'|\leq |\beta|+|\omega|-1}}\norm{\frac{\jap{v}}{(1+t)^{1+\delta}}\jap{x-(t+1)v}^4|\der g||\derv{'}{'}{} g|}_{L^1([0,T];L^1_xL^1_v)}.
\end{equation}
\end{lemma}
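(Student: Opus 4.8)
The plan is to run a standard weighted $L^2$ energy estimate on the differentiated equation \eref{diff_boltz}. First I would fix the multi-index with $|\alpha|+|\beta|+|\omega|=10$, multiply \eref{diff_boltz} by $\jap{x-(t+1)v}^4 \der g$, and integrate over $\R^3\times\R^3$. The key observation is that the transport part contributes favorably: since
$$(\part_t + v_i\part_{x_i})\bigl(\jap{x-(t+1)v}^4\bigr) = 0,$$
because $x-(t+1)v$ is constant along the characteristics $\dot x = v$, the term $\int v_i\part_{x_i}(\der g)\,\jap{x-(t+1)v}^4\der g\,\d v\,\d x$ integrates to zero (integrating by parts in $x$ and using that the weight is transported). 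Likewise $\int \part_t(\der g)\,\jap{x-(t+1)v}^4\der g\,\d v\,\d x = \frac12\frac{d}{dt}\norm{\jap{x-(t+1)v}^2\der g}_{L^2_xL^2_v}^2$ up to the weight being time-dependent, but again since the weight is annihilated by the transport operator this is exactly $\frac12\frac{d}{dt}\|\jap{x-(t+1)v}^2\der g\|_{L^2_xL^2_v}^2$. Then I would integrate in $t$ over $[0,T]$, yielding the initial-data term $\norm{\jap{x-v}^2\der g_{\ini}}_{L^2_xL^2_v}^2$ (at $t=0$ the weight is $\jap{x-v}^4$), and take a supremum over $T$ for the $L^\infty_t L^2_{x,v}$ piece.

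Next comes the coercive zeroth-order term. Multiplying the damping term $\frac{\delta d_0}{(1+t)^{1+\delta}}\jap{v}^2\der g$ by $\jap{x-(t+1)v}^4\der g$ and integrating gives
$$\int_0^T \frac{\delta d_0}{(1+t)^{1+\delta}} \int\int \jap{v}^2\jap{x-(t+1)v}^4 |\der g|^2\,\d v\,\d x\,\d t = \delta d_0\, \norm{(1+t)^{-\frac{1+\delta}{2}}\jap{v}\jap{x-(t+1)v}^2\der g}_{L^2([0,T];L^2_xL^2_v)}^2,$$
which is precisely the second quantity on the left-hand side of the claimed inequality (up to the harmless constant $\delta d_0$, absorbed into $\lesssim$ — or one can simply carry it). This is the mechanism by which the slowly-decaying time-dependent Gaussian weight produces a good spacetime norm. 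I would move this term to the left-hand side.

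It then remains to bound the right-hand side contributions: the commutator $[\part_t+v_i\part_{x_i},\der]g$, the weight-commutator $\der(\jap{v}^2 g)-\jap{v}^2\der g$ coming from the damping term, and the collision bilinear sum. For the transport commutator, the only vector fields that fail to commute with $\part_t+v_i\part_{x_i}$ are the $\part_{v_i}$'s: one has $[\part_t+v_i\part_{x_i},\part_{v_j}] = -\part_{x_j}$, so commuting through produces terms with one fewer $\part_v$ and one more $\part_x$; pairing against $\jap{x-(t+1)v}^4\der g$ and using Cauchy–Schwarz (or Young) in $L^1_{t}L^1_{x,v}$ gives exactly $\text{Comm}_1$ as defined in \eref{comm_1} (note $Y_i$ commutes exactly with the transport operator, so it never appears in the commutator). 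For the $\jap{v}^2$-weight commutator, the Leibniz rule expands $\der(\jap{v}^2 g)-\jap{v}^2\der g$ into terms where at least one $\part_v$ or $Y$ hits $\jap{v}^2$ (killing at most two $v$-derivatives but leaving a factor $\jap{v}^{\le 1}$ since $\part_v\jap{v}^2 = 2v$ and $Y$ on $\jap{v}^2$ also produces $\jap{v}^{\le 1}$ times lower-order $g$-factors after accounting for the extra $(1+t)$), and after pairing against $\jap{x-(t+1)v}^4\der g$ and inserting the $\frac{\delta d_0}{(1+t)^{1+\delta}}$ prefactor this is controlled by $\text{Comm}_2$ as in \eref{comm_2}. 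Finally, the collision term is simply left as the bilinear sum $\sum \int_0^T\int\int \Gamma_{\beta''',\omega'''}(\cdots)\jap{x-(t+1)v}^4\der g$, to be estimated in later sections. The main subtlety — and the one place that requires genuine care rather than bookkeeping — is verifying that all the weight commutators genuinely lose only one derivative and at most one power of $\jap{v}$, and that the time-dependent weight $\jap{x-(t+1)v}^4$ is handled correctly (in particular the cross term from $\part_t$ hitting the weight, which vanishes by the transport identity but must be checked with the correct sign); everything else is a routine energy identity.
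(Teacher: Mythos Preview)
Your proposal is correct and follows essentially the same route as the paper: multiply \eref{diff_boltz} by $\jap{x-(t+1)v}^4\der g$, integrate, use that the transport operator annihilates the weight, move the damping term to the left as the $L^2_tL^2_{x,v}$ norm, and bound the two commutators exactly as you describe (the paper likewise computes $[\part_t+v_i\part_{x_i},\der]g$ as a sum over one $\part_v$ traded for one $\part_x$, and expands $\der(\jap{v}^2 g)-\jap{v}^2\der g$ by Leibniz). There is no additional idea in the paper's argument beyond what you have written.
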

\begin{proof}
We multiply \eref{diff_boltz} by $\jap{x-(t+1)v}^{4}\der g$ and then integrate in time, space and velocity. Integrating by parts on the LHS we get,
\begin{align*}
 &\frac{1}{2}\int_{\R^3}\int_{\R^3} \jap{x-(T+1)v}^4(\der g)^2(T_*,x,v)\d v \d x- \frac{1}{2}\int\int \jap{x-v}^4(\der g)^2(0,x,v)\d v \d x \\
&\quad+\int_0^T\int_{\R^3}\int_{\R^3}\jap{x-(t+1)v}^4\frac{\delta d_0\jap{v}^2}{(1+t)^{1+\delta}}(\der g)^2\d v\d x\d t\\
&=\int_0^{T}\int_{\R^3} \int_{\R^3} \jap{x-(t+1)v}^4[\part_t+v_i\part_{x_i},\der]g \der g\d v \d x \d t\\
&\quad+\int_0^{T}\int_{\R^3}\int_{\R^3}\frac{\delta d_0}{(1+t)^{1+\delta}}(\der(\jap{v}^2 g)-\jap{v}^2\der g)\jap{x-(t+1)v}^4\der g\d v\d x\d t\\
&\quad +\smash{\mathlarger{\sum}_{\substack{|\alpha'|+|\alpha''|=|\alpha|\\|\beta'|+|\beta''|+|\beta'''|=|\beta|\\|\omega'|+|\omega''|+|\omega'''|}}}\color{black}{C^{\alpha,\beta,\omega}_{\alpha',\beta',\omega'}}\int_0^{T}\int_{\R^3} \int_{\R^3} \Gamma_{\beta''',\omega'''}(\derv{'}{'}{'} g,\derv{''}{''}{''} g)\jap{x-(t+1)v}^4\\
&\hspace{25em}\times\der g\d v\d x\d t.
\end{align*}
Now $$[\part_t+v_i\part_{x_i},\der]g=\sum_{\substack{|\beta'|+|\beta''|=|\beta|\\|\beta'|=1}}\part_x^{\beta'}\part_v^{\beta''}\part^\alpha_xY^\omega g.$$
Therefore, we can bound this contribution by $\text{Comm}_1$.

Next, the other commutator term arises from $\part_v$ or $(t+1)\part_x+\part_v$ hitting $\jap{v}^2$, this means
\begin{align*}
|\der(\jap{v}^2 g)-\jap{v}^2\der g|&\lesssim \sum_{\substack{|\beta'|\leq|\beta|, |\omega'|\leq |\omega|\\|\beta'|+|\omega'|\leq |\beta|+|\omega|-1}} \jap{v} |\derv{}{'}{'} g|+\sum_{\substack{|\beta'|\leq|\beta|, |\omega'|\leq |\omega|\\|\beta'|+|\omega'|\leq |\beta|+|\omega|-2}} |\derv{}{'}{'} g|.
\end{align*}
Thus, we can bound this contribution by $\text{Comm}_2$.
\end{proof}


\section{Physical decomposition and an alternate definition of \break the collision operator}\label{s.sing}
We first recall the decomposition of the singularity in $\theta$ for the collision operator as in \cite{GrSt11}. Let $\{\chi_k\}_{k=-\infty}^{\infty}$ be a partition of unity on $(0,\infty)$ such that $\norm{\chi_k}_{L^\infty}\leq 1$ and $\text{supp}(\chi_k)\subset [2^{-k-1},2^{-k}].$ For each $k$, we define,
$$B_k:=B_k(|v-v_*|,\sigma):=|v-v_*|^\gamma b\left(\left\langle\frac{v-v_*}{|v-v_*|},\sigma \right\rangle\right)\chi_k(|v-v'|).$$ 
Note that 
\begin{equation}\label{e.v_minus_v'}
|v-v'|^2=\frac{|v-v_*|^2}{2}\left(1-\left\langle\frac{v-v_*}{|v-v_*|},\sigma \right\rangle\right)=|v-v_*|^2\sin^2\frac{\theta}{2}.
\end{equation}
Thus, the condition that $|v-v'|\approx 2^{-k}$ implies that the angle between $\sigma$ and $\frac{v-v_*}{|v-v_*|}$ is comparable to $2^{-k}|v-v_*|^{-1}$.
 
Before we can start estimating the various terms outlined in \sref{set_up} we first recall the machinery developed in \cite{ImbSil16} and \cite{Sil14}. 

We begin by noting that $Q(f,g)$ can be decomposed as $Q_1(f,g)+Q_2(f,g)$, where $Q_1$ contains all the singularity present in $Q$ and 
\begin{align*}
Q_1(f,g)=\int \int f_*'(g'-g)B\d \sigma\d v_*,\\
Q_2(f,g)=(\int \int (f_*'-f_*)B \d \sigma \d v_*)g.
\end{align*}
\begin{lemma}[\cite{Sil14}, Lemma 4.1]\label{l.boltz_kernel}
\textcolor{black}{The term $Q_1$ can be rewritten in the following form}$$Q_1(f,g)=\int_{\R^3}(g'-g)K_f(v,v')\d v'$$
where
$$K_f(v,v')=\frac{2^{2}}{|v'-v|}\int_{w\perp(v'-v)}f(v+w)B(r,\cos \theta)r^{-1}\d w.$$
Note that the integral is supported for $w$ on the hyperplane perpendicular to $v-v'$. Now we have
$$r^2=|v'-v|^2+|w|^2,$$ $$\cos \frac{\theta}{2}=\frac{|w|}{r},$$
$$v'_*=v+w$$
and $$v_*=v'+w.$$
\end{lemma}
\begin{lemma}[\cite{Sil14}, Corollary 4.2]\label{l.boltz_kernel_approx}
$$K_f(v,v')\approx \left(\int_{\{w\cdot(v'-v)=0\}}f(v+w)|w|^{\gamma+2s+1}\d w\right)|v'-v|^{-3-2s}.$$
Here $A\approx B$ means that $A\lesssim B$ and $B\lesssim A$.
\end{lemma}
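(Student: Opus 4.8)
\emph{Proof sketch.} The plan is to evaluate the integral defining $K_f$ in \lref{boltz_kernel} explicitly, reducing the claim to elementary trigonometry together with the pointwise bounds on the angular kernel $b$. Write $\rho:=|v'-v|$ and view the integration variable $w$ as ranging over the two-plane $(v'-v)^{\perp}$, so $r^{2}=\rho^{2}+|w|^{2}$. The first step is to express the deviation angle $\theta$ entering $B(r,\cos\theta)$ through $|w|$ and $\rho$: with $a:=v'-v$ and $a\cdot w=0$,
$$
\cos\theta=\frac{(-a-w)\cdot(a-w)}{|a+w|\,|a-w|}=\frac{|w|^{2}-\rho^{2}}{|w|^{2}+\rho^{2}},
\qquad
\sin\theta=\frac{2\rho|w|}{|w|^{2}+\rho^{2}}=\frac{2\rho|w|}{r^{2}}.
$$
In particular $B(r,\cos\theta)$ is supported on $\{|w|\geq\rho\}$, which is precisely the set on which $\theta\in(0,\pi/2]$, so the modelling assumptions on $b$ are available there.

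Next I insert the bound on $b$. Using $\frac{2}{\pi}\theta\leq\sin\theta\leq\theta$ on $(0,\pi/2]$, the hypothesis $c_{b}\theta^{-1-2s}\leq\sin\theta\,b(\cos\theta)\leq c_{b}^{-1}\theta^{-1-2s}$ gives $b(\cos\theta)\approx(\sin\theta)^{-2-2s}$, hence with $\Phi(r)\approx r^{\gamma}$,
$$
B(r,\cos\theta)=\Phi(r)\,b(\cos\theta)\approx r^{\gamma}\Bigl(\tfrac{2\rho|w|}{r^{2}}\Bigr)^{-2-2s}\approx\frac{r^{\gamma+4+4s}}{(\rho|w|)^{2+2s}}.
$$
Substituting into the formula for $K_f$ from \lref{boltz_kernel} (where the integrand vanishes for $|w|<\rho$) and collecting powers of $\rho$,
$$
K_f(v,v')=\frac{4}{\rho}\int_{w\perp(v'-v),\,|w|\geq\rho}f(v+w)\,B(r,\cos\theta)\,r^{-1}\,\d w
\approx\frac{1}{\rho^{3+2s}}\int_{|w|\geq\rho}f(v+w)\,\frac{r^{\gamma+3+4s}}{|w|^{2+2s}}\,\d w.
$$
Finally, on $\{|w|\geq\rho\}$ one has $|w|\leq r\leq\sqrt{2}\,|w|$, so $r^{\gamma+3+4s}\approx|w|^{\gamma+3+4s}$ with constants depending only on $\gamma,s$; the integrand collapses to $f(v+w)|w|^{\gamma+2s+1}$, giving
$$
K_f(v,v')\approx|v'-v|^{-3-2s}\int_{\{w\cdot(v'-v)=0\}}f(v+w)\,|w|^{\gamma+2s+1}\,\d w,
$$
the integral being understood over the support of the kernel, $|w|\geq|v'-v|$.

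The only steps needing genuine care are the first and the last: getting the trigonometric identity for $\theta$ right, since that is what pins down both the exponent $-3-2s$ and the weight $|w|^{\gamma+2s+1}$, and checking that the two approximations $\sin\theta\approx\theta$ and $r\approx|w|$ hold with uniform constants on $\{|w|\geq\rho\}$ — which they do exactly because that is where $\theta\in(0,\pi/2]$. Everything else is bookkeeping of homogeneities in $\rho$ and $|w|$; no angular integrability is invoked, the estimate being pointwise in $(v,v')$, and $K_f(v,v')$ is finite for the localized $f$ under consideration because $\gamma+2s>0$.
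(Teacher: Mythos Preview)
Your argument is correct and is precisely the computation behind Silvestre's Corollary 4.2: compute $\cos\theta=(|w|^2-\rho^2)/r^2$ on the orthogonal plane, use $b(\cos\theta)\approx(\sin\theta)^{-2-2s}$, and collapse the homogeneities using $r\approx|w|$ on the support $|w|\geq\rho$. The paper does not give its own proof of this lemma---it is quoted directly from \cite{Sil14}---so there is nothing to compare against here. Your observation that the computation naturally produces the integral over $\{|w|\geq|v'-v|\}$ rather than the full hyperplane is accurate; the upper bound extends to the full plane for $f\geq 0$, while for the lower bound the restricted domain is what one actually obtains, and this is all that is used in the subsequent applications (e.g.\ \lref{diff_K_outside_ball}).
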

\begin{lemma}[\cite{ImbSil16},Lemma 3.4 and Lemma 3.5]\label{l.K_bound}
Assume $\gamma+2s\leq 2$. Then for $K_f$ as in \eref{boltz_kernel} and $r>0$ we have the following bounds,
$$\int_{\R^3\backslash B_r(v)}K_f(v,v')\d v'\lesssim r^{-2s}\int_{\R^3}f(z)|z-v|^{\gamma+2s}\d z,$$
and 
$$\int_{\R^3\backslash B_r(v')}K_f(v,v')\d v\lesssim r^{-2s}\int_{\R^3}f(z)|z-v'|^{\gamma+2s}\d z.$$
\end{lemma}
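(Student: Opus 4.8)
The plan is to deduce both bounds from the pointwise equivalence of \lref{boltz_kernel_approx} together with two elementary integral‑geometry identities; there is no genuine analytic difficulty, only careful bookkeeping. First I would replace $K_f(v,v')$ (or $K_{|f|}$, if $f$ changes sign) by the comparable quantity $\big(\int_{\{w\perp(v'-v)\}}f(v+w)|w|^{\gamma+2s+1}\,\d w\big)|v'-v|^{-3-2s}$, so that it suffices to estimate the corresponding integrals. The workhorse is the coarea‑type identity
$$\int_{\S^2}\int_{\omega^\perp}F(w)\,\d w\,\d\omega=2\pi\int_{\R^3}\frac{F(w)}{|w|}\,\d w,$$
which follows from the symmetry in $\omega\leftrightarrow\xi$ of the flag set $\{(\omega,\xi)\in\S^2\times\S^2:\omega\cdot\xi=0\}$ (each fibre being a unit great circle, of length $2\pi$). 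Its effect is to turn a planar integral of $f$ weighted by a power of $|w|$ into the solid integral $\int_{\R^3}f(z)|z-v|^{\gamma+2s}\,\d z$ that appears on the right‑hand side.

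For the first inequality I would introduce polar coordinates $v'-v=\rho\omega$, so that $\{w\perp(v'-v)\}=\omega^\perp$ and $\d v'=\rho^2\,\d\rho\,\d\omega$, perform the radial integral $\int_r^\infty\rho^{-1-2s}\,\d\rho=\tfrac1{2s}r^{-2s}$ (this is the only place $s>0$ is needed for that bound), and apply the identity above to the remaining $\int_{\S^2}\int_{\omega^\perp}$, obtaining $\lesssim r^{-2s}\int_{\R^3}f(z)|z-v|^{\gamma+2s}\,\d z$.

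The second inequality is where I expect to have to be careful, because $K_f$ is \emph{not} symmetric in $(v,v')$: the argument of $f$ is $v+w$, not $v'+w$. Holding $v'$ fixed, I would write $v=v'+\rho\omega$ with $\rho=|v-v'|>r$, $\omega\in\S^2$, $w\in\omega^\perp$, and set $y:=v+w=v'+\rho\omega+w$. Since $(\rho,w)\mapsto\rho\omega+w$ is just the orthogonal splitting $\R^3=\R\omega\oplus\omega^\perp$, one has $\rho=(y-v')\cdot\omega$, $|w|^2=|y-v'|^2-((y-v')\cdot\omega)^2$, and $\d\rho\,\d w=\d y$; exchanging the order of integration (Tonelli) leaves
$$\int_{\R^3}f(y)\Bigg(\int_{\{\omega\in\S^2:\,(y-v')\cdot\omega>r\}}\big((y-v')\cdot\omega\big)^{-1-2s}\big(|y-v'|^2-((y-v')\cdot\omega)^2\big)^{\frac{\gamma+2s+1}{2}}\,\d\omega\Bigg)\d y.$$
Passing to spherical coordinates for $\omega$ about $(y-v')/|y-v'|$ and substituting $c=\cos\theta$, the inner integral becomes $2\pi|y-v'|^{\gamma}\int_{r/|y-v'|}^{1}c^{-1-2s}(1-c^2)^{\frac{\gamma+2s+1}{2}}\,\d c$, which is $\lesssim |y-v'|^{\gamma}\,(r/|y-v'|)^{-2s}=r^{-2s}|y-v'|^{\gamma+2s}$: the singular contribution comes from $c\approx r/|y-v'|$ and is controlled using $s>0$, while the contribution near $c=1$ is $O(1)$ since $\gamma+2s+1>-1$. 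Inserting this and integrating against $f$ gives the claim.

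The only subtlety, then, is the non‑symmetric change of variables in the second bound and the verification that the standing assumptions $s\in(0,1)$ and the moderately soft range $\gamma+2s\in(0,2]$ (the upper bound entering through the equivalence \lref{boltz_kernel_approx}) make each integral above finite; once the geometry is set up, everything reduces to one‑dimensional power integrals.
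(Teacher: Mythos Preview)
The paper does not prove this lemma; it is quoted from \cite{ImbSil16} (Lemmas~3.4 and~3.5) without proof. Your argument is correct and is precisely the standard one: it matches the proof in \cite{ImbSil16} and also the computation the present paper carries out in detail for the closely related \lref{diff_K_outside_ball} (replace $K_f$ via \lref{boltz_kernel_approx}, pass to polar coordinates in $v'-v$ or $v-v'$, and apply the coarea identities of \lref{change_of_var_2}). The only difference is cosmetic: for the second bound you evaluate the spherical integral directly rather than invoke the second identity of \lref{change_of_var_2}, but the two are the same calculation; also, near $c=1$ the exponent $(\gamma+2s+1)/2$ is actually positive under the standing hypothesis $\gamma+2s>0$, so $(1-c^2)^{(\gamma+2s+1)/2}\le 1$ and no integrability check is needed there.
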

\begin{remark}
Instead of the first inequality, Silvestre proves $$\int_{B_{2r}(v)\backslash B_r(v)}K_f(v,v')\d v'\lesssim r^{-2s}\int_{\R^3}f(z)|z-v|^{\gamma+2s}\d z$$
in \cite{Sil14} (see Corollary 4.4). But from the proof it is clear that our version of the inequality holds too.
\end{remark}
\begin{lemma}[\cite{ImbSil16},Lemma 3.7]\label{l.subt_can}
The two following property holds true for any $R>r>0$,
$$ \int_{B_R(v)\backslash B_r(v)}(v'-v)K_f(v,v')\d v'=0.$$
\end{lemma}
\begin{proof}
The lemma in \cite{ImbSil16} is slightly different and it proves that 
$$ \text{PV}\int_{B_R(v)}(v'-v)K_f(v,v')\d v'=0.$$ 
But we claim that the proof for the lemma at hand is essentially the same. Indeed, we first use the change of variables $v'\to v+\tilde v$. This gives us,
$$\int_{B_R(v)\backslash B_r(v)}(v'-v)K_f(v,v')\d v'=\int_{B_R(0)\backslash B_r(0)}\tilde v K_f(v,v+\tilde v)\d {\tilde v}.$$

Next using polar coordinates and the symmetry property of $K_f$ which asserts that \break $K_f(v,v+\tilde v)=K(v,v-\tilde v)$, we get
\begin{align*}
\int_{B_R(0)\backslash B_r(0)}\tilde v K_f(v,v+\tilde v)\d {\tilde v}&=\int_{r}^R \int_{\part B_z} zw K_f(v,v+zw)\d w\d z \\
&=0.
\end{align*}

We used the symmetry property of $K_f$ to assert that the integral over the surface of the sphere $B_z$ is $0$ for any $z$. 
\end{proof}
\begin{lemma}[Pre-post collision change of variables, \cite{Vil02} pg. 53]\label{l.pre_post}
The change of variables\\
$(v,v_*,\sigma)\to (v',v_*',k)$ has a unit Jacobian, where $k=\frac{v-v_*}{|v-v_*|}$. Moreover, for any function $F(v,v_*,v',v'_*)$, we have 
\begin{align*}
&\int_{\R^3}\int_{\R^3}\int_{\S^2}F(v,v_*,v',v_*')B(|v-v_*|,\sigma)\d v\d v_*\d \sigma\\
&\qquad=\int_{\R^3}\int_{\R^3}\int_{\S^2}F(v',v'_*,v,v_*)B(|v-v_*|,\sigma)\d v\d v_*\d \sigma.
\end{align*}
\end{lemma}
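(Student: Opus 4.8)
The plan is to exhibit the collisional change of variables $\Psi:(v,v_*,\sigma)\mapsto(v',v_*',k)$, with $k:=(v-v_*)/|v-v_*|$ and $v',v_*'$ as in \eref{col_var}, as a measure-preserving involution of $\R^3\times\R^3\times\S^2$ off the null set $\{v=v_*\}$, and then to read off the asserted identity by substituting $(v,v_*,\sigma)=\Psi(v,v_*,\sigma)$ in the left-hand integral. Thus the work splits into three steps: the unit Jacobian, the involution property $\Psi\circ\Psi=\mathrm{id}$, and the $\Psi$-invariance of the kernel $B$; the integral identity then follows formally.

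For the Jacobian, I would factor $\Psi$ through center-of-mass and relative velocity variables $m=(v+v_*)/2$, $u=v-v_*$. The linear change $(v,v_*)\mapsto(m,u)$ has Jacobian of modulus $1$ (an elementary $3\times3$ block-determinant computation), and by \eref{col_var} one has $(v'+v_*')/2=m$ and $v'-v_*'=|u|\,\sigma$, so in these variables $\Psi$ becomes $(m,u,\sigma)\mapsto(m,\,|u|\sigma,\,u/|u|)$. Passing $u$ to spherical coordinates $u=\rho k$ and setting $u':=|u|\sigma=\rho\sigma$, one checks $\d u\,\d\sigma=\rho^2\,\d\rho\,\d k\,\d\sigma=\d u'\,\d k$; since the center of mass is unchanged, this yields $\d v\,\d v_*\,\d\sigma=\d v'\,\d v_*'\,\d k$. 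For the involution property, I would feed the pair $(v',v_*')$ back into \eref{col_var} with deflection parameter $k$: the output is $(v'+v_*')/2\pm(|v'-v_*'|/2)k=m\pm u/2$, namely $v$ and $v_*$, while the associated relative-velocity direction is $(v'-v_*')/|v'-v_*'|=\sigma$, so indeed $\Psi\circ\Psi=\mathrm{id}$. Along the way one records the two invariances $|v-v_*|=|v'-v_*'|$ and $\langle k,\sigma\rangle=\langle (v'-v_*')/|v'-v_*'|,(v-v_*)/|v-v_*|\rangle$; since the modelling assumption gives $B(|v-v_*|,\sigma)=\Phi(|v-v_*|)b(\langle k,\sigma\rangle)$, it follows that $B(|v-v_*|,\sigma)=B(|v'-v_*'|,k)$.

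With these three facts in hand, I would set $G(v,v_*,\sigma):=F(v,v_*,v',v_*')\,B(|v-v_*|,\sigma)$ (with $v',v_*'$ from \eref{col_var}) and substitute $(v,v_*,\sigma)\mapsto\Psi(v,v_*,\sigma)$ in $\int G$. Because $\Psi$ is a measure-preserving involution, $\int G=\int G\circ\Psi$, and evaluating $G\circ\Psi$ shows that the first two arguments of $F$ become $v',v_*'$, the post-collisional arguments collapse (by the involution) to the original $v,v_*$, and the weight becomes $B(|v'-v_*'|,k)=B(|v-v_*|,\sigma)$; hence $G\circ\Psi=F(v',v_*',v,v_*)\,B(|v-v_*|,\sigma)$, which is exactly the right-hand integrand. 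The only place that needs genuine care is the bookkeeping in this last evaluation — confirming that the post-collisional velocities of the image point $\Psi(v,v_*,\sigma)$ really are the original $(v,v_*)$ — together with the block-Jacobian computation; neither is a real obstacle, which is precisely why this statement is quoted as classical from \cite{Vil02}.
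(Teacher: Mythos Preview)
Your sketch is correct and is the standard argument for this classical fact. Note, however, that the paper does not prove this lemma at all: it is simply quoted from \cite{Vil02} with no proof provided, so there is nothing to compare against. Your center-of-mass/relative-velocity factorization, the spherical-coordinates computation $\d u\,\d\sigma=\rho^2\,\d\rho\,\d k\,\d\sigma=\d u'\,\d k$, the involution check, and the invariance of $B$ under $\Psi$ are exactly the ingredients of the classical proof and are assembled in the right order.

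One small remark: the formulas \eref{col_var} as printed in the paper contain an obvious typo (they read $v'=\frac{v-v_*}{2}+\frac{|v-v_*|}{2}\sigma$ etc., which would violate momentum conservation); you have silently used the correct versions $v'=\frac{v+v_*}{2}+\frac{|v-v_*|}{2}\sigma$, $v_*'=\frac{v+v_*}{2}-\frac{|v-v_*|}{2}\sigma$, which is the right thing to do.
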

\begin{lemma}[\cite{Sil14}, Lemma A.1]\label{l.change_of_variables_sil}
For any integrable function $F$ (in terms of $v_*$, $v$, $r=|v-v_*|$, $v'$ and/or $v_*'$)
$$\int_{\R^3} \int_{\mathbb{S}^{2}}F \d \sigma \d v_*=2^{2}\int_{\R^N} \frac{1}{|v-v'|}\int_{\{w:w\cdot v'=0\}} \frac{F}{r}\d w\d v'.$$
In the right side, we must write $v_*=v'+w$ and write the values of $r$, $\theta$ and $v_*'$ accordingly.
\end{lemma}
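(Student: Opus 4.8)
The identity is a change-of-variables formula — essentially the Carleman parametrization of the collision operator — so the plan is to exhibit an explicit diffeomorphism between the two domains of integration, compute its Jacobian, and invoke the multivariable change-of-variables theorem. By translation invariance of Lebesgue measure in $v_*$ we may assume $v=0$ throughout, so that $v'-v$ becomes $v'$, the constraint $w\perp(v'-v)$ becomes $w\cdot v'=0$, and $v$ is restored at the end. With the collisional formula $v'=\tfrac12 v_*+\tfrac12|v_*|\sigma$ set $w:=v_*-v'$; momentum conservation then gives $v_*'=w$, and one-line computations give the two structural identities $w\cdot v'=0$ and $|v_*|^2=|v'|^2+|w|^2$. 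Conversely, given $(v',w)$ with $w\cdot v'=0$ we recover $v_*=v'+w$, $r=|v_*|=\sqrt{|v'|^2+|w|^2}$ and $\sigma=(v'-w)/r$ (with $|\sigma|=1$ since $w\perp v'$), so $(v_*,\sigma)\leftrightarrow(v',w)$ is a bijection between $\R^3\times\mathbb{S}^2$ and $\{(v',w):w\cdot v'=0\}$ up to the Lebesgue-null sets $\{v'=0\}$ and (below) the relevant equators.

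I would compute the Jacobian in two stages after first passing to spherical coordinates $v_*=\rho\,\omega$, $\rho=r$, $\omega\in\mathbb{S}^2$, so that $\d{v_*}\,\d\sigma=\rho^2\,\d\rho\,\d\omega\,\d\sigma$ with $v'=\tfrac{\rho}{2}(\omega+\sigma)$ and $w=\tfrac{\rho}{2}(\omega-\sigma)$. \emph{Stage one:} for fixed $\omega$, the map $(\rho,\sigma)\mapsto v'$ has differential with columns $\tfrac12(\omega+\sigma)$ and $\tfrac{\rho}{2}\tau_1,\tfrac{\rho}{2}\tau_2$ for an orthonormal frame $\tau_1,\tau_2$ of $T_\sigma\mathbb{S}^2$, so its Jacobian determinant is $\tfrac{\rho^2}{8}\,\bigl|(\omega+\sigma)\cdot(\tau_1\times\tau_2)\bigr|=\tfrac{\rho^2}{8}(1+\omega\cdot\sigma)$; using $|v'|^2=\tfrac{\rho^2}{2}(1+\omega\cdot\sigma)$ this equals $\tfrac{|v'|^2}{4}$, so $\d\rho\,\d\sigma=\tfrac{4}{|v'|^2}\,\d{v'}$ and, for fixed $\omega$, $v'$ ranges over the half-space $\{v'\cdot\omega>0\}$. \emph{Stage two:} for fixed $v'$, the map $w\mapsto\omega=(v'+w)/|v'+w|$ from the $2$-plane $\{w\cdot v'=0\}$ onto the hemisphere $\{\omega\cdot v'>0\}$ is the radial projection onto $\mathbb{S}^2$ of the affine plane $\{v'+w:w\cdot v'=0\}$ (which is tangent to the sphere $|y|=|v'|$ at $y=v'$), so the solid-angle formula gives $\d\omega=\tfrac{|y\cdot n|}{|y|^3}\,\d w$ with unit normal $n=v'/|v'|$ and $|y|=\rho$; since $y\cdot n=(v'+w)\cdot v'/|v'|=|v'|$ this is $\d\omega=\tfrac{|v'|}{\rho^3}\,\d w$.

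Combining the two stages (together with a Fubini swap reorganizing the $\omega$- and $v'$-integrations so that $v'$ is outermost and the corresponding $w$ sweeps out the full plane $\{w\cdot v'=0\}$) yields
\[
\d{v_*}\,\d\sigma \;=\; \rho^2\,\d\rho\,\d\omega\,\d\sigma \;=\; \rho^2\cdot\frac{4}{|v'|^2}\cdot\frac{|v'|}{\rho^3}\,\d{v'}\,\d w \;=\; \frac{4}{|v'|\,\rho}\,\d{v'}\,\d w,
\]
which after restoring $v$ and writing $r=\rho$, $|v-v'|=|v'|$ is exactly the asserted factor $2^2/(|v-v'|\,r)$. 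Substituting $v_*=v'+w$ into $F$ (and accordingly $v_*'=v+w$, and the values of $r$ and $\theta$) and applying the change-of-variables theorem then gives the identity.

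I expect the main obstacle to be bookkeeping rather than conceptual: getting both Jacobian factors right — in particular Stage two, where it is easy to drop the obliquity factor $|v'|/\rho$ in the radial projection — and cleanly checking that the half-space (resp.\ hemisphere) restrictions from the two stages glue to the full plane $\{w\cdot v'=0\}$ after the Fubini swap. There is also the routine measure-theoretic point that the identity should be proved first for $F\ge0$ (by Tonelli, so all manipulations of the iterated integrals are legitimate) and then extended to integrable $F$ by splitting into positive and negative parts.
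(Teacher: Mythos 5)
The paper does not actually prove this lemma: it is cited verbatim from Silvestre \cite{Sil14} (Lemma~A.1), with only the remark that Silvestre's proof for nonnegative $F$ extends to integrable $F$. Your proposal supplies a self-contained proof, so there is no in-paper argument to compare against; instead I have checked your computation directly, and it is correct. You correctly read the displayed formula modulo its evident typos (the plane should be $\{w : w\perp (v'-v)\}$ rather than $\{w : w\cdot v'=0\}$, and $\R^N$ should be $\R^3$), and normalize by translation to $v=0$ where the two coincide. The two structural identities $w\perp v'$ and $r^2=|v'|^2+|w|^2$, the Stage-one Jacobian $\d\rho\,\d\sigma=\tfrac{4}{|v'|^2}\d v'$ obtained from $\det=\tfrac{\rho^2}{8}(1+\omega\cdot\sigma)=\tfrac{|v'|^2}{4}$, and the Stage-two radial-projection factor $\d\omega=\tfrac{|v'|}{\rho^3}\d w$ all check out; combining them gives $\d v_*\,\d\sigma=\tfrac{4}{|v'|\rho}\d v'\,\d w$, which after restoring $v$ and writing $\rho=r$ is precisely the asserted kernel $\tfrac{2^2}{|v-v'|\,r}$. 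The Tonelli/Fubini bookkeeping (that for each fixed $v'$ the hemisphere $\{\omega\cdot v'>0\}$ sweeps out the full plane $\{w\perp v'\}$, and the swap is justified first for $F\ge0$) is also correctly handled. This is essentially the standard proof of the Carleman representation, and it is the expected route.

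Two very minor points, neither a gap. First, in Stage one you should fix a sign convention for the orthonormal frame $\tau_1,\tau_2$ of $T_\sigma\S^2$ so that $\tau_1\times\tau_2=+\sigma$; with that convention $(\omega+\sigma)\cdot(\tau_1\times\tau_2)=1+\omega\cdot\sigma\ge0$, so the absolute value is harmless, but it is worth saying so explicitly since $1+\omega\cdot\sigma$ can vanish on the measure-zero set $\sigma=-\omega$ (which is also where $v'=0$, already excised). Second, it would be slightly cleaner to state explicitly that the composite map $(v_*,\sigma)\mapsto(v',w)$, $w=v_*-v'$, is a global bijection from $(\R^3\setminus\{0\})\times\S^2$ onto $\{(v',w):v'\ne 0,\ w\perp v'\}$, with inverse $v_*=v'+w$, $\sigma=(v'-w)/r$; your verification that $|\sigma|=1$ already contains this, but stating the bijectivity up front makes the two-stage Jacobian argument read as a single change of variables rather than two partial ones glued by Fubini.
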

\begin{remark}
In \cite{Sil14} Silvestre states and proves the above lemma for a non-negative function but the proof is the same for any integrable function.
\end{remark}
\begin{lemma}[\cite{ImbSil16}, Lemma A.10]\label{l.change_of_var_2}
Let $F:\R^3\to \R$ be any integrable function and $\mathfrak{n}\in \R$. Then the following identities hold,
$$\int_{\part B_r}\int_{\{w:w\perp \sigma\}}F(w)\d w\d\sigma=2\pi r^{2}\int_{\R^3}\frac{F(z)}{|z|}\d z,$$
$$\int_{\part B_r}\int_{\{w:w\perp \sigma\}}|w|^{\mathfrak{n}}F(\sigma+w)\d w\d\sigma=2\pi r^{2}\int_{\R^3\backslash B_r}F(z)\frac{(z^2-r^2)^{\frac{\mathfrak{n}}{2}}}{|z|}\d z,$$
and 
$$\int_{\part B_r}\int_{\{w:w\perp \sigma\}}\color{black}{\sigma F(\sigma+w)\d w\ d\sigma}=2\pi r^{4}\int_{\R^3\backslash B_r}zF(z)\frac{1}{|z|^3}\d z.$$
Note that the integrals on the left hand side are on spheres and hyperplanes, thus $\d w$ and $\d \sigma$ stand for differential of surface.
\end{lemma}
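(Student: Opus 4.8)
The plan is to derive all three identities from a single computation in polar coordinates on the $2$-plane $\{w\perp\sigma\}$, together with the fact that a rotation-invariant scalar (resp.\ vector) measure on a Euclidean sphere is determined by its total mass (resp.\ total first moment).

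First I would fix $\sigma$ and write points of the hyperplane $\sigma^\perp$ in polar form $w=\rho e$ with $\rho>0$ and $e$ a unit vector of $\sigma^\perp$, so that $\d{w}=\rho\,\d{\rho}\,\d{\mathcal H^1(e)}$ (arc length on the unit circle of $\sigma^\perp$). For each fixed $\rho$ this turns each left-hand side into an integral over $M_r:=\{(\sigma,e):|\sigma|=r,\ |e|=1,\ e\perp\sigma\}$ carried by the measure $\nu:=\d{\mathcal H^1(e)}\,\d{\mathcal H^2(\sigma)}$; this $\nu$ is $SO(3)$-invariant with total mass $\nu(M_r)=2\pi\cdot 4\pi r^2=8\pi^2 r^2$, and one has the auxiliary identity $\int_{M_r}\Phi(e)\,\d{\nu}=2\pi r^2\int_{\S^2}\Phi\,\d{\mathcal H^2}$ whenever $\Phi$ depends only on $e$ (proved by interchanging the two orderings of the $SO(3)$-invariant ``flag'' measure $\d{\mathcal H^2(\hat\sigma)}\,\d{\mathcal H^1(e)}$ and counting mass).

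For the first identity the reduced integrand is $F(\rho e)$, which depends only on $e$, so the auxiliary identity gives $2\pi r^2\int_0^\infty\int_{\S^2}F(\rho e)\,\rho\,\d{\mathcal H^2(e)}\,\d{\rho}$, and recognizing $\rho\,\d{\rho}\,\d{\mathcal H^2(e)}=|z|^{-1}\,\d{z}$ in spherical coordinates on $\R^3$ yields the right-hand side. For the second and third identities, for fixed $\rho$ consider $\Theta\colon M_r\to\part B_R$, $\Theta(\sigma,e)=\sigma+\rho e$, where $R:=\sqrt{r^2+\rho^2}$ (note $|\sigma+\rho e|^2=r^2+\rho^2$ by orthogonality); $\Theta$ is onto with circular fibres and $SO(3)$-equivariant, so $\Theta_*\nu$ is a rotation-invariant measure on $\part B_R$, hence $\Theta_*\nu=\tfrac{\nu(M_r)}{4\pi R^2}\,\mathcal H^2=\tfrac{2\pi r^2}{R^2}\,\mathcal H^2$. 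Thus $\int_{M_r}F(\Theta)\,\d{\nu}=\tfrac{2\pi r^2}{R^2}\int_{\part B_R}F\,\d{\mathcal H^2}$; multiplying by $\rho$, integrating in $\rho$, and substituting $R^2=r^2+\rho^2$ (so $\rho\,\d{\rho}=R\,\d{R}$ and $R$ runs over $(r,\infty)$) collapses the outcome to $2\pi r^2\int_r^\infty R^{-1}\big(\int_{\part B_R}F\,\d{\mathcal H^2}\big)\,\d{R}=2\pi r^2\int_{\R^3\setminus B_r}F(z)|z|^{-1}\,\d{z}$, i.e.\ the second identity. For the third identity (the vector factor written $\omega$ there being $\sigma$) I would instead push forward the vector-valued measure $\sigma\,\nu$: equivariance forces $\Theta_*(\sigma\,\nu)=\lambda\,\hat z\,\d{\mathcal H^2}$ for a scalar $\lambda$, since the only $SO(3)$-equivariant vector fields on $\part B_R$ are the radial ones, and pairing with $z$ while using $\sigma\cdot\Theta(\sigma,e)=|\sigma|^2=r^2$ gives $4\pi\lambda R^3=r^2\,\nu(M_r)=8\pi^2 r^4$, so $\lambda=2\pi r^4 R^{-3}$. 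Hence $\int_{M_r}\sigma\,F(\Theta)\,\d{\nu}=\tfrac{2\pi r^4}{R^4}\int_{\part B_R}zF(z)\,\d{\mathcal H^2}$, and the same substitution $R^2=r^2+\rho^2$ now produces $2\pi r^4\int_r^\infty R^{-3}\big(\int_{\part B_R}zF\,\d{\mathcal H^2}\big)\,\d{R}=2\pi r^4\int_{\R^3\setminus B_r}zF(z)|z|^{-3}\,\d{z}$.

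I expect the only delicate points to be the two invariant-measure steps: identifying $\Theta_*\nu$ with a multiple of $\mathcal H^2$ on $\part B_R$ and $\Theta_*(\sigma\,\nu)$ with a multiple of the radial field, and pinning down the constants. I would handle these via transitivity of the $SO(3)$-action on the sphere plus the elementary mass and first-moment computations above, which is far cleaner than computing directly the coarea Jacobian of the $4$-dimensional map $(\sigma,w)\mapsto\sigma+w$ on the bundle $M_r$. The measurability hypotheses and all Fubini--Tonelli interchanges are routine: one first checks the identities for $F\ge 0$ by Tonelli and then extends to general integrable $F$, all integrals being absolutely convergent, so the restriction of the radial variable to $(r,\infty)$ in the last two cases causes no trouble.
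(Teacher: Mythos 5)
The paper does not prove this lemma: it is cited from \cite{ImbSil16} (their Lemma~A.10) and used as a black box, so there is no internal argument to compare against. Your blind derivation is correct, including the implicit reading of the symbol $\omega$ in the third identity as $\sigma$ --- which it must be for the stated right-hand side to hold, and which matches the original source (the $\omega$ is a typo in the present paper). Checking the details: the ``flag-measure swap'' $\int_{M_r}\Phi(e)\,\d\nu = 2\pi r^2\int_{\S^2}\Phi\,\d\mathcal{H}^2$ is legitimate because $M_r$ is a homogeneous $SO(3)$-space, so the two fibre-product Hausdorff measures $\d\mathcal{H}^1(e)\,\d\mathcal{H}^2(\sigma)$ and $\d\mathcal{H}^1(\sigma)\,\d\mathcal{H}^2(e)$ are both invariant and hence proportional, with ratio pinned to $r$ by total mass ($8\pi^2r^2$ versus $8\pi^2 r$); the pushforward $\Theta_*\nu = \tfrac{2\pi r^2}{R^2}\mathcal{H}^2\llcorner\partial B_R$ and the vector version $\Theta_*(\sigma\,\nu)=\tfrac{2\pi r^4}{R^3}\hat z\,\d\mathcal{H}^2$ (pairing against $z$ and using $\sigma\cdot(\sigma+\rho e)=r^2$) are correct; and the substitution $R^2=r^2+\rho^2$ produces the stated expressions. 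Your approach --- exploiting $SO(3)$-equivariance plus uniqueness of invariant scalar and radial vector measures on the sphere --- replaces the more explicit change-of-variables / Jacobian computation one would typically perform for these coarea-type identities (as in the Silvestre and Imbert--Silvestre references), and is arguably cleaner because it reduces the constant determination to two elementary mass and first-moment counts rather than a direct $4$-dimensional coarea factor. The Tonelli-first, then $L^1$ extension remark at the end correctly handles the measurability and interchange issues.
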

\begin{remark}
In \cite{ImbSil16} the authors state the change of variables for $\mathfrak{n}=0$. The weight $|w|$ only provides the extra weight factor $(z^2-r^2)^{\frac{1}{2}}$. In fact, the authors in \cite{ImbSil16} actually use the above result with $\mathfrak{n}=\gamma+2s+1$ in Lemma 3.5. 
\end{remark}
\begin{lemma}[\cite{GrSt11}, Proposition A.1]\label{l.change_of_var_GrSt}
For $$\tilde C(v_*)=\int_{\R^n}\int_{\S^{n-1}}\Phi(|v-v_*|)b(\jap{k,\sigma})H(v,v_*,v',v_*')\d \sigma\d v$$ we have $$\tilde C(v_*)=2^{2}\int_{\R^3}\int_{E^{v'}_{v_*}} \frac{\Phi(|v-v_*|)}{|v'-v_*|}\frac{b\left(\left\langle\frac{v-v_*}{|v-v_*|},\frac{2v'-v-v_*}{|2v'-v-v_*|}\right\rangle\right)}{|v-v_*|}H\d {\pi_v}\d v'.$$
Above $H=H(v,v_*,v',v+v_*-v')$ and $E_{v_*}^{v'}$ is the hyperplane $$E_{v_*}^{v'}:=\{v\in\R^n:\jap{v_*-v',v-v'}=0\}.$$
Further, $\d \pi_v$ denotes the Lebesgue measure on this hyperplane.
\end{lemma}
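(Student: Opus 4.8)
The claimed identity is the classical Carleman representation of the gain term, and the plan is to prove it by a two-stage change of variables, keeping $v_*$ fixed throughout. \textbf{Stage 1: eliminate $\sigma$.} By \eref{col_var}, $v'=\tfrac{v+v_*}{2}+\tfrac{|v-v_*|}{2}\sigma$, so for each fixed $v$ the map $\sigma\mapsto v'$ carries $\S^{2}$ onto the sphere $\Sigma_v$ of radius $\tfrac{|v-v_*|}{2}$ about $\tfrac{v+v_*}{2}$ (equivalently, the sphere with diameter $[v,v_*]$) by a dilation-plus-translation, rescaling surface measure by the constant $\big(\tfrac{|v-v_*|}{2}\big)^{2}$; moreover on $\Sigma_v$ one has $\sigma=\tfrac{2v'-v-v_*}{|2v'-v-v_*|}$ (the denominator equals $|v-v_*|$ there) and $v_*'=v+v_*-v'$. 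Hence $\tilde C(v_*)$ becomes an integral over $v\in\R^3$ and $v'\in\Sigma_v$ of $\tfrac{2^2}{|v-v_*|^2}\,\Phi(|v-v_*|)\,b\big(\big\langle\tfrac{v-v_*}{|v-v_*|},\tfrac{2v'-v-v_*}{|2v'-v-v_*|}\big\rangle\big)\,H$ against surface measure $\d S_v(v')$, with $H=H(v,v_*,v',v+v_*-v')$.

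\textbf{Stage 2: swap $v$ and $v'$.} A short computation (expanding a square) shows $v'\in\Sigma_v\iff g(v,v'):=\langle v-v',v_*-v'\rangle=0\iff v\in E_{v_*}^{v'}$, so the pair $(v,v')$ runs over the hypersurface $\mathcal M=\{g=0\}\subset\R^3\times\R^3$, and I must re-slice $\mathcal M$ by $v'$ instead of by $v$. I would do this by applying the coarea formula to $g$ along the two coordinate projections: along the projection onto the $v$-factor the fibres are the spheres $\Sigma_v$, on which $|\nabla_{v'}g|=|2v'-v-v_*|=|v-v_*|$ is constant; along the projection onto the $v'$-factor the fibres are the hyperplanes $E_{v_*}^{v'}$, on which $|\nabla_{v}g|=|v_*-v'|$ is constant. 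Equating the two resulting iterated integrals — i.e.\ Fubini for the singular measure $\delta(g)$ — yields, for any integrable $\psi$ on $\mathcal M$,
\[
\int_{\R^3}\Big(\int_{\Sigma_v}\psi\,\d S_v(v')\Big)\,\d v=\int_{\R^3}\frac{1}{|v_*-v'|}\Big(\int_{E_{v_*}^{v'}}|v-v_*|\,\psi\,\d\pi_v\Big)\,\d v'.
\]

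Plugging the Stage 1 integrand (the choice $\psi=\tfrac{2^2}{|v-v_*|^2}\Phi(|v-v_*|)\,b(\cdots)\,H$) into this identity, the scalar factors combine to $\tfrac{1}{|v_*-v'|}\cdot|v-v_*|\cdot\tfrac{2^2}{|v-v_*|^2}=\tfrac{2^2}{|v_*-v'|\,|v-v_*|}$ while $\Phi$, $b$ and $H$ are carried along verbatim, producing exactly the asserted identity
\[
\tilde C(v_*)=2^2\int_{\R^3}\int_{E_{v_*}^{v'}}\frac{\Phi(|v-v_*|)}{|v'-v_*|}\,\frac{b(\cdots)}{|v-v_*|}\,H\,\d\pi_v\,\d v',
\]
the constant $2^2$ being the $n=3$ value of the similarity factor $(|v-v_*|/2)^{\,n-1}$ from Stage 1, consistent with the dimension in the statement.

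\textbf{Where the work is.} Stage 1 and all the gradient computations are elementary; I expect the only point requiring care to be the interchange of the two iterated singular integrals in Stage 2. I would make this rigorous in the standard way: mollify $\delta$, use that $0$ is a regular value of $g$ off the Lebesgue-null set $\{v=v_*\}\cup\{v'=v_*\}$ (where $\Sigma_v$ degenerates to a point, respectively $\nabla_v g$ vanishes), apply the coarea formula slice-by-slice, and pass to the limit using the integrability implicit in ``$\tilde C(v_*)$ is well defined'' — equivalently, observing that both disintegrations compute the same surface measure on $\mathcal M$. This is precisely the classical Carleman change of coordinates; it is parallel to Silvestre's \lref{change_of_variables_sil} with the roles of $v$ and $v_*$ interchanged, so one could alternatively read the statement off from that lemma (after relabelling and using the symmetrization \eref{sym_b} of $b$), but the direct coarea argument above seems the cleanest route.
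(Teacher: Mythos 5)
The paper cites this as Proposition~A.1 of \cite{GrSt11} and does not reproduce a proof, so there is no in-paper argument to compare against; your derivation is correct and is precisely the classical Carleman change of coordinates that underlies the cited result. Both stages check out: the substitution $\sigma\mapsto v'=\tfrac{v+v_*}{2}+\tfrac{|v-v_*|}{2}\sigma$ sends $\S^2$ to the sphere $\Sigma_v$ of diameter $[v,v_*]$ with $\d\sigma=\tfrac{2^2}{|v-v_*|^2}\d S_v(v')$, and on $\Sigma_v$ one has $\sigma=\tfrac{2v'-v-v_*}{|v-v_*|}$ and $v_*'=v+v_*-v'$; the characterization $v'\in\Sigma_v\iff\langle v-v',v_*-v'\rangle=0\iff v\in E^{v'}_{v_*}$ is exact; and the coarea re-slicing along the $v'$-projection with $|\nabla_{v'}g|=|2v'-v-v_*|=|v-v_*|$ on $\Sigma_v$ and $|\nabla_v g|=|v'-v_*|$ turns the prefactor $\tfrac{2^2}{|v-v_*|^2}$ into $\tfrac{2^2}{|v'-v_*|\,|v-v_*|}$, which is the claimed formula. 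One small point worth noting: your derivation implicitly corrects the slightly garbled differential $\d v\,\d\pi_v'$ in the paper's statement --- as your slicing makes explicit, the iterated measure is $\d\pi_v\,\d v'$, with $v'$ ranging over $\R^3$ and $v$ over the plane $E^{v'}_{v_*}$.
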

\begin{lemma}[\cite{GrSt11}, Proposition 3.2]\label{l.bound_on_B_tilde}
For $$\tilde B_k:=2^2 \frac{|v-v_*|^\gamma b\left(\left\langle\frac{v-v_*}{|v-v_*|},\frac{2v'-v-v_*}{|2v'-v-v_*|}\right\rangle\right)\chi_k(|v-v'|)}{|v-v_*||v'-v_*|},$$ we have the following bound $$\int_{E^{v'}_{v_*}} \tilde B_k\d \pi_v\lesssim 2^{2sk}|v'-v_*|^{\gamma+2s},$$
where $E_{v_*}^{v'}$ is the hyperplane $$E_{v_*}^{v'}:=\{v\in\R^n:\jap{v_*-v',v-v'}=0\}.$$
\end{lemma}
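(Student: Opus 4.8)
The plan is to turn the integral over the hyperplane $E^{v'}_{v_*}$ (from \lref{change_of_var_GrSt}) into an elementary two-dimensional integral by making the collision geometry explicit; since this is \cite{GrSt11}, Proposition~3.2, one could alternatively just cite it. Fix $v_*$ and $v'$, write $\rho:=|v'-v_*|$, let $e:=(v'-v_*)/\rho$, and parametrize $v\in E^{v'}_{v_*}$ as $v=v'+w$ with $w\perp e$, so that $\d\pi_v=\d w$ is Lebesgue measure on the $2$-plane $e^\perp$. First I would record the identities
\[
|v-v'|=|w|,\qquad |v-v_*|^2=\rho^2+|w|^2=:r^2,\qquad \sin\tfrac\theta2=\frac{|v-v'|}{|v-v_*|}=\frac{|w|}{r},
\]
the middle one because $w\perp(v'-v_*)$ and the last one being the standard relation $|v-v'|=|v-v_*|\sin(\theta/2)$ used in the singularity decomposition; a parallel computation checks that $\cos\theta=(\rho^2-|w|^2)/r^2$, matching the argument of $b$ in $\tilde B_k$. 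The structural input is that $b$, hence $\tilde B_k$, is supported in $\theta\le\pi/2$, i.e. $\sin(\theta/2)\le 1/\sqrt2$; this forces $|w|\le\rho$ and hence $r\in[\rho,\sqrt2\,\rho]$, so $r\approx\rho$ on the whole support of the integrand.

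Next I would bound the kernel pointwise. For $\theta\in(0,\pi/2]$ one has $\sin\theta\approx\theta\approx\sin(\theta/2)=|w|/r$, so the modelling assumption $\sin\theta\,b(\cos\theta)\approx\theta^{-1-2s}$ gives $b(\cos\theta)\approx\theta^{-2-2s}\lesssim (r/|w|)^{2+2s}$. Using in addition $|v-v_*|^\gamma=r^\gamma$, $\chi_k(|v-v'|)=\chi_k(|w|)$, $\norm{\chi_k}_{L^\infty}\le 1$, and $r\approx\rho$,
\[
\tilde B_k\;\lesssim\;\frac{r^{\gamma}(r/|w|)^{2+2s}}{r\,\rho}\,\chi_k(|w|)\;=\;\frac{r^{\gamma+1+2s}}{\rho}\,|w|^{-2-2s}\chi_k(|w|)\;\approx\;\rho^{\gamma+2s}\,|w|^{-2-2s}\chi_k(|w|).
\]

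Finally I would integrate in $w$ over $\R^2$: the cutoff restricts $|w|$ to the annulus $|w|\approx 2^{-k}$ of area $\approx 2^{-2k}$, where $|w|^{-2-2s}\approx (2^{-k})^{-2-2s}$, so
\[
\int_{E^{v'}_{v_*}}\tilde B_k\;\lesssim\;\rho^{\gamma+2s}\int_{\R^2}|w|^{-2-2s}\chi_k(|w|)\,\d w\;\lesssim\;\rho^{\gamma+2s}(2^{-k})^{-2-2s}(2^{-k})^{2}\;=\;2^{2sk}\,\rho^{\gamma+2s},
\]
which is the asserted bound $2^{2sk}|v'-v_*|^{\gamma+2s}$. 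There is no real obstacle; the one point to get exactly right is the support condition $\theta\le\pi/2$, equivalently $|v-v'|\le|v'-v_*|$, which is precisely what keeps $|v-v_*|\approx|v'-v_*|$ and prevents $r^{\gamma+1+2s}$ from dominating $\rho^{\gamma+1+2s}$; and if the dyadic annulus $\{|w|\approx 2^{-k}\}$ does not meet $\{|w|\le\rho\}$ the integrand vanishes and the inequality is trivial.
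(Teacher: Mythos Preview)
Your proof is correct; the paper itself does not prove this lemma but simply cites it as Proposition~3.2 of \cite{GrSt11}, and your argument is essentially the one given there: parametrize the hyperplane $E^{v'}_{v_*}$ by $w\perp(v'-v_*)$, use the collision geometry to get $|v-v_*|\approx|v'-v_*|$ on the support $\theta\le\pi/2$, bound $b(\cos\theta)\lesssim(|w|/r)^{-2-2s}$, and integrate the resulting radial expression over the dyadic annulus $|w|\approx 2^{-k}$.
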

\begin{remark}
The lemma we assert is not explicitly written as is in \cite{GrSt11} but the assertion is proved as a part of Proposition 3.2.
\end{remark}
\section{Estimates involving the Boltzmann kernel}\label{s.main_est}
The main lemma we will prove in this section is as follows
\begin{lemma}\label{l.main_boltz_estimates}
For $\gamma+2s\in(0,2]$ and $|\alpha|+|\beta|+|\omega|\leq 10$, we have the following bound for any $T\in \R_{\geq 0}$,
\begin{align*}
\sum_{\substack{|\alpha'|+|\alpha''|=|\alpha|\\|\beta'|+|\beta''|+|\beta'''|=|\beta|\\ |\omega'|+|\omega''|+|\omega'''|=|\omega|}}&\int_{0}^{T}\int_{\R^3}\int_{\R^3}\Gamma_{\beta''',\omega'''}(\derv{'}{'}{'} g,\derv{''}{''}{''} g)\jap{x-(t+1)v}^4 \der g\\
&\lesssim \mathbb{I}+\mathbb{II}+\mathbb{III}+\mathbb{IV}+\mathbb{V}+\mathbb{VI}+\mathbb{VII}+\mathbb{VIII}+\mathbb{IX}+\mathbb{X}+\mathbb{XI}+\mathbb{XII},
\end{align*}
where $\delta$ is the same as in \eref{delta} and we adopt the convention that $\mathbb{V}=\mathbb{VII}=\mathbb{X}=0$ for $s<\frac{1}{2}$. 
\begin{equation}\label{e.main_bound_no_sing}
\begin{split}
\mathbb{I}:=\smash{\sum_{\substack{|\alpha'|+|\alpha''|\leq|\alpha|\\|\beta'|+|\beta''|\leq|\beta|\\|\omega'|+|\omega''|\leq|\omega|\\|\alpha'|+|\beta'|+|\omega'|\leq 7}}}&\int_0^{T}(1+t)^{1+\delta}\norm{(1+t)^{-\frac{1}{2}-\frac{\delta}{2}}\jap{x-(t+1)v}^2\jap{v}\derv{''}{''}{''} g}_{L^2_xL^2_v} \\
&\qquad \times \norm{\jap{x-(t+1)v}^2\derv{'}{'}{'}g}_{L^\infty_xL^2_v}\\
&\qquad\times\norm{(1+t)^{-\frac{1}{2}-\frac{\delta}{2}}\jap{x-(t+1)v}^2\jap{v}\der g}_{L^2_xL^2_v}\d t\\[10pt]
\quad+\smash{\sum_{\substack{|\alpha'|+|\alpha''|\leq|\alpha|\\|\beta'|+|\beta''|\leq|\beta|\\|\omega'|+|\omega''|\leq|\omega|\\|\alpha'|+|\beta'|+|\omega'|\geq 8}}}&\int_0^{T}(1+t)^{1+\delta}\norm{(1+t)^{-\frac{1}{2}-\frac{\delta}{2}}\jap{x-(t+1)v}^2\jap{v}\derv{''}{''}{''} g}_{L^\infty_x L^2_v} \\
&\qquad \times \norm{\jap{x-(t+1)v}^2\derv{'}{'}{'}g}_{L^2_xL^2_v}\\
&\qquad\times\norm{(1+t)^{-\frac{1}{2}-\frac{\delta}{2}}\jap{x-(t+1)v}^2\jap{v}\der g}_{L^2_xL^2_v}\d t,
\end{split}
\end{equation}

\begin{equation}\label{e.main_bound_Q2}
\begin{split}
\mathbb{II}:=\smash{\sum_{\substack{|\alpha'|+|\alpha''|\leq|\alpha|\\|\beta'|+|\beta''|\leq|\beta|\\|\omega'|+|\omega''|\leq|\omega|\\|\alpha'|+|\beta'|+|\omega'|\leq 1}}}
&\int_0^T (1+t)^{1+2\delta}\norm{\derv{'}{'}{'} f\jap{v}^{\gamma+2s}}^s_{L^\infty_xW^{2,1}_v}\norm{\derv{'}{'}{'}f\jap{v}^{\gamma+2s}}^{1-s}_{L^\infty_xL^1_v}\\
&\hspace{4em}\times\norm{(1+t)^{-\frac{1+2\delta}{2}}\jap{v}^{\frac{\gamma+2s}{2}}\jap{x-(t+1)v}^2 \derv{''}{''}{''} g}_{L^2_xL^2_v}\\
&\hspace{5em}\times\norm{(1+t)^{-\frac{1+2\delta}{2}}\jap{v}^{\frac{\gamma+2s}{2}}\jap{x-(t+1)v}^2 \der g}_{L^2_xL^2_v}\d t,
\end{split}
\end{equation}

\begin{equation}\label{e.main_bound_comm_1}
\begin{split}
\mathbb{III}:=\smash{\sum_{\substack{|\alpha'|+|\alpha''|\leq|\alpha|\\|\beta'|+|\beta''|\leq|\beta|\\|\omega'|+|\omega''|\leq|\omega|\\|\alpha'|+|\beta'|+|\omega'|\leq 7}}}&\int_0^{T}(1+t)^{2+\delta}\norm{(1+t)^{-\frac{1+\delta}{2}}\jap{x-(t+1)v}^2\jap{v}\derv{''}{''}{''} g}_{L^2_xL^2_v}\\
&\qquad\times \norm{\jap{x-(t+1)v}\derv{'}{'}{'} g}_{L^\infty_xL^1_v}\\
&\qquad\times\norm{(1+t)^{-\frac{1+\delta}{2}}\jap{x-(t+1)v}^2\jap{v}\der g}_{L^2_xL^2_v}\d t\\[10pt]
\quad+\smash{\sum_{\substack{|\alpha'|+|\alpha''|\leq|\alpha|\\|\beta'|+|\beta''|\leq|\beta|\\|\omega'|+|\omega''|\leq|\omega|\\|\alpha'|+|\beta'|+|\omega'|\geq 8}}}&\int_0^{T}(1+t)^{2+\delta}\norm{(1+t)^{-\frac{1+\delta}{2}}\jap{x-(t+1)v}^2\jap{v}\derv{''}{''}{''} g}_{L^\infty_xL^2_v}\\
&\qquad\times \norm{\jap{x-(t+1)v}\derv{'}{'}{'} g}_{L^2_xL^1_v}\\
&\qquad\times\norm{(1+t)^{-\frac{1+\delta}{2}}\jap{x-(t+1)v}^2\jap{v}\der g}_{L^2_xL^2_v}\d t,
\end{split}
\end{equation}

\begin{equation}\label{e.main_bound_comm_2}
\begin{split}
\mathbb{IV}:=\smash{\sum_{\substack{|\alpha'|+|\alpha''|\leq|\alpha|\\|\beta'|+|\beta''|\leq|\beta|\\|\omega'|+|\omega''|\leq|\omega|\\|\alpha'|+|\beta'|+|\omega'|\leq 7}}}&\int_0^{T}(1+t)^{3-\delta}\norm{(1+t)^{-\frac{1+\delta}{2}}\jap{x-(t+1)v}^2\jap{v}\derv{''}{''}{''} g}_{L^2_xL^2_v}\\
&\qquad\times \norm{\jap{x-(t+1)v}^{2\delta}\derv{'}{'}{'} g}_{L^\infty_xL^1_v}\\
&\qquad\times\norm{(1+t)^{-\frac{1+\delta}{2}}\jap{x-(t+1)v}^2\jap{v}\der g}^2_{L^2_xL^2_v}\\[10pt]
\quad+\smash{\sum_{\substack{|\alpha'|+|\alpha''|\leq|\alpha|\\|\beta'|+|\beta''|\leq|\beta|\\|\omega'|+|\omega''|\leq|\omega|\\|\alpha'|+|\beta'|+|\omega'|\geq 8}}}&\int_0^{T_*}(1+t)^{3-\delta}\norm{(1+t)^{-\frac{1+\delta}{2}}\jap{x-(t+1)v}^2\jap{v}\derv{''}{''}{''} g}_{L^\infty_xL^2_v}\\
&\qquad\times \norm{\jap{x-(t+1)v}^{2\delta}\derv{'}{'}{'} g}_{L^2_xL^1_v}\\
&\qquad\times\norm{(1+t)^{-\frac{1+\delta}{2}}\jap{x-(t+1)v}^2\jap{v}\der g}_{L^2_xL^2_v},
\end{split}
\end{equation}

\begin{equation}\label{e.main_bound_top}
\begin{split}
\mathbb{V}:=&\mathds{1}_{s\in \left[\frac{1}{2},1\right)} \int_0^{T}(1+t)^{2+\delta}\norm{g}^{\frac{3}{2}-s}_{L^\infty_xL^1_v}\norm{\part_{v_i}g}^{s-\frac{1}{2}}_{L^\infty_xL^1_v}\norm{(1+t)^{-\frac{1}{2}-\frac{\delta}{2}}\jap{v}^{\frac{2s+\gamma}{2}}\der g}_{L^2_xL^2_v}\d t\\
&\quad + \mathds{1}_{s\in \left[\frac{1}{2},1\right)} \int_0^{T}(1+t)^{2+\delta}\norm{g}^{1-s}_{L^\infty_xL^1_v}\norm{\part_{v_i}g}^{s}_{L^\infty_xL^1_v}\norm{(1+t)^{-\frac{1}{2}-\frac{\delta}{2}}\jap{v}^{\frac{2s+\gamma}{2}}\der g}^2_{L^2_xL^2_v}\d t,
\end{split}
\end{equation}

\textcolor{black}{\begin{equation}\label{e.main_bound_top_2}
\begin{split}
\mathbb{VI}:=\smash{\sum_{\substack{|\alpha'|+|\beta'|+|\omega'|\leq 1\\|\alpha'|+|\alpha''|\leq|\alpha|\\|\beta'|+|\beta''|\leq|\beta|\\|\omega'|+|\omega''|\leq|\omega|}}}& \int_0^{T}(1+t)^{1+\delta}\norm{\part_{v_i} \derv{'}{'}{'}g}_{L^\infty_xL^1_v}\norm{(1+t)^{-\frac{1+\delta}{2}}\jap{x-(t+1)v}^2\jap{v}\derv{''}{''}{''} g}_{L^2_xL^2_v}\\
&\qquad\times\norm{(1+t)^{-\frac{1+\delta}{2}}\jap{x-(t+1)v}^2\jap{v}\der g}_{L^2_xL^2_v} \d t,
\end{split}
\end{equation}}

\begin{equation}\label{e.main_bound_pen_1}
\begin{split}
\mathbb{VII}:= \smash{\sum_{\substack{|\alpha'|+|\beta'|+|\omega'|=1\\|\alpha'|+|\alpha''|\leq|\alpha|\\|\beta'|+|\beta''|\leq|\beta|\\|\omega'|+|\omega''|\leq|\omega|}}}&\mathds{1}_{s\in \left[\frac{1}{2},1\right)} \left[\int_0^{T}(1+t)^{1+2\delta}\norm{(1+t)^{-\frac{1+2\delta}{2}}\jap{v}\jap{x-(t+1)v}^2\derv{''}{''}{''} g}_{L^2_xH^{(2s-1)^+}_v}\right.\\
&\qquad\left.\times\norm{\part_{v_i} \derv{'}{'}{'} g}_{L^\infty_xL^1_v}\norm{(1+t)^{-\frac{1+2\delta}{2}}\jap{v}\jap{x-(t+1)v}^2\der g}_{L^2_xL^2_v}\d t\right.\\
&\quad\left.+\int_0^{T}(1+t)^{1+2\delta}\norm{(1+t)^{-\frac{1+2\delta}{2}}\jap{v}\jap{x-(t+1)v}^2\derv{''}{''}{''} g}_{L^2_xH^{(2s-1)}_v}\right.\\
&\qquad\left.\times\norm{\part_{v_i} \derv{'}{'}{'} g}_{L^\infty_xL^1_v}\norm{(1+t)^{-\frac{1+2\delta}{2}}\jap{v}\jap{x-(t+1)v}^2\der g}_{L^2_xL^2_v}\d t\right],
\end{split}
\end{equation}

\begin{equation}\label{e.main_bound_pen_2}
\begin{split}
\mathbb{VIII}:=\smash{\sum_{\substack{|\alpha'|+|\beta'|+|\omega'|=1\\|\alpha'|+|\alpha''|\leq|\alpha|\\|\beta'|+|\beta''|\leq|\beta|\\|\omega'|+|\omega''|\leq|\omega|}}}&\int_0^{T}(1+t)^{|\beta'|+1+2\delta}\norm{\derv{'}{'}{'} g}_{L^\infty_xL^1_v}\\
&\qquad\times\norm{(1+t)^{-\frac{1+2\delta}{2}}\jap{x-(t+1)v}^2\jap{v}\derv{''}{''}{''} g}^2_{L^2_xH^s_v}\d t,
\end{split}
\end{equation}

\begin{equation}\label{e.main_bound_pen_2_prime}
\begin{split}
\mathbb{IX}:= \smash{\sum_{\substack{|\alpha'|+|\beta'|+|\omega'|=1\\|\alpha'|+|\alpha''|\leq|\alpha|\\|\beta'|+|\beta''|\leq|\beta|\\|\omega'|+|\omega''|\leq|\omega|}}}& \int_0^{T}(1+t)^{|\beta'|+1+\delta}\norm{\derv{'}{'}{'} f\jap{v}^{\gamma+2s}}^s_{L^\infty_xW^{2,1}_v}\norm{\derv{'}{'}{'}f\jap{v}^{\gamma+2s}}^{1-s}_{L^\infty_xL^1_v}\\
&\qquad\times\norm{(1+t)^{-\frac{1+\delta}{2}}\jap{x-(t+1)v}^2\jap{v}\derv{''}{''}{''} g}^2_{L^2_xL^2_v}\d t,
\end{split}
\end{equation}

\begin{equation}\label{e.main_bound_comm_3}
\begin{split}
\mathbb{X}:=\smash{\sum_{\substack{|\alpha'|+|\alpha''|\leq|\alpha|\\|\beta'|+|\beta''|\leq|\beta|\\|\omega'|+|\omega''|\leq|\omega|\\|\alpha'|+|\beta'|+|\omega'|\leq 7}}}&\mathds{1}_{s\in \left[\frac{1}{2},1\right)} \left[\int_0^{T}(1+t)^{2+2\delta}\norm{(1+t)^{-\frac{1+2\delta}{2}}\derv{''}{''}{''}g\jap{x-(t+1)v}^2\jap{v}}_{L^2_xH^{(2s-1)^+}_v}\right.\\
&\qquad \times\left.\norm{\derv{'}{'}{'} g}_{L^\infty_xL^1_v}\norm{(1+t)^{-\frac{1+2\delta}{2}}\der g\jap{x-(t+1)v}^2\jap{v}}_{L^2_xL^2_v}\d t\right.\\
&\quad+\left.\int_0^{T}(1+t)^{2+2\delta}\norm{(1+t)^{-\frac{1+2\delta}{2}}\derv{''}{''}{''}g\jap{x-(t+1)v}^2\jap{v}}_{L^2_xH^{2s-1}_v}\right.\\
&\qquad \times\left.\norm{\derv{'}{'}{'} g}_{L^\infty_xL^1_v}\norm{(1+t)^{-\frac{1+2\delta}{2}}\der g\jap{x-(t+1)v}^2\jap{v}}_{L^2_xL^2_v}\d t\right]\\[10pt]
\quad+\smash{\sum_{\substack{|\alpha'|+|\alpha''|\leq|\alpha|\\|\beta'|+|\beta''|\leq|\beta|\\|\omega'|+|\omega''|\leq|\omega|\\|\alpha'|+|\beta'|+|\omega'|\geq 8}}}&\mathds{1}_{s\in \left[\frac{1}{2},1\right)} \left[\int_0^{T}(1+t)^{2+2\delta}\norm{(1+t)^{-\frac{1+2\delta}{2}}\derv{''}{''}{''}g\jap{x-(t+1)v}^2\jap{v}}_{L^\infty_xH^{(2s-1)^+}_v}\right.\\
&\qquad \left.\times\norm{\derv{'}{'}{'} g}_{L^2_xL^1_v}\norm{(1+t)^{-\frac{1+2\delta}{2}}\der g\jap{x-(t+1)v}^2\jap{v}}_{L^2_xL^2_v}\d t\right.\\
&\quad\left.+\int_0^{T}(1+t)^{2+2\delta}\norm{(1+t)^{-\frac{1+2\delta}{2}}\derv{''}{''}{''}g\jap{x-(t+1)v}^2\jap{v}}_{L^\infty_xH^{2s-1}_v}\right.\\
&\qquad\left. \times\norm{\derv{'}{'}{'} g}_{L^2_xL^1_v}\norm{(1+t)^{-\frac{1+2\delta}{2}}\der g\jap{x-(t+1)v}^2\jap{v}}_{L^2_xL^2_v}\d t\right],
\end{split}
\end{equation}

\begin{equation}\label{e.main_bound_comm_4}
\begin{split}
\mathbb{XI}:=\smash{\sum_{\substack{|\alpha'|+|\alpha''|\leq|\alpha|\\|\beta'|+|\beta''|\leq|\beta|\\|\omega'|+|\omega''|\leq|\omega|\\|\alpha''|+|\beta''|+|\omega''|<|\alpha|+|\beta|+|\omega|\\\ 1\leq|\alpha'|+|\beta'|+|\omega'|\leq 7}}}&\int_0^{T}(1+t)^{1+2\delta}\norm{(1+t)^{-\frac{1+2\delta}{2}}\jap{x-(t+1)v}^2\jap{v}\part_{v_i} \derv{''}{''}{''} g}_{L^2_xL^2_v}\\
&\qquad\times\norm{\derv{'}{'}{'} g}_{L^\infty_xL^1_v}\\
&\qquad\times\norm{(1+t)^{-\frac{1+2\delta}{2}}\jap{x-(t+1)v}^2\jap{v}\der g}_{L^2_xL^2_v}\d t\\[10pt]
\quad+\smash{\sum_{\substack{|\alpha'|+|\alpha''|\leq|\alpha|\\|\beta'|+|\beta''|\leq|\beta|\\|\omega'|+|\omega''|\leq|\omega|\\|\alpha''|+|\beta''|+|\omega''|<|\alpha|+|\beta|+|\omega|\\\|\alpha'|+|\beta'|+|\omega'|\geq 8}}}&\int_0^{T}(1+t)^{1+2\delta}\norm{(1+t)^{-\frac{1+2\delta}{2}}\jap{x-(t+1)v}^2\jap{v}\part_{v_i} \derv{''}{''}{''} g}_{L^\infty_xL^2_v}\\
&\qquad \times\norm{\derv{'}{'}{'} g}_{L^2_xL^1_v}\\
&\qquad\times\norm{(1+t)^{-\frac{1+2\delta}{2}}\jap{x-(t+1)v}^2\jap{v}\der g}_{L^2_xL^2_v}\d t,
\end{split}
\end{equation}

\begin{equation}\label{e.main_bound_H2s}
\begin{split}
\mathbb{XII}:=\smash{\sum_{\substack{|\alpha'|+|\alpha''|\leq|\alpha|\\|\beta'|+|\beta''|\leq|\beta|\\|\omega'|+|\omega''|\leq|\omega|\\2\leq |\alpha'|+|\beta'|+|\omega'|\leq 7}}}&\int_0^{T}(1+t)^{1+2\delta}\norm{(1+t)^{-\frac{1+2\delta}{2}}\jap{x-(t+1)v}^2\jap{v} \derv{''}{''}{''} g}_{L^2_xH^{2s}_v}\\
&\qquad \times\norm{\derv{'}{'}{'} g}_{L^\infty_xL^1_v}\norm{(1+t)^{-\frac{1+2\delta}{2}}\jap{x-(t+1)v}^2\jap{v}\der g}_{L^2_xL^2_v}\d t\\[10pt]
\quad+\smash{\sum_{\substack{|\alpha'|+|\alpha''|\leq|\alpha|\\|\beta'|+|\beta''|\leq|\beta|\\|\omega'|+|\omega''|\leq|\omega|\\|\alpha'|+|\beta'|+|\omega'|\geq 8}}}&\int_0^{T}(1+t)^{1+2\delta}\norm{(1+t)^{-\frac{1+2\delta}{2}}\jap{x-(t+1)v}^2\jap{v} \derv{''}{''}{''} g}_{L^\infty_xH^{2s}_v}\\
&\qquad \times\norm{\derv{'}{'}{'} g}_{L^2_xL^1_v}\times\norm{(1+t)^{-\frac{1+2\delta}{2}}\jap{x-(t+1)v}^2\jap{v}\der g}_{L^2_xL^2_v}\d t.
\end{split}
\end{equation}
\end{lemma}
We first prove some preliminary embedding results that will be useful throughout this section.
\begin{lemma}\label{l.H-L-S_for_infinity_lp}
Let $\nu \in \left(0,\frac{3}{2}\right)$ and $I_v f:=\int|v-v_*|^{-\nu}f(v_*) \d v_*$ then for $f$ in $L^1_v\cap L^{p}_v$ and $2\geq p> \frac{3}{3-\nu}$ we have that 
$$\norm{I_v f}_{L^\infty_v}\lesssim \norm{f}_{L^1_v}+\norm{f}_{L^{p}_v}.$$
\end{lemma}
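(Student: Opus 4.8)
The plan is to fix $v\in\R^3$ and split the integral defining $I_v f(v)$ according to whether $v_*$ lies in the unit ball around $v$ or not. Writing
\[
I_v f(v)=\int_{|v-v_*|\le 1}|v-v_*|^{-\nu}f(v_*)\,\d{v_*}+\int_{|v-v_*|>1}|v-v_*|^{-\nu}f(v_*)\,\d{v_*}=:A(v)+B(v),
\]
I would bound $B$ by $\norm{f}_{L^1_v}$ and $A$ by a ($v$-independent) constant times $\norm{f}_{L^p_v}$, then take the essential supremum over $v$. This is the usual Hardy--Littlewood--Sobolev-type splitting, the point being that the kernel is bounded away from the diagonal and locally in a good Lebesgue space near the diagonal.

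For the far piece $B$: since $\nu>0$, on the region $|v-v_*|>1$ we have $|v-v_*|^{-\nu}\le 1$, so $|B(v)|\le\int_{\R^3}|f(v_*)|\,\d{v_*}=\norm{f}_{L^1_v}$, uniformly in $v$. For the near piece $A$: after the translation $w=v_*-v$, Hölder's inequality in $w$ with exponent $p$ and conjugate exponent $q=p/(p-1)$ gives $|A(v)|\le\norm{f}_{L^p_v}\bigl(\int_{|w|\le 1}|w|^{-\nu q}\,\d{w}\bigr)^{1/q}$. The local integral $\int_{|w|\le 1}|w|^{-\nu q}\,\d{w}$ over $\R^3$ is finite precisely when $\nu q<3$, and one checks $\nu q<3\iff \nu\tfrac{p}{p-1}<3\iff p(3-\nu)>3\iff p>\tfrac{3}{3-\nu}$, which is exactly the hypothesis on $p$. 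Denoting the resulting finite number by $C_{\nu,p}:=\bigl(\int_{|w|\le 1}|w|^{-\nu q}\,\d{w}\bigr)^{1/q}$, which is independent of $v$, we get $|A(v)|\le C_{\nu,p}\,\norm{f}_{L^p_v}$.

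Combining the two bounds yields $|I_v f(v)|\le\norm{f}_{L^1_v}+C_{\nu,p}\,\norm{f}_{L^p_v}$ for a.e.\ $v$, and taking $\esssup_{v}$ gives the claim (with implicit constant depending only on $\nu$ and $p$). There is no real obstacle here: the only step requiring care is the bookkeeping of exponents, and this is precisely where all the hypotheses enter. The condition $p>\tfrac{3}{3-\nu}$ is exactly what places the singular factor $|w|^{-\nu}$ in $L^q_{\mathrm{loc}}(\R^3)$; the assumption $\nu\in(0,\tfrac32)$ guarantees $\tfrac{3}{3-\nu}<2$, so that the admissible range $\bigl(\tfrac{3}{3-\nu},2\bigr]$ is nonempty; and the restriction $p\le 2$ is what allows the conclusion to be stated using only $L^p_v$ control for $p\le 2$.
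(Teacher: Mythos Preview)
Your proof is correct. It differs from the paper's argument, which works on the Fourier side: writing $\widehat{I_\nu f}(\xi)=c|\xi|^{-(3-\nu)}\hat f(\xi)$, the paper bounds $|I_\nu f(v)|\le\int|\xi|^{-(3-\nu)}|\hat f(\xi)|\,\d\xi$, splits at $|\xi|=1$, and controls the low-frequency piece by $\|\hat f\|_{L^\infty}\lesssim\|f\|_{L^1}$ and the high-frequency piece by H\"older together with Hausdorff--Young, $\|\hat f\|_{L^{p'}}\lesssim\|f\|_{L^p}$. Both routes land on the same arithmetic condition $p(3-\nu)>3$. Your physical-space splitting is more elementary (no Fourier transform or Hausdorff--Young needed), while the paper's Fourier approach has the advantage of being structurally identical to the proof of the companion lemma for $\nu\in[\tfrac32,2)$, where the near-diagonal piece genuinely requires fractional regularity $H^{1/2-\bar\delta}$ rather than just $L^p$.
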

\begin{proof}
We know by duality of Fourier transform that $I_\nu f=c\int \widehat{I_\nu f} e^{2\pi i v\cdot \xi}\d \xi$.

Now note that $\widehat{I_\nu f}(\xi)=c\xi^{-(3-\nu)}\hat f$. See chapter 3 in \cite{Stein} for more details on Reisz kernel.\\
Thus we have 
\begin{align*}
|I_\nu f(v)|&\leq c\int |\widehat{I_\nu f}| \d \xi\\
&\lesssim \int |\xi^{-3+\nu}||\hat f|\d \xi\\
&\lesssim \int_{|\xi|\leq 1} |\xi^{-3+\nu}||\hat f| \d \xi+ \int_{|\xi|\geq 1} |\xi^{-3+\nu}||\hat f| \d \xi.
\end{align*}
Since $\nu>0$, we have that $$\int_{|\xi|\leq 1} |\xi^{-3+\nu}||\hat f| \d \xi \lesssim \norm{\hat f}_{L^\infty_\xi}\lesssim \norm{f}_{L^1_v}.$$

For the second term we use the fact that $f \in L^p_v$. Indeed,
\begin{align*}
\int _{|\xi|>1} |\xi^{-3+\nu}||\hat f| \d \xi &\leq \int \jap{\xi}^{-3+\nu} |\hat f| \d \xi\\
&\leq \norm{\hat f}_{L^{p'}_\xi}\norm{\jap{\xi}^{-3+\nu}}_{L^p_\xi}\\
&\lesssim \norm{f}_{L^p_v},
\end{align*}
where the second inequality follows by the Holder's inequality. Since $p>\frac{3}{3-\nu}$, we have $p(3-\nu)>3$, which implies that $\norm{\jap{\xi}^{-3+\nu}}_{L^p_\xi}$ is bounded.\\
We also used the following inequality for fourier transform
$$||\hat f||_{L^{p'}}\lesssim ||f||_{L^p},$$
where $p$ and $p'$ are Holder conjugates of $1$ and $2\geq p\geq 1$.

Thus we have that $|I_\nu f(v)|\leq ||f||_{L^1_v}+||f||_{L^p_v}.$
\end{proof}
\begin{lemma}\label{l.exp_bound}
For every $l\in \N$ and $m\in \N\cup\{0\}$, the following estimate holds with a constant depending on $l,\gamma$ and $d_0$ for any $(t,x,v) \in [0,T)\times \R^3\times \R^3$
$$\jap{v}^l\jap{x-(t+1)v}^m|\der f|(t,x,v)\lesssim \sum \limits_{|\beta'|\leq |\beta|}\jap{x-(t+1)v}^m|\derv{}{'}{} g|(t,x,v).$$
\end{lemma}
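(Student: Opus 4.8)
The plan is to reduce the claim to the single fact that a non-degenerate Gaussian in $v$ absorbs an arbitrary polynomial weight $\jap{v}^l$, uniformly in $t$. First I would write $f$ through the weighted unknown $g$ of \eref{g}, i.e.\ $f=e^{-d(t)\jap{v}^2}g$ with $d(t)=d_0\bigl(1+(1+t)^{-\delta}\bigr)$, and record that $d_0\le d(t)\le 2d_0$ for every $t\ge 0$; in particular, for each $N\in\N$,
\begin{equation*}
\sup_{t\ge 0}\ \sup_{v\in\R^3}\ \jap{v}^{N}e^{-d(t)\jap{v}^2}\ \le\ \sup_{v\in\R^3}\jap{v}^{N}e^{-d_0(1+|v|^2)}\ =:\ C_N(d_0)<\infty.
\end{equation*}

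Next I would differentiate. Since $\partial_x^\alpha$ does not act on the $v$-Gaussian, $\partial_x^\alpha f=e^{-d(t)\jap{v}^2}\partial_x^\alpha g$, so the $x$-multi-index stays fixed. For the velocity derivatives and the (mutually commuting) vector fields $Y_i=(t+1)\partial_{x_i}+\partial_{v_i}$ I would use the product rules
\begin{equation*}
\partial_{v_i}\bigl(e^{-d(t)\jap{v}^2}h\bigr)=e^{-d(t)\jap{v}^2}\bigl(\partial_{v_i}-2d(t)v_i\bigr)h,\qquad Y_i\bigl(e^{-d(t)\jap{v}^2}h\bigr)=e^{-d(t)\jap{v}^2}\bigl(Y_i-2d(t)v_i\bigr)h,
\end{equation*}
the second because the $(t+1)\partial_{x_i}$ part of $Y_i$ again does not see the Gaussian. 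Iterating these $|\beta|+|\omega|$ times and tracking degrees (each step raises the polynomial degree by at most one, while a $\partial_{v_i}$ or $Y_i$ landing on an existing $v$-polynomial lowers it) yields an expansion
\begin{equation*}
\der f=e^{-d(t)\jap{v}^2}\sum_{\substack{|\beta'|\le|\beta|\\ |\omega'|\le|\omega|}}c_{\beta',\omega'}(t,v)\,\partial_x^\alpha\partial_v^{\beta'}Y^{\omega'}g,
\end{equation*}
where each $c_{\beta',\omega'}$ is a polynomial in $v$ of degree at most $|\beta|+|\omega|$ whose coefficients are polynomials in $d(t)$; hence $|c_{\beta',\omega'}(t,v)|\le C(|\beta|,|\omega|,d_0)\,\jap{v}^{|\beta|+|\omega|}$. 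This is a purely combinatorial Leibniz bookkeeping and is the only computation in the proof.

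Finally I would multiply through by $\jap{v}^l\jap{x-(t+1)v}^m$. The space-time weight $\jap{x-(t+1)v}^m$ is inert and occurs identically on both sides, so everything reduces to the pointwise estimate
\begin{equation*}
\jap{v}^l\,|c_{\beta',\omega'}(t,v)|\,e^{-d(t)\jap{v}^2}\ \le\ C(|\beta|,|\omega|,d_0)\,\jap{v}^{\,l+|\beta|+|\omega|}e^{-d_0(1+|v|^2)}\ \le\ C(l,|\beta|,|\omega|,d_0),
\end{equation*}
valid uniformly in $t\ge 0$ and $v\in\R^3$ by the first display. Summing over the finitely many multi-indices occurring in the expansion then gives $\jap{v}^l\jap{x-(t+1)v}^m|\der f|\lesssim\sum_{|\beta'|\le|\beta|,\,|\omega'|\le|\omega|}\jap{x-(t+1)v}^m|\partial_x^\alpha\partial_v^{\beta'}Y^{\omega'}g|$, which is the asserted bound (the strictly lower-order vector-field terms are of the same type, so the right-hand side is most naturally read with $|\omega'|\le|\omega|$ as well). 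The only point worth flagging is the uniformity in $t$, which holds precisely because $d(t)\ge d_0>0$ for all $t$, so the time-dependent Gaussian never degenerates; this is exactly the property for which $d(t)$ was designed to decay only polynomially slowly. Beyond that there is no real obstacle: the lemma is simply the device converting the Gaussian-weighted density $f$ into the energy variable $g$ at the sole cost of polynomial velocity weights.
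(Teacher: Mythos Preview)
Your proof is correct and follows exactly the paper's approach, which is a single sentence: differentiate $f=e^{-d(t)\jap{v}^2}g$ and use that $\jap{v}^N e^{-d(t)\jap{v}^2}\lesssim_N 1$. You have simply supplied the Leibniz bookkeeping that the paper leaves implicit, and your observation that the right-hand side should naturally carry a sum over $|\omega'|\le|\omega|$ as well (since $Y_i$ also sees the Gaussian through its $\partial_{v_i}$ part) is a correct refinement of a harmless imprecision in the stated lemma; in every application the energy norms sum over all multi-indices anyway.
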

\begin{proof}
This is immediate from differentiating $g$ and using that $\jap{v}^ne^{-d(t)\jap{v}}\lesssim_n 1$ for all $n\in \N$.
\end{proof}
\subsection{Estimates for the top order} This corresponds to $|\alpha''|+|\beta''|+|\omega''|=|\alpha|+|\beta|+|\omega|$ in \eref{diff_boltz}.

Let \underline{$G_{\alpha,\beta,\omega}:=\jap{x-(t+1)v}^2\der g$}. In the sequel, the integral is taken as\\
 $\int_0^{T_*}\int \int \int \int_{\mathbb{S}^2} B(v-v_*,\sigma)H(t,x,v,v_*,v') \d \sigma \d v_*\d v\d x\d t$, but the explicit dependence is dropped for brevity. In addition, we also drop the dependence of $G$ on $\alpha$, $\beta$ and $\omega$.

Since we will be using the following for all orders, we present it more generally for \textbf{arbitrary functions} $\mf$, $\mg$ and $\mh$. These are not necessarily the same as $f$ and $g$ from \eref{boltz} and \eref{boltz_gaussian}, respectively.\\
First we use the decomposition of singularity from \sref{sing} to get
\begin{equation}\label{e.sing_decom_top_order}
	B(\color{black}{\partial_v^\beta\mu)}_*(\mathfrak{f}_*'\mathfrak{g}'-\mathfrak{f}_*\mathfrak{g})=\sum_{k=-\infty}^{\infty} B_k (\color{black}{\partial_v^\beta\mu)_*}(\mathfrak{f}_*'\mathfrak{g}'-\mathfrak{f}_*\mathfrak{g}).
\end{equation}
Let us define 
\begin{equation}\label{e.triv_est_1}
\color{black}{I_k^+(\mf,\mg,\mh):=\int_0^{T}\int_{\R^3} \int_{\R^3}\int_{\R^3}\int_{\S^2}B_k (\partial_v^\beta\mu)_*{\mf}_*'\mg'\jap{x-(t+1)v}^4 \mh\d \sigma\d v_*\d v \d x\d t},
\end{equation}
and 
\begin{equation}\label{e.triv_est_2}
\color{black}{I_k^-(\mf,\mg,\mh):=\int_0^{T}\int_{\R^3} \int_{\R^3}\int_{\R^3}\int_{\S^2}B_k (\partial_v^\beta\mu)_*\mf_*\mg\jap{x-(t+1)v}^4  \mh\d \sigma\d v_*\d v \d x\d t}.
\end{equation}

When $k\leq 0$ (i.e. when $|v-v'|\geq \frac{1}{2}$), we trivially estimate the collisional term by estimating $I_k^-$ and $I_k^+$ on their own. But when $k>0$, we exploit cancellations due to the structure of the collisions to care of the angular singularity.

\textbf{In the following we drop the integration to keep the notation lean and use $\equiv$ to denote equality after integrating over $\d \sigma$, $\d v_*$, $\d v$, $\d x$ and $\d t$.}
\begin{align}
B_k\mu_*& \left(g_*'(\der g)'-g_*\der g\right)\jap{x-(t+1)v}^4\der g\nonumber\\
&\equiv B_k \left(\mu_*'g_*'(\der g)'-\mu_*g_*\der g\right)\jap{x-(t+1)v}^2G\nonumber\\
&\quad-B_k(\mu_*'-\mu_*)g_*'(\der g)'\jap{x-(t+1)v}^2 G\label{e.top_order_Gaussian_s_less_than_half}.
\end{align}
Futher we have,
\begin{align}
B_k &\left(\mu_*'g_*'(\der g)'-\mu_*g_*\der g\right)\jap{x-(t+1)v}^2G\nonumber\\
&\equiv Q_k(\mu g,G)G\label{e.top_order_symm}\\
&\quad+B_k(\mu g)'_*(\der g)' G[\jap{x-(t+1)v}^2-\jap{x-(t+1)v'}^2]\label{e.top_order_s_less_than_half}\\
&\equiv Q_k(\mu g,G)G\nonumber\\
&\quad +B_k(\mu g)'_*(\der g)' \jap{x-(t+1)v}\der g[\jap{x-(t+1)v}-\jap{x-(t+1)v'}]\nonumber\\
&\qquad\times[\jap{x-(t+1)v}^2-\jap{x-(t+1)v'}^2]\label{e.top_order_diff_weights_mult}\\
&\quad +B_k(\mu g)'_*\jap{x-(t+1)v'}(\der g)'\jap{x-(t+1)v} \der g\nonumber\\
&\qquad\times[\jap{x-(t+1)v}^2-\jap{x-(t+1)v'}^2]\nonumber\\
&\equiv Q_k(\mu g,G)G\nonumber\\
&\quad +B_k(\mu g)'_*(\der g)' \jap{x-(t+1)v}\der g[\jap{x-(t+1)v}-\jap{x-(t+1)v'}]\nonumber\\
&\qquad\times[\jap{x-(t+1)v}^2-\jap{x-(t+1)v'}^2]\nonumber\\
&\quad +\frac{1}{2}B_k[(\mu g)'_*-(\mu g)_*]\jap{x-(t+1)v'}(\der g)'\jap{x-(t+1)v} \der g\nonumber\\
&\qquad\times[\jap{x-(t+1)v}^2-\jap{x-(t+1)v'}^2]\label{e.top_order_diff_f},
\end{align}
where $Q_k(f,g)$ is basically the same as $Q(f,g)$ in \eref{boltz_kernel} with $B$ replaced with $B_k$. Moreover, we used pre-post collision change of variables from \lref{pre_post} to get \eref{top_order_diff_f}.

For the term involving difference of Gaussians we have,
\begin{align}
B_k(\mu_*'-&\mu_*)g_*'(\der g)'\jap{x-(t+1)v}^2G \nonumber\\
&\equiv B_k(\mu_*'-\mu_*)g_*'G' G\nonumber\\
&\quad +B_k (\mu_*'-\mu_*)g_*' (\der g)' G[\jap{x-(t+1)v}^2-\jap{x-(t+1)v'}^2]\label{e.top_order_Gaussian_1}\\
&\equiv \frac{1}{2}B_k(\mu_*'-\mu_*)(g_*'-g_*)G' G\label{e.top_order_Gaussian_2}\\
&\quad +B_k (\mu_*'-\mu_*)g_*' (\der g)' G[\jap{x-(t+1)v}^2-\jap{x-(t+1)v'}^2]\nonumber.
\end{align}
We again used pre-post collision change of variables (\lref{pre_post}) to get \eref{top_order_Gaussian_2}.

For $s<\frac{1}{2}$, we use 
\begin{align*}
\int_0^{T}\int_{\R^3} \int_{\R^3}\int_{\R^3}\int_{\S^2}B_k \mu_*\left(g_*'(\der g)'-g_*\der g\right)&\jap{x-(t+1)v}^4 \der g\d \sigma\d v_*\d v \d x\d t\\
&=\eref{top_order_symm}+\eref{top_order_s_less_than_half}+\eref{top_order_Gaussian_s_less_than_half}.
\end{align*}

For $s\geq \frac{1}{2}$, we use 
\begin{align*}
\int_0^{T}\int_{\R^3} \int_{\R^3}\int_{\R^3}\int_{\S^2}B_k \mu_*(g_*'(\der g)'-&g_*\der g)\jap{x-(t+1)v}^4 \der g\d \sigma\d v_*\d v \d x\d t\\
&=\eref{top_order_symm}+\eref{top_order_diff_weights_mult}+\eref{top_order_diff_f}+\eref{top_order_Gaussian_1}+\eref{top_order_Gaussian_2}.
\end{align*}

Now we start bounding the various terms from above. We remark that the following two lemmas have already been proved in \cite{GrSt11} the only difference is that we have space-time weights which does not complicate the proof much. That said, we reproduce the result for the ease of the reader.
\begin{lemma}\label{l.trivial_est_1}
Let  $\mf$, $\mg$ and $\mh$ be any smooth functions and $I_k^-(\mf,\mg,\mh)$ be defined as in \eref{triv_est_1} we have the following estimates
\begin{align*}
|I_k^-(\mf,\mg,\mh)|\lesssim 2^{2sk}\int_0^{T}&(1+t)^{1+\delta}\norm{\mf}_{L^\infty_xL^2_v}\norm{(1+t)^{-\frac{1}{2}-\frac{\delta}{2}}\jap{x-(t+1)v}^2\jap{v}^{\frac{\gamma+2s}{2}}\mg}_{L^2_xL^2_v}\\
&\qquad \times\norm{(1+t)^{-\frac{1}{2}-\frac{\delta}{2}}\jap{x-(t+1)v}^2\jap{v}^{\frac{\gamma+2s}{2}}\mh}_{L^2_xL^2_v},
\end{align*}
and
\begin{align*}
|I_k^-(\mf,\mg,\mh)|\lesssim 2^{2sk}\int_0^{T}&(1+t)^{1+\delta}\norm{\mf}_{L^2_xL^2_v}\norm{(1+t)^{-\frac{1}{2}-\frac{\delta}{2}}\jap{x-(t+1)v}^2\jap{v}^{\frac{\gamma+2s}{2}}\mg}_{L^\infty_xL^2_v}\\
&\qquad \times\norm{(1+t)^{-\frac{1}{2}-\frac{\delta}{2}}\jap{x-(t+1)v}^2\jap{v}^{\frac{\gamma+2s}{2}}\mh}_{L^2_xL^2_v}.
\end{align*}
\end{lemma}
\begin{proof}
First note that
$$\int_{\S^2}B_k\d \sigma\lesssim |v-v_*|^\gamma\int_{2^{-k-1}|v-v_*|^{-1}}^{2^{-k}|v-v_*|^{-1}} \theta^{-1-2s}\d \theta\lesssim 2^{2sk}|v-v_*|^{\gamma+2s}.$$

Thus we can take care of the second term easily using Cauchy--Schwarz,
\begin{align*}
\int_{\R^3}\int_{\R^3}\int_{\S^2}B_k|(\partial_v^\beta\mu \mf)_*&\mg \jap{x-(t+1)v}^4\mh|\d \sigma \d v_*\d v\\
&\lesssim 2^{2sk}\int_{\R^3}\int_{\R^3}|v-v_*|^{\gamma+2s}|(\color{black}{\partial_v^\beta\mu} \mf)_*\mg\jap{x-(t+1)v}^4 \mh| \d v_*\d v\\
&\lesssim 2^{2sk}\left(\int_{\R^3}\int_{\R^3}\color{black}{\partial_v^\beta\mu}_*\mf_*^2 \jap{x-(t+1)v}^4|\mg|^2\jap{v}^{\gamma+2s}\d v_*\d v\right)^{\frac{1}{2}}\\
&\quad \times\left(\int_{\R^3}\jap{x-(t+1)v}^4|\mh|^2\jap{v}^{-\gamma-2s}\int_{\R^3}\color{black}{\partial_v^\beta\mu}_* |v-v_*|^{2\gamma+4s}\d v_*\d v\right)^{\frac{1}{2}}\\
&\lesssim 2^{2sk}\norm{\mf}_{L^2_v}\norm{\jap{x-(t+1)v}^2\jap{v}^{\frac{\gamma+2s}{2}}\mg}_{L^2_v}\norm{\jap{x-(t+1)v}^2\jap{v}^{\frac{\gamma+2s}{2}}\mh}_{L^2_v},
\end{align*}
where we used the inequality
\begin{align*}
\int_{\R^3}{\partial_v^\beta\mu}_* |v-v_*|^{2\gamma+4s}\d v_*&\lesssim \int_{\R^3}{\partial_v^\beta\mu}_* (\jap{v}\jap{v_*})^{2\gamma+4s}\d v_* \\
&\lesssim \jap{v}^{2\gamma+4s}.
\end{align*}We get the required result by Cauchy--Schwarz in space and by multiplying and dividing by $(1+t)^{1+\delta}.$
\end{proof}
\begin{lemma}\label{l.trivial_est_2}
Let  $\mf$, $\mg$ and $\mh$ be any smooth functions and $I_k^+(\mf,\mg,\mh)$ be defined as in \eref{triv_est_2} we have the following estimates
\begin{align*}
|I_k^+(\mf,\mg,\mh)|\lesssim 2^{2sk}\int_0^{T}&(1+t)^{1+\delta}\norm{\jap{x-(t+1)v}^2 \mf}_{L^\infty_xL^2_v}\norm{(1+t)^{-\frac{1}{2}-\frac{\delta}{2}}\jap{x-(t+1)v}^2\jap{v}^{\frac{\gamma+2s}{2}}\mg}_{L^2_xL^2_v}\\
&\qquad \times\norm{(1+t)^{-\frac{1}{2}-\frac{\delta}{2}}\jap{x-(t+1)v}^2\jap{v}^{\frac{\gamma+2s}{2}}\mh}_{L^2_xL^2_v},
\end{align*}
and
\begin{align*}
|I_k^+(\mf,\mg,\mh)|\lesssim 2^{2sk}\int_0^{T}&(1+t)^{1+\delta}\norm{\jap{x-(t+1)v}^2 \mf}_{L^2_xL^2_v}\norm{(1+t)^{-\frac{1}{2}-\frac{\delta}{2}}\jap{x-(t+1)v}^2\jap{v}^{\frac{\gamma+2s}{2}}\mg}_{L^\infty_xL^2_v}\\
&\qquad \times\norm{(1+t)^{-\frac{1}{2}-\frac{\delta}{2}}\jap{x-(t+1)v}^2\jap{v}^{\frac{\gamma+2s}{2}}\mh}_{L^2_xL^2_v}.
\end{align*} 
\end{lemma}
\begin{proof}
We begin by applying pre-post collision change of variables to get that 
$$I_k^+(\mf,\mg,\mh)=\int_0^{T}\int_{\R^3} \int_{\R^3}\int_{\R^3}\int_{\S^2}B_k (\color{black}{\part_v^\beta\mu})_*'\mf_*\mg\jap{x-(t+1)v'}^4 \mh'\d \sigma\d v_*\d v \d x\d t.$$

Next since $v'=\frac{v+v_*}{2}+\frac{|v-v_*|\sigma}{2}$, we have $$\jap{x-(t+1)v'}^2\lesssim \jap{x-(t+1)v}^2+\jap{x-(t+1)v_*}^2.$$
Indeed, $$x-(t+1)v'=\frac{x-(t+1)v+x-(t+1)v_*}{2}-\frac{|x-(t+1)v-(x-(t+1)v_*)|\sigma}{2}$$ and thus we get by the required inequality by applying triangle inequality. \textcolor{black}{Also note that since $\jap{\cdot}\geq 1$, $$\jap{x-(t+1)v}^2+\jap{x-(t+1)v_*}^2\lesssim \jap{x-(t+1)v}^2\jap{x-(t+1)v_*}^2.$$ }We further have,
$${\color{black}{\part^\beta_v \mu}}\lesssim \sqrt{\mu}.$$

Using these observation, we get
\begin{align*}
\int_{\R^3}&\int_{\R^3}\int_{\S^2}B_k|({\color{black}{\partial_v^\beta\mu}})_*' \mf_*\mg \jap{x-(t+1)v'}^4\mh'|\d \sigma \d v_*\d v\\
&\lesssim \int_{\R^3}\int_{\R^3}\int_{\S^2}B_k{\color{black}{\sqrt{\mu}_*'}}\mf_*[\jap{x-(t+1)v_*}^2+\jap{x-(t+1)v}^2]\mg\jap{x-(t+1)v'}^2\mh|\d \sigma \d v_*\d v\\
&\lesssim \left(\int_{\R^3}\int_{\R^3}\int_{\S^2}\frac{B_k}{|v'-v_*|^{\gamma+2s}}{\color{black}{\sqrt{\mu}_*'}} \jap{x-(t+1)v_*}^4\mf_*^2 \jap{x-(t+1)v'}^4{\mh'}^2\jap{v}^{\gamma+2s}\d \sigma\d v_*\d v\right)^{\frac{1}{2}}\\
&\quad \times\left(\int_{\R^3}\int_{\R^3}\int_{\S^2}\frac{B_k}{|v'-v_*|^{-\gamma-2s}}\jap{x-(t+1)v}^4|\mg|^2\jap{v}^{-\gamma-2s}{\color{black}{\sqrt{\mu}_*'}}\d \sigma\d v_*\d v\right)^{\frac{1}{2}}
\end{align*}

For the second factor we first apply pre-post collision change of variables and then use \lref{change_of_var_GrSt} to get
\begin{align*}
\int_{\R^3}\int_{\R^3}\int_{\S^2}&\frac{B_k}{|v'-v_*|^{-\gamma-2s}}\jap{x-(t+1)v}^4|\mg|^2\jap{v}^{-\gamma-2s}{\color{black}{\sqrt{\mu}_*'}} \d \sigma\d v_*\d v\\
&=\int_{\R^3}\int_{\R^3}\int_{E^{v'}_{v_*}}\frac{\tilde B_k}{|v'-v_*|^{-\gamma-2s}}\jap{x-(t+1)v'}^4|\mg'|^2\jap{v'}^{-\gamma-2s}{\color{black}{\sqrt{\mu}_*}} \d \pi_v\d v_*\d v'.
\end{align*}
Above we used the fact that $|v'-v_*|=|v_*'-v|.$ Now using \lref{bound_on_B_tilde} we get that 
\begin{align*}
\int_{\R^3}\int_{\R^3}\int_{E^{v'}_{v_*}}&\frac{\tilde B_k}{|v'-v_*|^{-\gamma-2s}}\jap{x-(t+1)v'}^4|\mg'|^2\jap{v'}^{-\gamma-2s}{\color{black}{\sqrt{\mu}_*}} \d \pi_v\d v_*\d v'\\
&\lesssim 2^{2sk}\int_{\R^3}\int_{\R^3}|v'-v_*|^{2\gamma+4s}{\color{black}{\sqrt{\mu}_*}}\jap{v'}^{-\gamma-2s}\jap{x-(t+1)v'}^4|\mg'|^2\d v_*\d v'\\
&\lesssim 2^{2sk}\norm{\jap{x-(t+1)v}^2 \jap{v}^{\frac{\gamma+2s}{2}}\mg}^2_{L^2_v},
\end{align*}
where we used the fact that $\int_{\R^3}{\color{black}{\sqrt{\mu}_*}}|v'-v_*|^{2\gamma+2s}\d v_*\lesssim \jap{v'}^{2\gamma+4s}.$

For the first factor we need to work a little more. If $|v'|^2\lesssim \frac{1}{2}(|v|^2+|v_*|^2)$ then from the collisional conservation laws, $\mu_*'\leq \sqrt{\mu \mu_*}$. Thus, we have $\jap{v'}^{\gamma+2s}\mu_*'\lesssim \jap{v}^{-m}\jap{v_*}^{-m}$ for any fixed positive m. For $|v'|^2\geq \frac{1}{2}(|v|^2+|v_*|^2)$, it follows from collisional geometry that $|v'|^2\approx |v|^2+|v_*|^2$ and thus $\jap{v}^{\gamma+2s}\lesssim \jap{v'}^{\gamma+2s}.$ Thus in either case we have 
\begin{align*}
\int_{\R^3}\int_{\R^3}\int_{\S^2}&\frac{B_k}{|v'-v_*|^{\gamma+2s}}{\color{black}{\sqrt{\mu}_*'}} \jap{x-(t+1)v_*}^4\mf_*^2 \jap{x-(t+1)v'}^4{\mh'}^2\jap{v}^{\gamma+2s}\d \sigma\d v_*\d v\\
&\lesssim \int_{\R^3}\int_{\R^3}\int_{\S^2}\frac{B_k}{|v'-v_*|^{\gamma+2s}} \jap{x-(t+1)v_*}^4\mf_*^2 \jap{x-(t+1)v'}^4{\mh'}^2\jap{v'}^{\gamma+2s}\d \sigma\d v_*\d v.
\end{align*}
Now using the Carleman change of variables from \lref{change_of_var_GrSt} and the bound from \lref{bound_on_B_tilde}, we get 
\begin{align*}
\int_{\R^3}\int_{\R^3}\int_{\S^2}&\frac{B_k}{|v'-v_*|^{\gamma+2s}} \jap{x-(t+1)v_*}^4\mf_*^2 \jap{x-(t+1)v'}^4{\mh'}^2\jap{v'}^{\gamma+2s}\d \sigma\d v_*\d v\\
&\lesssim 2^{2sk}\int_{\R^3}\int_{\R^3} \jap{x-(t+1)v_*}^4\mf_*^2 \jap{x-(t+1)v'}^4{\mh'}^2\jap{v'}^{\gamma+2s}\d v_*\d v'\\
&\lesssim 2^{2sk}\norm{\jap{x-(t+1)v}^2\mf}^2_{L^2_v}\norm{\jap{x-(t+1)v}^2 \jap{v}^{\frac{\gamma+2s}{2}}\mh}^2_{L^2_v}
\end{align*}
Thus we get the required result by combining the above two bound and then using Cauchy--Schwarz in space followed by multiplying and dividing by $(1+t)^{1+\delta}.$
\end{proof}

Next we prove a variant of the cancellation lemma from \cite{AlDeViWe00}.
\begin{lemma}\label{l.canc_lemma}
Let  $\mf$, $\mg$ and $\mh$ be any smooth functions and $\gamma+2s\in (0,2]$, then we have the following bounds on $$\text{Can}_k(\mf,\mg,\mh):=\left|\int_0^{T}\int_{\R^3}\int_{\R^3}\int_{\R^3}\int_{\S^2}B_k (\mf_*-\mf_*')\mg \mh\d \sigma\d v_*\d v\d x \d t\right|,$$
\begin{align*}
\sum_{k=0}^{k=\infty}|\text{Can}_k(\mf,\mg,\mh)|\lesssim\int_0^T (1+t)^{1+2\delta}&\left(\norm{\mf\jap{v}^{\gamma+2s}}\right)^s_{L^\infty_xW^{2,1}_v}\norm{\mf\jap{v}^{\gamma+2s}}^{1-s}_{L^\infty_xL^1_v}\\
&\qquad\times\norm{(1+t)^{-\frac{1+2\delta}{2}}\mg\jap{v}^{\frac{\gamma+2s}{2}}}_{L^2_xL^2_v}\norm{(1+t)^{-\frac{1+2\delta}{2}}\mh\jap{v}^{\frac{\gamma+2s}{2}}}_{L^2_xL^2_v}\d t.
\end{align*}
\end{lemma}
\begin{proof}
Fix an $1\geq R>0$. We consider two cases depending on the size of $R$:
\\
\emph{Case 1:} $R>2^{-k}$. In this case we change the integrand as follows,
\begin{align*}
B_k (\mf_*-\mf_*')\mg \mh=B_k (\mf_*-\mf_*'-(v_*-v_*')_i\part_{v_i} \mf(v_*))\mg \mh+B_k(v_*'-v_*)_i\part_{v_i} \mf(v_*)\mg \mh.
\end{align*}

Using integral form of Taylor's theorem, we know that
$$|\mf_*-\mf_*'-(v_*'-v_*)_i\part_{v_i} \mf(v_*)|\lesssim |v_*-v_*'|^2\int_0^1|\part^2_{v_jv_k} \mf(u)|\d \eta,$$
where $u=\eta v_*+(1-\eta)v_*'$. Further, by conservation of momentum we know that $|v_*-v_*'|=|v-v'|$ and thus \eref{v_minus_v'} implies that $|v_*-v_*'|=|v-v_*|\sin \frac{\theta}{2}.$

The above observations imply, 
\begin{align*}
|\int_{\R^3}\int_{\R^3}\int_{\S^2} B_k &(\mf_*-\mf_*'-(v_*-v_*')_i\part_{v_i} \mf(v_*))\mg \mh\d \sigma\d v_*\d v|\\
&\lesssim \int_0^1 \int_{\R^3}\int_{\R^3}\int_{\S^2}|v_*-v_*'|^2 |B_k \part^2_{v_jv_k} \mf(u) \mg \mh|\d \sigma\d v_*\d v\d \eta\\
&\lesssim \int_0^1 \int_{\R^3}\int_{\R^3}\int_{\S^2}|v-v_*|^2\sin^2\frac{\theta}{2} |B_k \part^2_{v_jv_k} \mf(u) \mg \mh|\d \sigma\d v_*\d v\d \eta.
\end{align*}
Next we apply the change of variables $u=\eta v_*+(1-\eta)v_*'$. Due to the collisional variables \eref{col_var}, we see that $$\frac{\d u_i}{\d v_{*_j}}=\eta \delta_{ij}+(1-\eta)\frac{\d v_{*_i}'}{\d v_{*_j}}=\left(\frac{1+\eta}{2}\right)\delta_{ij}+\frac{1-\eta}{2}k_j\sigma_i,$$
where $k=(v-v_*)/ |v-v_*|$. Thus the jacobian is $$\left|\frac{\d u_i}{\d v_{*_j}}\right|=\left(\frac{1+\eta}{2}\right)^2\left\{\left(\frac{1+\eta}{2}\right)+\frac{1-\eta}{2}\jap{k,\sigma}\right\}.$$
Since $b(\jap{k,\sigma})=0$, when $\jap{k,\sigma}\leq 0$ from \eref{sym_b} and $\eta\in [0,1]$, it follows that the Jacobian is bounded from below on support of the integral of the factor.

Further note that 
\begin{align*}
|v-u|&=\left|\frac{1+\eta}{2}(v-v_*)+\frac{1-\eta}{2}|v-v_*|\sigma\right|\\
&=|v-v_*|\left|\left(\frac{1+\eta}{2}\right)^2+\left(\frac{1-\eta}{2}\right)^2+\frac{1-\eta^2}{2}k\cdot \sigma\right|^{\frac{1}{2}}\\
&=|v-v_*|\left|\eta^2+(1-\eta^2)\cos^2 \frac{\theta}{2}\right|^{\frac{1}{2}}\geq \frac{|v-v_*|}{\sqrt 2}.
\end{align*}

In conjunction to these observations associated to this change of variables we also use that $$\int_{\S^2}B_k \sin^2 \frac{\theta}{2}\d \sigma\lesssim\int_{2^{-k-1}|v-v_*|^{-1}}^{2^{-k}|v-v_*|^{-1}} |v-v_*|^\gamma \theta^{1-2s}\d \theta\lesssim 2^{(2s-2)k}|v-v_*|^{\gamma+2s-2}$$
to get,
\begin{align*}
\int_0^1 \int_{\R^3}\int_{\R^3}\int_{\S^2}|v-v_*|^2&\sin^2\frac{\theta}{2} |B_k \part^2_{v_jv_k} \mf(u) \mg \mh|\d \sigma\d v_*\d v\d \eta\\
&\lesssim \int_0^1 \int_{\R^3}\int_{\R^3}\int_{\S^2}|v-u|^2\sin^2\frac{\theta}{2} |B_k \part^2_{v_jv_k}\mf(u) \mg \mh|\d \sigma\d u\d v\d \eta\\
&\lesssim 2^{(2s-2)k}\int_0^1 \int_{\R^3}\int_{\R^3}|v-u|^{\gamma+2s} |\part^2_{v_jv_k} \mf(u) \mg \mh|\d u\d v\d \eta.
\end{align*}

Next, applying triangle inequality to get $|v-u|^{\gamma+2s}\lesssim \jap{v}^{\gamma+2s}+\jap{u}^{\gamma+2s}$ followed by Cauchy--Schwarz, we get,
\begin{align*}
 2^{(2s-2)k}\int_0^1 \int_{\R^3}\int_{\R^3}&|v-u|^{\gamma+2s}|\part^2_{v_jv_k} \mf(u) \mg \mh|\d u\d v\d \eta\\
&\lesssim 2^{(2s-2)k} \norm{\part^2_{v_iv_k}\mf\jap{v}^{\gamma+2s}}_{L^1_v}\norm{\jap{v}^{\frac{\gamma+2s}{2}}\mg}_{L^2_v}\norm{\jap{v}^{\frac{\gamma+2s}{2}}\mh}_{L^2_v}.
\end{align*}

Now we look at the term $B_k (v_*'-v_*)_i\part_{v_i} \mf(v_*)\mg \mh$. \textcolor{black}{ Conservation of momentum implies that $v_*'+v'=v+v'$. Notice that all the terms are independent of primed variables. Hence exploiting the symmetry of $B_k$ with respect to $\sigma$ around $\frac{v-v_*}{|v-v_*|},$ we get that all the components of $v-v'$ vanish expect the one in the direction of $\frac{v-v_*}{|v-v_*|}.$ Thus we may replace $v-v'$ by $\frac{v-v_*}{|v-v_*|}\left\langle v-v',\frac{v-v_*}{|v-v_*|}\right\rangle$}.

Since $\jap{v-v',v'-v_*}=0$, the vector above reduces to $\frac{v-v_*}{|v-v_*|}\frac{|v-v'|^2}{|v-v_*|}$. Using this, we get
\begin{align*}
\int_{\S^2} B_k \frac{v-v_*}{|v-v_*|}\frac{|v-v'^2|}{|v-v_*|}\d \sigma&\lesssim\int_{2^{-k-1}|v-v_*|^{-1}}^{2^{-k}|v-v_*|^{-1}} |v-v_*|^{\gamma+1} \sin^{2}\frac{\theta}{2} \theta^{-2-2s}\sin \theta\d \theta\\
&\lesssim |v-v_*|^{\gamma+1}\int_{2^{-k-1}|v-v_*|^{-1}}^{2^{-k}|v-v_*|^{-1}}\theta^{1-2s}\d \theta\\
&\lesssim 2^{(2s-2)k}|v-v_*|^{\gamma+2s-1}.
\end{align*}
Now we consider the cases $\gamma+2s-1\geq 0$ and $\gamma+2s-1<0$ separately.\\
\emph{Case a:} $1\geq \gamma+2s-1\geq 0$. In this case we have $|v-v_*|^{\gamma+2s-1}\leq \jap{v_*}^{\gamma+2s-1}\jap{v}^{\gamma+2s-1}$. Thus we have the bound 
\begin{align*}
|\int_{\S^2}\int_{\R^3}\int_{\R^3} &B_k(v_*'-v_*)_i\part_{v_i} \mf(v_*)\mg \mh\d \sigma\d v_*\d v|\\
&\lesssim 2^{(2s-2)k}\int_{\R^3}\int_{\R^3}|v-v_*|^{\gamma+2s-1}\part_{v_i}\mf_* \mg \mh \d v_*\d v\\
&\leq  2^{(2s-2)k}\norm{\part_{v_i} f\jap{v_*}^{\gamma+2s-1}}_{L^1_v}\norm{\jap{v}^{\frac{\gamma+2s-1}{2}} \mg}_{L^2_v}\norm{\jap{v}^{\frac{\gamma+2s-1}{2}} \mh}_{L^2_v}.
\end{align*}

\emph{Case b:} $-1<\gamma+2s-1<0$. For this case we will apply \lref{H-L-S_for_infinity_lp} to get
\begin{align*}
|\int_{\S^2}\int_{\R^3}\int_{\R^3} &B_k(v_*'-v_*)_i\part_{v_i} \mf(v_*)\mg \mh\d \sigma\d v_*\d v|\\
&\lesssim 2^{(2s-2)k}\int_{\R^3}\int_{\R^3}|v-v_*|^{\gamma+2s-1}\part_{v_i}\mf_* \mg \mh \d v_*\d v\\
&\lesssim  2^{(2s-2)k}\norm{\int_{\R^3}|v-v_*|^{\gamma+2s-1}\part_{v_i}\mf_*\d v_*}_{L^\infty_v}\norm{\mg}_{L^2_v}\norm{\mh}_{L^2_v}\\
&\leq 2^{(2s-2)k}[\norm{\part_{v_i} \mf}_{L^1_v}+\norm{\part_{v_i} \mf}_{L^p_v}]\norm{\mg}_{L^2_v}\norm{\mh}_{L^2_v},
\end{align*}
where $p=\frac{3}{2+\gamma+2s}+\bar\delta$ for $\bar \delta>0$ small enough so that $p\leq 2$.\\
\emph{Case 2:} $R\in [2^{-(k+1)},2^{-k}]$ or $2^{-k+1}>R$. In this case we estimate the two parts of the difference separately. More precisely, we bound
\begin{align*}|\int_{\R^3}\int_{\R^3}\int_{\S^2}&B_k (\mf_*-\mf_*)\mg \mh\d \sigma\d v_*\d v|\\
&\leq \int_{\R^3}\int_{\R^3}\int_{\S^2}|B_k \mf_*\mg \mh|\d \sigma\d v_*\d v+\int_{\R^3}\int_{\R^3}\int_{\S^2}|B_k \mf_*'\mg \mh|\d \sigma\d v_*\d v.
\end{align*}

First we note that  $$\int_{\S^2}B_k\d \sigma\lesssim\int_{2^{-k-1}|v-v_*|^{-1}}^{2^{-k}|v-v_*|^{-1}} |v-v_*|^\gamma \theta^{-1-2s}\d \theta\lesssim 2^{2sk}|v-v_*|^{\gamma+2s}.$$
This immediately implies that we can bound the first term by an application of triangle inequality followed by Cauchy--Schwarz as follows,
$$ \int_{\R^3}\int_{\R^3}\int_{\S^2}|B_k \mf_*\mg \mh|\d \sigma\d v_*\d v\lesssim 2^{2sk}\norm{\mf\jap{v}^{\gamma+2s}}_{L^1}\norm{\mg\jap{v}^{\frac{\gamma+2s}{2}}}_{L^2}\norm{\mh\jap{v}^{\frac{\gamma+2s}{2}}}_{L^2}.$$

For the second term, we first perform a change of variables $v_*\to v_*'$ which is well-defined following the same argument as in Case 1. Further we also know by the same argument that $|v-v_*|\lesssim |v-v_*'|.$ Hence we can bound the second term in the same way as the first term. We omit the details.

Now we choose $R=\frac{\norm{\mf\jap{v}^{\gamma+2s}}_{L^1_v}^{\frac{1}{2}}}{\norm{\mf\jap{v}^{\gamma+2s}}_{W^{2,1}_v}^{\frac{1}{2}}}.$ Let $k_1$ be such that $R\in [2^{-(k_1+1)},2^{-k_1}].$ Then 
$$\sum_{k=0}^{k=\infty}|\text{Can}_k(\mf,\mg,\mh)|=\sum_{k=0}^{k=k_1}|\text{Can}_k(\mf,\mg,\mh)|+\sum_{k=k_1+1}^{k=\infty}|\text{Can}_k(\mf,\mg,\mh)|.$$

For the first term we use Case 2, to get,
\begin{align*}
\sum_{k=0}^{k=k_1}|\text{Can}_k(\mf,\mg,\mh)|&\lesssim \norm{\mf\jap{v}^{\gamma+2s}}_{L^1}\norm{\mg\jap{v}^{\frac{\gamma+2s}{2}}}_{L^2}\norm{\mh\jap{v}^{\frac{\gamma+2s}{2}}}_{L^2}\sum_{k=0}^{k=k_1} 2^{2sk}\\
&\lesssim 2^{2sk_1}\norm{\mf\jap{v}^{\gamma+2s}}_{L^1}\norm{\mg\jap{v}^{\frac{\gamma+2s}{2}}}_{L^2}\norm{\mh\jap{v}^{\frac{\gamma+2s}{2}}}_{L^2}\\
&\lesssim R^{-2s}\norm{\mf\jap{v}^{\gamma+2s}}_{L^1}\norm{\mg\jap{v}^{\frac{\gamma+2s}{2}}}_{L^2}\norm{\mh\jap{v}^{\frac{\gamma+2s}{2}}}_{L^2}\\
&\lesssim \norm{\mf\jap{v}^{\gamma+2s}}_{L^1}^{1-s}\norm{\mf\jap{v}^{\gamma+2s}}^s_{W^{2,1}_v}\norm{\mg\jap{v}^{\frac{\gamma+2s}{2}}}_{L^2}\norm{\mh\jap{v}^{\frac{\gamma+2s}{2}}}_{L^2}.
\end{align*}

For the second term we use Case 1, to get,
\begin{align*}
\sum_{k=k_1}^{k=\infty}|\text{Can}_k(\mf,\mg,\mh)|&\lesssim \left(\sum_{k=k_1}^{k=\infty}2^{(2s-2)k}\right)[ \norm{\part^2_{v_iv_j}\mf\jap{v}^{\gamma+2s}}_{L^1_v}+\norm{\part_{v_i} \mf\jap{v}^{\gamma+2s}}_{L^1_v}\\
&\hspace{10em}+\norm{\part_{v_i} \mf}_{L^p_v}]\norm{\jap{v}^{\frac{\gamma+2s}{2}}\mg}_{L^2_v}\norm{\jap{v}^{\frac{\gamma+2s}{2}}\mh}_{L^2_v}\\
&\lesssim 2^{(2s-2)k_1}[ \norm{\part^2_{v_iv_j}\mf\jap{v}^{\gamma+2s}}_{L^1_v}+\norm{\part_{v_i} \mf\jap{v}^{\gamma+2s}}_{L^1_v}\\
&\hspace{10em}+\norm{\part_{v_i} \mf}_{L^p_v}]\norm{\jap{v}^{\frac{\gamma+2s}{2}}\mg}_{L^2_v}\norm{\jap{v}^{\frac{\gamma+2s}{2}}\mh}_{L^2_v}\\
&\lesssim R^{2-2s}[ \norm{\part^2_{v_iv_j}\mf\jap{v}^{\gamma+2s}}_{L^1_v}+\norm{\part_{v_i} \mf\jap{v}^{\gamma+2s}}_{L^1_v}\\
&\hspace{10em}+\norm{\part_{v_i} \mf}_{L^p_v}]\norm{\jap{v}^{\frac{\gamma+2s}{2}}\mg}_{L^2_v}\norm{\jap{v}^{\frac{\gamma+2s}{2}}\mh}_{L^2_v}\\
&\lesssim \norm{\mf\jap{v}^{\gamma+2s}}^{(1-s)}_{L^1_v}\left[\norm{\mf\jap{v}^{\gamma+2s}}^s_{W^{2,1}_v}+\frac{\norm{\part_{v_i} \mf}_{L^p_v}}{\norm{\mf\jap{v}^{\gamma+2s}}^{(1-s)}_{W^{2,1}_v}}\right]\\
&\hspace{5em}\times\norm{\jap{v}^{\frac{\gamma+2s}{2}}\mg}_{L^2_v}\norm{\jap{v}^{\frac{\gamma+2s}{2}}\mh}_{L^2_v},
\end{align*}
where $p=\frac{3}{2+\gamma+2s}+\bar\delta$ for $\bar \delta>0$ small enough.

Next we use Gagliardo--Nirenberg inequality to get,
$$\norm{\part_{v_i} \mf}_{L^p_v}\lesssim \norm{\part^2_{v_iv_j} \mf}^\theta_{L^1_v} \norm{\mf}^{1-\theta}_{L^1_v},$$
where $\theta=\frac{4p-3}{2p}$. Note that $\frac{4p-3}{2p}=1$ for $p=\frac{3}{2}$ but since $\gamma+2s>0$, we can arrange our $p$ so that $p=\frac{3}{2}-\bar\delta$ for some $\bar \delta>0$ small enough. This implies that we can have $\theta\leq 1-\bar\delta$. In particular, since $\norm{\part^2_{v_iv_j} \mf}_{L^1_v}\lesssim \norm{\mf \jap{v}^{\gamma+2s}}_{W^{2,1}_v}$ we have that
\begin{align*}
\norm{\part_{v_i} \mf}_{L^p_v}&\lesssim\norm{\mf \jap{v}^{\gamma+2s}}_{W^{2,1}_v}^{1-\bar\delta}\norm{\mf \jap{v}^{\gamma+2s}}_{L^1_v}^{\bar\delta}.
\end{align*}

Using the above observation we get,
 \begin{align*}
\frac{\norm{\part_{v_i} \mf}_{L^p_v}}{\norm{\mf\jap{v}^{\gamma+2s}}^{(1-s)}_{W^{2,1}_v}}&\lesssim \norm{\mf\jap{v}^{\gamma+2s}}_{L^1_v}^{\bar\delta}\norm{\mf\jap{v}^{\gamma+2s}}_{W^{2,1}_v}^{s-\bar \delta}\\
&\lesssim \norm{\mf\jap{v}^{\gamma+2s}}_{W^{2,1}_v}^{s}
\end{align*}

Finally using Cauchy--Schwarz inequality in space and multiplying and dividing by $(1+t)^{1+\delta}$, we get the required result.
\end{proof}
\begin{lemma}\label{l.sym_term}
For $\gamma+2s\in(0,2]$ and $f(t,x,v)\geq 0$, we have the following bound
\begin{align*}   
\int_{0}^{T}\int_{\R^3}\int_{\R^3}\sum_{k=0}^{k=\infty}Q_k(f,G)G&\d v\d x\d t\\
&\lesssim \int_0^T (1+t)^{1+2\delta}\norm{f\jap{v}^{\gamma+2s}}^s_{L^\infty_xW^{2,1}_v}\norm{f\jap{v}^{\gamma+2s}}^{1-s}_{L^\infty_xL^1_v}\\
&\hspace{5em}\times\norm{(1+t)^{-\frac{1+2\delta}{2}}\jap{v}^{\frac{\gamma+2s}{2}}\jap{x-(t+1)v}^2 \der g}^2_{L^2_xL^2_v}\d t,
\end{align*}
where $G=\jap{x-(t+1)v}^2 \der g$.
\end{lemma}
\begin{proof}
First note that,
\begin{align*}
\int_{0}^{T}\int_{\R^3}\int_{\R^3}\sum_{k=0}^{k=\infty}Q_k(f,G)G&\d v\d x\d t\\
&=\int_{0}^{T}\int_{\R^3}\int_{\R^3}\chi_{|v-'v|\leq 1}Q(f,G)G\d v\d x\d t.
\end{align*}
By an application of pre-post collision change of variables (\lref{pre_post}) we have
\begin{align*}
\int_{0}^{T}\int_{\R^3}\int_{\R^3}&\chi_{|v'-v|\leq 1}Q(f,G)G\d v\d x\d t\\
&=-\frac{1}{2}D+\frac{1}{2}\int_0^{T_*}\int_{\R^3} \int_{\R^3}\int_{\R^3}\int_{\S^2}\chi_{|v'-v|\leq 1}B f_*(G'^2-G^2)\d \sigma\d v_*\d v\d x\d t,
\end{align*}
where $$D=\int_0^{T}\int_{\R^3} \int_{\R^3}\int_{\R^3} \int_{\S^2}\chi_{|v'-v|\leq 1}(G(v')-G(v))^2 f(v_*)B(|v-v_*|,\sigma)\d \sigma\d v_*\d v\d x\d t.$$
Since $f\geq 0$, $D_k$ is non-negative too. Hence we can drop it from our analysis.
Thus, 
\begin{align*}
\sum_{k=0}^{k=\infty} \int_{0}^{T}\int_{\R^3}\int_{\R^3}&\chi_{|v'-v|\leq 1}Q_k(f,G)G\d v\d x\d t\\&\lesssim\sum_{k=0}^{k=\infty}\left|\int_0^{T_*}\int_{\R^3} \int_{\R^3}\int_{\R^3}\int_{\S^2}B_k f_*(G'^2-G^2)\d \sigma\d v_*\d v\d x\d t\right|.
\end{align*}
Now note that by pre-post collisional change of variables, we have 
\begin{align*}
\int_0^{T_*}\int_{\R^3} \int_{\R^3}\int_{\R^3}\int_{\S^2}&B_k f_*(G'^2-G^2)\d \sigma\d v_*\d v\d x\d t\\
&=\int_{0}^{T}\int_{\R^3}\int_{\R^3}\int_{\R^3}\int_{\S^2} B_k G^2 (f_*'-f_*) \d \sigma\d v_*\d v\d x\d t\\
&:= C_k(f,G).
\end{align*}

Using \lref{canc_lemma}, we get the bound
\begin{align*}
\sum_{k=0}^{k=\infty} |C_k(f,G)|&\lesssim \int_0^T (1+t)^{1+2\delta}\norm{f\jap{v}^{\gamma+2s}}^s_{L^\infty_xW^{2,1}_v}\norm{f\jap{v}^{\gamma+2s}}^{1-s}_{L^\infty_xL^1_v}\\
&\hspace{5em}\times\norm{(1+t)^{-\frac{1+2\delta}{2}}\jap{v}^{\frac{\gamma+2s}{2}}\jap{x-(t+1)v}^2 \der g}^2_{L^2_xL^2_v}\d t.
\end{align*}
This proves the required lemma.
\end{proof}
\begin{lemma}\label{l.top_order_comm_weights}
Let $|\alpha|+|\beta|+|\omega|=n$, $\gamma+2s\in(0,2]$ and $k\geq 0$ in the singularity decompostion for $B$, we have the following bounds,
\begin{align*}
|\eref{top_order_diff_weights_mult}| \lesssim 2^{(2s-2)k}\int_0^{T}&(1+t)^{3-\delta}\norm{g\jap{x-(t+1)v}^{2\delta}}_{L^\infty_xL^1_v}\\
&\qquad\times \norm{(1+t)^{-\frac{1}{2}-\frac{\delta}{2}}\der g\jap{x-(t+1)v}^2\jap{v}}^2_{L^2_xL^2_v}\d t,
\end{align*}
and 
\begin{align*}
\sum_{k=0}^{k=\infty}|\eref{top_order_diff_weights_mult}| \lesssim\int_0^{T}&(1+t)^{3-\delta}\norm{g\jap{x-(t+1)v}^{2\delta}}_{L^\infty_xL^1_v}\\
&\qquad\times \norm{(1+t)^{-\frac{1}{2}-\frac{\delta}{2}}\der g\jap{x-(t+1)v}^2\jap{v}}^2_{L^2_xL^2_v}\d t,
\end{align*}
where $\delta$ is the same as in \eref{delta}.
\end{lemma}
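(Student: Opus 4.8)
The plan is to spend the product of the two weight differences to create a factor $(1+t)^2|v-v'|^2=(1+t)^2|v-v_*|^2\sin^2\tfrac{\theta}{2}$, to use $\sin^2\tfrac{\theta}{2}$ to render the angular part integrable (this is exactly what produces the gain $2^{(2s-2)k}$), and to use the surplus kinetic power $|v-v_*|^{2\delta}\subset|v-v_*|^2$ together with the null structure to recover the factor $(1+t)^{-2\delta}$ that is missing from the naive count. Write $G=\jap{x-(t+1)v}^2\der g$. Using $\jap{x-(t+1)v}^2-\jap{x-(t+1)v'}^2=(\jap{x-(t+1)v}-\jap{x-(t+1)v'})(\jap{x-(t+1)v}+\jap{x-(t+1)v'})$ and expressing \eref{top_order_diff_weights_mult} through $G$ and $G'$ gives the integrand
\begin{equation*}
B_k(\mu g)'_*\,G'\,G\,\jap{x-(t+1)v'}^{-2}\jap{x-(t+1)v}^{-1}\bigl(\jap{x-(t+1)v}-\jap{x-(t+1)v'}\bigr)^2\bigl(\jap{x-(t+1)v}+\jap{x-(t+1)v'}\bigr).
\end{equation*}
By Taylor's theorem $|\jap{x-(t+1)v}-\jap{x-(t+1)v'}|\lesssim(1+t)|v-v'|=(1+t)|v-v_*|\sin\tfrac{\theta}{2}$, and splitting $\jap{x-(t+1)v}+\jap{x-(t+1)v'}$ into its two summands leaves, in each summand, a spare negative power of $\jap{x-(t+1)v'}$; thus $|\eref{top_order_diff_weights_mult}|\lesssim\mathcal P_1+\mathcal P_2$ with
\begin{equation*}
\mathcal P_i:=(1+t)^2\int_0^{T}\int\int\int\int_{\S^2}B_k\,|v-v_*|^2\sin^2\tfrac{\theta}{2}\,(\mu g)'_*\,|G'|\,|G|\,w_i\,\d\sigma\d v_*\d v\d x\d t,
\end{equation*}
where $w_1=\jap{x-(t+1)v'}^{-2}$ and $w_2=\jap{x-(t+1)v'}^{-1}\jap{x-(t+1)v}^{-1}$. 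It is essential to retain $w_i$ rather than bound it by $1$.

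Next I would insert the null structure: writing $|v-v_*|^2=|v-v_*|^{2\delta}|v-v_*|^{2-2\delta}$ and using $|v-v_*|=|v'-v_*'|$ together with $|v-v_*|^{2\delta}\lesssim(1+t)^{-2\delta}\jap{x-(t+1)v'}^{2\delta}\jap{x-(t+1)v_*'}^{2\delta}$ (valid since $2\delta<2+\gamma$, which follows from $\delta\le\tfrac{\gamma+2s}{8}$), the weight $\jap{x-(t+1)v'}^{2\delta}$ is absorbed by $w_i$ (using $2\delta\le1$, which holds since $\delta\le\tfrac1{10}$), so that
\begin{equation*}
\mathcal P_i\lesssim(1+t)^{2-2\delta}\int_0^{T}\int\int\int\int_{\S^2}B_k\,|v-v_*|^{2-2\delta}\sin^2\tfrac{\theta}{2}\,\bigl[\jap{x-(t+1)v_*'}^{2\delta}(\mu g)'_*\bigr]\,|G'|\,|G|\,\d\sigma\d v_*\d v\d x\d t.
\end{equation*}
Now I would apply Cauchy--Schwarz in the measure $B_k\,\d\sigma\,\d v_*\,\d v$, distributing the nonnegative factors $\jap{x-(t+1)v_*'}^{2\delta}(\mu g)'_*$ and $|v-v_*|^{2-2\delta}\sin^2\tfrac{\theta}{2}$ as square roots, splitting $|G'||G|$ into a $|G|^2$ piece and a $|G'|^2$ piece. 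In the $|G'|^2$ piece use the pre-post collisional change of variables (\lref{pre_post}, unit Jacobian) to reduce to the same unprimed integral; in the $|G|^2$ piece handle the remaining $v_*'$ via the change of variables $v_*\mapsto v_*'$ at fixed $v,\sigma$ (Jacobian $\approx1$, as in the proof of \lref{canc_lemma}). Either way one is reduced to an unprimed integral and performs the $\sigma$-integral: since $b(\cos\theta)\sin^2\tfrac{\theta}{2}\approx\theta^{-2s}$ is integrable ($s<1$) and $B_k$ localizes to $\theta\approx2^{-k}|v-v_*|^{-1}$, one gets $\int_{\S^2}B_k\sin^2\tfrac{\theta}{2}\,\d\sigma\lesssim2^{(2s-2)k}|v-v_*|^{\gamma+2s-2}$. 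Combined with the leftover $|v-v_*|^{2-2\delta}$ the kinetic weight is $|v-v_*|^{\gamma+2s-2\delta}$ with $\gamma+2s-2\delta\in(0,2]$ (again by $\delta\le\tfrac{\gamma+2s}{8}$), so $|v-v_*|^{\gamma+2s-2\delta}\lesssim\jap{v}^{\gamma+2s-2\delta}\jap{v_*}^{\gamma+2s-2\delta}$; the Gaussian in $(\mu g)_*$ absorbs $\jap{v_*}^{\gamma+2s-2\delta}$ so that the starred factor contributes only $\norm{g\jap{x-(t+1)v}^{2\delta}}_{L^\infty_xL^1_v}$ (by \lref{exp_bound}), while $\jap{v}^{\gamma+2s-2\delta}\le\jap{v}^2$ pairs with $|G|^2$. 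Multiplying and dividing by $(1+t)^{1+\delta}$ to put $(1+t)^{-\frac{1+\delta}{2}}$ inside each $L^2$ norm turns the explicit $(1+t)^{2-2\delta}$ into $(1+t)^{3-\delta}$; this is the first bound, and summing $\sum_{k\ge0}2^{(2s-2)k}<\infty$ (since $s<1$) gives the second.

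The main obstacle is this middle bookkeeping. The factor $\sin^2\tfrac{\theta}{2}$ must be spent entirely on the angular singularity: if instead one extracted the null-structure power from $|v-v'|^{2\delta}$ rather than from $|v-v_*|^{2\delta}$ one would be left with $\sin^{2-2\delta}\tfrac{\theta}{2}$ and the $\sigma$-integral would produce $2^{(2s-2+2\delta)k}$, too large to match the per-$k$ bound. Hence the $(1+t)^{-2\delta}$ gain has to come from the kinetic factor, and the space-time weight $\jap{x-(t+1)v'}^{2\delta}$ it generates is harmless only thanks to the spare negative powers carried in $w_i$ --- which is why the sum $\jap{x-(t+1)v}+\jap{x-(t+1)v'}$ must not be bounded crudely. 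One also has to verify that the $v_*\mapsto v_*'$ and pre-post changes of variables are compatible with the $\chi_k$-localization so that the $\sigma$-integral still produces exactly $2^{(2s-2)k}$, as in \lref{trivial_est_1} and \lref{trivial_est_2}.
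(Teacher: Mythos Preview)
Your proposal is correct and follows essentially the same route as the paper: Taylor-expand both weight differences to produce $(1+t)^2|v-v_*|^2\sin^2\tfrac{\theta}{2}$, spend $\sin^2\tfrac{\theta}{2}$ on the angular singularity to get $2^{(2s-2)k}$, extract $(1+t)^{-2\delta}$ from $|v-v_*|^{2\delta}$ via the null structure, and close with Cauchy--Schwarz plus a change of variables on the $G'$-piece.

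There is one organisational difference worth noting. The paper (implicitly, and somewhat obscured by typos) applies the pre-post change of variables to \eref{top_order_diff_weights_mult} \emph{before} anything else, which converts $(\mu g)'_*$ into $(\mu g)_*$; after that the unprimed null-structure bound $|v-v_*|^{2\delta}\lesssim(1+t)^{-2\delta}\jap{x-(t+1)v}^{2\delta}\jap{x-(t+1)v_*}^{2\delta}$ puts the $2\delta$-weight directly on the unprimed star, and only the regular $v\to v'$ change is needed after Cauchy--Schwarz. You instead keep the primed star and use the primed null structure, which forces you to invoke the more delicate $v_*\mapsto v_*'$ change (of cancellation-lemma type) in the $|G|^2$ piece; that change does work, but one has to check that $B_k$, $|v-v_*|^{2-2\delta}$, and the $\chi_k$-localisation all transform with bounded Jacobian factors, exactly as in the proof of \lref{canc_lemma}. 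Applying pre-post at the outset would let you avoid this entirely. Your explicit retention of the spare negative weights $w_i$ to absorb the null-structure factor $\jap{x-(t+1)v'}^{2\delta}$ is a nice bookkeeping device that the paper handles more implicitly via $\jap{x-(t+1)v}+\jap{x-(t+1)v'}\lesssim\jap{x-(t+1)v}\jap{x-(t+1)v'}$ together with $1+2\delta\le2$.
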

\begin{proof}
We begin by applying Taylor expansion to the difference in weights
\begin{align*}
\jap{x-(t+1)v'}^2-\jap{x-(t+1)v}^2&=(v'-v)_i\part_{v_i}(\jap{x-t+1)v}^2)\left.\right |_{\eta v+(1-\eta)v'}\\
&\lesssim (1+t)\sin \frac{\theta}{2}|v-v_*|(\jap{x-(t+1)v}+\jap{x-(t+1)v'}).
\end{align*}
where $\eta\in (0,1).$ 

And similarly, $$\jap{x-(t+1)v'}-\jap{x-(t+1)v}\lesssim (1+t)\sin \frac{\theta}{2}|v-v_*|.$$

Thus, 
\begin{align*}
(\jap{x-(t+1)v'}^2&-\jap{x-(t+1)v}^2)\cdot (\jap{x-(t+1)v'}-\jap{x-(t+1)v})\\
&\lesssim (1+t)^2\sin^2 \frac{\theta}{2}|v-v_*|^2(\jap{x-(t+1)v}+\jap{x-(t+1)v'}).
\end{align*}

Next note for some $\tau\in[0,1]$,
\begin{equation}\label{e.decay_from_diff_velocity}
\begin{split}
|v-v_*|^\tau&=(1+t)^{-\tau}|x-(t+1)v-(x-(t+1)v_*)|^\tau\\
&\lesssim_\tau (1+t)^{-\tau}(\jap{x-(t+1)v}^\tau\cdot\jap{x-(t+1)v_*}^\tau).
\end{split}
\end{equation}
Indeed, if $|x-(t+1)v-(x-(t+1)v_*)|\lesssim 1$, then we trivially have the inequality. Otherwise, if $|x-(t+1)v-(x-(t+1)v_*)|\geq 1$, then we can use triangle inequality and the fact that $\jap{.}\geq 1$ to conclude that 
\begin{align*}
|x-(t+1)v-(x-(t+1)v_*)|^\tau&\lesssim (|x-(t+1)v|+|x-(t+1)v_*|)^\tau\\
&\lesssim (\jap{x-(t+1)}+\jap{x-(t+1)v_*})^\tau\\
&\lesssim (\jap{x-(t+1)v}\jap{x-(t+1)v_*})^\tau.
\end{align*}

We let $\tau=2\delta$ in \eref{decay_from_diff_velocity} such that $0<2\delta<\gamma+2s$, then 
\begin{align*}
(1+t)^2\sin^2 \frac{\theta}{2}&|v-v_*|^2(\jap{x-(t+1)v}+\jap{x-(t+1)v'})\\
&\lesssim (1+t)^{2-2\delta}|v-v_*|^{2-2\delta}\sin^2 \frac{\theta}{2}(\jap{x-(t+1)v}^2\jap{x-(t+1)v'}\jap{x-(t+1)v_*}^{2\delta}).
\end{align*}

Hence we have,
\begin{align*}
\mid B_k&(\mu g)_*(\jap{x-(t+1)v}\der g)' \der g\\
&\qquad\times[\jap{x-(t+1)v'}^2-\jap{x-(t+1)v}^2][\jap{x-(t+1)v'}-\jap{x-(t+1)v}]\mid\\
&\hspace{4em}\lesssim (1+t)^{2-2\delta}B_k|v-v_*|^{2-2\delta}\sin^2 \frac{\theta}{2}\jap{x-(t+1)v_*}^{2\delta})\mu_*g_*GG'.
\end{align*}
Also note that since $|v-v'|=|v_*-v_*'|\leq 1$, $\mu_*\approx \mu_*'$ and $\jap{v}\approx \jap{v'}.$

Integrating in $\d \sigma\d v_* \d v$ and using Cauchy-Schwarz we get 
\begin{align*}
&\int_{\R^3}\int_{\R^3}\int_{\S^2}(1+t)^{2-2\delta}B_k|v-v_*|^{2-2\delta}\sin^2 \frac{\theta}{2}\jap{x-(t+1)v_*}^{2\delta})f_*GG'\d \sigma\d v_*\d v\\
&\quad\lesssim (1+t)^{3-\delta}\left(\int_{\R^3}\int_{\R^3}\int_{\S^2}(1+t)^{-1-\delta}{\color{black}{B_k |v-v_*|^{2-2\delta}}}\right.\\
&\hspace{15em} \times\left.\sin^2 \frac{\theta}{2}\jap{x-(t+1)v_*}^{2\delta}f_*G^2\d \sigma\d v_*\d v\right)^{\frac{1}{2}}\\
&\qquad \times \left(\int_{\R^3}\int_{\R^3}\int_{\S^2}(1+t)^{-1-\delta}\sin^2 \frac{\theta}{2}{\color{black}{B_k |v-v_*|^{2-2\delta}}}\jap{x-(t+1)v_*}^{2\delta}f_*G'^2\d \sigma\d v_*\d v\right)^{\frac{1}{2}}.
\end{align*}
For the first factor we use the fact that $$\int_{\S^2}B_k \sin^2\frac{\theta}{2}\d \sigma\lesssim\int_{2^{-k-1}|v-v_*|^{-1}}^{2^{-k}|v-v_*|^{-1}} |v-v_*|^\gamma \theta^{-2s+1}\d \theta\lesssim 2^{(2s-2)k}|v-v_*|^{\gamma+2s-2}.$$
Hence we have,
\begin{align*}
\int_{\R^3}\int_{\R^3}&\int_{\S^2}(1+t)^{-1-\delta}{\color{black}{B_k |v-v_*|^{2-2\delta}}}\sin^2 \frac{\theta}{2}\jap{x-(t+1)v_*}^{2\delta}f_*G^2\d \sigma\d v_*\d v\\
&\lesssim 2^{(2s-2)k}\int_{\R^3}\int_{\R^3}\int_{\S^2}(1+t)^{-1-\delta}\jap{v}^{\gamma+2s-2\delta}\jap{v_*}^{\gamma+2s-2\delta}\jap{x-(t+1)v_*}^{2\delta}f_*G^2\d v_*\d v\\
&\lesssim 2^{(2s-2)k}\norm{\jap{x-(t+1)v}^{2\delta}g}_{L^1_v}\norm{(1+t)^{-\frac{1+\delta}{2}}\jap{x-(t+1)v}^2\jap{v}\der g}^2_{L^2_v}.
\end{align*}

For the second factor we use the regular change of variables $v'\to v$ and that $\color{black}{|v'-v_*|\approx |v-v_*|}$ \textcolor{black}{($|v'-v_*|=|v-v_*|\cos \left(\frac{\theta}{2}\right)$)} and then proceed in the same way as before.

Finally we sum over $k$ from $k=0$ to $k=\infty$ to get the desired result.
\end{proof}

Before we bound \eref{top_order_diff_f} we need an auxiliary lemma which will also be useful later. 
\begin{lemma}\label{l.diff_K_outside_ball}
Let $\mg$ be any smooth function then $K_{\mg}(v,v')$ (defined as in \lref{boltz_kernel}) satisfies the following bounds for all $r>0$
$$\int_{\R^3\backslash B_r(v')}|K_{\mg}(v,v')-K_{\mg}(v',v)|\d v\lesssim r^{1-2s}\int_{\R^3} |\part_{v_i}{\mg}|(z)|z-v'|^{\gamma+2s}\d z$$
and
$$\int_{\R^3\backslash B_r(v)}|K_{\mg}(v,v')-K_{\mg}(v',v)|\d v'\lesssim r^{1-2s}\int_{\R^3} |\part_{v_i}{\mg}|(z)|z-v|^{\gamma+2s}\d z.$$
\end{lemma}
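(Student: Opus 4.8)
### Proof Proposal

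The plan is to adapt the proof of \lref{boltz_kernel_approx} (i.e.\ Silvestre's Corollary 4.2, from \cite{Sil14}) to the difference $K_g(v,v')-K_g(v',v)$, exploiting the fact that this difference involves $g$ evaluated at two nearby points and therefore produces a gradient. Recall from \lref{boltz_kernel} that
$$K_g(v,v')=\frac{4}{|v'-v|}\int_{w\perp(v'-v)}g(v+w)B(r,\cos\theta)\,r^{-1}\d w,$$
where $r^2=|v'-v|^2+|w|^2$, and that the analogous formula for $K_g(v',v)$ integrates $g(v'+w)$ over $w\perp(v-v')$ (the same hyperplane) with the \emph{same} $r$ but with $\cos\theta$ replaced by its reflection; because $B$ is symmetrized (see \eref{sym_b}) and depends on $\cos\theta$ only through the symmetrized kernel, the angular factor agrees up to the controlled symmetrization, so the essential difference is $g(v+w)-g(v'+w)$. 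First I would write
$$K_g(v,v')-K_g(v',v)=\frac{4}{|v'-v|}\int_{w\perp(v'-v)}\bigl[g(v+w)-g(v'+w)\bigr]B(r,\cos\theta)\,r^{-1}\d w + (\text{angular error}),$$
and bound $|g(v+w)-g(v'+w)|\le |v-v'|\,\sup_{[0,1]}|\partial_{v}g|$ along the segment, which after using $|v-v'|=|v'-v|$ cancels the $|v'-v|^{-1}$ prefactor, leaving a bound of the shape
$$|K_g(v,v')-K_g(v',v)|\lesssim \int_{w\perp(v'-v)}\bigl(\textstyle\sup_{\text{segment}}|\partial_v g|\bigr)(v+w)\,B(r,\cos\theta)\,r^{-1}\d w.$$

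Next I would mimic the computation leading to \lref{boltz_kernel_approx}: using the assumed form $B=|v-v_*|^\gamma b(\cos\theta)$ with $\sin\theta\,b(\cos\theta)\approx\theta^{-1-2s}$ and the collisional geometry (where the relevant angle is $\approx |v'-v|/|w|$ for $w$ large, cf.\ the discussion in \sref{sing}), one gets a pointwise estimate
$$|K_g(v,v')-K_g(v',v)|\lesssim \Bigl(\int_{\{w\cdot(v'-v)=0\}}|\partial_{v_i}g|(v+w)\,|w|^{\gamma+2s+1}\d w\Bigr)|v'-v|^{-2-2s},$$
i.e.\ exactly the kernel of \lref{boltz_kernel_approx} but with one power of $|v'-v|$ gained (from the segment bound) and $g$ replaced by $|\partial_{v_i}g|$. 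Then I would integrate this over $v\in\R^3\setminus B_r(v')$. The radial integral $\int_r^\infty \rho^{-2-2s}\rho^2\,\frac{d\rho}{\rho}$ converges and yields $r^{1-2s}$ (note $1-2s$ can be negative, but the integral $\int_r^\infty\rho^{-2s-1}d\rho$ still converges since $2s>0$), while the transverse integral reconstitutes $\int_{\R^3}|\partial_{v_i}g|(z)\,|z-v'|^{\gamma+2s}\d z$ after the change of variables $z=v+w$ — this is precisely the same bookkeeping as in \lref{K_bound}, Lemma 3.4/3.5 of \cite{ImbSil16}, so I would invoke that structure directly. This gives the first inequality; the second follows by the symmetric argument integrating over $v'\in\R^3\setminus B_r(v)$, or simply by the near-antisymmetry $K_g(v,v')-K_g(v',v)=-(K_g(v',v)-K_g(v,v'))$ combined with relabeling.

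The main obstacle I anticipate is handling the angular/geometric error term carefully: when one swaps $(v,v')\mapsto(v',v)$ inside $K_g$, the cosine of the scattering angle $\cos\theta=\frac{v-v'-w}{|v-v'-w|}\cdot\frac{v'-v-w}{|v'-v-w|}$ is not literally invariant, so the integrand is not just $g(v+w)-g(v'+w)$ times a common factor — there is a genuine discrepancy in $B(r,\cos\theta)$ between the two expressions. One must check that this discrepancy is itself controlled by $|v-v'|/|w|$ (it should be, since shifting the base point of the hyperplane by $v-v'$ within the plane $w\perp(v'-v)$ changes the relevant angle by $O(|v-v'|/|w|)$), so that it contributes a term of the same or better order and does not spoil the $r^{1-2s}$ gain. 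A secondary technical point is that the two integrals defining $K_g(v,v')$ and $K_g(v',v)$ are over the same hyperplane $\{w\perp(v'-v)\}$, which makes the differencing legitimate without any change of the domain of integration; I would make this explicit at the outset since it is what makes the gradient appear cleanly. Beyond these points the argument is a routine repetition of the Silvestre--Imbert--Silvestre computations with $g\rightsquigarrow|\partial_{v_i}g|$ and one extra power of $|v-v'|$.
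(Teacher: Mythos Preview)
Your overall strategy coincides with the paper's, but two specific points need correcting.

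First, the ``angular error'' you worry about does not exist. In the formula of \lref{boltz_kernel} one has
\[
\cos\theta=\frac{(v-v')-w}{|(v-v')-w|}\cdot\frac{(v'-v)-w}{|(v'-v)-w|},
\]
which is symmetric under $v\leftrightarrow v'$ since the dot product commutes; the hyperplane $\{w\perp(v'-v)\}$ and the value of $r$ are likewise unchanged. Hence the factor $B(r,\cos\theta)\,r^{-1}$ is \emph{identical} in the integrands for $K_g(v,v')$ and $K_g(v',v)$, and the difference is exactly
\[
K_g(v,v')-K_g(v',v)=\frac{4}{|v-v'|}\int_{w\perp(v-v')}\bigl[g(v+w)-g(v'+w)\bigr]B(r,\cos\theta)\,r^{-1}\,\d w,
\]
with no error term to chase.

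Second, and more substantively, your displayed pointwise bound places $|\partial_{v_i}g|$ at $v+w$, but the Taylor estimate actually gives
\[
|g(v+w)-g(v'+w)|\le |v-v'|\int_0^1|\partial_{v_i}g|\bigl(v'+w+\nu(v-v')\bigr)\,\d\nu,
\]
so the gradient sits at intermediate points depending on both $\nu$ and $v$. You therefore cannot ``directly invoke'' the bookkeeping of \lref{K_bound}, since the integrand is not a function of $v+w$ (or $v'+w$) alone. The paper's fix is to keep the integral Taylor form, normalize $v'=0$, and change variables $\alpha=\nu v$ in the $v$-integral: the Jacobian $\nu^{-3}$ combines with $|v|^{-2-2s}=\nu^{2+2s}|\alpha|^{-2-2s}$ to produce a factor $\nu^{2s-1}$, the domain $|v|>r$ becomes $|\alpha|>\nu r$, and crucially the argument of the gradient becomes $w+\alpha$, which is now in the form needed to apply the co-area identity of \lref{change_of_var_2}. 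That identity converts the hyperplane integral over $\{w\perp\alpha\}$ into $\int_{\R^3}|\partial_{v_i}g|(z)\,|z|^{\gamma+2s}\,\d z$ after the radial $\rho$-integral (which contributes $(\nu r)^{1-2s}$), and finally $\int_0^1\nu^{2s-1}\cdot\nu^{1-2s}\,\d\nu=1$ closes the estimate with the factor $r^{1-2s}$. With these two fixes your sketch is the paper's proof.
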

\begin{proof}
We have $$K_{\mg}(v,v')=\frac{2^2}{|v-v'|}\left(\int_{w\perp (v-v')} {\mg}(v+w)B(|v-v_*|,\theta)|v-v_*|^{-1}\d w\right).$$

Thus we can write the difference $K_{\mg}(v,v')-K_{\mg}(v',v)$ as $$\frac{2^2}{|v-v'|}\left(\int_{w\perp (v-v')} ({\mg}(v+w)-{\mg}(v'+w))B(|v-v_*|,\theta)|v-v_*|^{-1}\d w\right).$$

Now using \lref{boltz_kernel_approx}, we get that $$|K_{\mg}(v,v')-K_{\mg}(v',v)|\approx |v-v'|^{-3-2s}\left(\int_{w\perp (v-v')} ({\mg}(v+w)-{\mg}(v'+w))|w|^{\gamma+2s+1}\d w\right).$$

As an intermediate step we define, $$\zeta_w(\nu)=\nu(v+w)+(1-\nu)(v'+w).$$
Now, we have by the integral form of Taylor's theorem 
\begin{align*}
|g(v+w)-g(v'+w)|&=\left|\int_0^1 \frac{\d \zeta_w}{\d \nu}\cdot (\nabla {\mg})(v'+w+\nu(v-v'))\d \nu\right|\\
&\leq |v-v'|\int_0^1 |\part_{v_i} {\mg}|(v'+w+\nu(v-v'))|\d \nu.
\end{align*}

Thus the following inequality holds,
\begin{align*}
\int_{\R^3\backslash B_r(v')}& |K_{\mg}(v,v')-K_{\mg}(v',v)|\d v\\
&\lesssim \int_0^1\int_{\R^3\backslash B_r(v')} \int_{w\perp (v-v')}|v-v'|^{-2-2s}|\part_{v_i} {\mg}|(v'+w+\nu(v-v'))|w|^{\gamma+2s+1}\d w\d v\d \nu.
\end{align*}

We can assume without loss of generality that $v'=0$. We also use the change of variables $\nu v=\alpha$. The jacobian for this change of variables is $\frac{1}{\nu^3}.$ With these reductions, the inequality above reads 
\begin{align*}
\int_{\R^3\backslash B_{r}}& |K_{\mg}(v,v')-K_{\mg}(v',v)|\d v\\
&\lesssim \int_0^1\int_{\R^3\backslash B_{\nu r}} \int_{w\perp \alpha} \nu^{2s-1}|\alpha|^{-2-2s}|\part_{v_i} {\mg}|(w+\alpha)|w|^{\gamma+2s+1}\d w\d \alpha\d \nu\\
&=\int_0^1\int_{r\nu}^\infty \nu^{2s-1}\rho^{-2-2s}\int_{\part B_\rho}\int_{w\perp \alpha}|\part_{v_i} {\mg}|(\alpha+w)\cdot|w|^{\gamma+2s+1}\d w\d S(\alpha) \d \rho \d \nu.
\end{align*}

Using \lref{change_of_var_2}, we can rewrite the above inequality as 
\begin{align*}
\int_{\R^3\backslash B_{r}}& |K_{\mg}(v,v')-K_{\mg}(v',v)|\d v\\
&\lesssim \int_0^1\nu^{2s-1}\int_{\nu r}^{\infty}\rho^{-2s}\int_{\R^3\backslash B_\rho} |\part_{v_i} {\mg}|(z) \frac{(|z|^2-\rho^2)^{\frac{1+\gamma+2s}{2}}}{|z|}\d z\d \rho\d \nu\\
&=\int_{0}^1 \nu^{2s-1}\int_{\R^3\backslash B_{\nu r}} \frac{|\part_{v_i} {\mg}|(z)}{|z|}\left(\int_{\nu r}^{|z|} \rho^{-2s}(|z|^2-\rho^2)^{\frac{1+\gamma+2s}{2}}\d \rho\right)\d z\d \nu\\
&\leq \int_0^1 \nu^{2s-1}\int_{\R^3\backslash B_{\nu r}}\frac{|\part_{v_i} {\mg}|(z)}{|z|}[(\nu r)^{-2s+1}|z|^{1+\gamma+2s}]\d z \d \nu\\
&\leq r^{1-2s}\int_{\R^3}|\part_{v_i} {\mg}|(z)|z|^{\gamma+2s}\d z.
\end{align*}
\end{proof}
\begin{corollary}\label{c.diff_f_cor}
Let $g$ be any smooth function then $K_{\mg}(v,v')$ defined as in \lref{boltz_kernel} satisfies the following bounds for all $r>0$
$$\int_{B_r(v')}|v-v'||K_{\mg}(v,v')-K_{\mg}(v',v)|\d v\lesssim r^{2-2s}\int_{\R^3} |\part_{v_i}{\mg}|(z)|z-v'|^{\gamma+2s}\d z$$
and
$$\int_{B_r(v)}|v-v'||K_{\mg}(v,v')-K_{\mg}(v',v)|\d v'\lesssim r^{2-2s}\int_{\R^3} |\part_{v_i}{\mg}|(z)|z-v|^{\gamma+2s}\d z.$$
\end{corollary}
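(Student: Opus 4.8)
The plan is to deduce the corollary from \lref{diff_K_outside_ball} by a dyadic decomposition of the ball $B_r(v')$ into annuli on which the extra weight $|v-v'|$ is essentially constant, so that we trade the gain $r^{1-2s}$ over ball complements for the gain $r^{2-2s}$ over balls, paying one power of $r$ for the weight. Concretely, for the first inequality I would discard the measure-zero point $v'$ and write
$$B_r(v')\setminus\{v'\}=\bigcup_{j=0}^{\infty}A_j,\qquad A_j:=\{v\in\R^3:\ 2^{-j-1}r\le|v-v'|<2^{-j}r\},$$
and estimate each $A_j$ separately. On $A_j$ we have $|v-v'|<2^{-j}r$, and moreover $A_j\subset\R^3\setminus B_{2^{-j-1}r}(v')$, so applying the first bound of \lref{diff_K_outside_ball} with radius $2^{-j-1}r$ gives
$$\int_{A_j}|v-v'|\,|K_g(v,v')-K_g(v',v)|\,\d v\le 2^{-j}r\int_{\R^3\setminus B_{2^{-j-1}r}(v')}|K_g(v,v')-K_g(v',v)|\,\d v\lesssim 2^{-j}r\,(2^{-j-1}r)^{1-2s}\int_{\R^3}|\part_{v_i}g|(z)\,|z-v'|^{\gamma+2s}\,\d z.$$

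Since $v'$ is fixed, the $z$-integral on the right is independent of $j$ and factors out of the sum over $A_j$. The remaining geometric factor is $\sum_{j\ge0}2^{-j}(2^{-j-1}r)^{1-2s}=2^{2s-1}r^{2-2s}\sum_{j\ge0}2^{-j(2-2s)}$, and this last series converges precisely because $s<1$ forces $2-2s>0$; summing over $j$ therefore produces the claimed $r^{2-2s}$. The second inequality is proved identically: decompose $B_r(v)$ into annuli in the $v'$-variable and invoke the second bound of \lref{diff_K_outside_ball} (the one integrated over $\R^3\setminus B_r(v)$ in $\d v'$).

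The only step that requires genuine attention is the convergence of the dyadic sum over the scales $2^{-j}r$, which is exactly where the restriction $s\in(0,1)$ enters; everything else is the routine bookkeeping of summing a geometric series across dyadic shells and removing the center point, and no new estimate on $K_g$ is needed beyond \lref{diff_K_outside_ball}.
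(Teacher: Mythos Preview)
Your argument is correct and is exactly the approach taken in the paper: the paper's proof consists of the single sentence ``apply \lref{diff_K_outside_ball} to each dyadic interval $[r/2^{k+1},r/2^{k}]$, note $|v-v'|\lesssim r/2^{k}$, and sum over $k\ge 0$,'' which is precisely your decomposition into the annuli $A_j$ followed by the geometric-series summation using $s<1$. You have simply written out the details the paper leaves implicit.
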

\begin{proof}
\textcolor{black}{We only prove the first inequality since the second is essentially the same. First we use dyadic decomposition to $B_r(v')$ to get that
$$\int_{B_r(v')}|v-v'||K_{\mg}(v,v')-K_{\mg}(v',v)|\d v=\sum_{n=0}^{n=\infty}\int_{A_n(v')}|v-v'||K_{\mg}(v,v')-K_{\mg}(v',v)|\d v,$$
where $A_n(v')=B_{\frac{r}{2^n}}(v')\backslash B_{\frac{r}{2^{n+1}}}(v').$
Now note that $|v-v'|\lesssim \frac{r}{2^n}$ and $$\int_{A_n(v')}|v-v'||K_{\mg}(v,v')-K_{\mg}(v',v)|\d v\lesssim \frac{r}{2^n}\int_{\R^3\backslash B_{\frac{r}{2^n}}(v')}|K_{\mg}(v,v')-K_{\mg}(v',v)|\d v.$$
\lref{diff_K_outside_ball} implies that 
$$\int_{\R^3\backslash B_{\frac{r}{2^n}}(v')}|K_{\mg}(v,v')-K_{\mg}(v',v)|\d v\lesssim \left(\frac{r}{2^n}\right)^{1-2s}\int_{\R^3} |\part_{v_i}{\mg}|(z)|z-v'|^{\gamma+2s}\d z.$$
Putting together the above observations we get,
$$\int_{A_n(v')}|v-v'||K_{\mg}(v,v')-K_{\mg}(v',v)|\d v\lesssim \left(\frac{r}{2^n}\right)^{2-2s}\int_{\R^3} |\part_{v_i}{\mg}|(z)|z-v'|^{\gamma+2s}\d z.$$
Finally, summing over $n\geq 0$ and using the fact that $2-2s>0$, we get the required estimate.}
\end{proof}
\begin{lemma}\label{l.top_order_diff_f}
Let $|\alpha|+|\beta|+|\omega|=n$, $\gamma+2s\in(0,2]$ and $s\in\left(\frac{1}{2},1\right)$, we have the following bound for $\sum_{k=0}^{k=\infty}|\eref{top_order_diff_f}|$,
\begin{align*}
\sum_{k=0}^{k=\infty}|\eref{top_order_diff_f}|&\lesssim \int_0^{T}(1+t)^{2+\delta}\norm{g}^{1-s}_{L^\infty_xL^1_v}\norm{\part_{v_i}g}^{s}_{L^\infty_xL^1_v}\norm{(1+t)^{-\frac{1}{2}-\frac{\delta}{2}}\jap{v}^{\frac{\gamma+2s}{2}}\der g}^2_{L^2_xL^2_v}\d t\\
&\quad + \int_0^{T}(1+t)^{2+\delta}\norm{g}^{\frac{3}{2}-s}_{L^\infty_xL^1_v}\norm{\part_{v_i}g}^{s-\frac{1}{2}}_{L^\infty_xL^1_v}\norm{(1+t)^{-\frac{1}{2}-\frac{\delta}{2}}\jap{v}^{\frac{\gamma+2s}{2}}\der g}^2_{L^2_xL^2_v}\d t.
\end{align*}
\end{lemma}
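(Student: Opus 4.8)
The plan is to absorb the angular singularity of $B_k$ entirely through the cancellation carried by $[(\mu g)'_*-(\mu g)_*]$, the weight difference $\jap{x-(t+1)v}^2-\jap{x-(t+1)v'}^2$ having already been peeled off to create decay. After rewriting in terms of the Imbert--Silvestre kernel, this cancellation is precisely the one measured by $|K_{\mu g}(v,v')-K_{\mu g}(v',v)|$, which is controlled by \lref{diff_K_outside_ball} and \cref{diff_f_cor}. First I would expand the weight factor by Taylor's theorem exactly as in the proof of \lref{top_order_comm_weights}: $|\jap{x-(t+1)v}^2-\jap{x-(t+1)v'}^2|\lesssim (1+t)\,|v-v'|\,(\jap{x-(t+1)v}+\jap{x-(t+1)v'})$, with $|v-v'|=|v-v_*|\sin\tfrac\theta2$. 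This extracts the prefactor $(1+t)$ together with an honest power $|v-v_*|$ and two extra spatial weights; splitting according to which of $\jap{x-(t+1)v}$, $\jap{x-(t+1)v'}$ is retained, the amplitude $A(v,v')$ multiplying $[(\mu g)'_*-(\mu g)_*]$ then depends only on $v$ and $v'$, not on $v_*$ or $\sigma$.

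Next, with $F:=\mu g$, I would use \lref{boltz_kernel} and the Carleman change of variables \lref{change_of_variables_sil} (and \lref{pre_post} for the un-primed term) to carry out the $\d\sigma\,\d v_*$ integration: for each fixed $k\ge 0$ the kernel $K_{F,k}$ is supported on $|v-v'|\approx 2^{-k}$, away from the diagonal, so no principal value is involved and, schematically,
\[
\int\!\!\int B_k\big[(\mu g)'_*-(\mu g)_*\big]\,A(v,v')\,\d\sigma\,\d v_* \ =\ \int_{\R^3}\big[K_{F,k}(v,v')-K_{F,k}(v',v)\big]\,A(v,v')\,\d v',
\]
the $|v-v'|$ produced above being kept inside $A$. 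On the support $|v-v'|\approx 2^{-k}$ I would then estimate $\int|v-v'|\,|K_{F,k}(v,v')-K_{F,k}(v',v)|\,\d v'$ (and its $\d v$ analogue) in two ways: a coarse bound, using $|K_{F,k}(v,v')-K_{F,k}(v',v)|\le K_{F,k}(v,v')+K_{F,k}(v',v)$, $|v-v'|\lesssim 2^{-k}$ and \lref{K_bound}, which gives $\lesssim 2^{(2s-1)k}\int|F|(z)|z-v|^{\gamma+2s}\,\d z$ (no derivative, but for $s>\tfrac12$ summable only over the low blocks); and a fine bound, using the cancellation via \cref{diff_f_cor}, which gives $\lesssim 2^{-(2-2s)k}\int|\part_{v_i}F|(z)|z-v|^{\gamma+2s}\,\d z$ (one velocity derivative, but summable over all $k\ge0$ precisely because $s<1$). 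Splitting $\sum_{k\ge0}$ at a threshold and optimizing it (equivalently, applying Young's inequality to the resulting geometric mean) interpolates $\|\part_{v_i}F\|_{L^1_v}$ against $\|F\|_{L^1_v}$; carried out for the two pieces above, this produces the two interpolated terms of the statement, with exponents $(1-s,s)$ and $(\tfrac32-s,\,s-\tfrac12)$.

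It remains to clean up. I would use $|z-v|^{\gamma+2s}\lesssim\jap z^{\gamma+2s}\jap v^{\gamma+2s}$ (valid since $\gamma+2s\in(0,2]$), absorb the $\jap z^{\gamma+2s}$ and the Gaussian derivative onto $g$ via \lref{exp_bound} (so that $\|F\|_{L^1_v}$ and $\|\part_{v_i}F\|_{L^1_v}$ become the $L^1_v$ norms of $g$ and $\part_{v_i}g$), note that $\jap v\approx\jap{v'}$ on the support of $B_k$ for $k\ge0$, place the remaining $\jap v^{\gamma+2s}$ as $\jap v^{\frac{\gamma+2s}{2}}$ on each $\der g$ factor, apply Cauchy--Schwarz in $v,v'$ and then in $x$, and finally multiply and divide by $(1+t)^{1+\delta}$ to reconstruct the norm $\|(1+t)^{-\frac12-\frac\delta2}\jap v^{\frac{\gamma+2s}{2}}\der g\|_{L^2_xL^2_v}$; together with the $(1+t)$ from the Taylor step this gives the $(1+t)^{2+\delta}$ prefactor, after which one integrates in $t$.

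The main obstacle is the kernel-difference estimate: one must bound $|K_{\mu g,k}(v,v')-K_{\mu g,k}(v',v)|$ so that the result both survives summation over the dyadic blocks $k\ge0$ — which is what forces the gain $2^{-(2-2s)k}$ and hence the restriction $s<1$ — and fits within the very tight time-decay budget, which is exactly why one cannot afford the $\tfrac32$ velocity derivatives on $\mu g$ that a Hölder-space argument in the spirit of \cite{HeSnTa19} would require, and must instead interpolate the derivative-free coarse bound against the one-derivative bound furnished by the adapted estimates \lref{diff_K_outside_ball} and \cref{diff_f_cor}.
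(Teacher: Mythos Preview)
Your proposal is correct and follows essentially the same route as the paper. The only cosmetic difference is that the paper first passes to the limit in $k$ by monotone convergence and then splits the $\d v'$-integral at a continuous radius $R$ (using \lref{K_bound} on $\{|v-v'|>R\}$ and \cref{diff_f_cor} on $\{|v-v'|\le R\}$, with the choice $R=(\|f\jap{v}^{\gamma+2s}\|_{L^1_v}/\|\partial_{v_i}f\jap{v}^{\gamma+2s}\|_{L^1_v})^{1/2}$), whereas you keep the dyadic blocks and split the sum over $k$ at a threshold; the two are equivalent and produce exactly the interpolated exponents $(1-s,s)$ and $(\tfrac32-s,\,s-\tfrac12)$.
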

\begin{proof}
Using the change of variables from \lref{change_of_variables_sil} and then using the definition of kernel $K_f$ from \lref{boltz_kernel}, we get that 
\begin{align*}
|\eref{top_order_diff_f}|&=\vert\int_{0}^{T}\int_{\R^3}\int_{\R^3}\int_{\R^3}\chi_k(|v'-v|)(K_f(v,v')-K_f(v',v))\jap{x-(t+1)v}\der g\\
&\qquad\times\jap{x-(t+1)v'}(\der g)'[\jap{x-(t+1)v}^2-\jap{x-(t+1)v'}^2]\d v'\d v\d x\d t\vert\\
&\lesssim \int_{0}^{T_*}\int_{\R^3}\int_{\R^3}\int_{\R^3}\chi_k(|v'-v|)|(K_f(v,v')-K_f(v',v))\jap{x-(t+1)v}\der g\\
&\qquad\times\jap{x-(t+1)v'}(\der g)'[\jap{x-(t+1)v}^2-\jap{x-(t+1)v'}^2]|\d v'\d v\d x\d t.
\end{align*}
Now using monotone convergence again as in \lref{top_order_comm_weights}, we get that
\begin{align*}
\sum_{k=0}^{k=\infty}|\eref{top_order_diff_f}|&\lesssim \int_{0}^{T}\int_{\R^3}\int_{\R^3}\int_{\R^3}|(K_f(v,v')-K_f(v',v))\jap{x-(t+1)v}\der g\jap{x-(t+1)v'}\\
&\qquad\times(\der g)'[\jap{x-(t+1)v}^2-\jap{x-(t+1)v'}^2]|\d v'\d v\d x\d t.
\end{align*}

Hence it suffices to bound the term on the right side of the above inequality. First note that as in \lref{top_order_comm_weights}, we have
\begin{equation}\label{e.obs}
[\jap{x-(t+1)v}^2-\jap{x-(t+1)v'}^2]\lesssim (1+t)|v-v'|\jap{x-(t+1)v}\jap{x-(t+1)v'}].
\end{equation}

Now we estimate the above differently when $|v-v'|>R$ and when $|v-v'|\leq R$. To keep the notation lean, we let \underline{$|K_f(v,v')-K_f(v',v)|=T_f(v',v).$} Further we drop integration over space and time.

We first treat the case $|v-v'|\leq R$. Using \eref{obs} and Cauchy--Schwarz we get, 
\begin{align*}
&\int_{\R^3}\int_{B_R(v)}|T_f(v,v') \jap{x-(t+1)v}\der g\jap{x-(t+1)v'}(\der g)'\\
 &\hspace{10em}\times[\jap{x-(t+1)v}^2-\jap{x-(t+1)v'}^2]|\d v'\d v\\
&\quad\lesssim (1+t)\int_{\R^3}\int_{B_R(v)} |v-v'|\cdot|T_f(v,v')  G'G|\d v'\d v\\
&\quad\lesssim (1+t)^{2+\delta}\norm{(1+t)^{-\frac{1}{2}-\frac{\delta}{2}}\jap{v}^{\frac{\gamma+2s}{2}}\jap{x-(t+1)v}^2 \der g}_{L^2_v}\\
&\quad \times \left(\int_{\R^3}(1+t)^{-1-\delta}\jap{v}^{-\gamma-2s}\left(\int_{B_R(v)} |v-v'|T_f(v,v')  G'\d v'\right)^2\d v\right)^{\frac{1}{2}}\\
&\quad\lesssim (1+t)^{2+\delta}\norm{(1+t)^{-\frac{1}{2}-\frac{\delta}{2}}\jap{v}^{\gamma+2s}\jap{x-(t+1)v}^2 \der g}_{L^2_v}\\
&\quad\times\left[\int_{\R^3}\left(\int_{B_R(v)}|v-v'|\jap{v}^{-\gamma-2s}T_f(v,v')\d v'\right)\left(\int_{B_R(v)}(1+t)^{-1-\delta}|v-v'|T_f(v,v')G'^2\d v'\right)\d v\right]^{\frac{1}{2}}
\end{align*}

By \cref{diff_f_cor} we get that $$\int_{B_R(v)} \jap{v}^{-2s-\gamma}|v-v'|T_f(v,v')\d v'\lesssim R^{2-2s}\norm{\part_{v_i} f\jap{v}^{\gamma+2s}}_{L^1_v},$$
and 
$$\int_{B_R(v)} \jap{v'}^{-2s-\gamma}|v-v'|T_f(v,v')\d v\lesssim R^{2-2s}\norm{\part_{v_i} f\jap{v}^{\gamma+2s}}_{L^1_v}.$$

Using this and Fubini-Tonelli we get
\begin{align}
&(1+t)^{2+\delta}\norm{(1+t)^{-\frac{1}{2}-\frac{\delta}{2}}\jap{v}^{\frac{\gamma+2s}{2}}\jap{x-(t+1)v}^2 \der g}_{L^2_v}\nonumber\\
&\quad\times\left[\int_{\R^3}\left(\int_{B_R(v)}|v-v'|\jap{v}^{-\gamma-2s}T_f(v,v')\d v'\right)\left(\int_{B_R(v)}(1+t)^{-1-\delta}|v-v'|T_f(v,v')G'^2\d v'\right)\d v\right]^{\frac{1}{2}}\nonumber\\
&\lesssim (1+t)^{2+\delta}\norm{(1+t)^{-\frac{1}{2}-\frac{\delta}{2}}\jap{v}^{\frac{\gamma+2s}{2}}\jap{x-(t+1)v}^2 \der g}_{L^2_v}\nonumber\\
&\quad\times R^{1-s}\norm{\part_{v_i} f\jap{v}^{\gamma+2s}}^{\frac{1}{2}}_{L^1_v}\left[\int_{\R^3}\int_{B_R(v)}(1+t)^{-1-\delta}|v-v'|T_f(v,v')G'^2\d v'\d v\right]^{\frac{1}{2}}\nonumber\\
&\lesssim (1+t)^{2+\delta}\norm{(1+t)^{-\frac{1}{2}-\frac{\delta}{2}}\jap{v}^{\frac{\gamma+2s}{2}}\jap{x-(t+1)v}^2 \der g}_{L^2_v}\nonumber\\
&\quad\times R^{1-s}\norm{\part_{v_i} f\jap{v}^{\gamma+2s}}^{\frac{1}{2}}_{L^1_v}\left[\int_{\R^3}(1+t)^{-1-\delta}\jap{v'}^{\gamma+2s}G'^2\left(\int_{B_R(v')}|v-v'|\jap{v'}^{-\gamma-2s}T_f(v,v')\d v\right)\d v'\right]^{\frac{1}{2}}\nonumber\\
&\lesssim (1+t)^{2+\delta}\norm{(1+t)^{-\frac{1}{2}-\frac{\delta}{2}}\jap{v}^{\frac{\gamma+2s}{2}}\jap{x-(t+1)v}^2 \der g}_{L^2_v}\nonumber\\
&\quad\times R^{2-2s}\norm{\part_{v_i} f\jap{v}^{\gamma+2s}}_{L^1_v}\left[\int_{\R^3}(1+t)^{-1-\delta}\jap{v'}^{\gamma+2s}G'^2\d v'\right]^{\frac{1}{2}}\nonumber\\
&\lesssim (1+t)^{2+\delta}R^{2-2s}\norm{\part_{v_i} f\jap{v}^{\gamma+2s}}_{L^1_v}\norm{(1+t)^{-\frac{1}{2}-\frac{\delta}{2}}\jap{v}^{\frac{\gamma+2s}{2}}\jap{x-(t+1)v}^2 \der g}^2_{L^2_v}.\label{e.diff_f_1_sub_R}
\end{align}

For $|v-v'|\geq R$ we have
\begin{align*}
&\int_{\R^3}\int_{\R^3\backslash B_R(v)}(K_f(v,v')-K_f(v',v))\jap{x-(t+1)v}\der g\jap{x-(t+1)v'}(\der g)' \\
&\hspace{10em}\times [\jap{x-(t+1)v}^2-\jap{x-(t+1)v'}^2]\d v'\d v\\
&\lesssim (1+t)\left|\int_{\R^3}\int_{\R^3\backslash B_R(v)}|v-v'|K_f(v,v')GG'\d v'\d v\right|+\left|\int_{\R^3}\int_{\R^3\backslash B_R(v)}|v-v'|K_f(v',v)GG'\d v'\d v\right|\\
&\lesssim (1+t)\left|\int_{\R^3}\int_{\R^3\backslash B_R(v)}|v-v'|K_f(v,v')GG'\d v'\d v\right|.
\end{align*}
We used pre-post collision change of variables for the last inequality.

Since $s\geq \frac{1}{2}$, we get by Cauchy--Schwarz, \lref{K_bound} and Fubini-Tonelli,
\begin{align}
(1+t)&\left|\int_{\R^3}\int_{\R^3\backslash B_R(v)}|v-v'|K_f(v,v')GG'\d v'\d v\right|\nonumber\\
&\lesssim (1+t)^{2+\delta}\norm{(1+t)^{-\frac{1}{2}-\frac{\delta}{2}}\jap{v}^{\frac{\gamma+2s}{2}}\der g}_{L^2_v}\nonumber\\
&\quad \times\left[\int_{\R^3}\left(\int_{\R^3\backslash B_R(v)}|v-v'|\jap{v}^{-\gamma-2s}K_f(v,v')\d v'\right)\right.\\
&\qquad\times \left.\left(\int_{\R^3\backslash B_R(v)}(1+t)^{-1-\delta}|v-v'|K_f(v,v')G'^2\d v'\right)\d v\right]^{\frac{1}{2}}\nonumber\\
&\lesssim (1+t)^{2+\delta}R^{\frac{1}{2}-s}\norm{(1+t)^{-\frac{1}{2}-\frac{\delta}{2}}\jap{v}^{\frac{\gamma+2s}{2}}\der g}_{L^2_v}\norm{f\jap{v}^{\gamma+2s}}^{\frac{1}{2}}_{L^1_v}\nonumber\\
&\quad \times\left[\int_{\R^3}\int_{\R^3\backslash B_R(v)}(1+t)^{-1-\delta}|v-v'|K_f(v,v')G'^2\d v'\d v\right]^{\frac{1}{2}}\nonumber\\
&\lesssim (1+t)^{2+\delta}R^{1-2s}\norm{(1+t)^{-\frac{1}{2}-\frac{\delta}{2}}\jap{v}^{\frac{\gamma+2s}{2}}\der g}^2_{L^2_v}\norm{f\jap{v}^{\gamma+2s}}_{L^1_v}.\label{e.diff_f_2_sub_R}
\end{align}

Choosing $R=\frac{\norm{f\jap{v}^{\gamma+2s}}^{\frac{1}{2}}_{L^1_v}}{\norm{\part_{v_i}f\jap{v}^{\gamma+2s}}^{\frac{1}{2}}_{L^1_v}}$, we get that
$$\eref{diff_f_1_sub_R}\lesssim (1+t)^{2+\delta}\norm{f\jap{v}^{\gamma+2s}}^{1-s}_{L^1_v}\norm{\part_{v_i}f\jap{v}^{\gamma+2s}}^{s}_{L^1_v}\norm{(1+t)^{-\frac{1}{2}-\frac{\delta}{2}}\jap{v}^{\frac{\gamma+2s}{2}}\der g}^2_{L^2_v},$$
and
$$\eref{diff_f_2_sub_R}\lesssim (1+t)^{2+\delta}\norm{f\jap{v}^{\gamma+2s}}^{\frac{3}{2}-s}_{L^1_v}\norm{\part_{v_i}f\jap{v}^{\gamma+2s}}^{s-\frac{1}{2}}_{L^1_v}\norm{(1+t)^{-\frac{1}{2}-\frac{\delta}{2}}\jap{v}^{\frac{\gamma+2s}{2}}\der g}^2_{L^2_v}.$$

We thus get the required result after using \lref{exp_bound} and Cauchy--Schwarz in space.
\end{proof}
\begin{lemma}\label{l.top_order_s_less_than_half}
For $s<\frac{1}{2}$, $\gamma+2s\in(0,2]$ and $k\geq 0$ in the singularity decompostion for $B$, we have the following bounds
\begin{align*}
|\eref{top_order_s_less_than_half}|\lesssim2^{(2s-1)k} \int_0^{T} &(1+t)^{2+\delta}\norm{g\jap{x-(t+1)v}}_{L^\infty_xL^1_v}\\
&\qquad\times\norm{(1+t)^{-\frac{1+\delta}{2}}\der g\jap{v}^{\frac{\gamma+2s}{2}}\jap{x-(t+1)v}^2}^2_{L^2_xL^2_v}\d t,
\end{align*}
and
$$\sum_{k=0}^{k=\infty}|\eref{top_order_s_less_than_half}| \lesssim \int_0^{T} (1+t)^{2+\delta}\norm{g\jap{x-(t+1)v}}_{L^\infty_xL^1_v}\norm{(1+t)^{-\frac{1+\delta}{2}}\der g\jap{v}^{\frac{\gamma+2s}{2}}\jap{x-(t+1)v}^2}^2_{L^2_xL^2_v}\d t,$$
where $\delta$ is the same as in \eref{delta}.
\end{lemma}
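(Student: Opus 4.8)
The plan is to estimate \eref{top_order_s_less_than_half} following the scheme of \lref{top_order_comm_weights} together with \lref{trivial_est_1}--\lref{trivial_est_2}; the only structural novelty is that a \emph{single} difference of weights $\jap{x-(t+1)v}^2-\jap{x-(t+1)v'}^2$ appears instead of a product of two, so only one power of $\sin\frac{\theta}{2}$ is available, which is exactly why the estimate is restricted to $s<1/2$. First I would expand the weight difference about $v'$, writing it as $2(x-(t+1)v')\cdot(t+1)(v'-v)+(t+1)^2|v-v'|^2$ and using the collision identity $|v-v'|=|v-v_*|\sin\frac{\theta}{2}$ (and $|v-v'|=|v_*-v_*'|\le 2^{-k}$ on $\supp\chi_k$); this extracts the factor $\sin\frac{\theta}{2}$ which, after integration against $B_k$ in $\sigma$, produces the weight $2^{(2s-1)k}$ that is summable over $k\ge 0$ precisely when $s<1/2$.

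Next, since $|v-v'|=|v_*-v_*'|\le 2^{-k}\le 1$ for $k\ge 0$, I would replace $\mu_*',\jap{v'},\jap{v_*'}$ by $\mu_*,\jap{v},\jap{v_*}$ up to constants (using \lref{pre_post}, or the comparison $\mu_*'\approx\mu_*$, to turn $(\mu g)_*'$ into $(\mu g)_*$), and use the null-structure inequality $(1+t)|v-v_*|\lesssim\jap{x-(t+1)v}+\jap{x-(t+1)v_*}$ together with its fractional form \eref{decay_from_diff_velocity} to convert the time growth coming from the expansion into space-time weights; the $\jap{x-(t+1)v_*}$-contribution is absorbed into $\norm{g\jap{x-(t+1)v}}_{L^\infty_xL^1_v}$ (after \lref{exp_bound}), while the remaining powers of $\jap{x-(t+1)v}$ and $\jap{x-(t+1)v'}$ are distributed, using $\jap{x-(t+1)v'}^j(\der g)'\le\jap{x-(t+1)v'}^2(\der g)'$ for $j\le 2$ and $\jap{x-(t+1)v}\ge 1$, so as to reconstitute $G:=\jap{x-(t+1)v}^2\der g$ and $G':=\jap{x-(t+1)v'}^2(\der g)'$ at the cost of at most one extra power of $(1+t)$.

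With the integrand reduced to (essentially) $2^{-k}(1+t)\,B_k\,\mu_*\,g_*\,|G|\,|G'|$, I would apply Cauchy--Schwarz exactly as in \lref{trivial_est_1}--\lref{trivial_est_2}: split $B_k$ using $|v-v_*|^{\pm(\gamma+2s)}$, bound the $|G|^2$-factor via $\int_{\S^2}B_k\sin\frac{\theta}{2}\,\d\sigma\lesssim 2^{(2s-1)k}|v-v_*|^{\gamma+2s-1}$, treat the $|G'|^2$-factor by the Carleman change of variables of \lref{change_of_var_GrSt} together with \lref{bound_on_B_tilde}, and integrate the Gaussian $\mu_*$ against the leftover power of $|v-v_*|$ to produce the velocity weight $\jap{v}^{\gamma+2s}$, half of which goes to each factor; after the substitution $v'\to v$ (legitimate since $\jap{v}\approx\jap{v'}$ on $\supp\chi_k$) each factor becomes $\norm{(1+t)^{-\frac{1+\delta}{2}}\der g\jap{v}^{\frac{\gamma+2s}{2}}\jap{x-(t+1)v}^2}_{L^2_xL^2_v}$. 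The explicit $(1+t)$ from the expansion combines with the $(1+t)^{1+\delta}$ generated by rebalancing the time weights in Cauchy--Schwarz to give the claimed $(1+t)^{2+\delta}$; summing over $k\ge 0$ by monotone convergence then yields the second bound.

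The hard part will be the weight bookkeeping in the second step: a single weight difference supplies only one factor of $\jap{x-(t+1)v}+\jap{x-(t+1)v'}$ (or, via the null structure, $\jap{x-(t+1)v}+\jap{x-(t+1)v_*}$), so the split must be arranged so that no more than weight $2$ lands on either $\der g$ or $(\der g)'$ and no more than one extra power of $(1+t)$ is spent; spending two would make the time growth non-integrable downstream. As noted, keeping $\sin\frac{\theta}{2}$ rather than $\sin^2\frac{\theta}{2}$ is what forces the rate $2^{(2s-1)k}$ and hence the hypothesis $s<1/2$; for $s\ge 1/2$ one must instead use the finer decomposition \eref{top_order_diff_weights_mult}--\eref{top_order_diff_f}.
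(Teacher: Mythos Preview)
Your plan is essentially the paper's proof. The paper first applies the pre-post change of variables (\lref{pre_post}) to turn $(\mu g)'_*(\der g)'G$ into $f_*\,\der g\,G'$, bounds the single weight difference by $(1+t)|v-v_*|\sin\frac{\theta}{2}\,\jap{x-(t+1)v}\jap{x-(t+1)v_*}$ (exactly as in \lref{top_order_comm_weights}), absorbs $\jap{x-(t+1)v}|\der g|\le |G|$, and then runs the same Cauchy--Schwarz with the dyadic bound $\int_{\S^2}B_k\sin\frac{\theta}{2}\,\d\sigma\lesssim 2^{(2s-1)k}|v-v_*|^{\gamma+2s-1}$. Two small simplifications relative to your sketch: the null-structure inequality is \emph{not} invoked here (the single $(1+t)$ is simply kept, giving $(1+t)^{2+\delta}$ after rebalancing), and for the $G'^2$ factor the paper uses the regular change $v\mapsto v'$ (legitimate since $|v-v'|\le 1$ forces $\jap v\approx\jap{v'}$) rather than the Carleman representation of \lref{change_of_var_GrSt}; your Carleman route would also work but is heavier than needed.
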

\begin{proof}
As in \lref{top_order_comm_weights} we have that $$|\jap{x-(t+1)v'}^2-\jap{x-(t+1)v}^2|\lesssim (1+t)|v-v_*|\sin \frac{\theta}{2}\jap{x-(t+1)v}\jap{x-(t+1)v_*}.$$

Hence we have that
\begin{align*}
|B_kf_*G'\der g &[\jap{x-(t+1)v'}^2-\jap{x-(t+1)v}^2]|\\
&\qquad \lesssim (1+t)\sin \frac{\theta}{2}B_k|v-v_*| \jap{x-(t+1)v_*}f_*GG'.
\end{align*}

Now by Cauchy--Schwarz and noting that $s<\frac{1}{2}$ we get
\begin{align*}
(1+t)&\int_{\R^3}\int_{\R^3}\int_{\S^2} |\sin \frac{\theta}{2}B_k|v-v_*| \jap{x-(t+1)v_*}f_*GG'|\d \sigma\d v_*\d v\\
&\lesssim (1+t)^{2+\delta}\left(\int_{\R^3}\int_{\R^3}\int_{\S^2} |(1+t)^{-1-\delta}\sin \frac{\theta}{2}{\color{black}{B_k}}|v-v_*|\right.\\
&\hspace{10em}\times \left.\jap{x-(t+1)v_*}f_*G^2| \d \sigma\d v_*\d v\right)^\frac{1}{2}\\
&\quad \times \left(\int_{\R^3}\int_{\R^3}\int_{\S^2} |(1+t)^{-1-\delta}\sin \frac{\theta}{2}{\color{black}{B_k}}|v-v_*| \jap{x-(t+1)v_*}\right.\\
&\hspace{10em}\times\left.f_*G'^2| \d \sigma\d v_*\d v\right)^\frac{1}{2}.
\end{align*}

Next we use $$\int_{\S^2}B_k \sin \frac{\theta}{2}\d \sigma\lesssim\int_{2^{-k-1}|v-v_*|^{-1}}^{2^{-k}|v-v_*|^{-1}} |v-v_*|^\gamma \theta^{-2s}\d \theta\lesssim 2^{(2s-1)k}|v-v_*|^{\gamma+2s-1}.$$

Now proceeding in the same way as \lref{top_order_comm_weights} we get the desired result.
\end{proof}
\begin{lemma}\label{l.top_order_Gaussian_1}
Let $\gamma+2s\in(0,2]$ and $k\geq 0$ in the singularity decompostion for $B$, then we have the following bounds
\begin{align*}
|\eref{top_order_Gaussian_1}|\lesssim 2^{(2s-2)k}&\int_0^{T}(1+t)^{1+\delta}\norm{g\jap{x-(t+1)v}^2}_{L^\infty_xL^2_v}\\
&\qquad\times\norm{(1+t)^{-\frac{1+\delta}{2}}\jap{x-(t+1)v}^2\jap{v}\der g}_{L^2_xL^2_v}^2\d t,
\end{align*}
and 
\begin{align*}
\sum_{k=0}^{k=\infty}|\eref{top_order_Gaussian_1}|\lesssim& \int_0^{T}(1+t)^{1+\delta}\norm{g\jap{x-(t+1)v}^2}_{L^\infty_xL^2_v}\\
&\qquad\times\norm{(1+t)^{-\frac{1+\delta}{2}}\jap{x-(t+1)v}^2\jap{v}\der g}_{L^2_xL^2_v}^2\d t.
\end{align*}
\end{lemma}
\begin{proof}
We first apply pre-post collision change of variables to get 
\begin{align*}
\int_{\R^3}\int_{\R^3}\int_{\S^2}& B_k(\mu_*-\mu_*')g'_*(\der g)' G[\jap{x-(t+1)v}^2-\jap{x-(t+1)v'}^2]\\
&=-\int_{\R^3}\int_{\R^3}\int_{\S^2} B_k(\mu'_*-\mu_*)g_*(\der g) G'[\jap{x-(t+1)v}^2-\jap{x-(t+1)v'}^2].
\end{align*}

Now as in \lref{top_order_comm_weights} we have
$$|\jap{x-(t+1)v}^2-\jap{x-(t+1)v}^2|\lesssim (1+t)\sin \frac{\theta}{2}|v-v_*|(\jap{x-(t+1)v}+\jap{x-(t+1)v'}).$$
Next using $$\jap{x-(t+1)v'}\lesssim \jap{x-(t+1)v}+\jap{x-(t+1)v_*},$$ and that $$|v-v_*|\lesssim (1+t)^{-1}(\jap{x-(t+1)v}+\jap{x-(t+1)v_*}),$$
we get that 
\begin{align*}
|\jap{x-(t+1)v'}^2-\jap{x-(t+1)v}^2|&\lesssim \sin \frac{\theta}{2}[\jap{x-(t+1)v}^2+\jap{x-(t+1)v_*}^2]\\
&\lesssim \sin \frac{\theta}{2}\jap{x-(t+1)v}^2\jap{x-(t+1)v_*}^2.
\end{align*}

\textcolor{black}{By Taylor's theorem we have that $$|\mu_*'-\mu_*|=|v_*-v_*'|\part_{v_i}\mu(\eta v_*+(1-\eta)v_*'),$$
for $\eta\in (0,1)$. Since $|v_*'-v_*|=|v'-v|\leq 1$ as $k\geq 0$ (in $B_k$) we have that $\jap{v_*}\lesssim \jap{\eta v_*+(1-\eta)v_*'}\lesssim \jap{v'_*}$. Thus for $\tau>0$ small enough we have that
$$|v_*-v_*'|\part_{v_i}\mu(\eta v_*+(1-\eta)v_*')\lesssim |v-v_*|\sin \frac{\theta}{2}(\mu_*\mu_*')^\tau.$$}

\textcolor{black}{From above and using Cauchy--Schwarz we have that 
\begin{align*}&\int_{\R^3}\int_{\R^3}\int_{\S^2} |B_k(\mu_*-\mu_*')g_*(\der g) G'[\jap{x-(t+1)v}^2-\jap{x-(t+1)v'}^2]|\\
&\quad\lesssim \int_{\R^3}\int_{\R^3}\int_{\S^2} |v-v_*|^{1+\gamma}\sin^2 \frac{\theta}{2} B_k\jap{x-(t+1)v_*}^2g_* G G'(\mu_*\mu_*')^\tau\d \sigma\d v_*\d v\\
&\quad\lesssim \left(\int_{\R^3}\int_{\R^3}\int_{\S^2} |v-v_*|^{1+\gamma}\sin^2 \frac{\theta}{2}B_k \jap{x-(t+1)v_*}^2g_* G^2 (\mu_*\mu_*')^\tau\d \sigma\d v_*\d v\right)^\frac{1}{2}\\
&\qquad \times \left(\int_{\R^3}\int_{\R^3}\int_{\S^2} |v-v_*|^{1+\gamma}\sin^2 \frac{\theta}{2}B_k \jap{x-(t+1)v_*}^2g_* G'^2 (\mu_*\mu_*')^\tau\d \sigma\d v_*\d v\right)^\frac{1}{2}\\
&\quad\lesssim 2^{(2s-2)k}\left(\int_{\R^3}\int_{\R^3}|v-v_*|^{\gamma+2s-1}\jap{x-(t+1)v_*}^2g_* G^2 (\mu_*\mu_*')^\tau\d v_*\d v\right)^\frac{1}{2}\\
&\qquad \times \left(\int_{\R^3}\int_{\R^3}|v'-v_*|^{\gamma+2s-1} \jap{x-(t+1)v_*}^2g_* G'^2 (\mu_*\mu_*')^\tau\d v_*\d v'\right)^\frac{1}{2},
\end{align*}
where to get the last inequality we used the regular change of variables $v\to v'$ and the fact that $|v-v_*|\approx |v-v_*|$ (see \lref{top_order_comm_weights}). In addition, we also used 
$$\int_{\S^2}B_k \sin^2\frac{\theta}{2}\d \sigma\lesssim\int_{2^{-k-1}|v-v_*|^{-1}}^{2^{-k}|v-v_*|^{-1}} |v-v_*|^\gamma \theta^{-2s+1}\d \theta\lesssim 2^{(2s-2)k}|v-v_*|^{\gamma+2s-2}.$$}
Next we consider the cases $\gamma+2s-1\geq 0$ and $\gamma+2s-1<0$ separately.\\
\emph{Case 1:} $1\geq \gamma+2s-1\geq 0$. In this case we have $|v-v_*|^{\gamma+2s-1}\leq \jap{v_*}\jap{v}$. Thus we have the bound 
\begin{align*}
\int_{\R^3}\int_{\R^3}&|v-v_*|^{\gamma+2s-1}\jap{x-(t+1)v_*}^2g_* G^2 (\mu_*\mu_*')^\tau\d \sigma\d v_*\d v\\
&\leq \norm{g\mu^\tau\jap{v}\jap{x-(t+1)v}^2}_{L^1_v}\norm{\jap{x-(t+1)v}^2\jap{v}\der g}_{L^2_v}\\
&\leq \norm{g\jap{x-(t+1)v}^2}_{L^2_v}\norm{\jap{x-(t+1)v}^2\jap{v}\der g}_{L^2_v}.
\end{align*}
We get a similar bound for the other factor. 

Thus applying Cauchy--Schwarz in space followed by multiplying and dividing by $(1+t)^{1+\delta}$, we get the desired result.\\
\emph{Case 2:} $-1<\gamma+2s-1<0$. For this we will apply \lref{H-L-S_for_infinity_lp} to get
\begin{align*}
\int_{\R^3}\int_{\R^3}&|v-v_*|^{\gamma+2s-1}\jap{x-(t+1)v_*}^2g_* G^2 (\mu_*\mu_*')^\tau\d \sigma\d v_*\d v\\
&\leq [\norm{g\mu^\tau\jap{x-(t+1)v}^2}_{L^1_v}+\norm{ g\mu^\tau\jap{x-(t+1)v}^2}_{L^2_v}]\norm{\jap{x-(t+1)v}^2\der g}_{L^2_v}\\
&\leq \norm{ g\jap{x-(t+1)v}^2}_{L^2_v}\norm{\jap{x-(t+1)v}^2\jap{v}\der g}_{L^2_v}.
\end{align*}

Now we proceed in the same way as in the last case.
\end{proof}
\begin{lemma}\label{l.top_order_Gaussian_2}
Let $\gamma+2s\in(0,2]$ and $k\geq 0$ in the singularity decompostion for $B$, then we have the following bounds
\begin{align*}
|\eref{top_order_Gaussian_2}|\lesssim 2^{(2s-2)k}\int_0^{T}(1+t)^{1+\delta}\norm{\part_{v_i} g}_{L^\infty_xL^1_v}\norm{(1+t)^{-\frac{1+\delta}{2}}\jap{x-(t+1)v}^2\jap{v}\der g}_{L^2_xL^2_v}^2 \d t,
\end{align*}
and 
\begin{align*}
\sum_{k=0}^{k=\infty}|\eref{top_order_Gaussian_2}|\lesssim \int_0^{T}(1+t)^{1+\delta}\norm{\part_{v_i} g}_{L^\infty_xL^1_v}\norm{(1+t)^{-\frac{1+\delta}{2}}\jap{x-(t+1)v}^2\jap{v}\der g}_{L^2_xL^2_v}^2 \d t,
\end{align*}
\end{lemma}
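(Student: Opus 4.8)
The plan is to bound the term \eref{top_order_Gaussian_2} — which, after integrating in $\sigma,v_*,v,x,t$, equals $\tfrac12\int B_k(\mu_*'-\mu_*)(g_*'-g_*)G'G$ with $G=\jap{x-(t+1)v}^2\der g$ — by extracting one power of $\sin\tfrac\theta2$ from \emph{each} of the two differences $\mu_*'-\mu_*$ and $g_*'-g_*$. Their product supplies the $\sin^2\tfrac\theta2$ that renders the angular integral convergent and produces the summable gain $2^{(2s-2)k}$. Throughout $k\ge 0$, so $|v-v'|=|v_*-v_*'|=|v-v_*|\sin\tfrac\theta2\le 1$; hence $\jap{v}\approx\jap{v'}$, $\jap{v_*}\approx\jap{v_*'}$, $\mu_*\approx\mu_*'$, and the $\jap{v}$-weights on $G$ and $G'$ are interchangeable.

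I would first Taylor-expand the Gaussian as in \lref{top_order_Gaussian_1}: $|\mu_*'-\mu_*|\lesssim|v-v_*|\sin\tfrac\theta2\,(\mu_*\mu_*')^\tau$ for a small $\tau>0$. For $g$, the integral form of Taylor's theorem gives
\[
|g_*'-g_*|\le|v_*'-v_*|\int_0^1|\partial_{v_i}g|(y_\eta)\,\d \eta=|v-v_*|\sin\tfrac\theta2\int_0^1|\partial_{v_i}g|(y_\eta)\,\d \eta,\qquad y_\eta:=v_*+\eta(v_*'-v_*).
\]
Then $|B_k(\mu_*'-\mu_*)(g_*'-g_*)G'G|\lesssim B_k|v-v_*|\sin\tfrac\theta2(\mu_*\mu_*')^\tau|g_*'-g_*||G||G'|$, and Cauchy--Schwarz in $(\sigma,v_*,v)$ with common weight $B_k|v-v_*|\sin\tfrac\theta2(\mu_*\mu_*')^\tau|g_*'-g_*|$ splits $|G||G'|$ into a $|G|^2$-factor and a $|G'|^2$-factor. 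Since $B_k$, $(\mu_*\mu_*')^\tau$, $|v-v_*|\sin\tfrac\theta2$ and $|g_*'-g_*|$ are all invariant under the pre-post collision change of variables of \lref{pre_post}, which swaps $G'\leftrightarrow G$, the $|G'|^2$-factor equals the $|G|^2$-factor, and it remains to bound
\[
\int_0^T\!\int_{\R^3}\!\int_{\R^3}\Big(\int_0^1\!\int_{\R^3}\!\int_{\S^2}B_k|v-v_*|^2\sin^2\tfrac\theta2(\mu_*\mu_*')^\tau|\partial_{v_i}g|(y_\eta)\,\d \sigma\,\d v_*\,\d \eta\Big)|G|^2\,\d v\,\d x\,\d t.
\]

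The core is the inner $(\sigma,v_*)$-integral of $B_k|v-v_*|^2\sin^2\tfrac\theta2(\mu_*\mu_*')^\tau|\partial_{v_i}g|(y_\eta)$. For fixed $v,\sigma,\eta$ I change variables $v_*\mapsto y:=y_\eta$; since $v_*'-v_*=\tfrac{|v-v_*|}{2}(k-\sigma)$ with $k=\tfrac{v-v_*}{|v-v_*|}$, one has $\tfrac{\partial y}{\partial v_*}=(1-\tfrac\eta2)I+\tfrac\eta2\,\sigma\otimes k$, with Jacobian $(1-\tfrac\eta2)^2(1-\eta\sin^2\tfrac\theta2)\in[\tfrac18,1]$ for $\eta\in[0,1]$, $\theta\in(0,\tfrac\pi2]$; the map is proper, hence a global diffeomorphism with $\d v_*\approx\d y$. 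One checks $|v-y|\approx|v-v_*|$ and, since $|v_*-y|\le 1$, $\jap{v_*}\approx\jap{y}$, so $(\mu_*\mu_*')^\tau\lesssim e^{-c\,d(t)\jap{y}^2}$; moreover $\int_{\S^2}B_k\sin^2\tfrac\theta2\,\d \sigma\lesssim 2^{(2s-2)k}|v-v_*|^{\gamma+2s-2}$ (using $\theta^{-1-2s}\sin^2\tfrac\theta2\lesssim\theta^{1-2s}$ on $\supp\chi_k$). Thus after the change of variables the inner integral is $\lesssim 2^{(2s-2)k}\int_{\R^3}|v-y|^{\gamma+2s}e^{-c\,d(t)\jap{y}^2}|\partial_{v_i}g|(y)\,\d y\lesssim 2^{(2s-2)k}\jap{v}^2\|\partial_{v_i}g\|_{L^1_v}$, using $|v-y|^{\gamma+2s}\lesssim\jap{v}^{\gamma+2s}\jap{y}^{\gamma+2s}$ (valid since $\gamma+2s\in(0,2]$), the Gaussian to absorb $\jap{y}^{\gamma+2s}$, and $\gamma+2s\le 2$. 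Inserting this, integrating in $x$ (to produce $\|\partial_{v_i}g\|_{L^\infty_xL^1_v}$) and $t$, and rewriting $\jap{v}^2|G|^2=(1+t)^{1+\delta}\big|(1+t)^{-\frac{1+\delta}{2}}\jap{v}\jap{x-(t+1)v}^2\der g\big|^2$ yields the single-$k$ bound; summing $2^{(2s-2)k}$ over $k\ge 0$ (convergent since $s<1$), with monotone convergence justifying the interchange as in \lref{top_order_comm_weights}, gives the summed bound. The main obstacle is exactly this change of variables: the Taylor expansion of $g_*'-g_*$ leaves $|\partial_{v_i}g|$ evaluated at the intermediate point $y_\eta$, and recovering $\|\partial_{v_i}g\|_{L^1_v}$ on the right forces the change of variables to $y_\eta$ together with the uniform (in $\eta,\theta$) two-sided Jacobian bound; everything else parallels \lref{top_order_Gaussian_1} and \lref{top_order_comm_weights}.
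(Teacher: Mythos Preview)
Your proof is correct and follows essentially the same approach as the paper: extract one $\sin\tfrac\theta2$ from each of $\mu_*'-\mu_*$ and $g_*'-g_*$ via Taylor expansion, apply Cauchy--Schwarz to split $|G||G'|$, and handle the intermediate point $y_\eta$ by the change of variables $v_*\mapsto y_\eta$ with uniformly bounded Jacobian. The only difference is the order of operations: you apply Cauchy--Schwarz \emph{before} expanding $g_*'-g_*$ and then invoke pre-post symmetry (which leaves $B_k$, $|v-v_*|\sin\tfrac\theta2$, $(\mu_*\mu_*')^\tau$, and $|g_*'-g_*|$ invariant while swapping $G\leftrightarrow G'$) to identify the two factors, whereas the paper expands first and then treats the $G'^2$-factor by a preliminary pre-post change of variables (which replaces $u=\eta v_*+(1-\eta)v_*'$ by $\tilde u=\eta v_*'+(1-\eta)v_*$) before doing the $v_*\mapsto\tilde u$ substitution. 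Your ordering is a mild streamlining, collapsing two change-of-variables computations into one; otherwise the arguments coincide.
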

\begin{proof}
As in \lref{top_order_Gaussian_1}, we have that 
$$|\mu_*'-\mu_*|=|v_*-v_*'|\part_{v_i}\mu(\eta v_*+(1-\eta)v_*')\lesssim |v-v_*|\sin \frac{\theta}{2}(\mu_*\mu_*')^\tau.$$

Thus, 
\begin{align*}
\int_{\R^3}\int_{\R^3}\int_{\S^2}&|B_k(\mu_*'-\mu_*)(g_*'-g_*)G' G|\d \sigma\d v_*\d v\\\
&\lesssim \int_{\R^3}\int_{\R^3}\int_{\S^2} B_k(\mu_*\mu_*')^\tau|v-v'|\cdot|(g_*'-g_*)|\cdot|G' |\cdot |G|\d \sigma\d v_*\d v.
\end{align*}

Now using integral form of Taylor's theorem we get that $$|g_*'-g_*|\lesssim |v_*-v_*'|\int_0^1 |\part_{v_i} g|(\eta v_*+(1-\eta)v_*')\d \eta.$$

Let $u=\eta v_*+(1-\eta)v_*'$. From above observations we get, 
\begin{align*}
\int_{\R^3}\int_{\R^3}\int_{\S^2}&|B_k(\mu_*'-\mu_*)(g_*'-g_*)G' G|\d \sigma\d v_*\d v\\
&\lesssim \int_0^1 \int_{\R^3}\int_{\R^3}\int_{\S^2} B_k |v-v_*|^2\sin^2 \frac{\theta}{2}(\mu_*\mu_*')^\tau|\part_{v_i} g|(u)|G'|\cdot|G|\d \sigma\d v_*\d v\d \eta\\
&\lesssim \left(\int_0^1 \int_{\R^3}\int_{\R^3}\int_{\S^2} B_k |v-v_*|^2\sin^2 \frac{\theta}{2}(\mu_*\mu_*')^\tau|\part_{v_i} g|(u)G^2 \d \sigma\d v_*\d v\d \eta\right)^{\frac{1}{2}}\\
&\quad \times \left(\int_0^1 \int_{\R^3}\int_{\R^3}\int_{\S^2} B_k |v-v_*|^2\sin^2 \frac{\theta}{2}(\mu_*\mu_*')^\tau|\part_{v_i} g|(u)G'^2 \d \sigma\d v_*\d v\d \eta\right)^{\frac{1}{2}}.
\end{align*}

For the first factor, we need to apply the change of variables $u=\eta v_*+(1-\eta)v_*'$. Due to the collisional variables \eref{col_var}, we see that $$\frac{\d u_i}{\d v_{*_j}}=\eta \delta_{ij}+(1-\eta)\frac{\d v_{*_i}'}{\d v_{*_j}}=\left(\frac{1+\eta}{2}\right)\delta_{ij}+\frac{1-\eta}{2}k_j\sigma_i,$$
where $k=(v-v_*)/ |v-v_*|$. Thus the jacobian is $$\left|\frac{\d u_i}{\d v_{*_j}}\right|=\left(\frac{1+\eta}{2}\right)^2\left\{\left(\frac{1+\eta}{2}\right)+\frac{1-\eta}{2}\jap{k,\sigma}\right\}.$$
Since $b(\jap{k,\sigma})=0$, when $\jap{k,\sigma}\leq 0$ from \eref{sym_b} and $\eta\in [0,1]$, it follows that the Jacobian is bounded from below on support of the integral of the factor.\\
Also note that 
\begin{align*}
|v-u|&=\left|\frac{1+\eta}{2}(v-v_*)+\frac{1-\eta}{2}|v-v_*|\sigma\right|\\
&=|v-v_*|\left|\left(\frac{1+\eta}{2}\right)^2+\left(\frac{1-\eta}{2}\right)^2+\frac{1-\eta^2}{2}k\cdot \sigma\right|^{\frac{1}{2}}\\
&=|v-v_*|\left|\eta^2+(1-\eta^2)\cos^2 \frac{\theta}{2}\right|^{\frac{1}{2}}\geq \frac{|v-v_*|}{\sqrt 2},
\end{align*}
and since $|v_*-v_*'|\leq 1$, $$(\mu_*\mu_*')^{\tau}\leq \mu(u)^{\tau'},$$ for small enough $\tau'$.

Applying this change of variables to the first factor and  using the above observations we get,
\begin{align*}
\int_0^1 \int_{\R^3}\int_{\R^3}\int_{\S^2}& B_k |v-v_*|^2\sin^2 \frac{\theta}{2}(\mu_*\mu_*')^\tau|\part_{v_i} g|(u)G^2 \d \sigma\d v_*\d v\d \eta\\
&\lesssim \int_0^1 \int_{\R^3}\int_{\R^3}\int_{\S^2} B_k |v-u|^2\sin^2 \frac{\theta}{2}(\mu(u))^{\tau'}|\part_{v_i} g|(u)G^2 \d \sigma\d u\d v\d \eta.
\end{align*}
Now we apply Cauchy--Schwarz in $u$ and absorb the $\jap{u}^{\gamma+2s}$ into $\mu^{\tau'}(u)$.

For the second factor we first apply pre-post collisional change of variables to get that
\begin{align*}
\int_0^1 \int_{\R^3}\int_{\R^3}\int_{\S^2}& B_k |v-v_*|^2\sin^2 \frac{\theta}{2}(\mu_*\mu_*')^\tau|\part_{v_i} g|(u)G'^2 \d \sigma\d v_*\d v\d \eta\\
&=\int_0^1 \int_{\R^3}\int_{\R^3}\int_{\S^2} B_k |v-v_*|^2\sin^2 \frac{\theta}{2}(\mu_*\mu_*')^\tau|\part_{v_i} g|(\tilde u)G^2 \d \sigma\d v_*\d v\d \eta,
\end{align*}
where $\tilde u=\eta v_*'+(1-\eta)v_*.$ In a similar way as for the change of variables for $u$, the jacobian for the change of variables for $\tilde u$ is also bounded from below. Hence we get the required inequality by proceeding in the same way as for the first half.
\end{proof}
\begin{lemma}\label{l.top_order_Gaussian_s_less_than_half}
Let $\gamma+2s\in(0,2]$, $s<\frac{1}{2}$ and $k\geq 0$ in the singularity decompostion for $B$, then we have the following bounds
\begin{align*}
|\eref{top_order_Gaussian_s_less_than_half}|\lesssim 2^{(2s-1)k}\int_0^{T}&(1+t)^{1+\delta}\norm{g\jap{x-(t+1)v}^2}_{L^\infty_xL^2_v}\\
&\qquad\times\norm{(1+t)^{-\frac{1+\delta}{2}}\jap{x-(t+1)v}^2\jap{v}\der g}_{L^2_xL^2_v}^2 \d t,
\end{align*}
and 
\begin{align*}
\sum_{k=0}^{k=\infty}|\eref{top_order_Gaussian_s_less_than_half}|\lesssim \int_0^{T}&(1+t)^{1+\delta}\norm{g\jap{x-(t+1)v}^2}_{L^\infty_xL^2_v}\\
&\qquad\times\norm{(1+t)^{-\frac{1+\delta}{2}}\jap{x-(t+1)v}^2\jap{v}\der g}_{L^2_xL^2_v}^2 \d t.
\end{align*}
\end{lemma}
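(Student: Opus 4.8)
The plan is to avoid redoing any angular/weight bookkeeping from scratch. The chain of $\equiv$'s displayed just before the statement already records, after integrating in all of $\sigma,v_*,v,x,t$,
\[
\eref{top_order_Gaussian_s_less_than_half}\ \equiv\ \eref{top_order_Gaussian_1}\ +\ \eref{top_order_Gaussian_2},
\]
the passage from the $g_*'G'G$ term to $\tfrac12(g_*'-g_*)G'G$ being pre-post collisional change of variables (\lref{pre_post}). Hence $|\eref{top_order_Gaussian_s_less_than_half}|\le|\eref{top_order_Gaussian_1}|+|\eref{top_order_Gaussian_2}|$. For the first summand I would just quote \lref{top_order_Gaussian_1}, whose conclusion is verbatim the right-hand side asserted here (indeed with the stronger weight $2^{(2s-2)k}\le 2^{(2s-1)k}$), so that piece is finished.

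For $|\eref{top_order_Gaussian_2}|=\tfrac12\bigl|\int_0^T\!\!\int\!\int\!\int\!\int_{\S^2}B_k(\mu_*'-\mu_*)(g_*'-g_*)G'G\,\d\sigma\,\d v_*\,\d v\,\d x\,\d t\bigr|$ I would \emph{not} invoke \lref{top_order_Gaussian_2}: its proof Taylor-expands $g_*'-g_*$ and so outputs the norm $\norm{\part_{v_i}g}_{L^\infty_xL^1_v}$, which is not permitted in the present statement. Instead I estimate $|g_*'-g_*|\le|g_*'|+|g_*|$ and Taylor-expand only the Gaussian, using $|v_*-v_*'|=|v-v'|\lesssim|v-v_*|\sin\tfrac{\theta}{2}\le 1$ (for $k\ge0$) and $\jap{v_*}\approx\jap{v_*'}$ to get $|\mu_*'-\mu_*|\lesssim|v-v_*|\sin\tfrac{\theta}{2}(\mu_*\mu_*')^\tau$ with $\tau>0$ small. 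Only one power of $\sin\tfrac{\theta}{2}$ is now available, so $\int_{\S^2}B_k\sin\tfrac{\theta}{2}\,\d\sigma\lesssim 2^{(2s-1)k}|v-v_*|^{\gamma+2s-1}$; combined with the $|v-v_*|$ from the Gaussian bound this leaves the harmless factor $2^{(2s-1)k}|v-v_*|^{\gamma+2s}\le 2^{(2s-1)k}\jap{v}^{\gamma+2s}\jap{v_*}^{\gamma+2s}$ (since $\gamma+2s\le2$), so, unlike in \lref{top_order_Gaussian_2}, no Hardy--Littlewood--Sobolev estimate is needed. The remaining steps then follow the proof of \lref{top_order_Gaussian_2}: Cauchy--Schwarz in $(v,v_*,\sigma)$; the changes of variables $v'\to v$ and (for the $g_*'$ term) $v_*\to v_*'$, whose Jacobians are bounded above and below exactly as in the proof of \lref{canc_lemma}; $|(\der g)'|\le|G'|$ and $\jap{v}\approx\jap{v'}$ to split the velocity weights between $|G|$ and $|G'|$; and absorption of $\jap{v_*}^{\gamma+2s}$ and of the surplus $\jap{v}$-powers into $(\mu_*\mu_*')^\tau$. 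Finishing with Cauchy--Schwarz in $x$, a multiply-and-divide by $(1+t)^{1+\delta}$ to form the stated norms, and $\norm{g}_{L^\infty_xL^1_v}\lesssim\norm{g\jap{x-(t+1)v}^2}_{L^\infty_xL^2_v}$ (which holds because $\norm{\jap{x-(t+1)v}^{-2}}_{L^2_v}\lesssim(1+t)^{-3/2}\le1$), one obtains the asserted per-$k$ bound with constant $2^{(2s-1)k}$. Summing the geometric series $\sum_{k\ge0}2^{(2s-1)k}$, which converges precisely because $s<\tfrac12$, gives the second displayed estimate.

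I expect the only delicate point to be exactly the one responsible for the restriction $s<\tfrac12$: refusing to differentiate $g$ leaves just a single factor of $\sin\tfrac{\theta}{2}$ to control the non-integrable angular kernel, so $2^{(2s-1)k}$ is the best summand available and no further power of $\sin\tfrac{\theta}{2}$ may be spent when juggling the velocity weights or when passing from $L^1_v$ to the $L^\infty_xL^2_v$-norm of $g$. Sending the weight difference $\jap{x-(t+1)v}^2-\jap{x-(t+1)v'}^2$ entirely into the already-handled term $\eref{top_order_Gaussian_1}$ — where it comes with its own $\sin\tfrac{\theta}{2}$ and \lref{top_order_Gaussian_1} absorbs the extra weights — is what keeps this accounting within budget.
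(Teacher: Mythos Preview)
Your argument is correct, but it takes a different route from the paper. The paper does \emph{not} pass through the decomposition $\eref{top_order_Gaussian_s_less_than_half}\equiv\eref{top_order_Gaussian_1}+\eref{top_order_Gaussian_2}$ at all; that splitting was introduced specifically for the regime $s\ge\tfrac12$, where two factors of $\sin\tfrac{\theta}{2}$ are required. For $s<\tfrac12$ the paper works directly with $\eref{top_order_Gaussian_s_less_than_half}$: one pre-post change turns $g_*'(\der g)'\jap{x-(t+1)v}^2G$ into $g_*(\der g)\jap{x-(t+1)v'}^2G'$, the weight inequality $\jap{x-(t+1)v'}^2\lesssim\jap{x-(t+1)v}^2\jap{x-(t+1)v_*}^2$ restores $G$ and puts $\jap{x-(t+1)v_*}^2$ onto $g_*$, and then Taylor on $\mu_*'-\mu_*$ supplies the single $\sin\tfrac{\theta}{2}$, which is integrated to $2^{(2s-1)k}|v-v_*|^{\gamma+2s}$ and split via $|v-v_*|^{\gamma+2s}\lesssim\jap{v}\jap{v'}\jap{v_*}\jap{v_*'}$ before Cauchy--Schwarz. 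In particular the weight $\jap{x-(t+1)v_*}^2$ on $g$ appearing in the stated bound arises naturally in the paper's approach, whereas in yours it is obtained only a posteriori by the trivial inequality $\norm{g}_{L^2_v}\le\norm{g\jap{x-(t+1)v}^2}_{L^2_v}$.

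Your version is more modular---it reuses \lref{top_order_Gaussian_1} verbatim---but also slightly circuitous: routing through $\eref{top_order_Gaussian_2}$ and then undoing the symmetrization via $|g_*'-g_*|\le|g_*'|+|g_*|$ brings you back essentially to the integrand the paper starts from. Either way, the essential point you correctly isolate is that refusing to differentiate $g$ costs one power of $\sin\tfrac{\theta}{2}$, so the per-$k$ bound is $2^{(2s-1)k}$ and summability forces $s<\tfrac12$.
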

\begin{proof}
We first apply pre-post collision change of variables to get
\begin{align*}
\int_{\R^3}\int_{\R^3}\int_{\S^2}& B_k(\mu_*'-\mu_*)g_*'(\der g)'\jap{x-(t+1)v}^2 G\d \sigma\d v_*\d v\\
&=-\int_{\R^3}\int_{\R^3}\int_{\S^2} B_k(\mu_*'-\mu_*)g_*\der g\jap{x-(t+1)v'}^2 G'\d \sigma\d v_*\d v.
\end{align*}

Next, by the collisional variables \eref{col_var}, we have 
\begin{align*}
\jap{x-(t+1)v'}^2&\lesssim \jap{x-(t+1)v}^2+\jap{x-(t+1)v_*}^2\\
&\lesssim \jap{x-(t+1)v}^2\jap{x-(t+1)v_*}^2.
\end{align*}
Moreover, as in \lref{top_order_Gaussian_1} and \lref{top_order_Gaussian_2}, we get
$$|\mu_*'-\mu_*|=|v_*-v_*'|\part_{v_i}\mu(\eta v_*+(1-\eta)v_*')\lesssim |v-v_*|\sin \frac{\theta}{2}(\mu_*\mu_*')^\tau.$$

\textcolor{black}{From above observations we have the bound
\begin{align*}
\int_{\R^3}\int_{\R^3}\int_{\S^2} &|B_k(\mu_*'-\mu_*)g_*\der g\jap{x-(t+1)v'}^2 G'|\d \sigma\d v_*\d v\\
&\lesssim \int_{\R^3}\int_{\R^3}\int_{\S^2} |v-v_*|\sin \frac{\theta}{2} B_k(\mu_*'\mu_*)^\tau \jap{x-(t+1)v_*}^2|g_*|\cdot |G|\cdot |G'|\d \sigma\d v_*\d v\\
&\lesssim \left(\int_{\R^3}\int_{\R^3}\int_{\S^2} |v-v_*|\sin \frac{\theta}{2} B_k(\mu_*'\mu_*)^\tau \jap{x-(t+1)v_*}^2|g_*|\cdot |G|^2\d \sigma\d v_*\d v\right)^{\frac{1}{2}}\\
&\qquad\times \left(\int_{\R^3}\int_{\R^3}\int_{\S^2} |v-v_*|\sin \frac{\theta}{2} B_k(\mu_*'\mu_*)^\tau \jap{x-(t+1)v_*}^2|g_*|\cdot |G'|^2\d \sigma\d v_*\d v\right)^{\frac{1}{2}}
\end{align*}
}
Now by our decomposition of singularity, we have
$$|v-v_*|\int_{\S^2} \sin \frac{\theta}{2}B_k\d \sigma\lesssim |v-v_*|^{\gamma+1}\int_{2^{-k-1}|v-v_*|^{-1}}^{2^{-k}|v-v_*|^{-1}} \theta^{-2s}\d \theta\lesssim 2^{(2s-1)k}|v-v_*|^{\gamma+2s}.$$
Now note that we have that $\mu_*\jap{v_*}^m\lesssim \sqrt{\mu_*}$ for some fixed integer $m$. With this observation and Cauchy--Schwarz, we have 
\begin{align*}
\int_{\R^3}\int_{\R^3}\int_{\S^2}& |v-v_*|\sin \frac{\theta}{2} B_k(\mu_*'\mu_*)^\tau \jap{x-(t+1)v_*}^2|g_*|\cdot |G|^2\d \sigma\d v_*\d v\\
&\lesssim 2^{(2s-1)k}\int_{\R^3}\int_{\R^3}(\mu_*\mu_*')^{\frac{\tau}{2}}\jap{v}^2 \jap{x-(t+1)v_*}^2|g_*|\cdot |G|^2\d v_*\d v\\
&\lesssim 2^{(2s-1)k}\norm{\jap{x-(t+1)v}^2 g}_{L^2_v}^2\norm{\jap{x-(t+1)v}^2\jap{v}\der g}_{L^2_v}^2
\end{align*}

To bound $$ \left(\int_{\R^3}\int_{\R^3}\int_{\S^2} |v-v_*|\sin \frac{\theta}{2} B_k(\mu_*'\mu_*)^\tau \jap{x-(t+1)v_*}^2|g_*|\cdot |G'|^2\d \sigma\d v_*\d v\right)^{\frac{1}{2}}$$
we use the regular change of variables $v\to v'$ and then proceed as in \lref{top_order_comm_weights}. Finally, we apply Cauchy--Schwarz in space and multiply and divide by $(1+t)^{1+\delta}$.
\end{proof}
\begin{lemma}\label{l.top_est}
For $\gamma+2s\in (0,2]$ and $|\alpha|+|\beta|+|\omega|=10$, we have the following bound,
\begin{align*}
\int_0^{T}\int_{\R^3}\int_{\R^3}\Gamma(g,\der g)\jap{x-(t+1)v}^4&\der g\d v\d x\d t\\
&\lesssim \mathbb{I}+\mathbb{II}+\mathbb{III}+\mathbb{IV}+\mathbb{V}+\mathbb{VI},
\end{align*}
where $\mathbb{I}-\mathbb{VI}$ are the same as defined in \lref{main_boltz_estimates}.
\end{lemma}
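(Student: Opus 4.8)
The plan is to insert the dyadic decomposition $B=\sum_{k\in\Z}B_k$ from \sref{sing} into the integral, dispose of the non-singular range $k\le 0$ by a crude Cauchy--Schwarz, and, for $k>0$, run the algebraic identities displayed just before the statement and then quote the single-term estimates already proved, checking at the end that every expression produced is one of the quantities $\mathbb{I},\dots,\mathbb{VI}$ recorded in \lref{main_boltz_estimates}. First I would note that $\Gamma(g,\der g)\jap{x-(t+1)v}^4\der g$, once integrated in $(t,x,v,v_*,\sigma)$, is $\sum_{k\in\Z}$ of the corresponding $B_k$-integrand, and split $B_k\mu_*\bigl(g_*'(\der g)'-g_*\der g\bigr)\jap{x-(t+1)v}^4\der g$ into its two halves, which integrate respectively to $I_k^+(g,\der g,\der g)$ and $I_k^-(g,\der g,\der g)$ in the notation of \eref{triv_est_1}--\eref{triv_est_2}. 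For $k\le 0$, that is for $|v-v'|\ge 1$, there is no singularity to absorb, so I would apply the first inequality of \lref{trivial_est_1} and of \lref{trivial_est_2} (with the factor $\der' g=g$, of order $0\le 7$), sum the convergent series $\sum_{k\le 0}2^{2sk}$, and use $\jap{v}^{(\gamma+2s)/2}\le\jap{v}$ together with $\norm{g}_{L^\infty_xL^2_v}\le\norm{\jap{x-(t+1)v}^2g}_{L^\infty_xL^2_v}$; this bounds the whole range $k\le 0$ by a multiple of $\mathbb{I}$.

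For $k>0$ I would invoke the two decompositions spelled out above: when $s<\tfrac12$ the integrand equals, after integration, the sum of \eref{top_order_symm}, \eref{top_order_s_less_than_half} and \eref{top_order_Gaussian_s_less_than_half}, and when $s\ge\tfrac12$ the sum of \eref{top_order_symm}, \eref{top_order_diff_weights_mult}, \eref{top_order_diff_f}, \eref{top_order_Gaussian_1} and \eref{top_order_Gaussian_2}; these identities come from the energy-conservation relation $\mu_*=\mu^{-1}\mu_*'\mu'$, the pre-post collisional change of variables (\lref{pre_post}), and Taylor expansion of the weights $\jap{x-(t+1)v}$, the regime $s\ge\tfrac12$ being obliged to peel off a \emph{product} of two weight-differences so as to gain the additional $\sin^2(\theta/2)$ that makes the series in $k$ converge. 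It then remains to bound each summand and add over $k>0$: the contribution of \eref{top_order_symm} is handled by \lref{sym_term} with first argument $\mu g$ (which is nothing but $f$), where the rapid decay of $\mu$ and \lref{exp_bound} turn $\norm{f}_{L^\infty_xL^1_v}$ (respectively $\norm{f}_{L^\infty_xH^{1/2-\bar\delta}_v}$ with $\bar\delta=\delta<\gamma+2s$) into $\norm{g}_{L^\infty_xL^2_v}$ (respectively $\norm{g}_{L^\infty_xH^{1/2-\delta}_v}$), so this lands in $\mathbb{I}$ for $\gamma+2s>\tfrac12$ and in $\mathbb{II}_1$ for $\gamma+2s\le\tfrac12$; the contribution of \eref{top_order_diff_weights_mult} is absorbed into $\mathbb{IV}$ by \lref{top_order_comm_weights}; that of \eref{top_order_diff_f} into $\mathbb{V}$ by \lref{top_order_diff_f} (here \lref{exp_bound} converts $\norm{\part_{v_i}f\jap{v}^{\gamma+2s}}_{L^1_v}$ into $\norm{\part_{v_i}g}_{L^\infty_xL^1_v}$); those of \eref{top_order_Gaussian_1} and, in the regime $s<\tfrac12$, of \eref{top_order_Gaussian_s_less_than_half} into $\mathbb{I}$ by \lref{top_order_Gaussian_1} and \lref{top_order_Gaussian_s_less_than_half}; that of \eref{top_order_Gaussian_2} into $\mathbb{VI}$ by \lref{top_order_Gaussian_2}; and that of \eref{top_order_s_less_than_half} (still $s<\tfrac12$) into $\mathbb{III}$ by \lref{top_order_s_less_than_half}. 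Collecting the two regimes, the total is dominated by $\mathbb{I}+\mathbb{II}_1+\mathbb{III}+\mathbb{IV}+\mathbb{V}+\mathbb{VI}$, which is the assertion.

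I do not expect a genuine obstacle inside this lemma: all of the analytic difficulty has been offloaded to the auxiliary results it calls on---the cancellation lemma \lref{canc_lemma} underneath \lref{sym_term}, and above all \lref{top_order_diff_f}, where the kernel-difference estimate \lref{diff_K_outside_ball}/\cref{diff_f_cor} is exactly what lets one put $f$ in $W^{1,s}_v$ rather than in a H\"older space and thereby keep the time growth at the (integrable-after-weights) level $(1+t)^{2+\delta}$ instead of something non-integrable. The point needing attention here is purely organizational: checking that the alternatives governed by the signs of $s-\tfrac12$ and of $(\gamma+2s)-\tfrac12$ are exhaustive, that the $k\le 0$ tail genuinely sums, and that after the elementary dominations used above---on the powers of $(1+t)$, on $\jap{v}^{(\gamma+2s)/2}$ versus $\jap{v}$, on $\bar\delta$ versus $\delta$, and on the indicator conventions---every term produced coincides with the precise shape of one of $\mathbb{I},\dots,\mathbb{VI}$ as written in \lref{main_boltz_estimates}.
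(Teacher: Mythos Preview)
Your proposal is correct and follows essentially the same route as the paper: dyadic split, crude $2^{2sk}$ summation for the non-singular range via \lref{trivial_est_1}--\lref{trivial_est_2}, and for $k\ge 0$ the two algebraic decompositions (for $s<\tfrac12$ and $s\ge\tfrac12$) followed by invoking exactly the same auxiliary lemmas you list, term by term. Your treatment of \eref{top_order_symm} is in fact slightly more careful than the paper's---you correctly split according to whether $\gamma+2s\gtrless\tfrac12$, landing in $\mathbb{I}$ or $\mathbb{II}_1$ respectively, whereas the paper simply writes $\mathbb{II}_1$ (which is harmless only because the statement asserts a bound by the sum $\mathbb{I}+\mathbb{II}_1+\cdots$).
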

\begin{proof}
We use the singularity decompostion from \eref{sing_decom_top_order}. For $k<0$, we use the first estimate of \lref{trivial_est_1} and \lref{trivial_est_2} with $g$ as $\mf$, $\der g$ as $\mg$ and $\mh$ to get that 
\begin{align*}
\int_{0}^{T}\int_{\R^3}\int_{\R^3}\int_{\R^3}\int_{\S^2} &|B_k \mu_*(g_*'(\der g)'-g_*\der g)\\
&\qquad\times\jap{x-(t+1)v}^4\der g|\d \sigma\d v_*\d v\d x\d t\\
&\lesssim 2^{2sk}\int_0^{T}(1+t)^{1+\delta}\norm{\jap{x-(t+1)v}^2g}_{L^\infty_xL^2_v}\\
&\qquad \times \norm{(1+t)^{-\frac{1}{2}-\frac{\delta}{2}}\jap{x-(t+1)v}^2\jap{v}\der g}^2_{L^2_xL^2_v}\d t.
\end{align*}
Now summing over $k<0$, we get that 
\begin{align*}
\sum_{k=-\infty}^{k=0}\int_{0}^{T}\int_{\R^3}&\int_{\R^3}\int_{\R^3}\int_{\S^2} |B_k \mu_*(g_*'(\der g)'-g_*\der g)\\
&\hspace{6em}\times\jap{x-(t+1)v}^4\der g|\d \sigma\d v_*\d v\d x\d t\\
&\lesssim \int_0^{T}(1+t)^{1+\delta}\norm{\jap{x-(t+1)v}^2g}_{L^\infty_xL^2_v}\norm{(1+t)^{-\frac{1}{2}-\frac{\delta}{2}}\jap{x-(t+1)v}^2\jap{v}\der g}^2_{L^2_xL^2_v}\d t\\
&\lesssim \mathbb{I}.
\end{align*}

Next we treat the case $k\geq 0$. We bound \eref{top_order_symm} using \lref{sym_term} to get, 
\begin{align*}  
\int_{0}^{T}\int_{\R^3}\int_{R^3}\sum_{k=0}^{k=\infty}Q_k(f,G)G&\lesssim\mathbb{II}.
\end{align*}

Next we split into two cases depending on whether $s<\frac{1}{2}$ or $s\in \left[\frac{1}{2},1\right).$\\
\emph{Case 1:} $s<\frac{1}{2}$. For this case, we focus on the terms \eref{top_order_s_less_than_half} and \eref{top_order_Gaussian_s_less_than_half}.

Now using \lref{top_order_s_less_than_half}, we get
\begin{align*}   
\sum_{k=0}^{k=\infty}|\eref{top_order_s_less_than_half}|& \lesssim
      \int_0^{T} (1+t)^{2+\delta}\norm{g\jap{x-(t+1)v}}_{L^\infty_xL^1_v}\\
&\qquad\times\norm{(1+t)^{-\frac{1+\delta}{2}}\der g\jap{x-(t+1)v}^2\jap{v}}^2_{L^2_xL^2_v}\d t\\
&\lesssim \mathbb{III}.
\end{align*}

Finally, we use \lref{top_order_Gaussian_s_less_than_half} to get the bound,
\begin{align*}
\sum_{k=0}^{k=\infty}|\eref{top_order_Gaussian_s_less_than_half}|&\lesssim \int_0^{T}(1+t)^{1+\delta}\norm{g\jap{x-(t+1)v}^2}_{L^\infty_xL^2_v}\\
&\qquad\times\norm{(1+t)^{-\frac{1+\delta}{2}}\jap{x-(t+1)v}^2\jap{v}\der g}_{L^2_xL^2_v}^2 \d t\\
&\lesssim \mathbb{I}
\end{align*}
Combining the above bounds we get the desired result.\\
\emph{Case 2:} $s\geq 1$. For this case we have to bound the infinite sum of the terms \eref{top_order_diff_weights_mult}, \eref{top_order_diff_f}, \eref{top_order_Gaussian_1} and \eref{top_order_Gaussian_2}.

We use \lref{top_order_comm_weights} to get,
\begin{align*}
\sum_{k=0}^{k=\infty}|\eref{top_order_diff_weights_mult}|& \lesssim\int_0^{T}(1+t)^{3-\delta}\norm{g\jap{x-(t+1)v}^{2\delta}}_{L^\infty_xL^1_v}\\
&\qquad\times \norm{(1+t)^{-\frac{1}{2}-\frac{\delta}{2}}\der g\jap{x-(t+1)v}^2\jap{v}}^2_{L^2_xL^2_v}\d t\\
&\lesssim \mathbb{IV}.
\end{align*}
Next we use \lref{top_order_diff_f} to get,
\begin{align*}
\sum_{k=0}^{k=\infty}|\eref{top_order_diff_f}|&\lesssim \int_0^{T}(1+t)^{2+\delta}\norm{g}^{1-s}_{L^\infty_xL^1_v}\norm{\part_{v_i}g}^{s}_{L^\infty_xL^1_v}\norm{(1+t)^{-\frac{1}{2}-\frac{\delta}{2}}\jap{v}^{\frac{2s+\gamma}{2}}\der g}^2_{L^2_xL^2_v}\d t\\
&\quad + \int_0^{T}(1+t)^{2+\delta}\norm{g}^{\frac{3}{2}-s}_{L^\infty_xL^1_v}\norm{\part_{v_i}g}^{s-\frac{1}{2}}_{L^\infty_xL^1_v}\norm{(1+t)^{-\frac{1}{2}-\frac{\delta}{2}}\jap{v}^{\frac{2s+\gamma}{2}}\der g}^2_{L^2_xL^2_v}\d t\\
&\lesssim \mathbb{V}.
\end{align*}
Further, \lref{top_order_Gaussian_1} gives us,
\begin{align*}
\sum_{k=0}^{k=\infty}|\eref{top_order_Gaussian_1}|&\lesssim \int_0^{T}(1+t)^{1+\delta}\norm{g\jap{x-(t+1)v}^2}_{L^\infty_xL^2_v}\\
&\qquad\norm{(1+t)^{-\frac{1+\delta}{2}}\jap{x-(t+1)v}^2\jap{v}\der g}_{L^2_xL^2_v}^2\d t\\
&\lesssim \mathbb{I}.
\end{align*}
Finally, \lref{top_order_Gaussian_2} implies,
\begin{align*}
\sum_{k=0}^{k=\infty}|\eref{top_order_Gaussian_2}|&\lesssim \int_0^{T}(1+t)^{1+\delta}\norm{\part_{v_i} g}_{L^\infty_xL^1_v}\norm{(1+t)^{-\frac{1+\delta}{2}}\jap{x-(t+1)v}^2\jap{v}\der g}_{L^2_xL^2_v}^2 \d t\\
&\lesssim \mathbb{VI}.
\end{align*}
\end{proof}
\subsection{Penultimate order terms} This corresponds to $|\alpha''|+|\beta''|+|\omega''|=|\alpha|+|\beta|+|\omega|-1$. We again use the singularity decomposition and for $k<0$, we directly bound, 
\begin{equation}\label{e.pen_two_prime}
\begin{split}
\int_{0}^{T}\int_{\R^3}\int_{\R^3}\int_{\R^3}\int_{\S^2}&B_k \mu_{\beta''',\omega'''}(v_*)((\derv{'}{'}{'}g)_*'(\derv{''}{''}{''} g)'-(\derv{'}{'}{'}g)_*\derv{''}{''}{''} g)\\
&\quad \times\jap{x-(t+1)v}^4\der g\d \sigma\d v_*\d v\d x\d t 
\end{split}
\end{equation}
using \lref{trivial_est_1} and \lref{trivial_est_2}.\\
For $k\geq 0$, we need to make some changes to be able to treat the singularity
\textcolor{black}{
\begin{align}
B_k&\mu_{\beta''',\omega'''}(v_*)((\derv{'}{'}{'}g)_*'(\derv{''}{''}{''} g)'-(\derv{'}{'}{'}g)_*\derv{''}{''}{''} g)\nonumber\\
&\hspace{10em}\times\jap{x-(t+1)v}^4\der g\nonumber\\
&\equiv Q_k(\part_x^{\bar \alpha}\part_v^{\bar \beta}Y^{\bar \omega} (\mu g),\derv{''}{''}{''}g)\jap{x-(t+1)v}^4\der g\label{e.pen_term_main}\\
&\qquad +B_k(\mu_{\beta''',\omega'''}(v'_*)-\mu_{\beta''',\omega'''}(v_*))(\derv{'}{'}{'} g)_*'(\derv{''}{''}{''}g)'\jap{x-(t+1)v}^4\der g\label{e.Gaussian_diff_term},
\end{align}
where $\part_x^{\bar \alpha}\part_v^{\bar \beta}Y^{\bar \omega}\derv{''}{''}{''}=\der$. We used Leibnitz rule and the fact that $|\alpha'|+|\beta'|+|\beta'''|+|\omega'|+|\omega'''|= 1$ to absorb the Maxwellian into the derivative with $g$.}

 For the \eref{pen_term_main} we further specialize to three cases depending on $(|\bar \alpha|,|\bar\beta|,|\bar\omega|)$:
\begin{enumerate}
\item $(|\bar \alpha|,|\bar \beta|,|\bar\omega|)=(1,0,0)$. In this case $\der g=\part_{x_l}(\derv{''}{''}{''} g)$, where $\derv{''}{''}{''}$ is the same as in \eref{pen_two_prime}.\\
Let $\underline{\bar{G}_{\alpha'',\beta'',\omega''}=\jap{x-(t+1)v}^2\derv{''}{''}{''} g}$. From now on, we suppress its dependence on multi-indices.

By our definition we have,
$$\jap{x-(t+1)v}^2\part_{x_l}\derv{''}{''}{''} g=\part_{x_l}\bar G-2(x_l-(t+1)v_l) \derv{''}{''}{''} g.$$
Substituting the above equation we have
\begin{align*}
\int_0^{T}&\int_{\R^3}\int_{\R^3} Q_k(\part_{x_l} f,\derv{''}{''}{''} g)\jap{x-(t+1)v}^4\der g\d v \d x\d t\\
&\equiv\int_0^{T}\int_{\R^3}\int_{\R^3} Q_k(\part_{x_l} f,\bar G)\part_{x_l}\bar G\d v \d x\d t\\
&\quad - 2\int_0^{T}\int_{\R^3}\int_{\R^3} Q_k(\part_{x_l} f,\bar G)(x_l-(t+1)v_l)\derv{''}{''}{''} g\d v \d x\d t\\
&\quad+ \int_0^{T}\int_{\R^3}\int_{\R^3}[\jap{x-(t+1)v}^2Q_k(\part_{x_l} f,\derv{''}{''}{''} g)\\
&\hspace{10em}-Q_k(\part_{x_l} f,\jap{x-(t+1)v}^2\derv{''}{''}{''}g)]\\
&\hspace{14em}\times\jap{x-(t+1)v}^2\der g\d v \d x\d t.
\end{align*}
\item $(|\bar \alpha|,|\bar \beta|,|\bar\omega|)=(0,1,0)$. In this case $\der g=\part_{v_l}(\derv{''}{''}{''} g)$.\\
By our definition of $\bar G$ we have,
\begin{align*}
\jap{x-(t+1)v}^2\part_{v_l}\derv{''}{''}{''} g&=\part_{v_l}\bar G+2(t+1)(x_l-(t+1)v_l) \derv{''}{''}{''} g.
\end{align*}
Substituting the above equation we have
\begin{align*}
\int_0^{T}&\int_{\R^3}\int_{\R^3} Q_k(\part_{v_l} f,\derv{''}{''}{''} g)\jap{x-(t+1)v}^4\der f\d v \d x\d t\nonumber\\
&\equiv\int_0^{T}\int_{\R^3}\int_{\R^3} Q_k(\part_{v_l} f,\bar G)\part_{v_l}\bar G\d v \d x\d t\\
&\quad + 2(t+1)\int_0^{T}\int_{\R^3}\int_{\R^3} Q_k(\part_{v_l} f,\bar G)(x_l-(t+1)v_l)\derv{''}{''}{''} g\d v \d x\d t\\
&\quad+ \int_0^{T}\int_{\R^3}\int_{\R^3}[\jap{x-(t+1)v}^2Q_k(\part_{v_l} f,\derv{''}{''}{''} g)\\
&\hspace{10em}-Q_k(\part_{v_l} f,\jap{x-(t+1)v}^2\derv{''}{''}{''}g)]\\
&\hspace{14em}\times\jap{x-(t+1)v}^2\der g\d v \d x\d t.
\end{align*}
\item $(|\bar \alpha|,|\bar \beta|,|\bar\omega|)=(0,0,1)$. In this case $\der g=\part_{Y_l}(\derv{''}{''}{''} g)$.\\
First note that $Y_l$ commutes with $\jap{x-(t+1)v}$. Thus, by our definition of $\bar G$ we have,
\begin{align*}
\jap{x-(t+1)v}^2Y_l\derv{''}{''}{''} g={Y_l}\bar G.
\end{align*}
Substituting the above equation we have
\begin{align*}
\int_0^{T}&\int_{\R^3}\int_{\R^3}Q_k(Y_l f,\derv{''}{''}{''} g)\jap{x-(t+1)v}^4\der g\d v \d x\d t\\
&\equiv\int_0^{T}\int_{\R^3}\int_{\R^3} Q_k(Y_l f,\bar G)Y_l\bar G\d v \d x\d t\\
&\quad+ \int_0^{T}\int_{\R^3}\int_{\R^3} [\jap{x-(t+1)v}^2Q_k(Y f,\derv{''}{''}{''} g)\\
&\hspace{10em}-Q_k(Y f,\jap{x-(t+1)v}^2\derv{''}{''}{''}g)]\\
&\hspace{14em}\times\jap{x-(t+1)v}^2\der g\d v \d x\d t.
\end{align*}
\end{enumerate}
Hence we have in total that,
\begin{align}
|\eref{pen_term_main}|&\lesssim \left|\int_0^{T}\int_{\R^3} \int_{\R^3} Q_k(\part_x^{\bar \alpha}\part_v^{\bar \beta}Y^{\bar \omega} f,\bar G)\part_x^{\bar \alpha}\part_v^{\bar \beta}Y^{\bar \omega}\bar G\d v \d x\d t\right|\label{e.pen_main}\\
&\quad+(1+t)^{|\beta'|}\int_0^{T}\int_{\R^3}\int_{\R^3} |Q_k(\part_x^{\bar \alpha}\part_v^{\bar \beta}Y^{\bar \omega} f,\bar G)\bar G|\d v \d x\d t\label{e.pen_term_derv_weight}\\
&\quad+ \int_0^{T}\int_{\R^3}\int_{\R^3} |[\jap{x-(t+1)v}^2Q_k(\part_x^{\bar \alpha}\part_v^{\bar \beta}Y^{\bar \omega} f,\derv{''}{''}{''} g)\nonumber\\
&\hspace{3em}-Q_k(\part_x^{\bar \alpha}\part_v^{\bar \beta}Y^{\bar \omega} f,\jap{x-(t+1)v}^2\derv{''}{''}{''}g)]\jap{x-(t+1)v}^2\der g|\d v \d x\d t\label{e.pen_comm_term}.
\end{align}

We first look at the main term \eref{pen_main} which is of the form $$\int_{0}^{T}\int_{\R^3}\int_{\R^3}Q_k(\part f,\bar G)\part \bar G\d v\d x\d t,$$
where $\part=\{\part_{x_i},\part_{v_i},Y_i\}$ and $\bar G=\jap{x-(t+1)v}^2\derv{''}{''}{''} g.$

To take care of the singularity in the angle we need to make more changes to the collisional kernel. Decomposing the kernel we get $Q_k(\part f,\bar G)=Q_{1,k}(\part f,\bar G)+Q_{2,k}(\part f,\bar G)$. We can take care of $Q_{2,k}(\part f,\bar F)$ using \lref{canc_lemma}. 
\begin{proposition}\label{p.pen_term_main}
For the term involving the singular part of the Boltzmann kernel, we have the following equality
\begin{align}
\int_{0}^{T}\int_{\R^3}\int_{\R^3}\int_{\R^3}\int_{\S^2}&Q_{1,k}(\part f,\bar G)\part \bar G\d \sigma\d v_*\d v\d x\d t\nonumber\\ 
&\equiv\frac{1}{4}\int_0^{T}\int_{\R^3}\int_{\R^3}\int_{\R^3}\chi_{k}(|v-v'|)K_{\part^2 f}(\bar G'-\bar G)^2\d v'\d v\d x\d t\label{e.pen_term_main_1}\\
&\quad\frac{1}{2}\int_0^{T}\int_{\R^3}\int_{\R^3}\int_{\R^3}\chi_{k}(|v-v'|)(K_{\part f}-K_{\part f}')(\bar G'-\bar G)\part\bar G\d v'\d v\d x\d t \label{e.pen_term_main_2}.
\end{align}
Here $\chi_k(|v-v'|)$ is the same dyadic partition used in \sref{sing}.
\end{proposition}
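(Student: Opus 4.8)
The plan is to start from the kernel representation of $Q_{1,k}$ provided by \lref{boltz_kernel} together with the change of variables in \lref{change_of_variables_sil}, which together let us write, for fixed $t,x$,
\begin{align*}
\int_{\R^3}\int_{\R^3}\int_{\S^2}Q_{1,k}(\part f,\bar G)\part\bar G\,\d\sigma\,\d v_*\,\d v
&=\int_{\R^3}\int_{\R^3}\chi_k(|v-v'|)K_{\part f}(v,v')\bigl(\bar G'-\bar G\bigr)\,\part\bar G\,\d v'\,\d v.
\end{align*}
The factor $\chi_k(|v-v'|)$ appears because $B_k$ carries exactly that cutoff and it only depends on $|v-v'|$, so it passes through the Carleman-type change of variables untouched. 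The next step is the usual symmetrization: I would split $\part\bar G = \tfrac12(\part\bar G + \part\bar G') + \tfrac12(\part\bar G - \part\bar G')$ after also symmetrizing in $(v,v')$ via the substitution $v\leftrightarrow v'$, which by \lref{subt_can}-type reasoning (or simply by relabeling, since $\chi_k$ is symmetric in $v,v'$) turns $K_{\part f}(v,v')$ into the average with $K_{\part f}(v',v)=K_{\part f}'$. Concretely, relabeling $v\leftrightarrow v'$ in the integral sends $\bar G'-\bar G \mapsto -(\bar G'-\bar G)$ and $K_{\part f}(v,v')\mapsto K_{\part f}(v',v)$, so averaging the original integral with its relabeled copy produces $\tfrac12\int\int \chi_k(|v-v'|)\bigl(K_{\part f}(v,v')-K_{\part f}(v',v)\bigr)(\bar G'-\bar G)\,\part\bar G\,\d v'\,\d v$ for the ``difference of kernels'' piece, which is precisely \eref{pen_term_main_2} up to the harmless constant.

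For the remaining piece one is left with $\tfrac12\int\int \chi_k(|v-v'|)\bigl(K_{\part f}(v,v')+K_{\part f}(v',v)\bigr)(\bar G'-\bar G)\,\part\bar G\,\d v'\,\d v$, and here the idea is to integrate by parts in $v$ to move the $\part$ derivative off of $\bar G$. Writing $\part\bar G = \part(\bar G - \bar G')$ (since $\bar G'$ is a function of $v'$, the $\part_v$ or $Y$ derivative of the $v'$-dependent quantity vanishes — one has to be slightly careful for $Y_i = (t+1)\part_{x_i} + \part_{v_i}$ but the $\part_x$ part acts on $\bar G'$ only through its $x$ dependence, and one tracks this), and then integrating the $\part$ by parts, the derivative either lands on $(\bar G'-\bar G)$ again — giving $\int\int \chi_k K_{\part f}(\bar G'-\bar G)\,\part(\bar G'-\bar G)$, i.e. a total derivative $\tfrac12\part\bigl[(\bar G'-\bar G)^2\bigr]$, which after one more integration by parts produces $-\tfrac12\int\int \chi_k \,(\part K_{\part f})\,(\bar G'-\bar G)^2$ — or it lands on the kernel $K_{\part f}$; since $\part_v K_{\part f}(v,v') = K_{\part(\part f)}(v,v') = K_{\part^2 f}(v,v')$ (differentiating under the $w$-integral in \lref{boltz_kernel} commutes with the $v$-translation of $f$), this is exactly how the $K_{\part^2 f}$ in \eref{pen_term_main_1} arises. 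I would also need to check that the boundary terms at $|v-v'|\to 0$ and $|v|\to\infty$ vanish — the $\chi_k$ cutoff keeps us away from the diagonal, and Gaussian/polynomial decay of $g$ handles infinity.

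The main obstacle I anticipate is the bookkeeping of which kernel-symmetrized combination pairs with which $\bar G$-difference, and in particular making sure the $Y_i$ case works: $Y_i$ does not simply differentiate in $v$, so the claim ``$\part\bar G'$ contributes nothing'' and ``$\part K_{\part f} = K_{\part^2 f}$'' both need the commutation relation $[Y_i, \text{Carleman change of variables}]$ to be benign, together with the fact (used elsewhere in the paper, e.g.\ \lref{eng_set_up}) that $Y_i$ commutes with the transport operator and with $\jap{x-(t+1)v}$. A secondary subtlety is that the identity is stated with $\equiv$, meaning equality only after integrating in $\d\sigma\,\d v_*\,\d v\,\d x\,\d t$; so every integration by parts and every change of variables is performed under the full integral, and I would present it that way — doing the $v'$-relabeling and the $v$-integration by parts inside the spatial/temporal integrals and invoking Fubini--Tonelli (justified by the decay of $g$ and the kernel bounds in \lref{K_bound} and \lref{diff_K_outside_ball}) to rearrange freely.
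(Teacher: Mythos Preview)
Your approach has two genuine gaps. First, in your symmetrization step you relabel $v\leftrightarrow v'$ and track what happens to $K_{\part f}(v,v')$ and to $\bar G'-\bar G$, but you forget that $\part\bar G=\part\bar G(v)$ also becomes $(\part\bar G)'$ under the swap. Averaging therefore does \emph{not} produce $\tfrac12\int\chi_k(K_{\part f}-K_{\part f}')(\bar G'-\bar G)\part\bar G$; it produces $\tfrac12\int\chi_k(\bar G'-\bar G)[K_{\part f}\part\bar G-K_{\part f}'(\part\bar G)']$, which still mixes the two targets. Second, and more seriously, your claim that $\part_{v_i}K_{\part f}(v,v')=K_{\part^2 f}(v,v')$ is false: in the formula of \lref{boltz_kernel} the kernel depends on $v$ not only through $f(v+w)$ but also through $|v'-v|$, through the hyperplane $\{w\perp(v'-v)\}$, and through $r,\theta$. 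Differentiating in $v$ hits all of these, so you cannot simply move $\part_{v_i}$ inside as a derivative on $f$. (For $\part=\part_{x_i}$ your identity $\part_{x_i}K_f=K_{\part_{x_i}f}$ is correct, but this does not help you with $\part_{v_i}$ or $Y_i$.)

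The paper avoids both problems by \emph{not} passing to the $K_f$ representation until the very end. It stays in the $(v,v_*,\sigma)$ coordinates and integrates by parts there: for $\part=\part_{x_i}$ it integrates by parts twice in $x$ (harmless since $B_k$ is $x$-independent), and for $\part=\part_{v_i}$ it uses the Villani trick $\part_{v_i}B_k=-\part_{v_{*_i}}B_k$, so that after integrating by parts in $v_*$ one is left with $(\part_{v_i}+\part_{v_{*_i}})$ acting on the remaining factors; this combination annihilates $B_k$ and satisfies $(\part_{v_i}+\part_{v_{*_i}})(\part_{v_i}f)_*'=(\part_{v_i}^2 f)_*'$ and $(\part_{v_i}+\part_{v_{*_i}})\bar G'=(\part_{v_i}\bar G)'$. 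Two such integrations by parts together with the pre-post change of variables (\lref{pre_post}) yield the identity $4B_k(\part f)_*'(\bar G'-\bar G)\part\bar G\equiv B_k(\part^2 f)_*'(\bar G'-\bar G)^2+2B_k[(\part f)_*'-(\part f)_*](\bar G'-\bar G)\part\bar G$, and only then does one apply \lref{change_of_variables_sil} to rewrite each piece via $K_{\part^2 f}$ and $K_{\part f}-K_{\part f}'$. The $Y_i$ case is handled by writing $Y_i=(t+1)\part_{x_i}+\part_{v_i}$ and combining the two previous cases. This is the missing idea in your sketch.
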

\begin{proof}
\textbf{We begin by emphasizing that the $\part$ acting on $f$ and $\bar G$ is of the same form.}\\
Now to prove the claim above we need to perform integration by parts,\\ 
\emph{Case 1:} $\part=\part_{x_i}$. We perfom integration by parts in $x$ twice but for brevity we drop the integrals.
\begin{align*}
B_k(\part_{x_i}f)_*'&(\bar G'-\bar G)\part_{x_i}\bar G\\
&\equiv B_k(\part_{x_i}^2f)'_*(\bar G'-\bar G)^2-B_k(\part_{x_i}^2f)'_*(\bar G'-\bar G)\bar G'\\
&\quad-B_k(\part_{x_i}f)_*'((\part_{x_i} \bar G)'-\part_{x_i}\bar G)(\bar{G}-\bar{G}')-B_k(\part_{x_i}f)_*'((\part_{x_i} \bar G)'-\part_{x_i}\bar G)\bar{G}'\\
&\equiv B_k(\part^2_{x_i}f)'_*(\bar G'-\bar G)^2+\cancel{B_k(\part_{x_i}f)_*'((\part_{x_i} \bar G)'-\part_{x_i}\bar G)\bar G'}+B_k(\part_{x_i}f)_*'(\bar G'-\bar G)(\part_{x_i}\bar{G})'\\
&\quad-B_k(\part_{x_i}f)_*'((\part_{x_i} \bar G)'-\part_{x_i}\bar G)(\bar G-\bar G')-\cancel{B_k(\part_{x_i}f)_*'((\part_{x_i} \bar G)'-\part_{x_i}\bar G)\bar G'}\\
&\equiv B_k(\part^2_{x_i}f)'_*(\bar G'-\bar G)^2+2B_k((\part_{x_i}f)_*'-(\part_{x_i}f)_*)(\bar G'-\bar G)(\part_{x_i}\bar{G})'\\
&\quad+2B_k(\part_{x_i}f)_*(\bar G'-\bar G)(\part_{x_i}\bar G)'-B_k(\part_{x_i}f)_*'(\bar G'-\bar G)\part_{x_i}\bar G.
\end{align*}

Next note by pre-post collision change of variables, $$B_k(\part_{x_i}f)_*'(\bar G'-\bar G)\part_{x_i} \bar{G}\equiv -B_k(\part_{x_i}f)_*(\bar G'-\bar G)(\part_{x_i} \bar{G})',$$
and 
$$B_k((\part_{x_i}f)_*'-(\part_{x_i}f)_*)(\bar G'-\bar G)(\part_{x_i}\bar{G})'\equiv B_k((\part_{x_i}f)_*'-(\part_{x_i}f)_*)(\bar G'-\bar G)\part_{x_i}\bar{G}.$$
Using these observations and plugging them into the above equation we get
\begin{align*}
4B_k&(\part_{x_i}f)_*'(\bar G'-\bar G)\part_{x_i}\bar G\\
&\equiv B_k(\part_{x_i}^2f)'_*(\bar G'-\bar G)^2+2B_k((\part_{x_i}f)_*'-(\part_{x_i}f)_*)(\bar G'-\bar G)\part_{x_i} \bar G\\
&\equiv \color{black}{\int_0^{T}\int_{\R^3}\int_{\R^3}\int_{\R^3}\chi_{k}(|v-v'|)[K_{\part^2_{x_i}f}(\bar G'-\bar G)^2+2(K_{\part_{x_i}f}-K_{\part_{x_i}f}')(\bar G'-\bar G)\part_{x_i}\bar G]\d v'\d v\d x\d t.}
\end{align*}
For the last equality, we use the change of variables in \lref{change_of_variables_sil}.\\
\emph{Case 2:} $\part=\part_{v_i}.$ This case is a little more complicated but we still perform integration by parts twice in $v$. This time when we perform the first integration by parts we have the derivative falling on the kernel as well. We use the idea used in \cite{Vil98} to get over this issue,
\begin{align*}
\part_{v_i}[B_k&(v-v_*,\sigma)(\part_{v_i}f)_*'(\bar G'-\bar G)]\bar G\\
&\equiv \part_{v_i}(B_k(v-v_*,\sigma))(\part_{v_i}f)_*'(\bar G'-\bar G)\bar G+B_k(v-v_*,\sigma)\part_{v_i}[(\part_{v_i}f)_*'(\bar G'-\bar G)]\bar G\\
&\equiv - \part_{v_{*_i}}(B_k(v-v_*,\sigma))(\part_{v_i}f)_*'(\bar G'-\bar G)\bar G+B_k(v-v_*,\sigma)\part_{v_i}[(\part_{v_i}f)_*'(\bar G'-\bar G)]\bar G\\
&\equiv B_k(v-v_*,\sigma)(\part_{v_i}+\part_{v_{*_i}})[(\part_{v_i}f)_*'(\bar G'-\bar G)]\bar G\\
&\equiv B_k (\part_{v_i}^2 f)_*'(\bar G'-\bar G)\bar G+B_k(\part_{v_i}f)_*'((\part_{v_i} \bar G)'-\part_{v_i} \bar G)\bar{G}.
\end{align*}
We now proceed in the same way as in Case 1 and perform an additional integration by parts. This poses no issues as $(\part^2_{v_i} f)_*'=(\part_{v_i}+\part_{{v_*}_i})(\part_{v_i}f)_*'$, $(\part_{v_i}+\part_{{v_*}_i}) B=0$ and \\$(\part_{v_i}+\part_{{v_*}_i})\bar{G}'=(\part_{v_i}\bar{G})'$.\\
\emph{Case 3:} $\part=Y_i$. For this case we break up $Y_i$ as $(t+1)\part_{x_i}+\part_{v_i}$ and then apply Case 1 to the first term and Case 2 to the second term and then combine the appropriate terms. \textcolor{black}{Indeed, integrating by parts in $x$ and $v$ we get
\begin{align*}
B_k(Y_i f)_*'&(\bar G'-\bar G)Y_i\bar G\\
&\equiv B_k(Y_i f)_*'(\bar G'-\bar G)((t+1)\partial_{x_i}+\part_{v_i})\bar G\\
&\equiv B_k(Y_i^2f)'_*(\bar G'-\bar G)^2-B_k(Y_i^2f)'_*(\bar G'-\bar G)\bar G'\\
&\quad-B_k(Y_i f)_*'((Y_i \bar G)'-Y_i\bar G)(\bar{G}-\bar{G}')-B_k(Y_if)_*'((Y_i \bar G)'-Y_i\bar G)\bar{G}'.
\end{align*}
Next we integrate by parts again but since the details are the same as in the cases above, we skip them.}
\end{proof}
\begin{lemma}\label{l.sym_bound_weight}
Let $\mf$ and $\mg$ be any smooth functions and $K_{|\mf|}(v',v)$ be defined as in \lref{boltz_kernel} then for $\gamma+2s\geq 0$, we have the following bound,
$$\int_{\R^3}\int_{\R^3} K_{|\mf|}(\mg'-\mg)^2\d v'\d v \lesssim \norm{\mf\jap{v}^{2s+\gamma}}_{L^1_v}\norm{\mg\jap{v}^{(s+\frac{\gamma}{2})_+}}_{H^s_v}^2.$$
\end{lemma}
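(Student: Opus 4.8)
The plan is to reduce everything to the two structural facts already isolated in the paper: the pointwise kernel asymptotics of \lref{boltz_kernel_approx}, namely $K_{|f|}(v,v')\approx \left(\int_{w\perp(v'-v)}|f|(v+w)|w|^{\gamma+2s+1}\d w\right)|v'-v|^{-3-2s}$, together with the weighted-integrability bounds of \lref{K_bound}. The quantity $\int K_{|f|}(v,v')(g(v')-g(v))^2\d v'\d v$ is morally a weighted $\dot H^s$ seminorm of $g$ with a $v$-dependent weight controlled by $f$. So the first step is to split the inner $v'$-integral at the scale $|v-v'|=1$ (or at a $v$-dependent scale, but $1$ should suffice here). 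On the region $|v-v'|\ge 1$ I would brutally bound $(g'-g)^2\le 2(g')^2+2g^2$ and use \lref{K_bound} (both versions, to handle the $g'$ and the $g$ term after relabeling) to get $\int_{|v-v'|\ge1}K_{|f|}(v,v')\d v'\lesssim \int|f|(z)|z-v|^{\gamma+2s}\d z$, which after Fubini and Cauchy--Schwarz gives a contribution bounded by $\norm{f\jap{v}^{\gamma+2s}}_{L^1_v}\norm{g\jap{v}^{(\gamma/2+s)_+}}_{L^2_v}^2$, absorbed in the claimed right-hand side since $L^2_v\subset H^s_v$ is false but $H^s_v\supset L^2_v$ — wait, one needs $\norm{g}_{L^2}\le\norm{g}_{H^s}$, which is true. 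Good; this half is the easy one.

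The core is the region $|v-v'|\le 1$, where the singularity $|v-v'|^{-3-2s}$ lives and one must genuinely use the cancellation in $(g'-g)^2$. Here I would use the asymptotic formula to write $K_{|f|}(v,v')\lesssim |v-v'|^{-3-2s}\,\Phi_f(v,v')$ where $\Phi_f(v,v'):=\int_{w\perp(v'-v)}|f|(v+w)|w|^{\gamma+2s+1}\d w$, and crucially observe that $\Phi_f(v,v')$ is essentially $v'$-independent at scale $|v-v'|\le 1$ up to harmless weight factors: $\Phi_f(v,v')\lesssim \jap{v}^{\gamma+2s+1+?}\int_{w\perp (v'-v)}|f|(v+w)\,\jap{w}^{?}\d w$, and then bound the hyperplane integral by an $L^1$-type quantity of $f$ against $\jap{\cdot}$-weights; this is exactly the mechanism underlying \lref{K_bound}. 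Then the $v'$-integral becomes $\int_{|v-v'|\le1}|v-v'|^{-3-2s}(g(v')-g(v))^2\d v'$, which is bounded by $\norm{g}_{\dot H^s(B_1(v))}^2$-type localized Gagliardo seminorm; summing (integrating) over $v$ against the weight coming from $\Phi_f$ and using Fubini to put the $f$-weight onto the $v$ variable, one lands on $\norm{f\jap{v}^{\gamma+2s}}_{L^1_v}$ times the global Gagliardo $H^s_v$ seminorm of $g$, with the extra $\jap{v}^{(\gamma/2+s)_+}$ weight appearing precisely because $|w|^{\gamma+2s+1}$ on the hyperplane through $v$ contributes a $\jap{v}^{\gamma+2s}$ when $\gamma+2s>0$ and the budget is split $\gamma+2s = 2s + \gamma = 2\cdot(s+\gamma/2)$, half going to $g$ and half reconstituting the $f$-weight. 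I would phrase the last step via the equivalence $\norm{h}_{\dot H^s}^2\approx \int\int |v-v'|^{-3-2s}(h(v')-h(v))^2\d v\d v'$ for $s\in(0,1)$.

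The main obstacle is bookkeeping the velocity weights through the hyperplane integral: one must show $\Phi_f(v,v')\lesssim \jap{v}^{2(s+\gamma/2)_+}\,\big(\int_{\R^3}|f|(z)\jap{z-v}^{\gamma+2s}\d z\big)$-style bound uniformly for $|v-v'|\le1$, i.e. that restricting to a hyperplane and carrying the $|w|^{\gamma+2s+1}$ factor does not cost more than the stated $\jap{v}^{(s+\gamma/2)_+}$ moment on $g$ after the weight is distributed. When $\gamma+2s\le 2$ (our standing assumption) the power $|w|^{\gamma+2s+1}\le |w|^{3}$ is integrable-after-localization and the triangle inequality $|w|\le |w+v-v|$... more precisely $|v+w|\le\jap{v}\jap{w}$ lets one peel off $\jap{v}^{\gamma+2s+1}$, but that is too lossy; the sharp route is to keep $|f|(v+w)$ paired with its own argument's weight and only extract $\jap{v}^{\gamma+2s}$ via $|v-v'|\le1$ forcing the transversal variable $w$ to see the bulk of $v$. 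I expect this to be essentially a repackaging of the proof of \lref{K_bound}, so once that is invoked correctly the rest is Cauchy--Schwarz and Fubini. I would therefore structure the write-up as: (1) dyadic/annular split at scale $1$; (2) outer region via \lref{K_bound}; (3) inner region via \lref{boltz_kernel_approx} plus the Gagliardo characterization of $H^s_v$; (4) collect weights, noting $L^2_v\hookrightarrow H^s_v$ is used only in the outer estimate.
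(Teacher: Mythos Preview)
Your treatment of the outer region $|v-v'|\ge 1$ is correct and coincides with the paper's: split $(g'-g)^2\le 2(g')^2+2g^2$, apply \lref{K_bound} in both directions, and collect the weights.

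The inner region, however, contains a real gap. You propose to bound the hyperplane integral
\[
\Phi_f(v,v')=\int_{w\perp(v'-v)}|f|(v+w)\,|w|^{\gamma+2s+1}\,\d w
\]
pointwise in $v'$ by a weighted $L^1_v$ norm of $f$ times a power of $\jap{v}$, and then pull this factor out of the $v'$-integral so that what remains is the Gagliardo seminorm $\int_{|v-v'|\le1}|v-v'|^{-3-2s}(g'-g)^2\d v'$. That pointwise bound is false in general: $\Phi_f(v,v')$ is a trace of a weighted $|f|$ on the $2$-plane through $v$ orthogonal to $v'-v$, and such hyperplane traces of an $L^1$ function are not controlled by any $L^1$ norm --- they can be infinite on a set of directions of positive measure. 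Your remark that ``this is exactly the mechanism underlying \lref{K_bound}'' misreads that lemma: \lref{K_bound} integrates $K_f(v,v')$ in the second variable, and it is precisely the spherical average over directions (via \lref{change_of_var_2}) that converts the hyperplane integral into a full-space one. No such mechanism is available pointwise in the direction of $v'-v$.

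The paper handles the inner region differently. It localizes $g$ via a lattice partition: with a cutoff $\chi$ equal to $1$ on $B_{10}$ and supported in $B_{20}$, one has
\[
I_1\lesssim \sum_{z\in\Z^3}\int\int K_{|f|}(v,v')\big(\chi(v-z)g(v)-\chi(v'-z)g(v')\big)^2\,\d v'\,\d v,
\]
and on each piece $\jap{v}\approx\jap{v'}\approx\jap{z}$, so the velocity weights are constant and a direct appeal to \cite[Lemma 4.2]{ImbSil16} gives the local $H^s$ bound with constant $\jap{z}^{\gamma+2s}\norm{f\jap{v}^{\gamma+2s}}_{L^1_v}$. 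The sum over $z$ closes because the weight $\jap{v}^{(s+\gamma/2)_+}$ on $g$ provides an extra $\jap{z}^{-\eps}$ after redistributing the $\jap{z}^{\gamma+2s}$. If you want to avoid citing \cite{ImbSil16}, you would need to reproduce its argument, which does \emph{not} proceed via a pointwise kernel bound but rather keeps the $f$-integration coupled to the $(v,v')$-integration throughout.
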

\begin{proof}
We first write 
\begin{align*}\int_{\R^3}\int_{\R^3} K_{|\mf|}(\mg'-\mg)^2\d v'\d v&=\int\int_{\{(v,v'):|v-v'|\leq 1\}} K_{|\mf|}(\mg'-\mg)^2\d v'\d v\\
&\quad+\int\int_{\{(v,v'):|v-v'|> 1\}} K_{|\mf|}(\mg'-\mg)^2\d v'\d v\\
&:=I_1+I_2.
\end{align*}

To estimate the first term, we follow \cite{HeSnTa19} and decompose the integral into a sum over compact sets and apply a cut-off. Let $\chi$ be a smooth function such that it is one on $B_{10}$ and zero outside $B_{20}.$

Then we have that $$I_1\lesssim \sum_{z\in \Z^3}\int_{B_{10}(z)}\int_{B_{10}(z)}K_{|\mf|}(\mg'-\mg)^2\d v'\d v\lesssim \sum_{z\in \Z^3}\int_{\R^3}\int_{\R^3} K_{|\mf|}(\chi'\mg'-\chi \mg)^2\d v'\d v.$$

Now by a simple adaptation of [Lemma 4.2, \cite{ImbSil16}] and noting that $\jap{v}\approx \jap{v'}\lesssim \jap{z}$ for each piece in the sum we get
\begin{align*}I_1&\lesssim  \sum_{z\in \Z^3} \jap{z}^{\gamma+2s}\norm{\mf\jap{v}^{\gamma+2s}}_{L^1_v}\norm{\chi \mg}_{H^s_v}^2\\
&\lesssim \sum_{z\in \Z^3} \frac{\norm{\mf\jap{v}^{\gamma+2s}}_{L^1_v}\norm{\mg\jap{v}^{(s+\frac{\gamma}{2})_+}}_{H^s_v(B_{20})}^2}{\jap{z}^{\eps}}\\
&\lesssim \norm{\mf\jap{v}^{\gamma+2s}}_{L^1_v}\norm{\mg\jap{v}^{(s+\frac{\gamma}{2})_+}}_{H^s_v}^2.
\end{align*}

For $I_2$, we use triangle inequality and \lref{K_bound} to get,
\begin{align*}
I_2&\lesssim \int\int_{\{(v,v'):|v-v'|> 1\}} \mg(v')^2 K_{|\mf|}(v,v') \d v' \d v+\int\int_{\{(v,v'):|v-v'|> 1\}} \mg(v)^2 K_{|\mf|}(v,v') \d v' \d v\\
&\lesssim \int \jap{v'}^{\gamma+2s}\mg(v')^2 \left(\jap{v'}^{-\gamma-2s}\int_{B_1(v')^c} K_{|\mf|}(v,v')\d v\right)\d v'\\
&\quad+\int \jap{v}^{\gamma+2s}\mg(v)^2 \left(\jap{v}^{-\gamma-2s}\int_{B_1(v)^c} K_{|\mf|}(v,v')\d v'\right)\d v\\
&\lesssim \norm{\mf\jap{v}^{2s+\gamma}}_{L^1_v}\norm{\mg\jap{v}^{s+\frac{\gamma}{2}}}_{L^2_v}^2.
\end{align*}
\end{proof}
\begin{lemma}\label{l.pen_term_main_1}
For $\gamma+2s\in(0,2)$, we have the following bound for $\sum_{k=0}^{k=\infty}|\eref{pen_term_main_1}|$,
$$\sum_{k=0}^{k=\infty}|\eref{pen_term_main_1}|\lesssim \int_{0}^{T}(1+t)^{1+\delta}\norm{\part^2 g}_{L^\infty_xL^1_v}\norm{(1+t)^{-\frac{1+\delta}{2}}\jap{v}\jap{x-(t+1)v}^2\derv{''}{''}{''} g}_{L^2_x H^s_v}^2.$$
\end{lemma}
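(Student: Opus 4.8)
The plan is to collapse the dyadic sum into a single integral and then quote the symmetric–difference estimate \lref{sym_bound_weight}. Since $B\ge 0$ in the kernel formula of \lref{boltz_kernel} one has the pointwise bound $|K_{\part^2 f}|\le K_{|\part^2 f|}$, and since $\{\chi_k\}$ is a partition of unity with $\supp\chi_k\subset[2^{-k-1},2^{-k}]$ one has $\sum_{k\ge 0}\chi_k(|v-v'|)\le 1$ for every $v,v'$. After these two reductions every integrand is nonnegative, so monotone convergence gives
\begin{equation*}
\sum_{k=0}^{\infty}|\eref{pen_term_main_1}|\le\int_0^{T}\int_{\R^3}\int_{\R^3}\int_{\R^3}K_{|\part^2 f|}(v,v')\,(\bar G'-\bar G)^2\,\d v'\,\d v\,\d x\,\d t ,
\end{equation*}
where $\bar G=\jap{x-(t+1)v}^2\derv{''}{''}{''}g$ as in the preceding subsection.

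Next I would apply \lref{sym_bound_weight} for almost every fixed $(t,x)$, with its first argument equal to $|\part^2 f|$ and its second argument equal to $\bar G$ (legitimate since $\gamma+2s\ge 0$), which yields
\begin{equation*}
\int_{\R^3}\int_{\R^3}K_{|\part^2 f|}(v,v')(\bar G'-\bar G)^2\,\d v'\,\d v\lesssim\norm{\part^2 f\,\jap{v}^{2s+\gamma}}_{L^1_v}\,\norm{\bar G\,\jap{v}^{(s+\frac{\gamma}{2})_+}}_{H^s_v}^2 .
\end{equation*}
Because $\gamma+2s\le 2$ we have $(s+\frac{\gamma}{2})_+\le 1$, and $\jap{v}^{(s+\frac{\gamma}{2})_+-1}$ together with all its derivatives is bounded, so multiplication by it is bounded on $H^s_v$; hence $\norm{\bar G\,\jap{v}^{(s+\frac{\gamma}{2})_+}}_{H^s_v}\lesssim\norm{\jap{v}\,\bar G}_{H^s_v}$, which involves only one extra $v$-derivative of $\derv{''}{''}{''}g$.

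It then remains to integrate in $x$ and $t$ and to pass from $\part^2 f$ to $\part^2 g$. Integrating in $x$ by Hölder's inequality — the $L^1_v$ factor measured in $L^\infty_x$ and the two copies of $\jap{v}\bar G$ in $L^2_x$ — then integrating in $t$, and multiplying and dividing by $(1+t)^{1+\delta}$, gives the bound
\begin{equation*}
\int_0^{T}(1+t)^{1+\delta}\,\norm{\part^2 f\,\jap{v}^{2s+\gamma}}_{L^\infty_xL^1_v}\,\norm{(1+t)^{-\frac{1+\delta}{2}}\jap{v}\,\bar G}_{L^2_xH^s_v}^2\,\d t .
\end{equation*}
Finally, since $\part^2 f$ is a second-order derivative of $f$, $2s+\gamma\le 2$, and the Gaussian relating $f$ and $g$ absorbs the polynomial weight $\jap{v}^{2s+\gamma}$, \lref{exp_bound} gives $\norm{\part^2 f\,\jap{v}^{2s+\gamma}}_{L^\infty_xL^1_v}\lesssim\norm{\part^2 g}_{L^\infty_xL^1_v}$, with $\part^2 g$ abbreviating the finite Leibniz sum of the relevant second-order derivatives of $g$; unravelling $\bar G=\jap{x-(t+1)v}^2\derv{''}{''}{''}g$ then yields the stated inequality. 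The only steps requiring real care are the opening reduction — one must invoke $|K_{\part^2 f}|\le K_{|\part^2 f|}$ and $\sum_{k\ge0}\chi_k\le 1$ so that every integrand is nonnegative and Tonelli applies — and the elementary but essential check $(s+\frac{\gamma}{2})_+\le 1$, which is precisely where the hypothesis $\gamma+2s\le 2$ enters; everything else is routine Hölder estimates, and the case $\part=Y_i$ is treated via the splitting $Y_i=(t+1)\part_{x_i}+\part_{v_i}$ exactly as in \pref{pen_term_main}.
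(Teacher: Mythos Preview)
Your argument is correct and follows exactly the same route as the paper's proof: bound $|K_{\part^2 f}|\le K_{|\part^2 f|}$, collapse the dyadic sum by monotone convergence, and then apply \lref{sym_bound_weight} with first argument $|\part^2 f|$ and second argument $\bar G$. The additional details you supply --- the check that $(s+\tfrac{\gamma}{2})_+\le 1$ so the velocity weight can be absorbed into $\jap{v}$, the H\"older step in $x$, the multiplication/division by $(1+t)^{1+\delta}$, and the use of \lref{exp_bound} to pass from $\part^2 f$ to $\part^2 g$ --- are all correct and are left implicit in the paper's terse proof.
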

\begin{proof}
First note that $$|\eref{pen_term_main_1}|\lesssim \int_0^{T}\int_{\R^3}\int_{\R^3}\int_{\R^3}\chi_{k}(|v-v'|)K_{|\part^2 f|}(\bar G'-\bar G)^2\d v'\d v\d x\d t.$$
Next using monotone convergence theorem, we get that
$$\sum_{k=0}^{k=\infty}|\eref{pen_term_main_1}|\lesssim \int_0^{T}\int_{\R^3}\int_{\R^3}\int_{\R^3}K_{|\part^2 f|}(\bar G'-\bar G)^2\d v'\d v\d x\d t.$$
Finally using \lref{sym_bound_weight} with $\mf=\part^2 f$, $\mg=\jap{x-(t+1)v}^2\derv{''}{''}{''} g$, we get the required bound
\end{proof}
\begin{lemma}\label{l.diff_to_gradient}
Let $\mf:\R^3\to \R$ be a differentiable function. The following inequality\footnote{see Lemma 4.5 in \cite{ImbSil16} for a similar result.} holds for any pair $v,v'\in \R^3$.
$$\left|\mf(v')-\mf(v)\right|\lesssim |v-v'|^{2}\int_{B_R(m)}\frac{|\part_{v_i}\mf| (w)}{|w-v|^{2}|w-v'|^{2}}\d w.$$

Here $R=\frac{|v-v'|}{2}$ and $m=\frac{v+v'}{2}$.
\end{lemma}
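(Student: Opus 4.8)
The plan is to reduce the pointwise bound to a one‑dimensional integral along the segment joining $v$ and $v'$, then convert that line integral into the claimed ball integral by a geometric comparison of weights. First I would write, by the fundamental theorem of calculus along the segment $\zeta(\nu)=\nu v'+(1-\nu)v$,
\[
|f(v')-f(v)|\le |v-v'|\int_0^1 |\part_{v_i}f|(\zeta(\nu))\,\d\nu .
\]
This is the same Taylor‑expansion device already used in the proof of \lref{diff_K_outside_ball}. The goal is then to dominate $\int_0^1 |\part_{v_i}f|(\zeta(\nu))\,\d\nu$ by $|v-v'|\int_{B_R(m)}\frac{|\part_{v_i}f|(w)}{|w-v|^2|w-v'|^2}\,\d w$, with $R=\tfrac{|v-v'|}{2}$ and $m=\tfrac{v+v'}{2}$.

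The key step is a "thickening" of the segment: the segment $\zeta([0,1])$ lies inside the ball $B_R(m)$, and for a point $w$ at distance $\rho$ from the segment the denominator $|w-v|^2|w-v'|^2$ is comparable to $(\ell^2+\rho^2)^2$ where $\ell$ is the coordinate along the segment and $\ell\in[0,|v-v'|]$ (up to reflecting so $\ell$ is measured from the nearer endpoint). Hence, introducing cylindrical‑type coordinates $(\ell,\rho,\vartheta)$ adapted to the segment on a tube of radius $R$ around it, one has
\[
\int_{B_R(m)}\frac{|\part_{v_i}f|(w)}{|w-v|^2|w-v'|^2}\,\d w
\;\gtrsim\; \frac{1}{|v-v'|^2}\int_{\text{tube}} |\part_{v_i}f|(w)\,\frac{\rho\,\d\rho\,\d\vartheta\,\d\ell}{(\ell^2+\rho^2)^2},
\]
which is not quite right in that direction; so instead I would argue the other way. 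I would \emph{smear} the segment integral: for each $\nu$, average $|\part_{v_i}f|(\zeta(\nu))$ over a small transversal disk and use that the weight $|w-v|^{-2}|w-v'|^{-2}$ is bounded below on that disk by a constant times $(\dist(\zeta(\nu),v)\cdot\dist(\zeta(\nu),v'))^{-2}\approx (\nu(1-\nu))^{-2}|v-v'|^{-4}$ times the disk radius squared. Choosing disks of radius proportional to $\nu(1-\nu)|v-v'|$ (which keeps them inside $B_R(m)$ and keeps the weight comparable), the transversal integration produces exactly the factor that converts $\int_0^1(\cdot)\,\d\nu$ into $|v-v'|\int_{B_R(m)}\frac{(\cdot)}{|w-v|^2|w-v'|^2}\,\d w$, after accounting for the Jacobian $\sim (\nu(1-\nu)|v-v'|)^2$ of the map $(\nu,\text{disk})\mapsto w$ and the weight lower bound $\sim(\nu(1-\nu))^{-2}|v-v'|^{-4}$; the net power of $|v-v'|$ works out to $|v-v'|^{2}$ on the right and $|v-v'|$ from the FTC, matching the claimed $|v-v'|^{2}$ after the extra factor $|v-v'|$ in front of the line integral is absorbed. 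A cleaner packaging of the same computation is to cover $[0,1]$ by dyadic pieces $\nu\in[2^{-j-1},2^{-j}]$ (and symmetrically near $\nu=1$), on each of which $\dist(\zeta(\nu),v)\approx 2^{-j}|v-v'|$, and to thicken each piece to a slab of the appropriate width; summing the geometric series in $j$ reproduces the bound with no loss.

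The main obstacle I anticipate is purely bookkeeping: making the change of variables from $(\nu,\text{transversal coordinates})$ to $w\in B_R(m)$ rigorous, checking that the image covers (a fixed fraction of) $B_R(m)$ with bounded overlap, and verifying that the weight $|w-v|^{-2}|w-v'|^{-2}$ stays comparable to its value at the corresponding segment point throughout each slab — all uniformly in the scale $2^{-j}$. Once the geometry is set up, the estimate is a one‑line consequence of Tonelli's theorem and summing the dyadic contributions. I would also note that the result is a known Riesz‑potential type inequality (it is essentially the statement that the Newtonian‑type kernel $|w-v|^{-2}|w-v'|^{-2}$ on $B_R(m)$ controls the difference quotient), so an alternative is to cite such a lemma; but since the paper proves its companions (\lref{diff_K_outside_ball}, \cref{diff_f_cor}) from scratch, I would give the self‑contained dyadic argument above.
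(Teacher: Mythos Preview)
Your plan has a genuine gap at the ``smearing'' step. After the fundamental theorem of calculus along the segment you are trying to prove
\[
\int_0^1 |\part_{v_i}f|(\zeta(\nu))\,\d\nu \;\lesssim\; |v-v'|\int_{B_R(m)}\frac{|\part_{v_i}f|(w)}{|w-v|^2|w-v'|^2}\,\d w,
\]
i.e.\ to dominate a \emph{line} integral of the nonnegative function $g:=|\part_{v_i}f|$ by a \emph{volume} integral of $g$ with a fixed weight. This cannot hold without extra hypotheses on $g$: averaging $g$ over a transversal disk does not bound $g$ at the center from above. Concretely, take $g$ to be an $L^1$--normalized bump of width $\varepsilon$ centered at the midpoint $m$. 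The left side is of order $\varepsilon^{-2}|v-v'|^{-1}$, while on the right the weight is $\sim|v-v'|^{-4}$ near $m$ and $\int g=1$, so the right side is $\sim|v-v'|^{-3}$; the ratio blows up as $\varepsilon\to 0$. (One can realize such $g$ as $|\nabla f|$ for a suitable $f$, so this is not a vacuous objection.) The dyadic version you sketch has the same defect: on each slab you still replace a pointwise value of $g$ on the segment by a transversal average, which is the wrong direction of inequality.

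The fix, which is exactly what the paper does, is to avoid reducing to a single segment in the first place. Compare both $f(v)$ and $f(v')$ to the common ball average $\fint_{B_R(m)} f$. For each endpoint, say $v$, the fundamental theorem along every ray from $v$ into $B_R(m)$ together with polar coordinates centered at $v$ gives
\[
\Bigl|\fint_{B_R(m)} f - f(v)\Bigr|\;\lesssim\; \int_{B_R(m)}\frac{|\part_{v_i}f|(w)}{|w-v|^2}\,\d w,
\]
which is the standard Poincar\'e/Riesz--potential estimate. Then, since $|w-v'|\le 2R$ for $w\in B_R(m)$, one has $1\lesssim R^2|w-v'|^{-2}$, which inserts the second denominator and produces the factor $R^2=|v-v'|^2/4$. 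Doing the same with $v$ and $v'$ swapped and applying the triangle inequality yields the lemma. The conceptual difference from your plan is that the ``smearing'' happens at the level of $f$ (via the ball average), not at the level of $|\nabla f|$; that is what makes the inequality go the right way.
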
 
\begin{proof}
For any $w\in B_R(m)$, we have $$|\mf(w)-\mf(v)|\leq \int_0^{|w-v|}|\part_{v_i}\mf|(v+z\widehat{w-v})\d z,$$
where $\widehat{w-v}=\frac{w-v}{|w-v|}$. Thus, using spherical coordinates with zero at $w=v$ we have 
\begin{align*}
\left|\fint_{B_R(m)} \mf(w)\d w-\mf(v)\right|&=\left|\fint_{B_R(m)} [\mf(w)-\mf(v)]\d w\right|\\
&\lesssim \fint_{B_R(m)} \left(\int_{0}^{|w-v|} |\part_{v_i} \mf|(v+z\widehat{w-v})\d z\right)\d w\\
&\lesssim \int_{B_R(m)}\frac{|\part_{v_i}\mf|(w)}{|w-v|^{2}}\d w.
\end{align*}

This implies that $$\left|\fint_{B_R(m)} \mf(w)\d w-\mf(v)\right|\lesssim R^{2}\int_{B_R(m)}\frac{|\part_{v_i}\mf| (w)}{|w-v|^{2}|w-v'|^{2}}\d w.$$
Exchanging the role of $v$ and $v'$ and substraction the resulting inequalities gives us the required result.
\end{proof}

We next define an auxiliary kernel in the spirit of \cite{ImbSil16}. Let $$\aux(v,w)=\int_{\{v'\in B_R(v):(v'-v)\cdot(w-v)\geq |w-v|^2\}}\frac{|v'-v|^{2}(K_{\mf}(v,v')-K_{\mf}(v',v))}{|w-v|^{2}|w-v'|^{2}}\d v'.$$
\begin{lemma}\label{l.diff_aux_est}
Let $\mf:\R^3\to \R$ be a differentiable function then for $|v-v'|<R\leq 1$ and $w$ such that $\jap{w}\approx \jap{v}\approx \jap{v'}$ we have that 
$$\int_{B_R(v)}\jap{v}^{-\gamma-2s}|\aux(v,w)|\d w\lesssim R^{2-2s}\norm{\part_{v_i} \mf\jap{v}^{2s+\gamma}}_{L^1_v},$$
and 
$$\int_{B_R(w)}\jap{w}^{-\gamma-2s}|\aux(v,w)|\d v\lesssim R^{2-2s}\norm{\part_{v_i} \mf\jap{v}^{2s+\gamma}}_{L^1_v}.$$
\end{lemma}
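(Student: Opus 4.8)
The plan is to bound $\aux(v,w)$ pointwise by an integral of $|K_f(v,v')-K_f(v',v)|$ against a weight, and then to integrate in $w$ (respectively $v$) using Fubini and the key estimate of Lemma~\ref{l.diff_K_outside_ball}, exactly as was done in the proof of that lemma but now keeping track of the extra factor $|v'-v|^{2}/(|w-v|^{2}|w-v'|^{2})$. First I would observe that on the domain of integration defining $\aux$, namely $v'\in B_R(v)$ with $(v'-v)\cdot(w-v)\ge |w-v|^2$, one has $|w-v'|\gtrsim |w-v|$ (the hyperplane condition forces $v'$ to lie on the far side of $w$ relative to $v$, so $|w-v'|\ge$ a fixed multiple of $|w-v|$; this is the geometric point that makes $\aux$ integrable and is the analogue of the corresponding estimate in \cite{ImbSil16}). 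Hence $|v'-v|^2/(|w-v|^2|w-v'|^2)\lesssim |v'-v|^2|w-v|^{-4}$ on that set, and moreover $|w-v|\le |w-v'|+|v'-v|\lesssim |w-v'|$ combined with $|v'-v|<R$.

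Next I would swap the order of integration: in the first inequality we want $\int_{B_R(v)}\jap{v}^{-\gamma-2s}|\aux(v,w)|\,\d w$, which after inserting the definition of $\aux$ and using $\jap{w}\approx\jap{v}\approx\jap{v'}$ becomes
\begin{align*}
\int_{B_R(v)}\jap{v}^{-\gamma-2s}|\aux(v,w)|\,\d w
&\lesssim \int_{B_R(v)}|v'-v|^{2}\,|K_f(v,v')-K_f(v',v)|\\
&\qquad\times\left(\int_{\{w:\,|w-v|^2\le (v'-v)\cdot(w-v)\}}\frac{\jap{v}^{-\gamma-2s}}{|w-v|^{2}|w-v'|^{2}}\,\d w\right)\d v'.
\end{align*}
The inner $w$-integral is a purely geometric quantity: on the region where $|w-v'|\gtrsim |w-v|$ it is dominated by $\jap{v}^{-\gamma-2s}\int_{|w-v|\le 2R}|w-v|^{-4}\,\d w$... but that diverges, so instead I would use that the constraint $|w-v|^2\le (v'-v)\cdot(w-v)$ confines $w$ to a ball of radius $\approx |v'-v|$ tangent to $v$, on which $|w-v'|\approx |v'-v|$; therefore the inner integral is $\lesssim \jap{v}^{-\gamma-2s}|v'-v|^{-2}\int_{|w-v|\lesssim |v'-v|}|w-v|^{-2}\,\d w\lesssim \jap{v}^{-\gamma-2s}|v'-v|^{-1}$. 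Wait — that leaves $\int_{B_R(v)}|v'-v|\,|K_f(v,v')-K_f(v',v)|\,\jap{v}^{-\gamma-2s}\,\d v'$, which by Corollary~\ref{c.diff_f_cor} (applied with $g=f$, and with the weight $\jap{v}^{-\gamma-2s}$ absorbed using $\jap{v'}\approx\jap{v}$, so that $\jap{v}^{-\gamma-2s}|z-v|^{\gamma+2s}\lesssim \jap{z}^{\gamma+2s}$ on the relevant range) is bounded by $R^{2-2s}\norm{\part_{v_i}f\jap{v}^{\gamma+2s}}_{L^1_v}$, which is exactly the claimed bound. The second inequality is obtained the same way, integrating in $v$ first, using the symmetric geometric bound on the $v$-region and Corollary~\ref{c.diff_f_cor}'s second estimate (integration in $v'$ over $B_r(v)$), together with $\jap{w}\approx\jap{v}\approx\jap{v'}$.

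The main obstacle I expect is getting the geometry of the auxiliary domain right: one must verify carefully that the constraint $(v'-v)\cdot(w-v)\ge |w-v|^2$ both (i) forces $|w-v'|$ to be comparable to $|v'-v|$ (not merely bounded below by $|w-v|$) when performing the inner $w$-integral, so that the singularity $|w-v'|^{-2}$ is harmless and the $w$-integral produces a clean factor $|v'-v|^{-1}$, and (ii) symmetrically confines $v$ to a set of measure $\approx|v'-v|^3$ when we integrate in $v$. Once these two elementary but slightly delicate geometric facts are in hand, everything reduces to Corollary~\ref{c.diff_f_cor} and Fubini--Tonelli, with the weight manipulations handled exactly as in Lemma~\ref{l.diff_K_outside_ball} using $\jap{w}\approx\jap{v}\approx\jap{v'}$; the restriction $R\le 1$ and $|v-v'|<R$ is used precisely so that these velocity comparabilities hold on the whole domain.
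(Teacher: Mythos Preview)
Your treatment of the first inequality is essentially the paper's: swap the order of integration, observe that the constraint $(v'-v)\cdot(w-v)\geq|w-v|^2$ (for fixed $v,v'$) confines $w$ to the ball $B_{|v-v'|/2}((v+v')/2)$, compute the inner $w$-integral to be $\approx|v-v'|^{-1}$, and then apply Corollary~\ref{c.diff_f_cor}. One imprecision: your claim that $|w-v'|\approx|v'-v|$ on this ball is false, since $v'$ lies on its boundary and $|w-v'|$ can be arbitrarily small. The integral is nevertheless finite because both singularities $|w-v|^{-2}$ and $|w-v'|^{-2}$ are locally integrable in $\R^3$; the clean way (and the paper's way) is simply to note that by scaling the inner integral equals $C|v-v'|^{-1}$ with $C=\int_{B_{1/2}(e/2)}|\tilde w|^{-2}|\tilde w-e|^{-2}\,\d\tilde w<\infty$.

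The second inequality, however, has a genuine gap. Your claim (ii), that the constraint ``symmetrically confines $v$ to a set of measure $\approx|v'-v|^3$'', does not hold and cannot even be formulated as stated, since $|v'-v|$ depends on $v$. For fixed $w,v'$ the constraint is equivalent to $(w-v)\cdot(w-v')\leq 0$, which is a half-space in $v$, not a small ball; so the argument is not symmetric and Corollary~\ref{c.diff_f_cor} alone does not close it. The paper instead uses the triangle inequality $|v-v'|^2\lesssim|w-v|^2+|w-v'|^2$ to split the weight, then exploits the constraint in the form $(w-v)\cdot(w-v')\leq 0$, which forces $|v-v'|^2\geq|w-v|^2+|w-v'|^2$ and in particular $|v-v'|\geq|w-v'|$. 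After Fubini, for fixed $v'$ one integrates $|K_f(v,v')-K_f(v',v)|$ over $\{v:|v-v'|\geq|w-v'|\}$ using Lemma~\ref{l.diff_K_outside_ball} (the ``outside the ball'' estimate, not the corollary), yielding a factor $|w-v'|^{1-2s}$; the remaining integral $\int_{B_{2R}(w)}|w-v'|^{-1-2s}\,\d v'\lesssim R^{2-2s}$ then finishes the bound. You should replace your symmetric sketch by this argument.
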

\begin{proof}
For the first inequality, we have
\begin{align*}
\int_{B_R(v)} &|\aux(v,w)|\d w\\
&\leq \int_{B_R} \int_{\{v'\in B_R(v):(v'-v)\cdot(w-v)\geq |w-v|^2\}}\frac{|v'-v|^{2}|K_{\mf}(v,v')-K_{\mf}(v',v)|}{|w-v|^{2}|w-v'|^{2}}\d v'\d w\\
&=\int_{B_R(v)} |v-v'|^{2}|K_{\mf}(v,v')-K_{\mf}(v',v)|\left(\int_{B_{\frac{|v-v'|}{2}}(\frac{v+v'}{2})} \frac{1}{|w-v|^{2}|w-v'|^{2}}\d w\right)\d v'.
\end{align*}

Next we perform the change of variables $w\mapsto w+v$. We use this change of variables and \cref{diff_f_cor} to get,
\begin{align*}
\int_{B_R(v)}& |v-v'|^{2}|K_{\mf}(v,v')-K_{\mf}(v',v)|\left(\int_{B_{\frac{|v-v'|}{2}}(\frac{v+v'}{2})} \frac{1}{|w-v|^{2}|w-v'|^{2}}\d w\right)\d v'\\
&=\int_{B_R(v)} |v-v'|^{2}|K_{\mf}(v,v')-K_{\mf}(v',v)|\left(\int_{B_{\frac{|v-v'|}{2}}(\frac{v'-v}{2})} \frac{1}{|w|^{2}|w-(v'-v)|^{2}}\d w\right)\d v'\\
&=C\int_{B_R(v)} |K_{\mf}(v,v')-K_{\mf}(v',v)||v-v'|\d v'\lesssim R^{2-2s}\jap{v}^{\gamma+2s}\int_{\R^3}|\part_{v_i}\mf|(z)\jap{z}^{\gamma+2s}\d z.
\end{align*}

For the second inequality we use triangle inequality $$|v-v'|^{2}\leq |w-v|^{2}+|w-v'|^{2}.$$ Thus, we have
\begin{align*}
\jap{w}^{-\gamma-2s}&\int_{B_R(w)}\aux(v,w)\d v\\
&\lesssim \int_{B_R(w)}\int_{\{v':|v-v'|<R, (v'-v)\cdot (w-v)\geq |w-v|^2\}}\jap{w}^{-2s-\gamma}\frac{|K_{\mf}(v,v')-K_{\mf}(v',v)|}{|w-v'|^{2}}\d v'\d v \\
&\quad+  \int_{B_R(w)}\int_{\{v':|v-v'|<R, (v'-v)\cdot (w-v)\geq |w-v|^2\}}\jap{w}^{-2s-\gamma}\frac{|K_{\mf}(v,v')-K_{\mf}(v',v)|}{|w-v|^{2}}\d v'\d v.
\end{align*}

Using \lref{diff_K_outside_ball}, Fubini's theorem and that $\jap{w}\approx \jap{v}\approx\jap{v'}$, we have
\begin{align*}
\int_{B_R(w)}&\int_{\{v':|v-v'|<R, (v'-v)\cdot (w-v)\geq |w-v|^2\}}\jap{w}^{-2s-\gamma}\frac{|K_{\mf}(v,v')-K_{\mf}(v',v)|}{|w-v'|^{2}}\d v'\d v\\
&=\int_{B_{2R}(w)}\int_{\{v':|v-v'|<R, (w-v')\cdot (w-v)\leq 0\}\cap B_R(w)}\jap{w}^{-2s-\gamma}\frac{|K_{\mf}(v,v')-K_{\mf}(v',v)|}{|w-v'|^{2}}\d v\d v'\\
&\lesssim \int_{B_{2R}(w)}\int_{\{v':|v-v'|<R, |v-v'|\geq |w-v'|\}\cap B_R(w)}\jap{w}^{-2s-\gamma}\frac{|K_{\mf}(v,v')-K_{\mf}(v',v)|}{|w-v'|^{2}}\d v\d v'\\
&\lesssim \norm{\part_{v_i} \mf\jap{v}^{\gamma+2s}}\int_{B_{2R}(w)} \frac{1}{|w-v'|^{1+2s}}\d v'\\
&\lesssim R^{2-2s}\norm{\part_{v_i} \mf\jap{v}^{\gamma+2s}}_{L^1_v}.
\end{align*}

In a similar way we can prove the required bound for the second term. In that case we do not need to use Fubini's theorem.
\end{proof}

\begin{lemma}\label{l.diff_f_diff_G} Let $\mf$, $\mg$ and $\mh$ be any smooth functions then for $\gamma+2s\in(0,2]$, we have the following bound for $$\sum_{k=0}^{k=\infty}\left|\int_0^{T}\int_{\R^3}\int_{\R^3}\int_{\R^3} \chi_{k}(|v-v'|)(K_{\mf}-K_{\mf'})(\mg'-\mg)\mh\d v'\d v\d x\d t\right|:=I,$$
\[   
I\lesssim\\
     \begin{cases}
\begin{split}
		\int_0^{T}(1+t)^{1+\delta}&\norm{\part_{v_i} \mf\jap{v}^{2}}_{L^\infty_xL^1_v}\norm{(1+t)^{-\frac{1+\delta}{2}}\jap{v}\mg}_{L^2_xL^2_v}\\
&\qquad\times\norm{(1+t)^{-\frac{1+\delta}{2}}\jap{v}\mh}_{L^2_xL^2_v}\d t
\end{split}&\text{if } s\in \left(0,\frac{1}{2}\right)\\ \\

\begin{split}
		&\int_0^{T}(1+t)^{1+2\delta}\norm{\part_{v_i} \mf\jap{v}^{2}}_{L^\infty_xL^1_v}\norm{(1+t)^{-\frac{1+2\delta}{2}}\jap{v}\mh}_{L^2_xL^2_v}\\
&\hspace{5em}\times\norm{(1+t)^{-\frac{1+2\delta}{2}}\jap{v}\mg}^{(2s-1)_+}_{L^2_xH^{1}_v}\norm{(1+t)^{-\frac{1+2\delta}{2}}\jap{v}\mg}^{(2-2s)_-}_{L^2_xL^2_v}\d t\\
&+\int_0^{T}(1+t)^{1+2\delta}\norm{\part_{v_i} \mf\jap{v}^{2}}_{L^\infty_xL^1_v}\norm{(1+t)^{-\frac{1+2\delta}{2}}\jap{v}\mh}_{L^2_xL^2_v}\\
&\qquad\times\norm{(1+t)^{-\frac{1+2\delta}{2}}\jap{v}\mg}^{2s-1}_{L^2_xH^{1}_v}\norm{(1+t)^{-\frac{1+2\delta}{2}}\jap{v}\mg}^{2-2s}_{L^2_xL^2_v}\d t
\end{split}&\text{if } s\in \left[\frac{1}{2},1\right).
     \end{cases}
\] 

\end{lemma}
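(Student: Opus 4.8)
The plan is to estimate the quantity $I$ by splitting the $v,v'$ integration into the near-diagonal region $|v-v'|\le R$ and the far region $|v-v'|>R$, for a parameter $R\le 1$ to be optimized later, exactly as in the proof of \lref{top_order_diff_f}. In the far region, I would use the triangle inequality $|(K_f-K_f')(g'-g)h|\le |K_f-K_f'|(|g'|+|g|)|h|$, Cauchy--Schwarz in $v,v'$ weighted by $\jap{v}^{\pm(\gamma+2s)}$, then the kernel difference bound \lref{diff_K_outside_ball} (which gives the factor $R^{1-2s}\norm{\part_{v_i}f\jap{v}^{\gamma+2s}}_{L^1_v}$), together with pre-post-collisional / relabelling symmetry to convert the $|g'|$ piece into a $|g|$ piece via Fubini as in \lref{top_order_diff_f}. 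Since $\gamma+2s\le 2$ we can absorb the powers $\jap{v}^{\gamma+2s}$ into $\jap{v}^2$ on $\partial_{v_i}f$ and $\jap{v}$ on $g,h$ (using $\jap{v}\approx\jap{v'}$ on the relevant domain only after a further split, or by noting $|v-v'|>R$ does not give $\jap{v}\approx\jap{v'}$, so one should instead keep $\jap{v}^{\gamma/2+s}$ weights symmetrically distributed and dominate by $\jap{v}$). This produces a contribution $\lesssim R^{1-2s}\norm{\partial_{v_i}f\jap{v}^2}_{L^\infty_xL^1_v}\norm{\jap{v}g}_{L^2_v}\norm{\jap{v}h}_{L^2_v}$.

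The near-diagonal region $|v-v'|\le R$ is where the cancellation $g'-g$ must be used, and this is the main obstacle. Here I would invoke \lref{diff_to_gradient} to write $|g'(v)-g(v)|\lesssim |v-v'|^2\int_{B_R(m)}\frac{|\partial_{v_i}g|(w)}{|w-v|^2|w-v'|^2}\d w$, substitute this, and recognize the resulting kernel acting on $K_f-K_f'$ as the auxiliary kernel $\aux(v,w)$ from the discussion preceding \lref{diff_aux_est}; then \lref{diff_aux_est} gives $\int_{B_R}\jap{v}^{-\gamma-2s}|\aux(v,w)|\d w\lesssim R^{2-2s}\norm{\partial_{v_i}f\jap{v}^{2s+\gamma}}_{L^1_v}$ and its symmetric counterpart. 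For $s<\frac12$ this alone suffices: after Cauchy--Schwarz in the $w$-variable weighted by $\aux$, one Schur-tests using both bounds in \lref{diff_aux_est} and arrives at $\lesssim R^{2-2s}\norm{\partial_{v_i}f\jap{v}^2}_{L^\infty_xL^1_v}\norm{\jap{v}\partial_{v_i}g}_{L^2}\norm{\jap{v}h}_{L^2}$, i.e.\ with a $g$ in $H^1_v$ rather than $H^s_v$ — but since we instead want $g$ in $H^{0}_v=L^2_v$ for $s<\frac12$, one combines the far and near estimates and optimizes $R$ to trade the derivative on $g$ for the $R$-powers: choosing $R$ as a ratio of $L^1$-norms of $f$-derivatives as in \lref{top_order_diff_f}, the $R^{2-2s}$ vs.\ $R^{1-2s}$ powers balance and the $\partial_{v_i}g$ is no longer needed (the far term, with $g\in L^2$, dominates). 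For $s\ge\frac12$ one cannot afford a full derivative on $g$, so instead of \lref{diff_to_gradient} I would use the Littlewood--Paley decomposition $g=\sum_j Q_j g$ and exploit $|g'-g|\lesssim\sum_j \min(1,2^j|v-v'|)\|Q_j g\|_{L^\infty}$-type bounds, pairing the $2^j|v-v'|$ gain against the $|v-v'|^{2s-1-\cdot}$ singularity of $K_f-K_f'$ to land exactly $(2s-1)^+$ velocity derivatives on $g$; the kernel-difference integrability \lref{diff_K_outside_ball} restricted to the dyadic shell $|v-v'|\sim 2^{-k}$ gives the geometric factor $2^{(2s-1)k}$ summing over $k\ge0$, and \lref{LP_est} converts the resulting $\sum_j 2^{(2s-1)j}\|Q_j g\|^2$ into $\norm{g}_{H^{(2s-1)^+}_v}^2$.

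Putting the two regions together, summing the geometric series in $k$ (each dyadic piece carrying $2^{(2s-1)k}$ or $2^{(2s-2)k}$ so convergent since $s<1$), applying Cauchy--Schwarz in $x$, and distributing the space-time weights $(1+t)^{\mp(1+\delta)/2}$ (resp.\ $(1+t)^{\mp(1+2\delta)/2}$) symmetrically between the $g$- and $h$-factors while the leftover $(1+t)^{1+\delta}$ (resp.\ $(1+t)^{1+2\delta}$) is placed in front, yields the two claimed inequalities. I expect the near-diagonal analysis for $s\ge\frac12$ — reconciling the Littlewood--Paley bookkeeping with the dyadic-in-$k$ decomposition of the kernel and checking that exactly $(2s-1)^+$ derivatives (and not more) fall on $g$ — to be the delicate point; the far-region and the $s<\frac12$ near-region estimates are routine given \lref{diff_K_outside_ball}, \cref{diff_f_cor}, \lref{diff_to_gradient} and \lref{diff_aux_est}.
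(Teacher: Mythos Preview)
Your plan for $s\geq\tfrac12$ is essentially the paper's argument. The paper Littlewood--Paley decomposes $g=\sum_j g_j$ \emph{first} and then, for each fixed $j$, splits the $k$-sum at $k=j$; in other words the near/far threshold is $R=2^{-j}$, varying with the LP frequency, not a single parameter to be optimized. For $k<j$ it uses the crude bound $|g_j'-g_j|\le |g_j'|+|g_j|$ together with \cref{diff_f_cor} on the shell $|v-v'|\sim 2^{-k}$, producing a factor $2^{k(2s-1)}$; for $k\ge j$ it uses \lref{diff_to_gradient} and \lref{diff_aux_est} exactly as you describe, producing $2^{k(2s-2)}\|\jap{v}^{\frac{\gamma}{2}+s}\partial_{v_i}g_j\|_{L^2_v}$. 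Summing the geometric series in $k$ and invoking \lref{LP_est} lands $g$ in $H^{(2s-1)^+}_v$. So your outline here is right; just replace the fixed-$R$ framing by the $j$-dependent split.

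For $s<\tfrac12$, however, the ``optimize $R$ to trade the derivative on $g$'' step is wrong. Both your near and far pieces carry the \emph{same} factor $\|\partial_{v_i}f\jap{v}^{\gamma+2s}\|_{L^1_v}$ --- there is no mix of $\|f\|$ and $\|\partial f\|$ as in \lref{top_order_diff_f} --- so there is nothing to balance in $R$; moreover, equating $R^{1-2s}\|g\|$ and $R^{2-2s}\|\partial g\|$ would give $R=\|g\|/\|\partial g\|$ and hence a \emph{negative} power of $\|\partial g\|$, which is meaningless. The paper's argument here is much simpler than what you attempt: drop the near/far split entirely and apply your far-region estimate (triangle inequality on $g'-g$ plus the kernel-difference bound) to \emph{every} dyadic shell. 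Shell $k$ contributes $\lesssim 2^{k(2s-1)}\|\partial_{v_i}f\jap{v}^{\gamma+2s}\|_{L^1_v}\|\jap{v}^{\frac{\gamma}{2}+s}g\|_{L^2_v}\|\jap{v}^{\frac{\gamma}{2}+s}h\|_{L^2_v}$, and since $2s-1<0$ the series $\sum_{k\ge 0}2^{k(2s-1)}$ converges. No derivative falls on $g$, and no optimization is needed.
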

\begin{proof}
\emph{Case 1:} $s\geq \frac{1}{2}$.\\
Fix a $0<R<1$ and let $k_1$ be such that $R\in [2^{-k_1-1},2^{-k_1}]$.\\
Then we have two cases,\\
\emph{Case 1a):} $k\leq k_1$. In this case we only exploit the cancellation from $\mf.$
\begin{align*}
\int_{\R^3}\int_{\R^3}& |\chi_{k}(|v-v'|)(K_{\mf}-K_{\mf}')(\mg'-\mg)|\mh\d v'\d v\\
&\lesssim \norm{\mh\jap{v}^{\frac{\gamma}{2}+s}}_{L^2_v}\left(\int_{\R^3}\jap{v}^{-\gamma-2s}\left(\int_{\R^3}\chi_{k}(|v-v'|)|K_{\mf}-K_{\mf}'||\mg'-\mg|\d v'\right)^2\d v\right)^{\frac{1}{2}}:=I_k.
\end{align*}

Let $T_{\mf}(v,v')=|K_{\mf}(v,v')-K_{\mf}(v',v)|$. 
\begin{align*}
&\left(\int_{\R^3}\jap{v}^{-\gamma-2s}\left(\int_{2^{-k-1}\leq\{|v-v'|\leq 2^{-k}\}}T_{\mf}|\mg'-\mg|\d v'\right)^2\d v\right)^{\frac{1}{2}}\\
&\quad \lesssim \left(\int_{\R^3}\jap{v}^{-\gamma-2s}\left(\int_{\{2^{-k-1}\leq|v-v'|\leq 2^{-k}\}}T_{\mf}|\mg'|\d v'\right)^2\d v\right)^{\frac{1}{2}}\\
&\qquad+\left(\int_{\R^3}\jap{v}^{-\gamma-2s}\left(\int_{\{2^{-k-1}\leq|v-v'|\leq 2^{-k}\}}T_{\mf}|\mg|\d v'\right)^2\d v\right)^{\frac{1}{2}}.
\end{align*}
Using Cauchy--Schwarz, Fubini and \cref{diff_f_cor} we get 
\begin{align*}
&\left(\int_{\R^3}\jap{v}^{-\gamma-2s}\left(\int_{\{2^{-k-1}\leq|v-v'|\leq 2^{-k}\}}T_{\mf}|\mg'|\d v'\right)^2\d v\right)^{\frac{1}{2}}\\
&\qquad\lesssim \left(\int_{\R^3} \left(\jap{v}^{-\gamma-2s}\int_{\{2^{-k-1}\leq|v-v'|\leq 2^{-k}\}} T_{\mf} \d v'\right)\left(\int_{\{2^{-k-1}\leq|v-v'|\leq 2^{-k}\}} T_{\mf} {\mg}^2\d v'\right)\d v\right)^{\frac{1}{2}}\\
&\qquad\lesssim 2^{\frac{-k(1-2s)}{2}}\norm{\part_{v_i}\mf\jap{v}^{\gamma+2s}}^{\frac{1}{2}}_{L^1_v}\left(\int_{\R^3}{\mg'}^2 \jap{v'}^{\gamma+2s}\jap{v'}^{-\gamma-2s}\int_{\{2^{-k-1}\leq|v-v'|\leq 2^{-k}\}} T_{\mf} \d v\d v'\right)^{\frac{1}{2}}\\
&\qquad \lesssim 2^{-k(1-2s)}\norm{\part_{v_i}\mf\jap{v}^{\gamma+2s}}_{L^1_v}\norm{\jap{v}^{\frac{\gamma}{2}+s} \mg}_{L^2_v}.
\end{align*}
The other term is taken care of in the same way but this time we don't use Fubini.\\

Thus,
\begin{align*}
\int_{\R^3}\int_{\R^3} &|\chi_{k}(|v-v'|)(K_{\mf}-K_{\mf'})(\mg'-\mg)\mh|\d v'\d v\\
&\lesssim 2^{-k(1-2s)}\norm{\part_{v_i}\mf\jap{v}^{\gamma+2s}}_{L^1_v}\norm{\jap{v}^{\gamma+2s} \mg}_{L^2_v} \norm{\mh\jap{v}^{\frac{\gamma}{2}+s}}_{L^2_v}.
\end{align*}
\emph{Case 1b):} $k> k_1$. For this case we use use \lref{diff_to_gradient} to get
\begin{align*}
&\int_{\R^3}\jap{v}^{-\gamma-2s}\left(\int_{\{2^{-k-1}\leq|v-v'|\leq 2^{-k}\}}T_{\mf}|\mg'-\mg|\d v'\right)^2\d v\\
&\qquad \lesssim \int_{\R^3}\jap{v}^{-\gamma-2s}\left(\int_{B_{2^{-k}}(v)}\int_{B_r(m)}|\part_{v_i} \mg|(w)\frac{|v-v'|^2T_{\mf}}{|w-v|^2|w-v|^2}\d w\d v'\right)^2\d v,
\end{align*}
where $r=\frac{|v-v'|}{2}$ and $m=\frac{v+v'}{2}$. Since $|v-v'|\leq 1$, we have for all $w\in B_r(m)$, $\jap{w}\approx \jap{v}\approx\jap{v'}.$

Using Fubini's theorem we have
\begin{align*}
\int_{\R^3}&\jap{v}^{-\gamma-2s}\left(\int_{B_{2^{-k}}(v)}\int_{B_r(m)}|\part_{v_i} \mg|(w)\frac{|v-v'|^2T_{\mf}}{|w-v|^2|w-v|^2}\d w\right)\d v'\d v\\
&\qquad \lesssim \int_{\R^3}\jap{v}^{-\gamma-2s}\left(\int_{B_{2^{-k}}(v)}|\part_{v_i} \mg|(w)\left(\int_{\{v':(v'-v)\cdot(w-v)\geq |w-v|^2\}}\frac{|v-v'|^2T_{\mf}}{|w-v|^2|w-v|^2}\d v'\right)\d w\right)^2\d v.
\end{align*}

In view of definition of $\aux(v,w)$ and the fact that $\jap{w}\approx \jap{v}\approx\jap{v'}$, we use \lref{diff_aux_est} and Cauchy--Schwarz to get
\begin{align*}
\int_{\R^3}&\jap{v}^{-\gamma-2s}\left(\int_{\{2^{-k-1}\leq|v-v'|\leq 2^{-k}\}}T_{\mf}|\mg'-\mg|\d v'\right)^2\d v\\
&\lesssim \int_{\R^3}\jap{v}^{-\gamma-2s}\left(\int_{B_{2^{-k}}(v)}|\part_{v_i} \mg|(w)\aux(v,w)\d w\right)^2\d v\\
&\lesssim \int_{\R^3}\left(\int_{B_{2^{-k}}(v)}\jap{v}^{-2s-\gamma}\aux(v,w)\d w\right)\left(\int_{B_{2^{-k}}(v)}|\part_{v_i} \mg|^2(w)\aux(v,w)\d w\right)\d v\\
&\lesssim 2^{-k(2-2s)}\norm{\part_{v_i} \mf\jap{v}^{2s+\gamma}}_{L^1_v} \int_{\R^3}\int_{B_{2^{-k}}(v)} |\part_{v_i} \mg|^2(w)\aux(v,w)\d w\d v\\
&\lesssim 2^{-k(2-2s)}\norm{\part_{v_i} \mf\jap{v}^{2s+\gamma}}_{L^1_v} \int_{\R^3}\jap{w}^{2s+\gamma}|\part_{v_i} \mg|(w)^2\left(\int_{B_{2^{-k}}(w)} \jap{w}^{-2s-\gamma}\aux(v,w)\d v\right)\d w\\&\lesssim 2^{-2k(2-2s)}\norm{\part_{v_i} \mf\jap{v}^{2s+\gamma}}^2_{L^1_v} \norm{\part_{v_i}\mg\jap{v}^{\frac{\gamma}{2}+s}}^2_{L^2_v}.
\end{align*}
Thus taking square roots for the case $k\geq k_1$ we get 
\begin{align*}
\int_{\R^3}\int_{\R^3}& |\chi_{k}(|v-v'|)(K_{\mf}-K_{\mf'})(\mg'-\mg)\mh|\d v'\d v\\
&\lesssim 2^{-k(2-2s)}\norm{\part_{v_i} \mf\jap{v}^{2s+\gamma}}_{L^1_v}\norm{\jap{v}^{\frac{\gamma}{2}+s} \mg}_{H^1_v} \norm{\mh\jap{v}^{2s+\gamma}}_{L^2_v}.
\end{align*}

Using Case 1a) as above we have the following bound, 
\begin{align*}
\sum_{k=0}^{k=k_1}|I_k|&\lesssim \sum_{k=0}^{k=k_1} 2^{-k(1-2s)}\norm{\part_{v_i}\mf\jap{v}^{\gamma+2s}}_{L^1_v}\norm{\jap{v}^{\frac{\gamma}{2}+s} \mg}_{L^2_v} \norm{\mh\jap{v}^{\frac{\gamma}{2}+s}}_{L^2_v}\\
&\lesssim 2^{k_1(2s-1)}\norm{\part_{v_i}\mf\jap{v}^{\gamma+2s}}_{L^1_v}\norm{\jap{v}^{\frac{\gamma}{2}+s} \mg}_{L^2_v} \norm{\mh\jap{v}^{\frac{\gamma}{2}+s}}_{L^2_v}\\
&\lesssim R^{1-2s}\norm{\part_{v_i}\mf\jap{v}^{\gamma+2s}}_{L^1_v}\norm{\jap{v}^{\frac{\gamma}{2}+s} \mg}_{L^2_v} \norm{\mh\jap{v}^{\frac{\gamma}{2}+s}}_{L^2_v}.
\end{align*}
Here we used that $s>\frac{1}{2}$. For $s=\frac{1}{2}$ we can replace $R^{1-2s}$ by $R^{-\eta}$ for any $\eta>0$.

Using Case 1b) we get
\begin{align*}
\sum_{k=k_1+1}^{k=\infty}|I_k|&\lesssim \sum_{k=k_1+1}^{k=\infty} 2^{-k(2-2s)}\norm{\part_{v_i}\mf\jap{v}^{\gamma+2s}}_{L^1_v}\norm{\jap{v}^{\frac{\gamma}{2}+s} \mg}_{H^1_v} \norm{\mh\jap{v}^{\frac{\gamma}{2}+s}}_{L^2_v}\\
&\lesssim 2^{-k_1(2-2s)}\norm{\part_{v_i}\mf\jap{v}^{\gamma+2s}}_{L^1_v}\norm{\jap{v}^{\frac{\gamma}{2}+s} \mg}_{H^1_v} \norm{\mh\jap{v}^{\frac{\gamma}{2}+s}}_{L^2_v}\\
&\lesssim R^{2-2s}\norm{\part_{v_i}\mf\jap{v}^{\gamma+2s}}_{L^1_v}\norm{\jap{v}^{\frac{\gamma}{2}+s} \mg}_{H^1_v} \norm{\mh\jap{v}^{\frac{\gamma}{2}+s}}_{L^2_v}.
\end{align*}

Now let $$R=\frac{\norm{\jap{v}^{\frac{\gamma}{2}+s} \mg}_{L^2_v}}{\norm{\jap{v}^{\frac{\gamma}{2}+s} \mg}_{H^1_v}}.$$
This choice implies, 
\begin{align*}
\sum_{k=0}^{k=k_1}|I_k|&\lesssim \norm{\part_{v_i}\mf\jap{v}^{\gamma+2s}}_{L^1_v}\norm{\jap{v}^{\frac{\gamma}{2}+s} \mg}^{\max\{2s-1,\eta\}}_{H^{1}_v}\norm{\jap{v}^{\frac{\gamma}{2}+s} \mg}^{\min\{2-2s,1-\eta\}}_{L^2_v} \norm{\mh\jap{v}^{\frac{\gamma}{2}+s}}_{L^2_v}.
\end{align*}
and 
$$\sum_{k=k_1+1}^{k=\infty}|I_k|\lesssim \norm{\part_{v_i}\mf\jap{v}^{\gamma+2s}}_{L^1_v}\norm{\jap{v}^{\frac{\gamma}{2}+s} \mg}^{2s-1}_{H^{1}_v}\norm{\jap{v}^{\frac{\gamma}{2}+s} \mg}^{2-2s}_{L^2_v} \norm{\mh\jap{v}^{\frac{\gamma}{2}+s}}_{L^2_v}.$$
\emph{Case 2:} $s<\frac{1}{2}$.\\
We proceed in the same way as in Case 1a) and then sum from $k=0$ to $k=\infty$ to get the required estimate.
\end{proof}
\begin{lemma}[Commutator estimate]\label{l.comm_estimates}
Let $\mf$, $\mg$ and $\mh$ be any smooth functions, then for $\gamma+2s\in(0,2]$, we have the following bounds for 
\begin{align*}
I:=\sum_{k=0}^{k=\infty} \left|\int_0^T\int_{\R^3} \int_{\R^3} \int_{\S^2}\right.&\left. [\jap{x-(t+1)v}^2Q_k(\mf,\mg)\right.\\
&-\left.Q_k(\mf,\jap{x-(t+1)v}^2\mg]\jap{x-tv}^2\mh\d \sigma\d v_*\d v\d t\right|,
\end{align*}
\begin{align*}
I&\lesssim   \int_0^{T}(1+t)^{3-\delta}\norm{\mf\jap{x-(t+1)v}^{2\delta}\jap{v}^{\gamma+2s}}_{L^\infty_xL^1_v}\norm{(1+t)^{-\frac{1+\delta}{2}}\mg\jap{x-(t+1)v}^{2}\jap{v}^{\frac{\gamma}{2}+s}}_{L^2_xL^2_v}\\
&\qquad \times\norm{(1+t)^{-\frac{1+\delta}{2}}\mh\jap{x-(t+1)v}^{2}\jap{v}^{\frac{\gamma}{2}+s}}_{L^2_xL^2_v}\d t\\
&\quad +\int_0^{T}(1+t)^{2+2\delta}\norm{\mf\jap{v}^2}_{L^\infty_xL^1_v}\norm{(1+t)^{-\frac{1+2\delta}{2}}\mh\jap{x-(t+1)v}^2\jap{v}}_{L^2_xL^2_v}\d t\\
&\qquad \times\norm{(1+t)^{-\frac{1+2\delta}{2}}\mg\jap{x-(t+1)v}\jap{v}}_{L^2_xH^{1}_v}^{2s-1}\norm{(1+t)^{-\frac{1+2\delta}{2}}\mg\jap{x-(t+1)v}\jap{v}}_{L^2_xL^2_v}^{2-2s}\\
&\quad +\int_0^{T}(1+t)^{2+2\delta}\norm{\mf\jap{v}^2}_{L^\infty_xL^1_v}\norm{(1+t)^{-\frac{1+2\delta}{2}}\mh\jap{x-(t+1)v}^2\jap{v}}_{L^2_xL^2_v}\d t\\
&\qquad \times\norm{(1+t)^{-\frac{1+2\delta}{2}}\mg\jap{x-(t+1)v}\jap{v}}_{L^2_xH^{1}_v}^{(2s-1)_+}\norm{(1+t)^{-\frac{1+2\delta}{2}}\mg\jap{x-(t+1)v}\jap{v}}_{L^2_xL^2_v}^{(2-2s)_-}\\
&\quad +\int_0^{T}(1+t)^{2+\delta}\norm{\mf\jap{x-(t+1)v}\jap{v}^{\gamma+2s}}_{L^\infty_xL^1_v}\norm{(1+t)^{-\frac{1+\delta}{2}}\mg\jap{x-(t+1)v}^2\jap{v}^{\frac{\gamma}{2}+s}}_{L^2_xL^2_v}\\
&\qquad \times\norm{(1+t)^{-\frac{1+\delta}{2}}\mh\jap{x-(t+1)v}^2\jap{v}^{\frac{\gamma}{2}+s}}_{L^2_xL^2_v}\d t,
\end{align*}
and
\begin{align*}
I&\lesssim   \int_0^{T}(1+t)^{3-\delta}\norm{\mf\jap{x-(t+1)v}^{2\delta}\jap{v}^{\gamma+2s}}_{L^2_xL^1_v}\norm{(1+t)^{-\frac{1+\delta}{2}}\mg\jap{x-(t+1)v}^{2}\jap{v}^{\frac{\gamma}{2}+s}}_{L^\infty_xL^2_v}\\
&\qquad \times\norm{(1+t)^{-\frac{1+\delta}{2}}\mh\jap{x-(t+1)v}^{2}\jap{v}^{\frac{\gamma}{2}+s}}_{L^2_xL^2_v}\d t\\
&\quad +\int_0^{T}(1+t)^{2+2\delta}\norm{\mf\jap{v}^2}_{L^2_xL^1_v}\norm{(1+t)^{-\frac{1+2\delta}{2}}\mh\jap{x-(t+1)v}^2\jap{v}}_{L^2_xL^2_v}\d t\\
&\qquad \times\norm{(1+t)^{-\frac{1+2\delta}{2}}\mg\jap{x-(t+1)v}\jap{v}}_{L^\infty_xH^{1}_v}^{2s-1}\norm{(1+t)^{-\frac{1+2\delta}{2}}\mg\jap{x-(t+1)v}\jap{v}}_{L^\infty_xL^2_v}^{2-2s}\\
&\quad +\int_0^{T}(1+t)^{2+2\delta}\norm{\mf\jap{v}^2}_{L^2_xL^1_v}\norm{(1+t)^{-\frac{1+2\delta}{2}}\mh\jap{x-(t+1)v}^2\jap{v}}_{L^2_xL^2_v}\d t\\
&\qquad \times\norm{(1+t)^{-\frac{1+2\delta}{2}}\mg\jap{x-(t+1)v}\jap{v}}_{L^\infty_xH^{1}_v}^{(2s-1)_+}\norm{(1+t)^{-\frac{1+2\delta}{2}}\mg\jap{x-(t+1)v}\jap{v}}_{L^\infty_xL^2_v}^{(2-2s)_-}\\
&\quad +\int_0^{T}(1+t)^{2+\delta}\norm{\mf\jap{x-(t+1)v}\jap{v}^{\gamma+2s}}_{L^2_xL^1_v}\norm{(1+t)^{-\frac{1+\delta}{2}}\mg\jap{x-(t+1)v}^2\jap{v}^{\frac{\gamma}{2}+s}}_{L^\infty_xL^2_v}\\
&\qquad \times\norm{(1+t)^{-\frac{1+\delta}{2}}\mh\jap{x-(t+1)v}^2\jap{v}^{\frac{\gamma}{2}+s}}_{L^2_xL^2_v}\d t,
\end{align*}
\end{lemma}
\begin{proof}
\emph{Case 1:} $s\geq \frac{1}{2}.$ Using pre-post collision change of variables we have
\begin{align*}
\int_{\R^3} \int_{\R^3} \int_{\S^2}& [\jap{x-(t+1)v}^2Q_k(\mf,\mg)-Q_k(\mf,\jap{x-(t+1)v}^2\mg]\jap{x-(t+1)v}^2\mh\d \sigma\d v_*\d v\\
&=\int_{\R^3} \int_{\R^3} \int_{\S^2} B_k\mf_*'\mg'[\jap{x-(t+1)v}^2-\jap{x-(t+1)v'}^2]\jap{x-(t+1)v}^2\mh\d \sigma\d v_*\d v\\
&=\int_{\R^3} \int_{\R^3} \int_{\S^2}B_k \mf_*\mg[\jap{x-(t+1)v'}^2-\jap{x-(t+1)v}^2]\jap{x-(t+1)v'}^2\mh'\d \sigma\d v_*\d v.
\end{align*}

Before we start estimating, we need to make some changes to the equation above. For brevity, we drop the integration.
\begin{align}
&B_k\mf_*\mg[\jap{x-(t+1)v'}^2-[\jap{x-(t+1)v}^2]\jap{x-(t+1)v'}^2\mh'\nonumber\\
&\quad\equiv B_k\mf_*\mg[\jap{x-(t+1)v'}-\jap{x-(t+1)v}][\jap{x-(t+1)v'}^2-\jap{x-(t+1)v}^2]\jap{x-(t+1)v'}\mh' \label{e.comm_est_weight_prod_diff}\\
&\qquad+B_k\mf_*\mg\jap{x-(t+1)v}[\jap{x-(t+1)v'}^2-\jap{x-(t+1)v}^2-(v-v')_i\part_{v_i}\jap{x-(t+1)v}^2(v')]\nonumber\\
&\hspace{10em} \times\jap{x-(t+1)v'}\mh' \label{e.comm_est_weight_diff_g_1}\\
&\qquad+B_kf_*(\mg\jap{x-(t+1)v}-\mg'\jap{x-(t+1)v'})(v-v')_i\part_{v_i}\jap{x-(t+1)v}^2(v')\jap{x-(t+1)v'}\mh' \label{e.comm_est_weight_diff_g_2}\\
&\qquad+B_k \mf_*\mg'\jap{x-(t+1)v'}\part_{v_i}\jap{x-(t+1)v}^2(v')\cdot(v-v')_i\jap{x-(t+1)v'}\mh'.\label{e.comm_est_cancel}
\end{align}

We begin by estimating \eref{comm_est_weight_diff_g_1}. We have by integral form of Taylor's theorem, 
\begin{align*}
\jap{x-(t+1)v'}^2-&\jap{x-(t+1)v}^2-\part_{v_i}\jap{x-(t+1)v}^2(v')\cdot(v-v')_i\\
&=(v-v')_i(v-v')_j\int_0^1 \part^2_{v_iv_j} \jap{x-(t+1)v}^2(\eta v+(1-\eta)v')\d \eta.
\end{align*}
Since $|\part^2_{v_iv_j}\jap{x-(t+1)v}^2|\leq (1+t)^2$ and $|v-v'|=|v-v_*|\sin \left(\frac{\theta}{2}\right)$, we get
$$|(v-v')_i(v-v')_j\part^2_{v_iv_j} \jap{x-(t+1)v}^2(\eta v+(1-\eta)v')|\leq |v-v_*|^2\theta^2(1+t)^2.$$

Now, using \eref{decay_from_diff_velocity} from \lref{top_order_comm_weights} we get $$|v-v_*|^{2\delta}\lesssim (1+t)^{-2\delta} \jap{x-(t+1)v}^{2\delta}\jap{x-(t+1)v_*}^{2\delta}.$$

The above observations imply the following bound $$|v-v_*|^2(1+t)^2\lesssim (1+t)^{2-2\delta}|v-v_*|^{2-2\delta}\jap{x-(t+1)v}^{2\delta}\jap{x-(t+1)v_*}^{2\delta}.$$
Thus we get,
\begin{align*}
|B_k\mf_*\jap{x-(t+1)v}\mg&[\jap{x-(t+1)v'}^2-\jap{x-(t+1)v}^2\\
&\qquad-\part_{v_i}\jap{x-(t+1)v}^2(v')\cdot(v-v')_i]\jap{x-(t+1)v'}\mh'|\\
&\lesssim |(1+t)^{2-2\delta}|v-v_*|^{2-2\delta}\sin^2\frac{\theta}{2}B_k \mf_*\jap{x-(t+1)v_*}^{2\delta}\\
&\qquad \times \mg\jap{x-(t+1)v}^{2} \jap{x-(t+1)v'}^2\mh'|.
\end{align*}
Now we proceed in the same way as in \lref{top_order_comm_weights} to get the result,
\begin{align*}
\sum_{k=0}^{k=\infty}|\eref{comm_est_weight_diff_g_1}|\lesssim \int_0^{T_*}&(1+t)^{3-\delta}\norm{\mf\jap{x-(t+1)v}^{2\delta}\jap{v}^{\gamma+2s}}_{L^\infty_xL^1_v}\\
&\qquad\times\norm{(1+t)^{-\frac{1+\delta}{2}}\mg\jap{x-(t+1)v}^{2}\jap{v}^{\frac{\gamma}{2}+s}}_{L^2_xL^2_v}\\
&\hspace{5em} \times\norm{(1+t)^{-\frac{1+\delta}{2}}\mh\jap{x-(t+1)v}^{2}\jap{v}^{\frac{\gamma}{2}+s}}_{L^2_xL^2_v}\d t.
\end{align*}

Due to symmetry, the contribution coming from \eref{comm_est_cancel} amounts to zero.\\
Indeed, to see this we first apply pre-post collision change of variables to get,
\begin{align*}
B_k\mf_*\mg'\jap{x-(t+1)v'}&\part_{v_i}\jap{x-(t+1)v}^2(v')\cdot(v-v')_i\jap{x-(t+1)v'}\mh'\\
&\equiv -B_k \mf'_*\mg\jap{x-(t+1)v}\part_{v_i}\jap{x-(t+1)v}^2(v)\cdot(v-v')_i\jap{x-(t+1)v}\mh.
\end{align*}
For notational convenience, let $\mg\jap{x-(t+1)v}\part_{v_i}\jap{x-(t+1)v}^2(v)\jap{x-(t+1)v}\mh=J_i$. Next using the change of variables from \lref{change_of_variables_sil}, we have that
\begin{align*}
\int_{\R^3}\int_{\R^3}\int_{\S^2} B_k \mf'_* J_i (v-v')_i \d\sigma\d v_*\d v=\int_{\R^3}J_i\int_{\R^3} \chi_{k}(|v-v'|)(v-v')_iK_{\mf}(v,v')\d v'\d v,
\end{align*}
which by \lref{subt_can} is zero.

For \eref{comm_est_weight_prod_diff}, we use $$|\jap{x-(t+1)v}-\jap{x-(t+1)v'}|\lesssim (1+t)|v-v_*|\sin \frac{\theta}{2},$$ and that $$|\jap{x-(t+1)v}^2-\jap{x-(t+1)v'}^2|\lesssim (1+t)|v-v_*|\sin \frac{\theta}{2}\jap{x-(t+1)v}\jap{x-(t+1)v'}.$$
Further we also use,
$$|v-v_*|^{2\delta}\lesssim (1+t)^{-2\delta}\jap{x-(t+1)v}^{2\delta}\jap{x-(t+1)v_*}^{2\delta}.$$
Thus we have in total,
\begin{align*}
|\jap{x-(t+1)v}&-\jap{x-(t+1)v'}|\cdot |\jap{x-(t+1)v}^2-\jap{x-(t+1)v'}^2|\\
&\lesssim (1+t)^{2-2\delta}|v-v_*|^{2-2\delta}\sin^2 \frac{\theta}{2}\jap{x-(t+1)v}^{1+2\delta}\jap{x-(t+1)v'}\jap{x-(t+1)v_*}^{2\delta}.
\end{align*}
Using above, we get the following bound
\begin{align*}
 |B_k\mf_*\mg&[\jap{x-(t+1)v'}-\jap{x-(t+1)v}][\jap{x-(t+1)v'}^2-\jap{x-(t+1)v}^2]\jap{x-(t+1)v'}\mh'|\\
& \lesssim(1+t)^{2-2\delta}|v-v_*|^{2-2\delta}\sin^2\frac{\theta}{2}B_k \mf_*\jap{x-(t+1)v_*}^{2\delta} \mg\jap{x-(t+1)v}^{1+2\delta} \jap{x-(t+1)v'}^2\mh'.
\end{align*}
As above, we get,
\begin{align*}
\sum_{k=0}^{k=\infty}|\eref{comm_est_weight_prod_diff}|\lesssim \int_0^{T_*}&(1+t)^{3-\delta}\norm{\mf\jap{x-(t+1)v}^{2\delta}\jap{v}^{\gamma+2s}}_{L^\infty_xL^1_v}\\
&\qquad\times\norm{(1+t)^{-\frac{1+\delta}{2}}\mg\jap{x-(t+1)v}^{2}\jap{v}^{\frac{\gamma}{2}+s}}_{L^2_xL^2_v}\\
&\hspace{5em} \times\norm{(1+t)^{-\frac{1+\delta}{2}}\mh\jap{x-(t+1)v}^{2}\jap{v}^{\frac{\gamma}{2}+s}}_{L^2_xL^2_v}\d t.
\end{align*}

Finally we treat \eref{comm_est_weight_diff_g_2}. For aiding with the notation we let $\mathfrak G=\mathfrak g\jap{x-(t+1)v}$.\\
We also have the following bound, $$|\part_{v_i}\jap{x-(t+1)v}^2(v')\cdot(v-v')_i|\lesssim (1+t)|v-v_*|\sin\frac{\theta}{2}\jap{x-(t+1)v'}.$$
Fix a $0<R<1$ and let $k_1$ be such that $R\in [2^{-k_1-1},2^{-k_1}]$.\\
Then we have two cases,\\
\emph{Case 1a):} $k\leq k_1$. In this case, we use the bound, 
\begin{align*}
\int_{\R^3}\int_{\R^3}\int_{\S^2}&|B_k\mf_*(\mG-\mG')(v-v')_i\part_{v_i}\jap{x-(t+1)v}^2(v')\jap{x-(t+1)v'}\mh'|\d \sigma\d v_*\d v\\
&\lesssim \int_{\R^3}\int_{\R^3}\int_{\S^2}(1+t)|v-v_*||B_k\sin\frac{\theta}{2}\mf_*\mG\jap{x-(t+1)v'}^2\mh'|\d \sigma\d v_*\d v\\
&\quad+ \int_{\R^3}\int_{\R^3}\int_{\S^2}(1+t)|v-v_*||B_k\sin\frac{\theta}{2}\mf_*\mG'\jap{x-(t+1)v'}^2\mh'|\d \sigma\d v_*\d v
\end{align*}
Now using Cauchy-Schwarz and the regular change of variables $v\to v'$ in a similar way as in \lref{top_order_comm_weights}, we get
\begin{align*}
\int_{\R^3}\int_{\R^3}\int_{\S^2}&(1+t)|v-v_*||B_k\sin\frac{\theta}{2}\mf_*\mG\jap{x-(t+1)v'}^2\mh'|\d \sigma\d v_*\d v\\
&\lesssim 2^{k(2s-1)}(1+t)\norm{\mf\jap{v}^2}_{L^1_v}\norm{\mG\jap{v}}_{L^2_v}\norm{\mh\jap{x-(t+1)v}^2\jap{v}}_{L^2_v}.
\end{align*}
We get a similar bound for the other term. Thus, in total we have,
\begin{align}
\int_0^{T_*}\int_{\R^3}\int_{\R^3}\int_{\R^3}\int_{\S^2}&|B_k\mf_*(\mG-\mG)(v-v')_i\part_{v_i}\jap{x-(t+1)v}^2(v')\\
&\qquad\times\jap{x-(t+1)v'}\mh'|\d \sigma\d v_*\d v\d x\d t\nonumber\\
&\lesssim 2^{k(2s-1)}\int_0^{T_*}(1+t)^{2+\delta}\norm{\mf\jap{v}^2}_{L^\infty_xL^1_v}\norm{(1+t)^{-\frac{1+\delta}{2}}\mG\jap{v}}_{L^2_xL^2_v}\label{e.diff_g_bound_1}\\
&\hspace{10em}\times \norm{(1+t)^{-\frac{1+\delta}{2}}\mh\jap{x-(t+1)v}^2\jap{v}}_{L^2_xL^2_v}\d t\nonumber.
\end{align}
\emph{Case 2b):} $k> k_1$. In this case in addition to the ingredients we already used above, we also use, $$|\mG-\mG|\lesssim |v-v'|\int_0^1 |\part_{v_j}\mG|(\eta v+(1-\eta)v')\d \eta,$$
which follows by the integral form of Taylor's theorem. 

Thus we have the following bound,
\begin{align*}
\int_{\R^3}\int_{\R^3}\int_{\S^2}&|B_k\mf_*(\mG-\mG)(v-v')_i\part_{v_i}\jap{x-(t+1)v}^2(v')\jap{x-(t+1)v'}\mh'|\d \sigma\d v_*\d v\\
&\lesssim (1+t)\int_0^1\int_{\R^3}\int_{\R^3}\int_{\S^2}(1+t)|v-v_*|^2\sin^2\frac{\theta}{2}|B_k\mf_*\\
&\hspace{5em}\times\part_{v_j} \mG(z)\jap{x-(t+1)v'}^2\mh'|\d \sigma\d v_*\d v\d \eta,
\end{align*}
where $z=\eta v+(1-\eta)v'$.

Using the above abound and Cauchy--Schwarz, we get
\begin{align*}
&\int_0^1\int_{\R^3}\int_{\R^3}\int_{\S^2}(1+t)B_k|\mf_*||v-v_*|^2\sin^2\frac{\theta}{2}|\part_{v_i} \mG(z)|\cdot |\jap{x-(t+1)v'}^2\mh'|\d \sigma\d v_*\d v\d \eta\\
&\lesssim (1+t)\left(\int_0^1\int_{\R^3}\int_{\R^3}\int_{\S^2}B_k|\mf_*||v-v_*|^2\sin^2\frac{\theta}{2}|\part_{v_i}  \mG(z)|^2\d \sigma\d v_*\d v\d \eta\right)^{\frac{1}{2}}\\
&\qquad \times \left(\int_0^1\int_{\R^3}\int_{\R^3}\int_{\S^2}B_k|\mf_*||v-v_*|^{2}\sin^2\frac{\theta}{2}\jap{x-(t+1)v'}^4{\mh'}^2\d \sigma\d v_*\d v\d \eta\right)^{\frac{1}{2}}.
\end{align*}
First note that,
$$\int_{\S^2}B_k|v-v_*|^2\sin^2\frac{\theta}{2}\lesssim 2^{k(2s-2)}|v-v_*|^{\gamma+2s}.$$
For the first factor in the bound above, we perform the change of variables $v\to \eta v+(1-\eta)v'$. We can prove that the jacobian for $v\to \eta v+(1-\eta)v'$ has a lower bound in the same way as we proved for $u\to \eta v_*+(1-\eta)v'_*$ in \lref{top_order_Gaussian_2} (in particular the lower bound is independent of $v_*$, $\sigma$ and $\eta\in[0,1]$).\\
Next we get a lower bound for $|z-v_*|$. Since $\theta\leq \frac{\pi}{2}$, we have
\begin{align}
|z-v_*|&= \left|\frac{1+\eta}{2}(v-v_*)+\frac{1-\eta}{2}|v-v_*|\sigma\right|\notag\\
&=|v-v_*|\left|\left(\frac{1+\eta}{2}\right)^2+\left(\frac{1-\eta}{2}\right)^2+\frac{1-\eta^2}{2}\kappa\cdot \sigma\right|\notag\\
&=|v-v_*|\left|\eta^2+(1-\eta^2)\cos^2 \frac{\theta}{2}\right|\notag\\
&\geq c|v-v_*|\label{e.z_lower_bound}.
\end{align}
Putting these observations together, we get that
\begin{align*}
\int_0^1\int_{\R^3}\int_{\R^3}\int_{\S^2}&B_k|\mf_*||v-v_*|^2\sin^2\frac{\theta}{2}|\part_{v_i}  \mG(z)|^2\d \sigma\d v_*\d v\d \eta\\
&\lesssim 2^{k(2s-2)}\norm{\mf\jap{v}^{\gamma+2s}}_{L^1_v}\norm{\part_{v_i} (\mG)\jap{v}^{\frac{\gamma}{2}+s}}_{L^2_v}^2.
\end{align*}

Similarly, we can prove the following bound for the second factor using the change of variables, $v'\to v$,
\begin{align*}
\int_0^1\int_{\R^3}\int_{\R^3}\int_{\S^2}&B_k|\mf_*||v-v_*|^2\sin^2\frac{\theta}{2}|\jap{x-(t+1)v'}^4{\mh'}^2\d \sigma\d v_*\d v\d \eta\\
&\lesssim 2^{k(2s-2)}\norm{\mf\jap{v}^{\gamma+2s}}_{L^1_v}\norm{\mh\jap{x-(t+1)v}^2 \jap{v}^{\frac{\gamma}{2}+s}}_{L^2_v}^2.
\end{align*}
Hence we have the bound,
\begin{align}
\int_0^{T}\int_{\R^3}\int_{\R^3}\int_{\R^3}&\int_{\S^2}|B_k\mf_*(\mG-\mG)(v-v')_i\part_{v_i}\jap{x-(t+1)v}^2(v')\\
&\qquad\times\jap{x-(t+1)v'}\mh'|\d \sigma\d v_*\d v\d x\d t\nonumber\\
&\lesssim 2^{k(2s-2)}\int_0^{T}(1+t)^{2+2\delta}\norm{\mf\jap{v}^2}_{L^\infty_xL^1_v}\norm{(1+t)^{-\frac{1+2\delta}{2}}\part_{v_i}\mG\jap{v}}_{L^2_xL^2_v}\label{e.diff_g_bound_2}\\
&\hspace{10em}\times\norm{(1+t)^{-\frac{1+2\delta}{2}}\mh\jap{x-(t+1)v}^2\jap{v}}_{L^2_xL^2_v}\d t\nonumber.
\end{align}

With these bounds in hand, we now estimate,
 $$\sum_{k=0}^{k=\infty}|\eref{comm_est_weight_diff_g_2}|=\sum_{k=0}^{k=k_1}|\eref{comm_est_weight_diff_g_2}|+\sum_{k=k_1+1}^{k=\infty}|\eref{comm_est_weight_diff_g_2}|.$$
For the first term, we use \eref{diff_g_bound_1} to get
\begin{align*}
\sum_{k=0}^{k=k_1}|\eref{comm_est_weight_diff_g_2}|&\lesssim \sum_{k=0}^{k=k_1}2^{k(2s-1)}\int_0^{T}(1+t)^{2+2\delta}\norm{\mf\jap{v}^2}_{L^\infty_xL^1_v}\norm{(1+t)^{-\frac{1+2\delta}{2}}\mG\jap{v}}_{L^2_xL^2_v}\\
&\qquad\times \norm{(1+t)^{-\frac{1+2\delta}{2}}\mh\jap{x-(t+1)v}^2\jap{v}}_{L^2_xL^2_v}\d t\\
&\lesssim 2^{k_1(2s-1)}\int_0^{T}(1+t)^{2+2\delta}\norm{\mf\jap{v}^2}_{L^\infty_xL^1_v}\norm{(1+t)^{-\frac{1+2\delta}{2}}\mG\jap{v}}_{L^2_xL^2_v}\\
&\qquad\times \norm{(1+t)^{-\frac{1+2\delta}{2}}\mh\jap{x-(t+1)v}^2\jap{v}}_{L^2_xL^2_v}\d t\\
&\lesssim R^{1-2s}\int_0^{T}(1+t)^{2+2\delta}\norm{\mf\jap{v}^2}_{L^\infty_xL^1_v}\norm{(1+t)^{-\frac{1+2\delta}{2}}\mG\jap{v}}_{L^2_xL^2_v}\\
&\qquad\times \norm{(1+t)^{-\frac{1+2\delta}{2}}\mh\jap{x-(t+1)v}^2\jap{v}}_{L^2_xL^2_v}\d t,
\end{align*}
Here we used that for $s>\frac{1}{2}$. For $s=\frac{1}{2}$ we can replace $R^{1-2s}$ by $R^{-\eta}$ for any $\eta>0$.

For the second term, we use \eref{diff_g_bound_2} to get
\begin{align*}
\sum_{k=k_1+1}^{k=\infty}|\eref{comm_est_weight_diff_g_2}|&\lesssim \sum_{k=k_1+1}^{k=\infty}2^{k(2s-2)}\int_0^{T}(1+t)^{2+2\delta}\norm{\mf\jap{v}^2}_{L^\infty_xL^1_v}\norm{(1+t)^{-\frac{1+2\delta}{2}}\part_{v_i}\mG\jap{v}}_{L^2_xL^2_v}\\
&\qquad\times \norm{(1+t)^{-\frac{1+2\delta}{2}}\mh\jap{x-(t+1)v}^2\jap{v}}_{L^2_xL^2_v}\d t\\
&\lesssim 2^{k_1(2s-2)}\int_0^{T}(1+t)^{2+2\delta}\norm{\mf\jap{v}^2}_{L^\infty_xL^1_v}\norm{(1+t)^{-\frac{1+2\delta}{2}}\part_{v_i}\mG\jap{v}}_{L^2_xL^2_v}\\
&\qquad\times \norm{(1+t)^{-\frac{1+2\delta}{2}}\mh\jap{x-(t+1)v}^2\jap{v}}_{L^2_xL^2_v}\d t\\
&\lesssim R^{2-2s}\int_0^{T}(1+t)^{2+2\delta}\norm{\mf\jap{v}^2}_{L^\infty_xL^1_v}\norm{(1+t)^{-\frac{1+2\delta}{2}}\part_{v_i}\mG\jap{v}}_{L^2_xL^2_v}\\
&\qquad\times \norm{(1+t)^{-\frac{1+2\delta}{2}}\mh\jap{x-(t+1)v}^2\jap{v}}_{L^2_xL^2_v}\d t,
\end{align*}

Now let $$R=\frac{\norm{(1+t)^{-\frac{1+2\delta}{2}}\jap{v} \mG}_{L^2_v}}{\norm{(1+t)^{-\frac{1+2\delta}{2}}\jap{v} \mG}_{H^1_v}}.$$
Thus,
\begin{align*}
\sum_{k=0}^{k=k_1}|\eref{comm_est_weight_diff_g_2}|&\lesssim \int_0^{T}(1+t)^{2+2\delta}\norm{\mf\jap{v}^2}_{L^\infty_xL^1_v}\norm{(1+t)^{-\frac{1+2\delta}{2}}\mh\jap{x-(t+1)v}^2\jap{v}}_{L^2_xL^2_v}\\
&\qquad\times \norm{(1+t)^{-\frac{1+2\delta}{2}}\mG\jap{v}}^{\max\{2s-1,\eta\}}_{L^2_xH^1_v}\norm{(1+t)^{-\frac{1+2\delta}{2}}\mG\jap{v}}^{\min\{2-2s,1-\eta\}}_{L^2_xL^2_v}\d t
\end{align*}
for any $\eta>0$ and
\begin{align*}
\sum_{k=k_1}^{k=\infty}|\eref{comm_est_weight_diff_g_2}|&\lesssim \int_0^{T}(1+t)^{2+2\delta}\norm{\mf\jap{v}^2}_{L^\infty_xL^1_v}\norm{(1+t)^{-\frac{1+2\delta}{2}}\mh\jap{x-(t+1)v}^2\jap{v}}_{L^2_xL^2_v}\\
&\qquad\times \norm{(1+t)^{-\frac{1+2\delta}{2}}\mG\jap{v}}^{2s-1}_{L^2_xH^1_v}\norm{(1+t)^{-\frac{1+2\delta}{2}}\mG\jap{v}}^{2-2s}_{L^2_xL^2_v}\d t.
\end{align*}
Remembering that $G=\jap{x-(t+1)v}g$, we have the bound, 
\begin{align*}
\sum_{k=0}^{k=\infty}|\eref{comm_est_weight_diff_g_2}|&\lesssim \int_0^{T}(1+t)^{2+2\delta}\norm{\mf\jap{v}^2}_{L^\infty_xL^1_v}\norm{(1+t)^{-\frac{1+2\delta}{2}}\mh\jap{x-(t+1)v}^2\jap{v}}_{L^2_xL^2_v}\\
&\qquad \times\norm{(1+t)^{-\frac{1+2\delta}{2}}\mg\jap{x-(t+1)v}\jap{v}}^{2s-1}_{L^2_xH^1_v}\norm{(1+t)^{-\frac{1+2\delta}{2}}\mg\jap{x-(t+1)v}\jap{v}}^{2-2s}_{L^2_xL^2_v}\d t\\
&+\int_0^{T}(1+t)^{2+2\delta}\norm{\mf\jap{v}^2}_{L^\infty_xL^1_v}\norm{(1+t)^{-\frac{1+2\delta}{2}}\mh\jap{x-(t+1)v}^2\jap{v}}_{L^2_xL^2_v}\\
&\qquad \times\norm{(1+t)^{-\frac{1+2\delta}{2}}\mg\jap{x-(t+1)v}\jap{v}}^{(2s-1)_+}_{L^2_xH^1_v}\norm{(1+t)^{-\frac{1+2\delta}{2}}\mg\jap{x-(t+1)v}\jap{v}}^{(2-2s)_-}_{L^2_xL^2_v}\d t.
\end{align*}

\emph{Case 2:} $s<\frac{1}{2}$. For this case we work directly with 
$$\int_{\R^3} \int_{\R^3} \int_{\S^2}B_k \mf_*\mg[\jap{x-(t+1)v'}^2-[\jap{x-(t+1)v}^2]\jap{x-(t+1)v'}^2\mh'\d \sigma\d v_*\d v.$$

We again use the bound, $$|\jap{x-(t+1)v'}^2-\jap{x-(t+1)v}^2|\lesssim (1+t)|v-v'|(\jap{x-(t+1)v}+\jap{x-(t+1)v'}),$$
and the bound $$\jap{x-(t+1)v'}\lesssim \jap{x-(t+1)v}+\jap{x-(t+1)v_*}.$$

Putting this together we get the bound, 
\begin{align*}
\int_{\R^3} \int_{\R^3}& \int_{\S^2}|B_k \mf_*\mg[\jap{x-(t+1)v'}^2-[\jap{x-(t+1)v}^2]\jap{x-(t+1)v'}^2\mh'|\d \sigma\d v_*\d v\\
&\lesssim 2^{k(2s-1)}(1+t)^{2+\delta}\norm{\mf\jap{x-(t+1)v}\jap{v}^{\gamma+2s}}_{L^1_v}\norm{(1+t)^{-\frac{1+\delta}{2}}\mg\jap{x-(t+1)v}^2\jap{v}^{\frac{\gamma}{2}+s}}_{L^2_v}\\
&\qquad \times\norm{(1+t)^{-\frac{1+\delta}{2}}\mh\jap{x-(t+1)v}^2\jap{v}^{\frac{\gamma}{2}+s}}_{L^2_v}.
\end{align*}
Summing over positive $k$ followed by a Cauchy--Schwarz in space implies the required bound.
\end{proof}
\begin{lemma}\label{l.comm_gaussian}
Let $\mf$, $\mg$ and $\mh$ from $[0,T)\times\R^3\times\R^3\to \R$ be any smooth functions then for $\gamma+2s\in(0,2]$ we have the following estimates,
\begin{align*}
\sum_{k=0}^{k=\infty}&\left|\int_0^{T}\int_{\R^3}\int_{\R^3}\int_{\R^3}\int_{\S^2}B_k(\mu_{\beta,\omega}(v'_*)-\mu_{\beta,\omega}(v_*))\mf_*'\mg'\jap{x-(t+1)v}^4\mh\d\sigma\d v_*\d v\d x\d t\right|\\
&\lesssim \int_0^{T}(1+t)^{1+\delta}\norm{\mf\jap{x-(t+1)v}^2}_{L^\infty_xL^2_v}\norm{(1+t)^{-\frac{1+\delta}{2}}\jap{x-(t+1)v}^2\jap{v}\mg}_{L^2_xL^2_v}\\
&\quad\hspace{7em}\times\norm{(1+t)^{-\frac{1+\delta}{2}}\jap{x-(t+1)v}^2\jap{v}\mh}_{L^2_xL^2_v}\d t\\
&\quad+\int_0^{T}(1+t)^{1+\delta}\norm{\mf}_{L^\infty_xL^1_v}\norm{(1+t)^{-\frac{1+\delta}{2}}\jap{x-(t+1)v}^2\jap{v}\part_{v_i} \mg}_{L^2_xL^2_v}\\
&\quad \hspace{7em}\times\norm{(1+t)^{-\frac{1+\delta}{2}}\jap{x-(t+1)v}^2\jap{v}\mh}_{L^2_xL^2_v}\d t\\
&\quad+ \int_0^{T}(1+t)^{2+\delta}\norm{\mf}_{L^\infty_xL^1_v}\norm{(1+t)^{-\frac{1+\delta}{2}}\jap{x-(t+1)v}^2\jap{v} \mg}_{L^2_xL^2_v}\\
&\quad \hspace{7em}\times\norm{(1+t)^{-\frac{1+\delta}{2}}\jap{x-(t+1)v}^2\jap{v}\mh}_{L^2_xL^2_v}\d t,
\end{align*}
and
\begin{align*}
\sum_{k=0}^{k=\infty}&\left|\int_0^{T}\int_{\R^3}\int_{\R^3}\int_{\R^3}\int_{\S^2}B_k(\mu_{\beta,\omega}(v'_*)-\mu_{\beta,\omega}(v_*))\mf_*'\mg'\jap{x-(t+1)v}^4\mh\d\sigma\d v_*\d v\d x\d t\right|\\
&\lesssim \int_0^{T}(1+t)^{1+\delta}\norm{\mf\jap{x-(t+1)v}^2}_{L^2_xL^2_v}\norm{(1+t)^{-\frac{1+\delta}{2}}\jap{x-(t+1)v}^2\jap{v}\mg}_{L^\infty_xL^2_v}\\
&\quad\hspace{7em}\times\norm{(1+t)^{-\frac{1+\delta}{2}}\jap{x-(t+1)v}^2\jap{v}\mh}_{L^2_xL^2_v}\d t\\
&\quad+\int_0^{T}(1+t)^{1+\delta}\norm{\mf}_{L^2_xL^1_v}\norm{(1+t)^{-\frac{1+\delta}{2}}\jap{x-(t+1)v}^2\jap{v}\part_{v_i} \mg}_{L^\infty_xL^2_v}\\
&\quad \hspace{7em}\times\norm{(1+t)^{-\frac{1+\delta}{2}}\jap{x-(t+1)v}^2\jap{v}\mh}_{L^2_xL^2_v}\d t\\
&\quad+ \int_0^{T}(1+t)^{2+\delta}\norm{\mf}_{L^2_xL^1_v}\norm{(1+t)^{-\frac{1+\delta}{2}}\jap{x-(t+1)v}^2\jap{v} \mg}_{L^\infty_xL^2_v}\\
&\quad \hspace{7em}\times\norm{(1+t)^{-\frac{1+\delta}{2}}\jap{x-(t+1)v}^2\jap{v}\mh}_{L^2_xL^2_v}\d t.
\end{align*}
\end{lemma}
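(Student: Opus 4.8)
\textbf{Proof proposal for \lref{comm_gaussian}.}

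The plan is to follow the now-standard pattern from the preceding lemmas on the Gaussian-difference commutator, the key point being that each derivative $\part_v$ or $Y$ falling on the Maxwellian produces a factor $\jap{v}$ times (a power of) the Maxwellian, which is harmless because $\jap{v}^m e^{-d(t)\jap{v}^2}\lesssim_m 1$; thus $\mu_{\beta,\omega}$ behaves, for the purpose of these bounds, exactly like $\mu^{\tau}$ for some small $\tau>0$. Concretely, by Taylor's theorem applied to $v_*\mapsto \mu_{\beta,\omega}(v_*)$ along the segment from $v_*$ to $v_*'$, and using $|v_*-v_*'|=|v-v'|=|v-v_*|\sin\frac{\theta}{2}\leq 1$ (so that $\jap{v_*}\approx\jap{v_*'}\approx\jap{\eta v_*+(1-\eta)v_*'}$), one obtains
\begin{equation*}
|\mu_{\beta,\omega}(v_*')-\mu_{\beta,\omega}(v_*)|\lesssim |v-v_*|\sin\tfrac{\theta}{2}\,(\mu_*\mu_*')^{\tau}
\end{equation*}
for $\tau>0$ sufficiently small, exactly as in the proof of \lref{top_order_Gaussian_1}. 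This is the single structural input; everything else is bookkeeping of weights and time-decay factors.

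Next I would split the $\jap{x-(t+1)v}^4 h$ weight exactly as in \lref{top_order_Gaussian_1} and \lref{top_order_Gaussian_2}: write $\jap{x-(t+1)v}^4 = \jap{x-(t+1)v}^2\cdot\jap{x-(t+1)v}^2$ and, using first the Taylor bound above to move the cancellation onto the weight difference and then $\jap{x-(t+1)v'}\lesssim\jap{x-(t+1)v}+\jap{x-(t+1)v_*}$ together with $|v-v_*|\lesssim(1+t)^{-1}(\jap{x-(t+1)v}+\jap{x-(t+1)v_*})$, absorb the collisional geometry into $\jap{x-(t+1)v}^2$ times a $\jap{x-(t+1)v_*}$-power that is swallowed by $(\mu_*\mu_*')^{\tau}$. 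The three terms on the right-hand side of the claimed estimate should arise from: (i) the ``diagonal'' piece where the $g$-difference $g'-g$ is replaced by $g'$ and then symmetrized by \lref{pre_post} as in \eref{top_order_Gaussian_2}, using the integral form of Taylor's theorem $|g_*'-g_*|\lesssim|v_*-v_*'|\int_0^1|\part_{v_i}g|(\eta v_*+(1-\eta)v_*')\d\eta$ and the change of variables $u=\eta v_*+(1-\eta)v_*'$ whose Jacobian is bounded below (the computation is identical to that in \lref{top_order_Gaussian_2}) — this produces the term with $\part_{v_i}g$ and the $(1+t)^{1+\delta}$ prefactor; (ii) the piece where one keeps $g'$ as is and pays an extra $(1+t)$ from the weight difference that is not yet consumed — this produces the term with $(1+t)^{2+\delta}$ and bare $g$; (iii) the piece analogous to \eref{top_order_Gaussian_1} where the $L^\infty_x L^2_v$ norm of $f$ with weights appears, producing the first displayed term with $(1+t)^{1+\delta}$. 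After each term I apply Cauchy--Schwarz in $v_*$ (absorbing one $(\mu_*\mu_*')^\tau$), use the angular bound $\int_{\S^2}B_k\sin\frac{\theta}{2}\d\sigma\lesssim 2^{(2s-1)k}|v-v_*|^{\gamma+2s-1}$ or $\int_{\S^2}B_k\sin^2\frac{\theta}{2}\d\sigma\lesssim 2^{(2s-2)k}|v-v_*|^{\gamma+2s-2}$ (whichever the powers of $|v-v_*|$ call for), handle the resulting convolution in $|v-v_*|^{\gamma+2s}$ or $|v-v_*|^{\gamma+2s-1}$ against $f$ in $L^1_v$ (using $\int\mu_*|v-v_*|^{2\gamma}\lesssim\jap{v}^{2\gamma}$-type bounds, splitting into $\gamma+2s-1\gtrless 0$ if necessary as in \lref{top_order_Gaussian_1} Case 1/Case 2), then Cauchy--Schwarz in $x$, then multiply and divide by the appropriate power of $(1+t)$, and finally sum the geometric series $\sum_{k\geq 0}2^{(2s-2)k}$ or $\sum_{k\geq 0}2^{(2s-1)k}$ — wait, the latter diverges, so in the cases where only $\sin\frac{\theta}{2}$ is available I must instead extract $\sin^2\frac\theta2$ by pairing it with a $|v-v_*|$ coming from the weight difference, exactly as is done throughout \lref{top_order_s_less_than_half} and \lref{comm_estimates}, so that one always sums $\sum_{k\geq 0}2^{(2s-2)k}<\infty$. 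For the mixed-norm variant (second display) one simply replaces the Cauchy--Schwarz split in $x$ so that $f$ lands in $L^2_x$ and $g$ in $L^\infty_x$; no new idea is needed.

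The main obstacle I anticipate is purely organizational rather than conceptual: one must be careful that \emph{every} term genuinely sums in $k$, i.e.\ that whenever the crude angular estimate only gives $2^{(2s-1)k}$ one has an unused factor $|v-v_*|$ (equivalently $\sin\frac\theta2$) available from one of the weight differences to upgrade to $2^{(2s-2)k}$, and that the powers of $(1+t)$ match the three stated prefactors after all the $|v-v_*|\lesssim(1+t)^{-1}(\cdots)$ substitutions — in particular one should not over-spend the decay, which is why the term carrying $\part_{v_i}g$ keeps only $(1+t)^{1+\delta}$ (the $\part_v$ is ``free'' here since $g$ is a lower-order quantity and no extra weight difference is spent on it) while the term with bare $g$ must carry the extra $(1+t)$. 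A secondary technical point is the lower bound on the Jacobian of $u=\eta v_*+(1-\eta)v_*'$ uniformly in $\sigma$, $v_*$ and $\eta\in[0,1]$, which I would simply cite from the computation already carried out in the proof of \lref{top_order_Gaussian_2} (it uses only $b(\jap{k,\sigma})=0$ for $\jap{k,\sigma}\leq 0$ via \eref{sym_b}). With these two points handled, the proof is a line-by-line adaptation of \lref{top_order_Gaussian_1}, \lref{top_order_Gaussian_2} and the Gaussian-difference parts of \lref{comm_estimates}, and I would present it compactly by pointing to those lemmas for the repeated change-of-variables and angular-integral computations.
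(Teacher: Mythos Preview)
Your proposal is on the right track for $s<\tfrac12$, where a single factor of $\sin\tfrac\theta2$ from the first-order Taylor bound on $\mu_{\beta,\omega}$ already makes $\sum_{k\ge0}2^{(2s-1)k}$ converge and the whole argument collapses to \lref{top_order_Gaussian_s_less_than_half}. The paper in fact splits into exactly these two regimes.

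For $s\ge\tfrac12$ there is a genuine gap. After you peel off the weight-difference piece (your (iii), the paper's \eref{comm_gaussian_1}) and the $G'-G$ piece with $G=\jap{x-(t+1)v}^2 g$ (which produces the $\part_{v_i}g$ and the $(1+t)^{2+\delta}$ terms, the paper's \eref{comm_gaussian_2}), what remains is
\[
B_k\bigl(\mu_{\beta,\omega}(v_*')-\mu_{\beta,\omega}(v_*)\bigr)\,f_*'\,\jap{x-(t+1)v}^2 g\,\jap{x-(t+1)v}^2 h,
\]
and here there is \emph{no} further weight difference or $g$-difference to spend: the only cancellation left is the $\mu$-difference, which gives one $\sin\tfrac\theta2$ and hence $2^{(2s-1)k}$, divergent for $s\ge\tfrac12$. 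Your claim that ``one always has an unused factor $|v-v_*|$ from one of the weight differences'' fails precisely on this term. The paper's fix is to go to \emph{second order} in the Taylor expansion of $\mu_{\beta,\omega}$: the second-order remainder \eref{comm_gaussian_3} carries $|v-v'|^2\sim|v-v_*|^2\sin^2\tfrac\theta2$ and is harmless, while the first-order piece \eref{comm_gaussian_4}, namely $B_k(v-v')_i\,\part_{v_i}\mu_{\beta,\omega}(v)\,f_*'\,\jap{\cdots}^2 g\,\jap{\cdots}^2 h$, \emph{vanishes identically} by the principal-value symmetry of \lref{subt_can} (exactly the mechanism used for \eref{comm_est_cancel} in \lref{comm_estimates}). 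This second-order-plus-cancellation step is the missing structural idea.

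Two smaller corrections. First, in your item (i) you write $|g_*'-g_*|$ and invoke the $u=\eta v_*+(1-\eta)v_*'$ change of variables from \lref{top_order_Gaussian_2}; but here $g$ sits in the \emph{unstarred} slot, so the relevant difference is $|G(v')-G(v)|$ and the correct change of variables is $z=\eta v+(1-\eta)v'$ (its Jacobian bound is the one recorded in \lref{comm_estimates}, cf.\ \eref{z_lower_bound}). Second, the pre-post symmetrization you invoke for (i) does not create a $g_*'-g_*$ difference in this configuration; the $\part_{v_i}g$ term really comes from Taylor-expanding $G'-G$ directly, not from symmetrizing.
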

\begin{proof}
We treat the two cases, $s\geq \frac{1}{2}$ and $s<\frac{1}{2}$, differently.\\
\emph{Case 1:} $s\geq \frac{1}{2}$. To be able to get the required estimate, we make further changes to the term at hand,
\begin{align}
B_k(\mu_{\beta,\omega}&(v'_*)-\mu_{\beta,\omega}(v_*))\mf_*'\mg'\jap{x-(t+1)v}^4\mh\label{e.comm_gaussian_less_than_half}\\
&=B_k[\mu_{\beta,\omega}(v'_*)-\mu_{\beta,\omega}(v_*)]\mf_*'\mg'[\jap{x-(t+1)v}^2-\jap{x-(t+1)v'}^2]\jap{x-(t+1)v}^2\mh\label{e.comm_gaussian_1}\\
&\quad+B_k[\mu_{\beta'\omega}(v'_*)-\mu_{\beta,\omega}(v_*)]\mf_*'[\jap{x-(t+1)v'}^2\mg'-\jap{x-(t+1)v}^2\mg]\jap{x-(t+1)v}^2\mh\label{e.comm_gaussian_2}\\
&\quad+ B_k[\mu_{\beta,\omega}(v'_*)-\mu_{\beta,\omega}(v_*)-(v-v')_i\part_{v_i}(\mu_{\beta,\omega})(v_*)]\mf_*'\jap{x-(t+1)v}^2\mg\jap{x-(t+1)v}^2\mh\label{e.comm_gaussian_3}\\
&\quad+  B_k(v-v')_i\part_{v_i}(\mu_{\beta,\omega})(v_*)\mf_*'\jap{x-(t+1)v}^2\mg\jap{x-(t+1)v}^2\mh\label{e.comm_gaussian_4}.
\end{align}

First note that $$|\mu_{\beta,\omega}(v'_*)-\mu_{\beta,\omega}(v_*)|\lesssim |v-v'|\part_{v_i}\mu_{\beta,\omega}(\eta v_*+(1-\eta)v_*')\lesssim |v-v_*|\sin\frac{\theta}{2}(\mu_*\mu_*')^\tau,$$
where we used the fact that $\part_{v_i}\mu_{\beta,\omega}=\part_{v_i}\part^{|\beta|+|\omega|}_{v} \mu\lesssim_{|\beta|,|\omega|} \mu^{\frac{1}{2}}$.

Now we can bound \eref{comm_gaussian_1} in the same way as in \lref{top_order_Gaussian_1} to get,
\begin{align*}
\sum_{k=0}^{k=\infty}|\eref{comm_gaussian_1}|\lesssim \int_0^{T}(1+t)^{1+\delta}&\norm{\mf\jap{x-(t+1)v}^2}_{L^\infty_xL^2_v}\norm{(1+t)^{-\frac{1+\delta}{2}}\jap{x-(t+1)v}^2\jap{v}\mg}_{L^2_xL^2_v}\\
&\quad\times\norm{(1+t)^{-\frac{1+\delta}{2}}\jap{x-(t+1)v}^2\jap{v}\mh}_{L^2_xL^2_v}\d t,
\end{align*}
where we also absorbed the extra $\jap{v_*}^{\gamma+2s}$ by $\mu_*^\tau$ in the norm for $\mf$.

For \eref{comm_gaussian_2}, we first let $\mG=\jap{x-(t+1)v}^2\mg$ and then use Taylor's integral formula to get,
$$|\mG'-\mG|\lesssim |v-v'|\int_0^1 \mG(\eta v+(1-\eta)v')\d \eta.$$
Next we proceed in a similar way as \lref{top_order_Gaussian_2} but use the change of variables\\ $z\to \eta v+(1-\eta)v'$ from \lref{comm_estimates} to get
\begin{align*}
\sum_{k=0}^{k=\infty}|\eref{comm_gaussian_1}|\lesssim &\int_0^{T}(1+t)^{1+2\delta}\norm{\mf}_{L^\infty_xL^1_v}\norm{(1+t)^{-\frac{1+2\delta}{2}}\jap{x-(t+1)v}^2\jap{v}\part_{v_i} \mg}_{L^2_xL^2_v}\\
&\hspace{5em}\times\norm{(1+t)^{-\frac{1+2\delta}{2}}\jap{x-(t+1)v}^2\jap{v}\mh}_{L^2_xL^2_v}\d t\\
&\quad+ \int_0^{T}(1+t)^{2+\delta}\norm{\mf}_{L^\infty_xL^1_v}\norm{(1+t)^{-\frac{1+\delta}{2}}\jap{x-(t+1)v}^2\jap{v} \mg}_{L^2_xL^2_v}\\
&\hspace{5em}\times\norm{(1+t)^{-\frac{1+\delta}{2}}\jap{x-(t+1)v}^2\jap{v}\mh}_{L^2_xL^2_v}\d t.
\end{align*}

For \eref{comm_gaussian_3} we use Taylor's theorem and the fact that $|v-v'|\lesssim 1$ to get 
$$|\mu_{\beta,\omega}(v'_*)-\mu_{\beta,\omega}(v_*)-(v-v')_i\part_{v_i}(\mu_{\beta,\omega})(v_*)|\lesssim|v-v'|^2 (\mu_*\mu_*')^{\tau'}.$$
Now we can easily get the bound,
\begin{align*}
\sum_{k=0}^{k=\infty}|\eref{comm_gaussian_3}|\lesssim \int_0^{T}(1+t)^{1+\delta}&\norm{\mf}_{L^\infty_xL^1_v}\norm{(1+t)^{-\frac{1+\delta}{2}}\jap{x-(t+1)v}^2\jap{v}\mg}_{L^2_xL^2_v}\\
&\quad\times\norm{(1+t)^{-\frac{1+\delta}{2}}\jap{x-(t+1)v}^2\jap{v}\mh}_{L^2_xL^2_v}\d t.
\end{align*}

Finally in the same way as in \lref{comm_estimates}, we have that $\eref{comm_gaussian_4}=0$.\\
\emph{Case 2:} $s<\frac{1}{2}$. In this case we just use, 
$$|\mu_{\beta,\omega}(v'_*)-\mu_{\beta,\omega}(v_*)|\lesssim |v-v'|\part_{v_i}\mu_{\beta,\omega}(\eta v_*+(1-\eta)v_*')\lesssim |v-v_*|\sin\frac{\theta}{2}(\mu_*\mu_*')^\tau,$$
which is enough to overcome the singularity in $\theta$ as $s<\frac{1}{2}$.\\
Now proceeding as in \lref{top_order_Gaussian_s_less_than_half}, we can get the bound,
\begin{align*}
\sum_{k=0}^{k=\infty}|\eref{comm_gaussian_less_than_half}|\lesssim \int_0^{T}(1+t)^{1+\delta}&\norm{\mf\jap{x-(t+1)v}^2}_{L^\infty_xL^2_v}\norm{(1+t)^{-\frac{1+\delta}{2}}\jap{x-(t+1)v}^2\jap{v}\mg}_{L^2_xL^2_v}\\
&\quad\times\norm{(1+t)^{-\frac{1+\delta}{2}}\jap{x-(t+1)v}^2\jap{v}\mh}_{L^2_xL^2_v}\d t.
\end{align*}
\end{proof}

The final estimate we need is stated in the following lemma, which has been proved in various forms in \cite{GrSt11}, \cite{ImbSil16} and \cite{AMUXY10}. The proof for our setting can be inferred directly from \cite{GrSt11} or by making necessary changes as in \lref{diff_f_diff_G} to the proof in \cite{ImbSil16}. Thus we state the lemma without proof\footnote{The function $\mf$ must crucially be estimated in $L^1_v$  to get appropriate time decay. This is in contrast to \cite{GrSt11} where they estimate it in $L^2_v$.}.
\begin{lemma}\label{l.derv_dist}
For $\gamma+2s\in(0,2]$ and for $\mf$, $\mg$ and $\mh$ smooth functions, we have the following estimates,
\begin{equation}\label{e.H2s_1} 
\begin{split}
\sum_{k=0}^{k=\infty}\left|\int_{\R^3}Q_{k}(\mf,\mg)\mh\d v\right|&\lesssim\norm{\mf\jap{v}^{\gamma+2s}}_{L^1_v}\norm{\mg\jap{v}^{\frac{\gamma}{2}+s}}_{H^{2s}_v}\norm{\mh\jap{v}^{\frac{\gamma}{2}+s}}_{L^2_v}
\end{split}
\end{equation}
and 
\begin{equation}\label{e.Hs} 
\begin{split}
\sum_{k=0}^{k=\infty}\left|\int_{\R^3}Q_{k}(\mf,\mg)\mh\d v\right|&\lesssim\norm{\mf\jap{v}^{\gamma+2s}}_{L^1_v}\norm{\mg\jap{v}^{\frac{\gamma}{2}+s}}_{H^{s}_v}\norm{\mh\jap{v}^{\frac{\gamma}{2}+s}}_{H^{s}_v}.
\end{split}
\end{equation}
\end{lemma}
\begin{lemma}\label{l.pen_est}
For $\gamma+2s\in(0,2]$ and $|\alpha''|+|\beta''|+|\omega''|=|\alpha|+|\beta|+|\omega|-1=9$ (and hence $|\alpha'|+|\beta'|+|\omega'|+|\alpha'''|+|\beta'''|+|\omega'''|= 1$), we have the following estimate,
\begin{align*}
\int_0^{T}\int_{\R^3}\int_{\R^3}&\Gamma_{\beta''',\omega'''}(\derv{'}{'}{'} g,\derv{'}{'}{'} g)\jap{x-(t+1)v}^4\der g\d v\d x\d t\\
&\lesssim \mathbb{I}+\mathbb{II}+\mathbb{III}+\mathbb{IV}+\color{black}{\mathbb{VI}}+\mathbb{VII}+\mathbb{VIII}+\mathbb{IX}+\mathbb{X}+\mathbb{XI},
\end{align*}
where the terms on the right side are the same as in \lref{main_boltz_estimates}.
\end{lemma}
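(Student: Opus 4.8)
The plan is to take the penultimate-order term $\int_0^T\int\int \Gamma_{\beta''',\omega'''}(\derv{'}{'}{'}g,\derv{''}{''}{''}g)\jap{x-(t+1)v}^4\der g$, with $|\alpha''|+|\beta''|+|\omega''|=9$ and $|\alpha'|+|\beta'|+|\omega'|+|\alpha'''|+|\beta'''|+|\omega'''|=1$, and decompose it using the singularity decomposition from \sref{sing}. First I would split the sum over $k$: for $k<0$ (i.e.\ $|v-v'|\ge 1$) the kernel $B_k$ carries no singularity, so one applies \lref{trivial_est_1} and \lref{trivial_est_2} directly (after absorbing the Maxwellian derivative $\mu_{\beta''',\omega'''}$ into a fraction of a Gaussian via \lref{exp_bound}, which costs only velocity weights), summing the geometric series $\sum_{k<0}2^{2sk}<\infty$ to land inside $\mathbb I$. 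For $k\ge 0$ one uses the algebraic identity splitting $B_k\mu_{\beta''',\omega'''}(v_*)((\derv{'}{'}{'}g)'_*(\derv{''}{''}{''}g)'-(\derv{'}{'}{'}g)_*\derv{''}{''}{''}g)\jap{x-(t+1)v}^4\der g$ into the main term \eref{pen_term_main}, namely $Q_k(\derv{'}{'}{'}(\mu g),\derv{''}{''}{''}g)\jap{x-(t+1)v}^4\der g$ (where the Maxwellian is absorbed since the total order on the primed/tripled slot is $\le 1$), plus the Gaussian-difference term \eref{Gaussian_diff_term}.

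For the Gaussian-difference term \eref{Gaussian_diff_term} I would invoke \lref{comm_gaussian} with $f=\derv{'}{'}{'}g$, $g=\derv{''}{''}{''}g$, $h=\der g$; its three output terms match $\mathbb I$, $\mathbb{XI}$ (the $\part_{v_i}\derv{''}{''}{''}g$ term) and $\mathbb{XI}$ again (or $\mathbb I$), after using \lref{exp_bound} to trade velocity weights for the Gaussian and noting $|\alpha'|+|\beta'|+|\omega'|\le 1\le 7$ or $\ge 8$ never both — actually here the primed order is $\le 1$ so only the first branch of those $\mathbb{I}$/$\mathbb{XI}$ sums is used. For the main term \eref{pen_main}, I would follow the three-case split by $(|\alpha'|,|\beta'|,|\omega'|)\in\{(1,0,0),(0,1,0),(0,0,1)\}$ already set up before \pref{pen_term_main}: in each case rewrite $\jap{x-(t+1)v}^2\part\,\derv{''}{''}{''}g$ in terms of $\part\bar G$ (plus lower-weight junk carrying a factor $(1+t)^{|\beta'|}$), producing \eref{pen_main} $=$ the genuine main piece $\int Q_k(\derv{'}{'}{'}f,\bar G)\part\bar G$, plus \eref{pen_term_derv_weight} $=(1+t)^{|\beta'|}\int|Q_k(\derv{'}{'}{'}f,\bar G)\bar G|$, plus the commutator \eref{pen_comm_term}.

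I would then dispatch each of these. For \eref{pen_comm_term} apply \lref{comm_estimates} with $f=\derv{'}{'}{'}g$, $g=\derv{''}{''}{''}g$, $h=\der g$; its three output terms are exactly (up to \lref{exp_bound}) $\mathbb{IV}$ (the $(1+t)^{3-\delta}$, $\jap{x-(t+1)v}^{2\delta}$ piece), $\mathbb{X}$ (the $(1+t)^{2+2\delta}$, $H^{(2s-1)^+}$ piece, present only for $s\ge\frac12$) and $\mathbb{III}$ (the $(1+t)^{2+\delta}$, $\jap{x-(t+1)v}$ piece). For \eref{pen_term_derv_weight}, bound $\sum_k|Q_k(\derv{'}{'}{'}f,\bar G)\bar G|$ by \lref{canc_lemma} on the $Q_{2,k}$ part together with \lref{sym_term} (or directly \lref{derv_dist}, equation \eref{Hs}) on the $Q_{1,k}$ part; with the extra $(1+t)^{|\beta'|}$ this produces $\mathbb{VIII}$ (the $H^s$ bound on $\derv{''}{''}{''}g$) and $\mathbb{IX}$ (the $H^{\frac12-\delta}$ / $L^2$ bound, used when $\gamma+2s$ is small). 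For the genuine main piece $\int Q_k(\derv{'}{'}{'}f,\bar G)\part\bar G$, decompose $Q_k=Q_{1,k}+Q_{2,k}$: the $Q_{2,k}$ part is handled by \lref{canc_lemma} directly (feeding $\mathbb{II}_1$ via its $L^2$-based branch), and the $Q_{1,k}$ part is rewritten by \pref{pen_term_main} as \eref{pen_term_main_1} $+$ \eref{pen_term_main_2}; then \eref{pen_term_main_1} is bounded by \lref{pen_term_main_1} landing in $\mathbb{VIII}$, and \eref{pen_term_main_2} is bounded by \lref{diff_f_diff_G} with $f=\derv{'}{'}{'}g$, $g=\bar G$, $h=\part\bar G$ (after \lref{exp_bound}), its $s\ge\frac12$ branch giving $\mathbb{VII}$ and its $s<\frac12$ branch giving $\mathbb{IX}$. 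Collecting all contributions yields the claimed bound by $\mathbb I+\mathbb{II}_1+\mathbb{III}+\mathbb{IV}+\mathbb{VII}+\mathbb{VIII}+\mathbb{IX}+\mathbb X+\mathbb{XI}$.

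The main obstacle I anticipate is the bookkeeping for \eref{pen_term_main_2}: one must genuinely exploit the cancellation in $|K_{\derv{'}{'}{'}f}-K'_{\derv{'}{'}{'}f}|$ via \lref{diff_K_outside_ball}/\cref{diff_f_cor} and the Littlewood--Paley machinery of \lref{diff_f_diff_G}, because the naive Hölder-space estimate of \cite{HeSnTa19} would cost more than $\frac32$ velocity derivatives and destroy the time decay (this is precisely the difficulty flagged in point (1) of \sref{proof_details}). Keeping the $(1+t)$-powers straight — in particular that the weight difference $[\jap{x-(t+1)v'}^2-\jap{x-(t+1)v}^2]$ costs $(1+t)$ but is compensated by the null-structure gain $|v-v_*|^{\delta}\lesssim(1+t)^{-\delta}\jap{x-(t+1)v}^{\delta}\jap{x-(t+1)v_*}^{\delta}$ — is the delicate part; the rest is routine combination of the lemmas above with \lref{exp_bound} and Cauchy--Schwarz in $x$.
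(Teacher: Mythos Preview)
Your proposal is correct and follows essentially the same approach as the paper's own proof: the same singularity decomposition, the same splitting into \eref{pen_main}/\eref{pen_term_derv_weight}/\eref{pen_comm_term}/\eref{Gaussian_diff_term}, and the same lemmas invoked at each step (\lref{trivial_est_1}--\lref{trivial_est_2} for $k<0$, \lref{comm_gaussian} for \eref{Gaussian_diff_term}, \lref{comm_estimates} for \eref{pen_comm_term}, \eref{Hs} of \lref{derv_dist} for \eref{pen_term_derv_weight}, and \pref{pen_term_main} together with \lref{pen_term_main_1} and \lref{diff_f_diff_G} for \eref{pen_main}). Minor bookkeeping differences aside (the paper records \eref{Gaussian_diff_term} as landing in $\mathbb I+\mathbb{III}+\mathbb{XI}$ rather than just $\mathbb I+\mathbb{XI}$, records the $Q_{2,k}$ piece of \eref{pen_main} as $\mathbb{II}_1+\mathbb{IX}$, and assigns \eref{pen_term_main_2} to $\mathbb{VII}+\mathbb{XII}$ rather than your $\mathbb{VII}+\mathbb{IX}$), the arguments coincide.
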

\begin{proof}
As before we use the singularity decomposition from \sref{sing}. For $k<0$, we use the first bound from \lref{trivial_est_1} and \lref{trivial_est_2} with $\mf=\derv{'}{'}{'} g$, $\mg=\derv{''}{''}{''} g$ and $\mh=\der g$. Thus we get,
\begin{align*}
\int_0^{T}&\int_{\R^3}\int_{\R^3}|\Gamma^k_{\beta''',\omega'''}(\derv{'}{'}{'} g,\derv{'}{'}{'} g)\jap{x-(t+1)v}^4\der g|\d v\d x\d t\\
&\lesssim 2^{2sk}\int_0^{T_*}(1+t)^{1+\delta}\norm{\jap{x-(t+1)v}^2\derv{'}{'}{'}g}_{L^\infty_xL^2_v}\\
&\qquad\times\norm{(1+t)^{-\frac{1}{2}-\frac{\delta}{2}}\jap{x-(t+1)v}^2\jap{v}\derv{''}{''}{''} g}_{L^2_xL^2_v}\\
&\hspace{5em} \times \norm{(1+t)^{-\frac{1}{2}-\frac{\delta}{2}}\jap{x-(t+1)v}^2\jap{v}\der g}_{L^2_xL^2_v}\d t.
\end{align*}

Now we can sum over $k<0$ and bound it by $\mathbb{I}$.

For $k\geq 0$ we have to bound the infinite sums of \eref{pen_term_main} and \eref{Gaussian_diff_term}.\\
First, for $\sum_{k=0}^{k=\infty}|\eref{Gaussian_diff_term}|$, we use the first bound from \lref{comm_gaussian} with $\mf=\derv{'}{'}{'} g$, $\mg=\derv{''}{''}{''} g$ and $\mh=\der g$. We thus get,
$$\sum_{k=0}^{k=\infty}|\eref{Gaussian_diff_term}|\lesssim \mathbb{I}+\mathbb{III}+\mathbb{XI}.$$

For \eref{pen_term_main}, we use that, $|\eref{pen_term_main}|\lesssim |\eref{pen_main}|+|\eref{pen_term_derv_weight}|+|\eref{pen_comm_term}|$.\\
For $\sum_{k=0}^{k=\infty}|\eref{pen_term_derv_weight}|$, we use \eref{Hs} from \lref{derv_dist} with $\mf=\part_x^{\bar \alpha}\part_v^{\bar \beta}Y^{\bar \omega} f$ (where $\part_x^{\bar \alpha}\part_v^{\bar \beta}Y^{\bar \omega}=\derv{'}{'}{'}\part_v^{\beta'''}Y^{\omega'''}$), $\mg=\mh=\jap{x-(t+1)v}^2\derv{''}{''}{''} g$ and then use \lref{exp_bound} to get the bound,
$$\sum_{k=0}^{k=\infty} |\eref{pen_term_derv_weight}|\lesssim \mathbb{VIII}.$$

Next we bound, $\sum_{k=0}^{k=\infty} |\eref{pen_comm_term}|$. We use \lref{comm_estimates} with $\mf=\part_x^{\bar \alpha}\part_v^{\bar \beta}Y^{\bar \omega} f$, $\mg=\derv{''}{''}{''} g$ and $\mh=\der g$ to get the bound,
$$\sum_{k=0}^{k=\infty} |\eref{pen_comm_term}|\lesssim \mathbb{IV}+\mathbb{X}+\mathbb{III}.$$

Finally we have, 
\begin{align*}
|\eref{pen_main}|&\lesssim \left|\int_{0}^{T} \int_{\R^3}\int_{\R^3}Q_{1,k}(\part_x^{\bar \alpha}\part_v^{\bar \beta}Y^{\bar \omega} f, \bar G)\derv{'}{'}{'} \bar G\d v\d x\d t\right|\\
&\quad+\left|\int_{0}^{T} \int_{\R^3}\int_{\R^3}Q_{2,k}(\part_x^{\bar \alpha}\part_v^{\bar \beta}Y^{\bar \omega} f, \bar G)\derv{'}{'}{'} \bar G\d v\d x\d t\right|
\end{align*}
For the second term we use \lref{canc_lemma} with $\mf=\part_x^{\bar \alpha}\part_v^{\bar \beta}Y^{\bar \omega}f$,\\
 $\mg=\jap{x-(t+1)v}^2\derv{''}{''}{''}g$ and $\mh=\derv{'}{'}{'}(\jap{x-(t+1)v}^2\derv{''}{''}{''} g)$ and use \lref{exp_bound} to get,
$$\left|\int_{0}^{T} \int_{\R^3}\int_{\R^3}Q_{2,k}(\derv{'}{'}{'} f, \bar G)\derv{'}{'}{'} \bar G\d v\d x\d t\right|\lesssim \mathbb{II}+\mathbb{IX}.$$
For the first term we use \pref{pen_term_main} and thus it suffices to bound the infinite sums of \eref{pen_term_main_1} and \eref{pen_term_main_2}.

To bound $\sum_{k=0}^{k=\infty} |\eref{pen_term_main_1}|$, we use \lref{sym_bound_weight} with $\mf=\part_x^{\bar \alpha}\part_v^{\bar \beta}Y^{\bar \omega} f$ and $\mg=\jap{x-(t+1)v}^2\derv{''}{''}{''} g$. We thus have,
$$\sum_{k=0}^{k=\infty} |\eref{pen_term_main_1}|\lesssim \mathbb{VIII}.$$

Finally, to bound $\sum_{k=0}^{k=\infty} |\eref{pen_term_main_2}|$, we use \lref{diff_f_diff_G} with $f=\part_x^{\bar \alpha}\part_v^{\bar \beta}Y^{\bar \omega} f$ and\\
 $g=\jap{x-(t+1)v}^2\derv{''}{''}{''} g$ and $h=\jap{x-(t+1)v}^2\der g$ to get the following estimate,
$$\sum_{k=0}^{k=\infty} |\eref{pen_term_main_2}|\lesssim \color{black}{\mathbb{VI}}+\mathbb{VII}.$$
\end{proof}
\subsection{Lower orders terms} This corresponds to $|\alpha''|+|\beta''|+|\omega''|\leq|\alpha|+|\beta|+|\omega|-2$.

We again use the singularity decomposition and for $k<0$, we directly bound, 
\begin{align*}
\int_{0}^{T}\int_{\R^3}\int_{\R^3}\int_{\R^3}\int_{\S^2}&B_k \mu_{\beta''',\omega'''}(v_*)((\derv{'}{'}{'}g)_*'(\derv{''}{''}{''} g)'-(\derv{'}{'}{'}g)_*\derv{''}{''}{''} g)\\
&\quad \times\jap{x-(t+1)v}^4\der g\d \sigma\d v_*\d v\d x\d t 
\end{align*}
using \lref{trivial_est_1} and \lref{trivial_est_2}.\\
For $k\geq 0$, we need to make some changes to be able to treat the singularity
\begin{align}
B_k&\mu_{\beta''',\omega'''}(v_*)((\derv{'}{'}{'}g)_*'(\derv{''}{''}{''} g)'\nonumber\\
&\hspace{6em}-(\derv{'}{'}{'}g)_*\derv{''}{''}{''} g)\jap{x-(t+1)v}^4\der g\nonumber\\
&\equiv Q_k(\mu_{\beta''',\omega'''}\derv{'}{'}{'} g,\derv{''}{''}{''}g)\jap{x-(t+1)v}^4\der g\nonumber\\
&\quad +B_k(\mu_{\beta''',\omega'''}(v'_*)-\mu_{\beta''',\omega'''}(v_*))(\derv{'}{'}{'} g)_*'(\derv{''}{''}{''}g)'\jap{x-(t+1)v}^4\der g\label{e.low_Gaussian_diff_term}\\
&\equiv Q_k(\mu_{\beta''',\omega'''}\derv{'}{'}{'} g,\jap{x-(t+1)v}^2\derv{''}{''}{''}g)\jap{x-(t+1)v}^2\der g\label{e.low_main}\\
&\quad+ [Q_k(\mu_{\beta''',\omega'''}\derv{'}{'}{'} g,\derv{''}{''}{''}g)\jap{x-(t+1)v}^4\der g\nonumber\\
&\qquad-Q_k(\mu_{\beta''',\omega'''}\derv{'}{'}{'} g,\jap{x-(t+1)v}^2\derv{''}{''}{''}g)\jap{x-(t+1)v}^2\der g]\label{e.low_comm}\\
&\quad +B_k(\mu_{\beta''',\omega'''}(v'_*)-\mu_{\beta''',\omega'''}(v_*))(\derv{'}{'}{'} g)_*'(\derv{''}{''}{''}g)'\jap{x-(t+1)v}^4\der g\nonumber.
\end{align}
\begin{lemma}\label{l.low_est}
For $\gamma+2s\in(0,2]$ and $|\alpha''|+|\beta''|+|\omega''|\leq 8$ (and hence $|\alpha'|+|\beta'|+|\omega'|+|\alpha'''|+|\beta'''|+|\omega'''|\geq 2$), we have the following estimate,
\begin{align*}
\int_0^{T}\int_{\R^3}\int_{\R^3}&\Gamma_{\beta''',\omega'''}(\derv{'}{'}{'} g,\derv{'}{'}{'} g)\jap{x-(t+1)v}^4\der g\d v\d x\d t\\
&\lesssim \mathbb{I}+\mathbb{III}+\mathbb{IV}+\mathbb{X}+\mathbb{XI}+\mathbb{XII},
\end{align*}
where the terms on the right side are the same as in \lref{main_boltz_estimates}.
\end{lemma}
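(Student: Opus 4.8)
The plan is to follow the same three-step pattern used in \lref{top_est} and \lref{pen_est}, but exploiting the crucial simplification that in this regime $|\alpha''|+|\beta''|+|\omega''|\le 8$, so the ``flat fractional diffusion'' estimate \lref{derv_dist} can be applied \emph{directly} to the main term, with the derivative placed on $\derv{''}{''}{''}g$; no integration by parts of the type performed in \pref{pen_term_main}, \lref{sym_bound_weight} or \lref{diff_f_diff_G} is needed here, which is exactly why $\mathbb{V}$--$\mathbb{IX}$ do not appear in the statement. Concretely, I would first invoke the singularity decomposition from \sref{sing}, writing the $\Gamma_{\beta''',\omega'''}$ integrand as $\sum_k B_k\mu_{\beta''',\omega'''}(\cdots)$. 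For $k<0$ (i.e.\ $|v-v'|\ge 1$) I apply the first inequalities of \lref{trivial_est_1} and \lref{trivial_est_2} with $f=\derv{'}{'}{'}g$, $g=\derv{''}{''}{''}g$, $h=\der g$; summing the geometric series $\sum_{k\le 0}2^{2sk}$ and using $\jap v^{\gamma+2s}\mu_{\beta''',\omega'''}\lesssim \mu^{1/2}$ together with \lref{exp_bound} bounds this contribution by $\mathbb I$. For $k\ge 0$ one uses the decomposition already recorded above the statement, namely that the summand equals $\eref{low_main}+\eref{low_comm}+\eref{low_Gaussian_diff_term}$.

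Next I would treat the three pieces in turn. For the main term \eref{low_main}, I sum over $k\ge 0$ and apply \lref{derv_dist}: \eref{H2s_1}/\eref{Hs} when $\gamma+2s\in(0,\tfrac12)$ and \eref{H2s_2} when $\gamma+2s\in[\tfrac12,2]$, with $f=\mu_{\beta''',\omega'''}\derv{'}{'}{'}g$, $g=\jap{x-(t+1)v}^2\derv{''}{''}{''}g$, $h=\jap{x-(t+1)v}^2\der g$; absorbing the velocity weights $\jap v^{\gamma+2s}$ into $\mu_{\beta''',\omega'''}$ and invoking \lref{exp_bound}, the leading term produces $\mathbb{XII}$ and the lower-order corrections ($H^{1/2-\delta}_v$, $L^{p^{**}}_v$, $L^\infty_v$) produce $\mathbb{II}_1$ or $\mathbb{II}_2$ according to whether $\gamma+2s<\tfrac12$ or $\gamma+2s\ge\tfrac12$. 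The dichotomy between the two sums defining $\mathbb{XII}$ (and $\mathbb{II}_1$) simply records which of $\derv{'}{'}{'}g$ and $\derv{''}{''}{''}g$ carries at most $8$ derivatives and is therefore placed in $L^\infty_x$ via $H^2_x\hookrightarrow L^\infty_x$ (together with $H^{1/2-\delta}_v\subset H^1_v$ where the fractional-derivative correction needs an integer-derivative bound). For the commutator term \eref{low_comm} I apply \lref{comm_estimates} with $f=\derv{'}{'}{'}g$, $g=\derv{''}{''}{''}g$, $h=\der g$, obtaining $\mathbb{III}+\mathbb{IV}+\mathbb X$; and for the Gaussian-difference term \eref{low_Gaussian_diff_term} I apply \lref{comm_gaussian} with the same substitution, obtaining $\mathbb I+\mathbb{III}+\mathbb{XI}$ (the few extra powers of $\jap{x-(t+1)v}$ in $\mathbb I$, $\mathbb{III}$ relative to what \lref{comm_gaussian} directly gives are available for free since $\jap{x-(t+1)v}\ge 1$). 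After a Cauchy--Schwarz in $x$ and multiplying/dividing by the appropriate power of $(1+t)$ in each application, collecting everything yields the asserted bound by $\mathbb I+\mathbb{II}_1+\mathbb{II}_2+\mathbb{III}+\mathbb{IV}+\mathbb X+\mathbb{XI}+\mathbb{XII}$.

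The step needing the most care is the bookkeeping for the main term: one must check, case by case in $(|\alpha'|,|\beta'|,|\omega'|)$, that the factor assigned to $L^\infty_x$ really has at most $8$ derivatives, so that the two further $x$-derivatives (and, in the $H^{1/2-\delta}_v$ correction, one further $v$-derivative) stay within the order-$10$ budget controlled by $E_T$, and that the resulting weighted norms and powers of $(1+t)$ match $\mathbb{XII}$, $\mathbb{II}_1$, $\mathbb{II}_2$ verbatim. The only mildly non-routine analytic ingredient is the bound $\norm{\mu_{\beta''',\omega'''}\derv{'}{'}{'}g}_{H^{1/2-\delta}_v}\lesssim\norm{\derv{'}{'}{'}g}_{H^{1/2-\delta}_v}$, obtained from a fractional Leibniz rule and the Gaussian decay of $\mu_{\beta''',\omega'''}$; everything else is a direct appeal to the lemmas already established in this section, which is precisely why the lower-order case is strictly easier than the top- and penultimate-order ones.
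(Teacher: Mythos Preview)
Your overall strategy is exactly the paper's: singularity decomposition, then \lref{trivial_est_1}--\lref{trivial_est_2} for $k<0$, and for $k\ge 0$ handle \eref{low_main} via \lref{derv_dist}, \eref{low_comm} via \lref{comm_estimates}, and \eref{low_Gaussian_diff_term} via \lref{comm_gaussian}. One bookkeeping point needs correction, however: the choice between \eref{H2s_1} and \eref{H2s_2} in \lref{derv_dist} is \emph{not} governed by the range of $\gamma+2s$ but by the derivative count on the first slot. When $|\alpha'|+|\beta'|+|\omega'|\le 9$ one uses \eref{H2s_1}, since the $H^{1/2-\delta}_v$ norm on $\mu_{\beta''',\omega'''}\derv{'}{'}{'}g$ costs at most one extra $v$-derivative and stays within budget; when $|\alpha'|+|\beta'|+|\omega'|=10$ this would overshoot, and one must instead use \eref{H2s_2}, which shifts the burden to $\derv{''}{''}{''}g\in L^{p^{**}}_v$ or $L^\infty_v$ (these now have $\le 2$ derivatives, so the Sobolev losses are harmless). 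The $\gamma+2s$ dichotomy then enters only \emph{within} \eref{H2s_2} to decide between the $L^{p^{**}}_v$ and $L^\infty_v$ alternatives, matching the second halves of $\mathbb{II}_1$ and $\mathbb{II}_2$. Relatedly, \eref{Hs} is not usable here since $h=\der g$ is top order and cannot absorb an $H^s_v$ norm. Finally, for $k<0$ you should invoke the \emph{second} estimates of \lref{trivial_est_1}--\lref{trivial_est_2} when $|\alpha'|+|\beta'|+|\omega'|\ge 8$, for the same reason; this is what produces the second sum in $\mathbb I$.
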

\begin{proof}
As in \lref{pen_est}, we have the bound,
\begin{align*}
\sum_{k=0}^{k=\infty}\int_0^{T}\int_{\R^3}\int_{\R^3}&|\Gamma^k_{\beta''',\omega'''}(\derv{'}{'}{'} g,\derv{'}{'}{'} g)\jap{x-(t+1)v}^4\der g|\d v\d x\d t\\
&\lesssim \mathbb{I}.
\end{align*}
The only difference is that for $|\alpha'|+|\beta'|+|\omega'|\geq 8$, we use the second estimate from \lref{trivial_est_1} and \lref{trivial_est_2}.

For $k\geq 0$, we bound infinite sums of \eref{low_main}, \eref{low_comm} and \eref{low_Gaussian_diff_term}.\\
We use \eref{H2s_1} from \lref{derv_dist} with $\mf=\mu_{\beta''',\omega'''}\derv{'}{'}{'} g$,\\ $\mg=\jap{x-(t+1)v}^2\derv{''}{''}{''} g$ and $\mh=\jap{x-(t+1)v}^2\der g$ and the fact that $|\mu_{\beta''',\omega'''}\jap{v}^m\lesssim 1|$ for any $m\in \R$ to get,
$$\sum_{k=0}^{k=\infty} |\eref{low_main}|\lesssim\mathbb{XII}.$$

Next we use \lref{comm_estimates} with $\mf=\mu_{\beta''',\omega'''}\derv{'}{'}{'} g$, $\mg=\derv{''}{''}{''} g$ and $\mh=\der g$ to get,
$$\sum_{k=0}^{k=\infty} |\eref{low_comm}|\lesssim \mathbb{III}+\mathbb{IV}+\mathbb{X}.$$

Finally we use \lref{comm_gaussian} with $\mf=\mu_{\beta''',\omega'''}\derv{'}{'}{'} g$, $\mg=\derv{''}{''}{''} g$ and $\mh=\der g$ to get,
$$\sum_{k=0}^{k=\infty} |\eref{low_Gaussian_diff_term}|\lesssim \mathbb{I}+\mathbb{III}+\mathbb{XI}.$$
\end{proof}
\begin{proof}[Proof of \lref{main_boltz_estimates}]
Combining \lref{top_est}, \lref{pen_est} and \lref{low_est} gives us \lref{main_boltz_estimates}.
\end{proof}
\section{Local existence}\label{s.local}
With the estimates in hand from last sections, local existence can be proved by following \cite{AMUXY10}, in which the authors introduce a cut-off in angle and then take the limit or by following \cite{HeSnTa19} via a method of continuity. Thus, we give priori estimates in this section which can be easily turned into an existence theorem (\lref{local_ext}) using the methods of \cite{AMUXY10} or \cite{HeSnTa19}.
\begin{lemma}\label{l.local}
Fix $d_0>0$, and let $\delta$ be the same as in \eref{delta}. Let $f$ be a sufficiently regular and non-negative solution to \eref{boltz} on $[0,T_*)\times\R^3\times \R^3$ such that the initial data satisfies
\begin{equation*}
\sum\limits_{|\alpha|+|\beta|\leq 10}\norm{\jap{x-(t+1)v}^2\part^\alpha_x\part_v^\beta(e^{2d_0\jap{v}^2}f_{\ini})}_{L^2_xL^2_v}^2\leq \eps^2,
\end{equation*}
for some $\eps$ small enough . In addition to this, assume that $f$ also satisfies the a priori bound
$$\sum_{|\alpha|+|\beta|+|\omega|\leq 10}\norm{\jap{x-(t+1)v}^2\der (e^{d(t)\jap{v}^2}f)}_{L^\infty([0,T];L^2_xL^2_v)}\leq \bar{\eps}^2,$$
for $\bar{\eps}\geq \eps$ but small enough depending on $d_0$ and $\delta$. Then $f$ actually satisfies the following bound for all $T\in (0,T_*]$
\begin{equation}\label{e.local}
\begin{split}
&\sum_{|\alpha|+|\beta|+|\omega|\leq 10}\norm{\jap{x-(t+1)v}^2\der (e^{d(t)\jap{v}^2}f)}^2_{L^\infty([0,T];L^2_xL^2_v)}\\
&\qquad+\sum_{|\alpha|+|\beta|+|\omega|\leq 10}\norm{\jap{v}\jap{x-(t+1)v}^2\der (e^{d(t)\jap{v}^2}f)}^2_{L^2([0,T];L^2_xL^2_v)}\\
&\hspace{8em}\leq \eps^2\exp(C(d_0,\delta,\gamma,s)T).
\end{split}
\end{equation}
\end{lemma}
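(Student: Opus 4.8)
The plan is to run a standard energy estimate, feeding the output of \lref{eng_set_up} and \lref{main_boltz_estimates} into a Gronwall argument on a short time interval. First I would fix $T\in(0,T_*]$, sum the conclusion of \lref{eng_set_up} over all multi-indices with $|\alpha|+|\beta|+|\omega|\le 10$, and observe that the left-hand side controls
$$\sum_{|\alpha|+|\beta|+|\omega|\le 10}\left(\norm{\jap{x-(t+1)v}^2\der g}^2_{L^\infty_t L^2_xL^2_v}+\norm{(1+t)^{-\frac{1+\delta}{2}}\jap{v}\jap{x-(t+1)v}^2\der g}^2_{L^2_tL^2_xL^2_v}\right),$$
which is exactly the quantity appearing on the left of \eref{local} (with $g=e^{d(t)\jap{v}^2}f$). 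The initial-data term is bounded by $\eps^2$ using the hypothesis on $f_{\ini}$ together with $d(0)=2d_0$. It then remains to bound $\text{Comm}_1$, $\text{Comm}_2$, and the collisional sum $\sum\int_0^T\int\int \Gamma_{\beta''',\omega'''}(\cdots)\jap{x-(t+1)v}^4\der g$.

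For the two commutator terms, $\text{Comm}_1$ and $\text{Comm}_2$ from \eref{comm_1}--\eref{comm_2}, I would use Cauchy--Schwarz in $(x,v)$ and \lref{exp_bound} to absorb the stray $\jap{v}$ weights into the Gaussian, turning each $L^1_xL^1_v$ product into a product of two $L^2_xL^2_v$ norms of the energy type; the extra $(1+t)$-weights that appear are harmless on a bounded time interval $[0,T]$ since $T$ is finite, so they only contribute a constant $C(d_0,\delta,\gamma,s,T)$ — but since for the local result we only need $\exp(CT)$ with $C$ possibly growing, one can in fact just crudely bound $(1+t)^a\lesssim e^{at}$ for the relevant finite exponents. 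For the collisional sum, I would invoke \lref{main_boltz_estimates}: it bounds the sum by $\mathbb{I}+\mathbb{II}_1+\mathbb{II}_2+\mathbb{III}+\cdots+\mathbb{XII}$. Each of these terms has the schematic form $\int_0^T (1+t)^{a}\,(\text{energy-type norm of }\derv{'}{'}{'}g)\times(\text{energy-type norm of }\derv{''}{''}{''}g)\times(\text{energy-type norm of }\der g)\,\d t$, where at least one of the three factors carries the dissipative weight $(1+t)^{-\frac{1+\delta}{2}}\jap{v}\jap{x-(t+1)v}^2$ and at least one other is an $L^\infty_t$-type (non-dissipative) energy norm. For the $L^1_v$ and $H^{\sigma}_v$ norms of the lower-order factor $\derv{'}{'}{'}g$ I would use \lref{exp_bound} and Sobolev embedding $L^2_x\hookrightarrow$ (nothing needed) together with $H^{2s}_v$ or $H^{1/2-\delta}_v$ being controlled by finitely many $\part_v$ derivatives, all absorbed by the Gaussian weight, to bound it by $\norm{g}_{Y^{10}}$ type quantities.

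The bookkeeping step is then: every term on the right of \lref{main_boltz_estimates} can be written (after Cauchy--Schwarz in time on the two dissipative factors, or Young's inequality, and crudely estimating the polynomial-in-$t$ prefactor by a constant $C_T$ on $[0,T]$) as
$$C_T\left(\sup_{[0,T]}\norm{g}^2_{Y^{10}}\right)^{1/2}\Big(\int_0^T \norm{(1+t)^{-\frac{1+\delta}{2}}\jap{v}\jap{x-(t+1)v}^2 g}^2_{L^2_xL^2_v\text{-type}}\,\d t\Big)^{1/2}\times(\text{another dissipative }L^2_t\text{ factor})^{1/2},$$
i.e.\ bounded by $C_T\,\bar\eps^2 \cdot E(T)$ where $E(T)$ denotes the full left-hand side of \eref{local} and $\bar\eps$ controls the $L^\infty_t$ non-dissipative energy by the a priori hypothesis. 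Here the key point is that each term is at least quadratic in the "large" energy and linear in the small a priori constant $\bar\eps$, or — when the low-order factor is itself the large energy at top order — one uses that that factor is accompanied by a genuinely lower-order second factor that can be taken in the dissipative norm. After summing over the finitely many terms $\mathbb{I}$--$\mathbb{XII}$ and the finitely many multi-index splittings, one arrives at
$$E(T)\le \eps^2 + C(d_0,\delta,\gamma,s)\,T\,\bar\eps\, E(T)+ C(d_0,\delta,\gamma,s)\int_0^T E(t)\,\d t,$$
wait — more precisely one keeps the time integral and does not pull out $T$; by choosing $\bar\eps$ small enough that $C\bar\eps<\tfrac12$ the first error term is absorbed into the left, and Gronwall's inequality applied to the remaining integral term yields $E(T)\le \eps^2\exp(C(d_0,\delta,\gamma,s)T)$, which is \eref{local}.

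The main obstacle I anticipate is the top-order and penultimate-order terms where the derivative-loss structure forces the "large" $10$-th order factor to sit in a slot where only an $L^2_xL^1_v$ or $L^\infty_xL^1_v$ norm (without the dissipative $\jap{v}\jap{x-(t+1)v}^2$ weight) is available — this is exactly the content of terms like $\mathbb{V}$, $\mathbb{VI}$, $\mathbb{VII}$, $\mathbb{VIII}$ and the $|\alpha'|+|\beta'|+|\omega'|\ge 8$ branches. There one must check that such a factor is still controlled: by \lref{exp_bound} the Gaussian upgrades $\jap{v}^l L^1_v$ to $L^2_v$, and then $L^2_xL^1_v$ norms of $\der g$ with $|\der|\le 10$ are bounded by $\norm{g}_{Y^{10}}$ after a further Cauchy--Schwarz/Sobolev step, so these factors are genuinely part of $E(T)^{1/2}$ or $\bar\eps$; the nonlinear terms $\mathbb{V},\mathbb{VII}$ involving products of norms to fractional powers $\norm{g}^{1-s}\norm{\part_v g}^s$ are handled by the weighted AM--GM / interpolation built into their statements, again absorbed by $\bar\eps$ and $E(T)$. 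None of this is deep, but verifying that in every one of the dozen terms at least one factor carries the dissipative weight and at least one other is small is the part that requires care rather than ideas.
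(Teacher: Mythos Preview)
Your proposal is correct and follows essentially the same route as the paper: sum \lref{eng_set_up} over multi-indices, bound $\text{Comm}_1$ and $\text{Comm}_2$ by Cauchy--Schwarz/Young into $\int_0^T\norm{g}_Y^2\,\d t$ plus an absorbable piece of the $X$-norm, feed the collisional term through \lref{main_boltz_estimates} and use Sobolev embedding plus the a~priori bound to extract a factor of $\bar\eps$ from each of $\mathbb{I}$--$\mathbb{XII}$, absorb the resulting $C\bar\eps\norm{g}_X^2$ into the left-hand side for $\bar\eps$ small, and close with Gr\"onwall. The paper simplifies the time-weight bookkeeping by taking $T_*\le 1$ rather than bounding $(1+t)^a$ by exponentials, and your invocation of \lref{exp_bound} for the commutators is misplaced (those terms are already in $g$, so only Cauchy--Schwarz and Young are needed), but neither affects the argument.
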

\begin{proof}
We first recall some notations that will be helpful throughout the proof:
\begin{itemize}
\item As in \sref{set_up}, we let $g=e^{d(t)\jap{v}^2}f$.\\
\item $\norm{h}^2_{Y^m}(T)=\sum \limits_{|\alpha|+|\beta|+|\omega|\leq m}\norm{\jap{x-(t+1)v}^2 h}^2_{L^2_xL^2_v}(T)$.\\
For $m=10$, we will drop the superscript.\\
\item $\norm{h}^2_{X^m}(T)=\sum \limits_{|\alpha|+|\beta|+|\omega|\leq m}=\norm{\jap{v}\jap{x-(t+1)v}^2 h}^2_{L^2([0,T];L^2_xL^2_v)}$.\\
For $m=10$, we again drop the superscript.
\end{itemize}
Now, using \lref{eng_set_up} and the bound on initial data, we get that
\begin{align*}
&\norm{g}^2_{Y}(T)+\norm{g}^2_{X}(T)\leq\eps^2+\text{Comm}_1+\text{Comm}_2\\
&\quad +\mathlarger{\sum}_{\substack{|\alpha'|+|\alpha''|=|\alpha|\\|\beta'|+|\beta''|+|\beta'''|=|\beta|\\|\omega'|+|\omega''|+|\omega'''|=|\omega|}}\int_0^{T}\int_{\R^3} \int_{\R^3} \Gamma_{\beta''',\omega'''}(\derv{'}{'}{'} g,\derv{''}{''}{''} g)\jap{x-(t+1)v}^4\der g\d v\d x\d t.
\end{align*}
By assuming $T_*\leq 1$, we can ignore all the space-time weight issues.\\
\textbf{Estimate for $\text{Comm}_1$.} Using H\"{o}lder's inequality followed by Young's inequality we have 
\begin{align*}
\text{Comm}_1&\lesssim \int_{0}^{T}\norm{\jap{x-(t+1)v}^2\der g}^2_{L^2_xL^2_v}\d t\\
&\quad+ \sum_{\substack{|\alpha'|\leq |\alpha|+1\\ |\beta'|\leq |\beta|-1}}\int_0^T\norm{\jap{x-(t+1)v}^2\derv{'}{'}{'} g}^2_{L^2_xL^2_v}\d t\\
&\lesssim \int_0^T\norm{g}^2_Y(t)\d t.
\end{align*}
\textbf{Estimate for $\text{Comm}_2$.} Using Young's inequality we have,
\textcolor{black}{\begin{align*}
\jap{v}&\jap{x-(t+1)v}^4|\der g||\derv{'}{'}{} g|\\
&\leq \eta \jap{v}^2\jap{x-(t+1)v}^4|\der g|^2+C_\eta\jap{x-(t+1)v}^4|\derv{'}{'}{'} g|^2.
\end{align*}}
Using this and H\"{o}lder's we get that 
\begin{align*}
\text{Comm}_2&\leq  C_\eta \int_0^T\norm{g}^2_Y(t)\d t+\eta\norm{g}^2_X(T).
\end{align*}
\textbf{Estimates for the integral of the kernel.} Thanks to \lref{main_boltz_estimates}, it suffices to bound the terms $\mathbb{I}-\mathbb{XII}$.\\
Since we do not care about time decay, the estimates are straight forward as we just need to make sure that we can close in terms of derivatives. But the estimates in \sref{main_est} are such that we do not have a derivative loss problem. To give an example, we can bound $\mathbb{I}$ by using Sobolev embedding and the a priori bound as follows,
\begin{align*}
\int_0^{T}&(1+t)^{1+\delta}\norm{(1+t)^{-\frac{1}{2}-\frac{\delta}{2}}\jap{x-(t+1)v}^2\jap{v}\derv{''}{''}{''} g}_{L^2_xL^2_v} \\
&\qquad \times \norm{\jap{x-(t+1)v}^2\derv{'}{'}{'}g}_{L^\infty_xL^2_v}\norm{(1+t)^{-\frac{1}{2}-\frac{\delta}{2}}\jap{x-(t+1)v}^2\jap{v}\der g}_{L^2_xL^2_v}\d t\\
&\lesssim \norm{(1+t)^{-\frac{1}{2}-\frac{\delta}{2}}\jap{x-(t+1)v}^2\jap{v}\derv{''}{''}{''} g}_{L^2([0,T];L^2_xL^2_v)} \\
&\qquad \times \norm{\jap{x-(t+1)v}^2\derv{'}{'}{'}g}_{L^\infty([0,T];L^\infty_xL^2_v)}\\
&\hspace{5em}\times\norm{(1+t)^{-\frac{1}{2}-\frac{\delta}{2}}\jap{x-(t+1)v}^2\jap{v}\der g}_{L^2([0,T];L^2_xL^2_v)}\\
&\lesssim \norm{(1+t)^{-\frac{1}{2}-\frac{\delta}{2}}\jap{x-(t+1)v}^2\jap{v}\derv{''}{''}{''} g}_{L^2([0,T];L^2_xL^2_v)} \\
&\qquad \times {\color{black}{\sum_{|\alpha'''|\leq |\alpha'|+2}\norm{\jap{x-(t+1)v}^2\derv{'''}{'}{'}g}_{L^\infty([0,T];L^\infty_xL^2_v)}}}\\
&\hspace{5em}\times\norm{(1+t)^{-\frac{1}{2}-\frac{\delta}{2}}\jap{x-(t+1)v}^2\jap{v}\der g}_{L^2([0,T];L^2_xL^2_v)}\\
&\lesssim \bar \eps\norm{g}_X^2(T).
\end{align*}

In a very similar way we can prove by using Sobolev embedding, that,
\begin{align*}
\sum_{\substack{|\alpha'|+|\alpha''|=|\alpha|\\|\beta'|+|\beta''|+|\beta'''|=|\beta|\\ |\omega'|+|\omega''|+|\omega'''|=|\omega|}}&\int_{0}^{T}\int_{\R^3}\int_{\R^3}\Gamma_{\beta''',\omega'''}(\derv{'}{'}{'} g,\derv{''}{''}{''} g)\jap{x-(t+1)v}^4 \der g\\
&\lesssim \bar \eps \norm{g}_X^2(T).
\end{align*}

We also prove these estimates in more detail in \sref{errors} in \lref{kernel_boot_est} and thus we skip the details here.

By combining the above estimates we have that,
\begin{align*}
&\norm{g}^2_{Y}(T)+\norm{g}^2_{X}(T)\leq \eps^2+ C(d_0,\delta,\gamma,s)\int_0^T \norm{g}^2_{Y}(t)\d t+C(d_0,\delta,\gamma,s)\bar \eps \norm{g}^2_X(T).
\end{align*}
If $\bar \eps C\leq \frac{1}{2}$, we can absorb it on the right hand side to get,
$$\norm{g}^2_Y(T)+\norm{g}^2_{X}(T)\lesssim \eps^2+ C\int_0^T \norm{g}^2_{Y}(t)\d t.$$
Now \eref{local} follows by Gr\"{o}nwall's inequality.
\end{proof}

We now state the local existence theorem that we use without proof. Using the apriori estimate in \lref{local}, one can prove the following lemma employing techniques from \cite{AMUXY10} or \cite{HeSnTa19}.
\begin{lemma}\label{l.local_ext}
Fix $s\in (0,1)$ and $\gamma$ such that $\gamma+2s\in(0,2)$. Further let $M_0>0$, $d_0>0$ and the initial data $f_{\ini}$ be non-negative and such that
$$\sum_{|\alpha|+|\beta|\leq 10}\norm{(1+|x-v|^2)\part^\alpha_x\part^\beta_v (e^{2d_0(1+|v|^2)} f_{\ini})}_{L^2_xL^2_v}^2\leq M_0^2.$$ 
Then there exists a time $T>0$ (depending only on $M_0,$ $d_0,$ $\gamma$ and $s$) such that \eref{boltz} admits a solution $f(t,x,v)$ on $[0,T]\times\R^3\times\R^3$ satisfying the initial condition, i.e. $f(0,x,v)=f_{\ini}(x,v)$. Further, $f(t,x,v)$ remains non-negative for all $t\in [0,T].$

In addition, if we let $\delta$ such that $$\delta=\min\left\{\frac{1-s}{4},\frac{1}{10},\frac{\gamma+2s}{8}\right\},$$ we have that $fe^{d_0(1+(1+t)^{-\delta})(1+|v|^2)} $ is unique in the energy space $E^4_T\cap C^0([0,T);Y^4_{x,v})$. Moreover, $fe^{d_0(1+(1+t)^{-\delta})(1+|v|^2)}\in E_T\cap C^0([0,T);Y_{x,v})$.
\end{lemma}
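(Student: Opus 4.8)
\textbf{Proof proposal for \lref{local_ext}.} The plan is to deduce the local existence statement from the a priori estimate of \lref{local} by a routine but careful approximation argument, which I would organize exactly along the lines of either \cite{AMUXY10} or \cite{HeSnTa19}. First I would set up a cut-off approximation: for $\epsilon>0$ replace the singular angular kernel $b(\cos\theta)$ by $b_\epsilon(\cos\theta):=\min\{b(\cos\theta),\epsilon^{-1}\}$ (equivalently, truncate the $\theta$-integration to $\theta\ge\epsilon$), obtaining a genuine cut-off Boltzmann operator $Q^\epsilon$. For each fixed $\epsilon$ the equation $\partial_t f^\epsilon+v\cdot\nabla_x f^\epsilon=Q^\epsilon(f^\epsilon,f^\epsilon)$ is well-posed locally in the weighted space by standard Kaniel--Shinbrot / iteration arguments in the cut-off setting (for the Gaussian-weighted framework this is carried out in \cite{AMUXY10}); non-negativity of $f^\epsilon$ when $f_{\ini}\ge0$ follows from the cut-off gain--loss structure via the usual Duhamel/monotone iteration. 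The key point is that the bound one gets this way is $\epsilon$-dependent and degenerates as $\epsilon\to0$.

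The heart of the matter is then a \emph{uniform-in-$\epsilon$} estimate on a time interval $[0,T]$ with $T$ depending only on $M_0,d_0,\gamma,s$. Here I would apply the machinery of Sections \ref{s.set_up}--\ref{s.main_est} to $g^\epsilon=e^{d(t)\jap{v}^2}f^\epsilon$: \lref{eng_set_up} reduces everything to bounding $\text{Comm}_1$, $\text{Comm}_2$ and the collision term $\sum\int\Gamma_{\beta''',\omega'''}(\derv{'}{'}{'}g,\derv{''}{''}{''}g)\jap{x-(t+1)v}^4\der g$. All the estimates of \lref{main_boltz_estimates}, and in particular \lref{trivial_est_1}--\lref{derv_dist}, are proved for the dyadically decomposed pieces $B_k$ with constants \emph{independent of the high-$k$ truncation}, and they are summable in $k$; since the cut-off $b_\epsilon$ only removes pieces $B_k$ with $k$ large, every bound in \sref{main_est} holds for $Q^\epsilon$ with the \emph{same} constant as for $Q$. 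Restricting to $T_*\le1$ kills all the $(1+t)$ time-weight growth (as in the proof of \lref{local}), so one lands on the differential inequality
\[
\norm{g^\epsilon}^2_{Y}(T)+\norm{g^\epsilon}^2_{X}(T)\le \eps^2+C\int_0^T\norm{g^\epsilon}^2_{Y}(t)\,\d t+C\bar\eps\,\norm{g^\epsilon}^2_{X}(T),
\]
uniformly in $\epsilon$, and a bootstrap/continuity argument on $[0,T]$ with $T=T(M_0,d_0,\gamma,s)$ small (plus Grönwall) gives a uniform bound $\norm{g^\epsilon}^2_{Y}(T)+\norm{g^\epsilon}^2_{X}(T)\lesssim M_0^2$. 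Here the role of the $L^2$-based energy (rather than $L^\infty$) is exactly what lets the subtle integration-by-parts of \cite{HeSnTa19} (our \pref{pen_term_main}, \lref{diff_f_diff_G}) handle the derivative-loss terms for all $s\in(0,1)$.

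Next I would pass to the limit $\epsilon\to0$. The uniform bound in $E^{10}_T\cap L^2([0,T];X)$ gives weak-$*$ compactness in the top space and, by the equation, equicontinuity in time in a slightly weaker norm; a standard Aubin--Lions / Arzelà--Ascoli argument (using that $x$- and $v$-derivatives up to order $10$ are controlled, so one has compactness at order $\le9$ locally in $(x,v)$) extracts a subsequence $f^\epsilon\to f$ converging strongly enough to pass to the limit in the bilinear collision term — the continuity of $(f,g)\mapsto Q(f,g)$ in the relevant weighted topologies is precisely quantified by \eref{boltz_est} and the estimates of \sref{main_est}. The limit $f$ solves \eref{boltz} on $[0,T]$, lies in $E_T\cap C^0([0,T);Y_{x,v})$ (the continuity in time coming from the equation plus the uniform bounds, as in \cite{HeSnTa19}), satisfies the initial condition, and inherits non-negativity from the $f^\epsilon$. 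For uniqueness in $E^4_T\cap C^0([0,T);Y^4_{x,v})$ I would take two solutions $f_1,f_2$ with the stated regularity, form $h=e^{d(t)\jap{v}^2}(f_1-f_2)$, and run the same energy estimate at order $4$ on the difference equation: the collision term is bilinear, so $\Gamma(g_1,g_1)-\Gamma(g_2,g_2)=\Gamma(g_1-g_2,g_1)+\Gamma(g_2,g_1-g_2)$, and the estimates of \lref{main_boltz_estimates} applied at order $4$ (where $E^4$ regularity is exactly enough, via Sobolev embedding in $\R^3$, to put the ``other'' factor in $L^\infty_x$) yield $\frac{d}{dt}\norm{h}^2_{Y^4}\lesssim \norm{h}^2_{Y^4}$; Grönwall with $h(0)=0$ forces $h\equiv0$.

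The main obstacle I anticipate is establishing the uniform-in-$\epsilon$ estimate \emph{with a time of existence independent of $\epsilon$} while simultaneously tracking that every constant in \sref{main_est} — especially in the penultimate-order integration-by-parts lemmas \pref{pen_term_main}, \lref{diff_f_diff_G}, \lref{comm_estimates} and the cancellation lemma \lref{canc_lemma} — is genuinely independent of the angular truncation; the delicate point is that the truncated kernel must still satisfy the pre-post collisional symmetry (\lref{pre_post}) and the Carleman-type change of variables (\lref{change_of_var_GrSt}), which it does since $b_\epsilon$ is still a legitimate symmetrized angular function, but this must be checked rather than assumed. The second, more technical, obstacle is arranging enough strong compactness to pass to the limit in $Q^\epsilon(f^\epsilon,f^\epsilon)\to Q(f,f)$: one needs convergence in a topology strong enough that the singular bilinear form is continuous, and the cleanest route is to combine the uniform top-order bound with an interpolation against a compactness statement one order down, exactly as in \cite{HeSnTa19,AMUXY10}; since this is by now standard in the non-cutoff literature, I would cite those works for the details rather than reproduce them.
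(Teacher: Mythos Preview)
Your proposal is correct and matches the paper's approach exactly: the paper does not give a proof of \lref{local_ext} at all, stating only that it follows from the a priori estimate of \lref{local} by ``employing techniques from \cite{AMUXY10} or \cite{HeSnTa19},'' which is precisely the cut-off approximation / uniform-estimate / compactness scheme you outline. Your write-up is in fact considerably more detailed than what the paper provides.
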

See \sref{notation} for the definition of the energy spaces.
\section{Bootstrap assumptions and the bootstrap theorem}\label{s.bootstrap}
Now we introduce the bootstrap assumption for $E$ norm
\begin{equation}\label{e.boot_assumption_1}
E_T(g)\leq \eps^{\frac{3}{4}}
\end{equation}
Our goal from now until \sref{errors} would be to improve the bootstrap assumption \eref{boot_assumption_1} with $\eps^{\frac{3}{4}}$ replaced by $C\eps$ for some constant $C$ depending only on $d_0$, $\gamma$ and $s$. This is presented as a theorem below,
\begin{theorem}\label{t.boot}
Let $\gamma$, $s$, $d_0$ and $f_{\ini}$ be as in \tref{global} and let $\delta>0$ be as in \eref{delta}. There exists $\eps_0=\eps_0(d_0,\gamma,s)>0$ and $C_0=C_0(d_0,\gamma,s)>0$ with $C_0\eps_0\leq \frac{1}{2}\eps^{\frac{3}{4}}_0$ such that the following holds for $\eps \in [0,\eps_0]$:\\
Suppose there exists $T_{\boot}>0$ and a solution $f:[0,T_{\boot})\times\R^3\times\R^3$ with $f(t,x,v)\geq 0$ and $f(0,x,v)=f_{\ini}(x,v)$ such that the estimate \eref{boot_assumption_1} holds for $T\in[0,T_{\boot})$, then the estimate in fact holds for all $T\in[0,T_{\boot})$ with $\eps^{\frac{3}{4}}$ replaced by $C\eps.$
\end{theorem}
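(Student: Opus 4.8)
\textbf{Proof proposal for Theorem \ref{t.boot}.}

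The plan is to run a standard continuity/bootstrap argument built on the energy identity of \lref{eng_set_up} together with the master estimate \lref{main_boltz_estimates} and the local existence theorem \lref{local_ext}. First I would fix $T\in[0,T_{\boot})$ and, for each multi-index with $|\alpha|+|\beta|+|\omega|\leq 10$, apply \lref{eng_set_up} (and its evident lower-order analogues, which follow by the same computation with fewer derivatives) to bound
\[
\norm{\jap{x-(t+1)v}^2\der g}^2_{L^\infty_tL^2_xL^2_v}+\norm{(1+t)^{-\frac{1+\delta}{2}}\jap{v}\jap{x-(t+1)v}^2\der g}^2_{L^2_tL^2_xL^2_v}
\]
by the data term, $\mathrm{Comm}_1$, $\mathrm{Comm}_2$, and the nonlinear collisional contribution. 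The data term is controlled by $\eps^2$ using the hypothesis on $f_{\ini}$ together with $\jap{v}^n e^{-d(0)\jap{v}^2}\lesssim e^{-d_0\jap{v}^2}$, which converts the weighted norm of $e^{2d_0\jap{v}^2}f_{\ini}$ into the weighted norm of $\der g_{\ini}$ (cf.\ \lref{exp_bound}). Summing over all orders and inserting the $(1+T)^{-|\beta|}$ prefactors gives $E_T(g)^2\lesssim \eps^2 + (\text{commutators}) + (\text{collision})$.

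Next I would handle the two commutator terms. For $\mathrm{Comm}_1$, the extra $x$-derivative is harmless and the deficit of one $v$-derivative is exactly compensated by a factor $(1+T)$ that is absorbed by the $(1+T)^{-|\beta|}$ weights in the definition of $E_T$; after a Cauchy--Schwarz in $v$ and $x$ one bounds it by $\int_0^T (1+t)^{-1-\delta}\,(\text{bootstrap quantities})\,\d t$, and since $(1+t)^{-1-\delta}$ is integrable this yields a contribution $\lesssim \eps^{3/2}$ using \eref{boot_assumption_1}. For $\mathrm{Comm}_2$, the explicit $(1+t)^{-1-\delta}$ weight already present makes the time integral converge, and the $\jap{v}^{1/2}$ is dominated by the $\jap{v}$ available in the $X$-type norm; again one gets $\lesssim \eps^{3/2}$. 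The collisional term is the substance: by \lref{main_boltz_estimates} it is dominated by $\mathbb{I}+\mathbb{II}_1+\mathbb{II}_2+\mathbb{III}+\cdots+\mathbb{XII}$, and each of these must be shown to be $\lesssim \eps^{9/4}$ (or at least $\lesssim \eps^{2+\kappa}$ for some $\kappa>0$) under the bootstrap assumption. Here one uses: (i) the Klainerman--Sobolev type gain $\norm{\cdot}_{L^\infty_xL^2_v}\lesssim (1+t)^{-3/2}\sum_{|\omega|\leq 3}\norm{Y^\omega\cdot}_{L^2_xL^2_v}$ (\lref{vect_trick}) to turn $L^\infty_x$ norms of the lower-index factor into $L^2_x$ norms of $Y$-differentiated quantities at the cost of $(1+t)^{-3/2}$; (ii) the $L^1_v\to L^2_v$ gain coming from the $\jap{x-(t+1)v}$ weights (\lref{L1_to_L2}), worth another $(1+t)^{-3/2}$; (iii) the explicit polynomial-in-$(1+t)$ prefactors displayed in $\mathbb{I}$--$\mathbb{XII}$ (at most $(1+t)^{3-\delta}$, arising from the commuted weights and the null-structure bookkeeping). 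The key arithmetic check is that in every term the powers of $(1+t)$ balance so that, after distributing the $(1+T)^{-|\beta|}$ factors and the $(1+t)^{-(1+\delta)/2}$ weights hidden in the $X$-norms, the remaining time integrand is $\lesssim (1+t)^{-1-\delta'}$ for some $\delta'>0$ — this is precisely where the choice $\delta=\min\{(1-s)/4,1/10,(\gamma+2s)/8\}$ in \eref{delta} is used, and it is the main obstacle: one must verify that for $s$ up to $1$ the combined decay $(1+t)^{-3}$ from (i)+(ii) beats the worst growth $(1+t)^{3-\delta}$ with a little room to spare, and that the factors like $\norm{\part_{v_i}g}_{L^\infty_xL^1_v}$ or $\norm{\cdot}_{H^{2s}_v}$, $\norm{\cdot}_{H^{(2s-1)^+}_v}$ appearing in the top and penultimate terms are all controlled by $E_T(g)$ (again via \lref{vect_trick}, \lref{exp_bound}, and Sobolev embedding in $v$, noting $H^{2s}_v\hookleftarrow$ the $10$ velocity derivatives available).

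Assembling these bounds gives $E_T(g)^2\leq C_1\eps^2 + C_2 E_T(g)^2\!\cdot\! E_T(g) \leq C_1\eps^2 + C_2\eps^{3/4}E_T(g)^2$ for $T\in[0,T_{\boot})$, where the last step uses the bootstrap assumption \eref{boot_assumption_1} once on one of the three factors in each trilinear term. Choosing $\eps_0$ small enough that $C_2\eps_0^{3/4}\leq \tfrac12$, I absorb the nonlinear term into the left side to obtain $E_T(g)^2\leq 2C_1\eps^2$, i.e.\ $E_T(g)\leq C_0\eps$ with $C_0=\sqrt{2C_1}$; shrinking $\eps_0$ further so that $C_0\eps_0\leq \tfrac12\eps_0^{3/4}$ (possible since $3/4<1$) gives the stated relation $C_0\eps_0\leq\tfrac12\eps_0^{3/4}$ and in particular improves \eref{boot_assumption_1} to $E_T(g)\leq C_0\eps < \eps^{3/4}$ on all of $[0,T_{\boot})$. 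Continuity of $T\mapsto E_T(g)$ (from $g\in E_T\cap C^0([0,T);Y_{x,v})$, guaranteed by \lref{local_ext}) and non-negativity of $f$ (also from \lref{local_ext}, propagated by the local theory) close the argument. I expect essentially all the difficulty to be concentrated in the term-by-term time-decay accounting for $\mathbb{I}$--$\mathbb{XII}$ described above; the algebraic absorption and the continuity step are routine.
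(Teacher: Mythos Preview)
Your overall architecture is right and your handling of the trilinear collisional terms $\mathbb{I}$--$\mathbb{XII}$ is essentially what the paper does in \lref{kernel_boot_est}. The gap is in the \emph{bilinear} commutator terms $\mathrm{Comm}_1$ and $\mathrm{Comm}_2$, and it prevents the bootstrap from closing.

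First, your claim that $\mathrm{Comm}_1$ produces a $(1+t)^{-1-\delta}$ integrand is simply false: look at \eref{comm_1} --- there is no time weight there (that factor lives only in $\mathrm{Comm}_2$, see \eref{comm_2}). More seriously, even if one grants your bound $\mathrm{Comm}_1+\mathrm{Comm}_2\lesssim \eps^{3/2}$, the bootstrap does not close. These terms are quadratic in $g$, so under \eref{boot_assumption_1} the best you get is $(\eps^{3/4})^2=\eps^{3/2}$; your assembled inequality then reads $E_T(g)^2\leq C_1\eps^2+C_2\eps^{3/2}+C_3\eps^{3/4}E_T(g)^2$, and after absorbing the last term you obtain only $E_T(g)^2\lesssim \eps^{3/2}$, i.e.\ $E_T(g)\lesssim \eps^{3/4}$ --- exactly the bootstrap assumption, not the improved bound $C\eps$. (Indeed, in your final displayed inequality you silently dropped the commutator contribution altogether.)

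The paper's resolution is an \emph{induction on $|\beta|+|\omega|$} (\pref{ene_est}). The point is that $\mathrm{Comm}_1$ involves $\derv{'}{'}{}g$ with $|\beta'|\leq|\beta|-1$, and $\mathrm{Comm}_2$ involves $\derv{}{'}{'}g$ with $|\beta'|+|\omega'|\leq|\beta|+|\omega|-1$, so both are strictly lower in the induction order. One first applies Young's inequality (\lref{comm_1}, \lref{comm_2}) to split each commutator into an $\eta$-small piece at top order --- absorbed on the left --- and a $C_\eta$-large piece at strictly lower order. For the latter one invokes the \emph{inductive hypothesis}, which is already the improved bound $\lesssim\eps^2(1+T)^{2|\beta'|}$; together with the $(1+T)^2$ coming from the time integration in $\mathrm{Comm}_1$ this yields exactly $\eps^2(1+T)^{2|\beta|}$ rather than $\eps^{3/2}(1+T)^{2|\beta|}$. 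The base case $|\beta|+|\omega|=0$ has no commutators (since $[\part_t+v\cdot\nabla_x,\part_x^\alpha]=0$ and there is no $\jap{v}^2$-commutator when no $\part_v$ or $Y$ falls on it), so it follows directly from the collisional estimate. Without this induction the bilinear commutators genuinely obstruct the improvement from $\eps^{3/4}$ to $C\eps$.
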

\textbf{In the next sections, we always work under the assumptions of \tref{boot}}.
\section{Error estimates for global problem}\label{s.errors}
We first begin by a few lemmas that will help us get the required time decay.
\begin{lemma}\label{l.L2_time_in_norm} 
For $T\in (0,T_{\boot}]$ and $|\alpha|+|\beta|+|\omega|\leq 10$,
$$\norm{(1+t)^{-\frac{1}{2}-\delta-|\beta|}\jap{v}\jap{x-(t+1)v}^{2}\der g}_{L^2([0,T];L^2_xL^2_v)}\lesssim \eps^{\frac{3}{4}}.$$
\end{lemma}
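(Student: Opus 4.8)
The plan is to reduce the claimed bound to the $L^2$-in-time part of the energy norm $E^{10}_T$ by converting all $\partial_v$ derivatives in $\der$ into $Y$ derivatives. Recall that $\norm{g}^2_{E^{10}_T}$ already contains, for every multi-index $(\alpha,\beta,\omega)$ with $|\alpha|+|\beta|+|\omega|\le 10$, the nonnegative summand $(1+T)^{-|\beta|}\norm{(1+t)^{-\frac{1+\delta}{2}}\jap{v}\jap{x-(t+1)v}^2\der g}^2_{L^2([0,T];L^2_xL^2_v)}$. The only obstruction to reading off Lemma~\ref{l.L2_time_in_norm} directly is the discrepancy between the weight $(1+t)^{-\frac12-\delta-|\beta|}$ we want and the weight $(1+t)^{-\frac{1+\delta}{2}}$ appearing in the norm, together with the $(1+T)^{-|\beta|}$ prefactor. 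Both are cured at once by the vector-field identity $\partial_{v_i}=Y_i-(t+1)\partial_{x_i}$, which trades each velocity derivative for a factor of $(1+t)$ and a spatial derivative.

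First I would record the commutation facts: the operators $Y_1,Y_2,Y_3$, the operators $(1+t)\partial_{x_1},(1+t)\partial_{x_2},(1+t)\partial_{x_3}$, and $\partial_x^\alpha$ all commute with one another — a one-line computation using that $(1+t)$ is constant in $(x,v)$ and that ordinary partial derivatives commute. Expanding $\partial_v^\beta=\prod_i(Y_i-(1+t)\partial_{x_i})^{\beta_i}$ multinomially and then applying $\partial_x^\alpha$ gives
\begin{equation*}
\der g=\sum_{\beta'\le\beta}c_{\beta,\beta'}\,(1+t)^{|\beta|-|\beta'|}\,\partial_x^{\alpha+\beta-\beta'}Y^{\beta'+\omega}g ,
\end{equation*}
with fixed combinatorial constants $c_{\beta,\beta'}$; each term on the right carries no $\partial_v$ and has total order $|\alpha+\beta-\beta'|+|\beta'+\omega|=|\alpha|+|\beta|+|\omega|\le 10$.

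Next I would insert this expansion into the left-hand side of the lemma. Multiplying the $\beta'$-term by $(1+t)^{-\frac12-\delta-|\beta|}\jap{v}\jap{x-(t+1)v}^2$, the powers of $(1+t)$ combine to $(1+t)^{-\frac12-\delta-|\beta'|}\le(1+t)^{-\frac{1+\delta}{2}}$, since $\tfrac{\delta}{2}+|\beta'|\ge0$. Hence
\begin{equation*}
\norm{(1+t)^{-\frac12-\delta-|\beta|}\jap{v}\jap{x-(t+1)v}^2\der g}_{L^2([0,T];L^2_xL^2_v)}\lesssim\sum_{\beta'\le\beta}\norm{(1+t)^{-\frac{1+\delta}{2}}\jap{v}\jap{x-(t+1)v}^2\,\partial_x^{\alpha+\beta-\beta'}Y^{\beta'+\omega}g}_{L^2([0,T];L^2_xL^2_v)}.
\end{equation*}
For the multi-index $(\alpha+\beta-\beta',\,0,\,\beta'+\omega)$ the velocity index is $0$, so the $(1+T)^{-|\beta|}$ prefactor in $\norm{\cdot}_{E^{10}_T}$ equals $1$, and each summand on the right is therefore bounded by $\norm{g}_{E^{10}_T}=E_T(g)$. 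Since $\beta'$ ranges over the fixed finite set $\{\beta':\beta'\le\beta\}$ (of cardinality $\prod_i(\beta_i+1)$, bounded because $|\beta|\le10$), the bootstrap assumption \eref{boot_assumption_1} yields the bound $\lesssim\eps^{3/4}$, as claimed.

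The argument is essentially bookkeeping and I do not anticipate a genuine obstacle; the one place to be careful is the first step — verifying the commutations and checking that every term produced by the multinomial expansion indeed has no velocity derivative and total order $\le10$, so that it is controlled by the $\beta$-index-zero (hence $(1+T)$-free) part of $E^{10}_T$. In particular, no use is ever made of the more delicate $(1+T)^{-|\beta|}$ weights with $|\beta|\ge1$.
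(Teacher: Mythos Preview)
Your proof is correct and is genuinely different from the paper's argument. The paper keeps the multi-index $(\alpha,\beta,\omega)$ fixed and instead performs a dyadic decomposition of $[0,T]$: on each piece $[T_{i-1},T_i]$ with $T_i\sim 2^i$ the bootstrap assumption yields
\[
\norm{(1+t)^{-\frac{1+\delta}{2}}\jap{v}\jap{x-(t+1)v}^{2}\der g}_{L^2([T_{i-1},T_i];L^2_xL^2_v)}\lesssim \eps^{3/4}(1+T_i)^{|\beta|/2},
\]
and then the extra factor $(1+t)^{-\frac{\delta}{2}-|\beta|}$ in the target norm supplies enough decay ($2^{-\delta i/2}$ after cancellations) to sum the pieces. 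By contrast, you trade every $\partial_{v_i}$ for $Y_i-(1+t)\partial_{x_i}$, which pushes all derivatives into the $|\beta|=0$ sector of $E^{10}_T$ where the troublesome $(1+T)^{-|\beta|}$ prefactor is absent; the lemma then follows immediately with no time decomposition at all. Your route is more elementary and makes transparent why the weight $(1+t)^{-|\beta|}$ on the left is exactly right, while the paper's route has the advantage of not expanding the derivative string and of using only the single energy summand attached to $(\alpha,\beta,\omega)$ itself.
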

\begin{proof}
We can assume that $T>1$, otherwise the inequality is an immediate consequence of the bootstrap assumtion \eref{boot_assumption_1}.\\
We split the integration in time into dyadic intervals. More precisely, let $k=\lceil\log_2 T\rceil+1$. Define $\{T_i\}_{i=0}^k$ with $T_0<T_1<\dots<T_k$, where $T_0=0$ and $T_i=2^{i-1}$ where $i=1,\dots, k-1$ and $T_k=T$. Now by the bootstrap assumption \eref{boot_assumption_1}, for any $i=1,\dots,k$, 
$$\norm{(1+t)^{-\frac{1}{2}-\frac{\delta}{2}}\jap{v}\jap{x-(t+1)v}^{2}\der g}_{L^2([T_{i-1},T_i];L^2_xL^2_v)}\lesssim \eps^{\frac{3}{4}}2^{|\beta|i}.$$
Thus,
\begin{align*}
&\norm{(1+t)^{-\frac{1}{2}-\delta-|\beta|}\jap{v}\jap{x-(t+1)v}^{2}\der g}_{L^2([0,T];L^2_xL^2_v)}\\
&\lesssim (\sum_{i=1}^k\norm{(1+t)^{-\frac{1}{2}-\delta-|\beta|}\jap{v}\jap{x-(t+1)v}^{2}\der g}^2_{L^2([T_{i-1},T_I];L^2_xL^2_v)})^{\frac{1}{2}}\\
&\lesssim (\sum_{i=1}^k2^{-2|\beta|i-\delta i}\norm{(1+t)^{-\frac{1}{2}-\frac{\delta}{2}}\jap{v}\jap{x-(t+1)v}^{2}\der g}^2_{L^2([T_{i-1},T_I];L^2_xL^2_v)})^{\frac{1}{2}}\\
&\lesssim \eps^{\frac{3}{4}}(\sum_{i=1}^k 2^{-2|\beta|i-\delta i}\cdot 2^{2|\beta|i})^{\frac{1}{2}}=\eps^{\frac{3}{4}}(\sum_{i=1}^k 2^{-\delta i})^{\frac{1}{2}}\lesssim \eps^{\frac{3}{4}}.
\end{align*}
\end{proof}
\begin{lemma}[Vector Field Trick] \label{l.vect_trick}
For a smooth $h$ we have the following embedding estimate
$$\norm{h}_{L^\infty_xL^2_v} \lesssim\sum_{|\omega|\leq 3}(1+t)^{\frac{-3}{2}}  \norm{Y^\omega h}_{L^2_xL^2_v}.$$
\end{lemma}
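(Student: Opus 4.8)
\textbf{Proof plan for Lemma~\ref{l.vect_trick} (Vector Field Trick).}

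The plan is to reduce the claimed $L^\infty_x L^2_v$ bound to a standard Sobolev embedding in the $x$-variable and then convert the $x$-derivatives that appear into the vector fields $Y_i = (t+1)\partial_{x_i} + \partial_{v_i}$ at the cost of the stated power of $(1+t)$. First I would apply Sobolev embedding $H^2_x \hookrightarrow L^\infty_x$ in three spatial dimensions: for each fixed $v$,
\[
\|h(\cdot,v)\|_{L^\infty_x}^2 \lesssim \sum_{|\mu|\le 2} \|\partial_x^\mu h(\cdot,v)\|_{L^2_x}^2,
\]
and then take the $L^2_v$ norm and use Minkowski's/Fubini to exchange the order of integration, obtaining
\[
\|h\|_{L^\infty_x L^2_v}^2 \lesssim \sum_{|\mu|\le 2} \|\partial_x^\mu h\|_{L^2_x L^2_v}^2 .
\]
So it suffices to control $\|\partial_x^\mu h\|_{L^2_xL^2_v}$ for $|\mu|\le 2$ by $(1+t)^{-3/2}\sum_{|\omega|\le ?}\|Y^\omega h\|_{L^2_xL^2_v}$; note the upper bound on $|\omega|$ in the statement is left blank, and from the Sobolev step it should read $|\omega|\le 2$ (or $\le 3$ with room to spare), which I would fill in accordingly.

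The key step is the identity $\partial_{x_i} = (t+1)^{-1}\big(Y_i - \partial_{v_i}\big)$, but since we are only allowed $L^2$ control and cannot afford $\partial_v$ derivatives, I would instead use that $\partial_{x_i} = (t+1)^{-1} Y_i$ \emph{as operators acting after commuting}, more precisely that for any multi-index $\mu$ with $|\mu|\le 2$ one can write $\partial_x^\mu$ as $(1+t)^{-|\mu|}$ times a polynomial (with constant coefficients) in the commuting family $\{Y_i\}$ — this works because $Y_i$ and $\partial_{x_j}$ commute with each other, so $Y^\mu = ((t+1)\partial_x + \partial_v)^\mu$ expands and the top-order term is exactly $(t+1)^{|\mu|}\partial_x^\mu$; rearranging gives $(t+1)^{|\mu|}\partial_x^\mu h = Y^\mu h - (\text{lower-order-in-}\partial_x \text{ terms involving } \partial_v)$. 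That is not quite what I want since $\partial_v$ reappears. The cleaner route, which I would actually carry out, is to observe directly: for $|\mu|=1$, $\partial_{x_i}h = (1+t)^{-1}(Y_i h - \partial_{v_i}h)$ is the wrong sign of approach; rather, one notes $\|\partial_{x_i}h\|_{L^2_xL^2_v}$ cannot be bounded by $Y$-derivatives alone in general, so the statement must really be read with the convention that we only need the \emph{decay}, and the genuine content is
\[
\|\partial_x^\mu h\|_{L^2_xL^2_v} \le (1+t)^{-|\mu|}\sum_{\nu\le \mu}\binom{\mu}{\nu}\|Y^\nu \partial_v^{\mu-\nu} h\|\ ,
\]
which is not of the desired form. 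I will therefore handle this as in Smulevici's work: write $Y^\mu h$, expand, and \emph{solve} for $(1+t)^{|\mu|}\partial_x^\mu h$ in terms of $Y^\nu h$ for $\nu \le \mu$ and strictly lower-order $\partial_x$-derivatives hit by $\partial_v$'s — then induct on $|\mu|$, at each stage the lower-order pieces carry a better power of $(1+t)$, so they are absorbed. Concretely, for $|\mu|=1$: $(1+t)\partial_{x_i}h = Y_i h - \partial_{v_i} h$; this still has $\partial_{v_i}h$. Hence the honest statement one proves is that $\|\partial_x^\mu h\|$ is bounded by $(1+t)^{-|\mu|}$ times a sum of $\|Y^\nu h\|$ \emph{with $\nu$ ranging over multi-indices in all of the variables}, i.e. one must include mixed $Y$–$\partial_v$ objects, or equivalently enlarge the vector field family. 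I expect the paper's convention (to be verified against its usage in Section~\ref{s.errors}) is that $Y^\omega$ denotes words in $\{Y_1,Y_2,Y_3\}$ only and that the factor $(1+t)^{-3/2}$ together with the Sobolev step is what is being asserted, with the $\partial_v$-terms being lower order in the bootstrap hierarchy and hence harmless; I would then simply cite that the commutator $[\partial_t + v\cdot\partial_x, Y_i]=0$ ensures the $Y$-weighted norms are the natural ones and close the argument by the dyadic-in-$x$-derivative bookkeeping above.

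The main obstacle is precisely this bookkeeping: making the conversion $\partial_x^\mu \leftrightarrow (1+t)^{-|\mu|}Y^\mu$ rigorous without secretly reintroducing $\partial_v$-derivatives that are not controlled by the right-hand side. The resolution, following Smulevici~\cite{Sm16}, is to prove the pointwise (in $(t,x,v)$) algebraic identity expressing each $\partial_x^\mu$ as $(1+t)^{-|\mu|}$ times an explicit constant-coefficient polynomial in the $Y_i$'s \emph{only}, which is valid because $\partial_{v_i} = Y_i - (1+t)\partial_{x_i}$ and iterating this substitution terminates: every time a $\partial_v$ is produced it comes multiplied by a compensating $(1+t)$ against an existing $(1+t)^{-1}$, and after at most $|\mu|$ steps all $\partial_v$'s have been traded for $Y$'s and extra $\partial_x$'s of strictly lower order, on which one inducts. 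Once this identity is in hand, plugging it into the Sobolev estimate and taking $L^2_xL^2_v$ norms yields
\[
\|h\|_{L^\infty_x L^2_v} \lesssim \sum_{|\omega|\le 3}(1+t)^{-3/2}\|Y^\omega h\|_{L^2_xL^2_v},
\]
which is the claim; I would then remark that the loss from $|\mu|=2$ gives the full $(1+t)^{-3/2}$ decay and that summing the lower-order contributions only improves the power of $(1+t)$.
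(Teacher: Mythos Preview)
Your approach has a genuine gap at the conversion step. The pointwise algebraic identity you hope for, expressing $\partial_x^\mu$ as $(1+t)^{-|\mu|}$ times a polynomial in the $Y_i$'s only, simply does not exist: $\{\partial_{x_i},\partial_{v_i}\}$ and $\{\partial_{x_i},Y_i\}$ span the same six-dimensional space of commuting vector fields, so you cannot write $\partial_{x_i}$ using the three $Y_j$'s alone. Your ``iterate the substitution $\partial_{v_i}=Y_i-(1+t)\partial_{x_i}$'' argument is circular: plugging it back into $(1+t)\partial_{x_i}h = Y_ih-\partial_{v_i}h$ yields $0=0$, not a terminating recursion. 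Consequently there is no way to bound $\|\partial_x^\mu h\|_{L^2_xL^2_v}$ by $\sum_{|\omega|\le N}\|Y^\omega h\|_{L^2_xL^2_v}$ for any $N$, and the Sobolev-embedding route as you have set it up cannot close.

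The missing idea, which the paper uses, is to work at the level of $h^2$ after integrating in $v$. One writes (via the fundamental theorem of calculus in each $x_i$)
\[
\sup_x\int h^2\,\d v \le \Big|\int\!\!\int \partial_{x_1}\partial_{x_2}\partial_{x_3}(h^2)\,\d v\,\d x\Big|
= (1+t)^{-3}\Big|\int\!\!\int \big((1+t)\partial_{x_1}\big)\big((1+t)\partial_{x_2}\big)\big((1+t)\partial_{x_3}\big)(h^2)\,\d v\,\d x\Big|,
\]
and then observes that since $\int\partial_{v_i}(\,\cdot\,)\,\d v=0$, each factor $(1+t)\partial_{x_i}$ may be replaced by $Y_i=(1+t)\partial_{x_i}+\partial_{v_i}$ \emph{inside the $v$-integral} at no cost. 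This gives $(1+t)^{-3}\big|\int\!\!\int Y_1Y_2Y_3(h^2)\,\d v\,\d x\big|$, and expanding $Y_1Y_2Y_3(h^2)$ by the product rule followed by Cauchy--Schwarz yields the claim with $|\omega|\le 3$. The point is that the $\partial_v$-contamination you struggled with disappears for free once you integrate in $v$; no inductive bookkeeping is needed.
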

\begin{proof}

From fundamental theorem of calculus we have that $$ \sup_{x} \int_{\R^3} h^2 \d v  \leq \vert\int_{\R^3} \part_{x_1}  \part_{x_2} \part_{x_3} \int_{\R^3} h^2 \d v \d x\vert.$$

Now by an application of dominated convergence theorem, we can pull the $x$ derivatives inside the integral for $v$. 

We write $\part_{x_i}=(1+t)^{-1} (1+t)\part_{x_i}$, we get\\
$$\int_{\R^3} \part_{x_1}  \part_{x_2} \part_{x_3} \int_{\R^3} h^2 \d v \d x =(1+t)^{-3}\int_{\R^3} \int_{\R^3}  ((1+t)\part_{x_1})  ((1+t)\part_{x_2}) ((1+t)\part_{x_3})h^2 \d v \d x.$$

Now noting that \textcolor{black}{$\int_{\R^3}\part_{v_i} h^2 \d v_i=0$}, we get that 
$$(1+t)^{-3}\int_{\R^3} \int_{\R^3}  ((1+t)\part_{x_1})  ((1+t)\part_{x_2}) ((1+t)\part_{x_3})h^2 \d v \d x=(1+t)^{-3}\int_{\R^3} \int_{\R^3} Y_1Y_2Y_3 h^2 \d v \d x.$$

This implies the required lemma.\\
\end{proof}
\begin{lemma}\label{l.L1_to_L2}
Let $h$ be a smooth function. Then for any $n>\frac{3}{2}$, we have that
$$\norm{h}_{L^1_v}(t,x)\lesssim (1+t)^{-\frac{3}{2}}\norm{\jap{x-(t+1)v}^{n}h(t,x,v)}_{L^2_v}.$$ 
\end{lemma}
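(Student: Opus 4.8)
\textbf{Proof proposal for \lref{L1_to_L2}.}

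The plan is to exploit the simple pointwise inequality $1 \lesssim (1+t)^{-3}\jap{x-(t+1)v}^{2n}$ away from a bounded region in $v$, combined with Cauchy--Schwarz. First I would fix $(t,x)$ and split $\R^3_v$ into two regions: the set $A$ where $|x-(t+1)v|$ is small, say $|x-(t+1)v|\le 1$, and its complement. On $A$, the map $v\mapsto x-(t+1)v$ confines $v$ to a ball of radius $(1+t)^{-1}$ centered at $x/(t+1)$, so $A$ has Lebesgue measure $\lesssim (1+t)^{-3}$; hence by Cauchy--Schwarz $\int_A |h|\,\d v \lesssim |A|^{1/2}\norm{h}_{L^2_v(A)} \lesssim (1+t)^{-3/2}\norm{h}_{L^2_v}$, and since $\jap{x-(t+1)v}\ge 1$ this is bounded by $(1+t)^{-3/2}\norm{\jap{x-(t+1)v}^n h}_{L^2_v}$.

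On the complement $A^c$ I would write $|h| = \jap{x-(t+1)v}^{-n}\cdot \jap{x-(t+1)v}^{n}|h|$ and apply Cauchy--Schwarz in $v$:
\begin{align*}
\int_{A^c} |h|\,\d v &\le \left(\int_{A^c}\jap{x-(t+1)v}^{-2n}\,\d v\right)^{1/2}\left(\int_{A^c}\jap{x-(t+1)v}^{2n}|h|^2\,\d v\right)^{1/2}\\
&\le \left(\int_{\R^3}\jap{x-(t+1)v}^{-2n}\,\d v\right)^{1/2}\norm{\jap{x-(t+1)v}^{n}h}_{L^2_v}.
\end{align*}
The remaining factor is handled by the change of variables $w = x-(t+1)v$, which has Jacobian $(1+t)^{-3}$, giving
$$\int_{\R^3}\jap{x-(t+1)v}^{-2n}\,\d v = (1+t)^{-3}\int_{\R^3}\jap{w}^{-2n}\,\d w \lesssim (1+t)^{-3},$$
where the $w$-integral converges precisely because $2n > 3$. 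Taking the square root produces the factor $(1+t)^{-3/2}$. Combining the two regions gives the claimed bound.

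This lemma is essentially routine, so I do not expect a genuine obstacle; the only point requiring a little care is making sure the exponent condition $n > 3/2$ is exactly what is needed for the integral $\int \jap{w}^{-2n}\,\d w$ to converge in dimension $3$, and that the bounded-region contribution is dominated by the same power of $(1+t)$ — both are immediate. One could even skip the splitting entirely and just run the Cauchy--Schwarz argument on all of $\R^3_v$, since $\jap{x-(t+1)v}^{-2n}$ is integrable; the two-region presentation is only a matter of taste.
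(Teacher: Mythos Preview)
Your proof is correct and follows essentially the same approach as the paper: both split according to whether $|x-(t+1)v|\le 1$ (equivalently $|v-x/(t+1)|\le (1+t)^{-1}$) and apply Cauchy--Schwarz on each piece, with the only cosmetic difference being that the paper uses the bare weight $|v-x/(t+1)|^{n}$ on the far region while you use $\jap{x-(t+1)v}^{n}$ directly. Your closing remark that the splitting is unnecessary is also correct and gives an even shorter argument than the paper's.
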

\begin{proof}
We break the region of integration to obtain decay from the $\jap{x-(t+1)v}$ weights.
\begin{align*}
\int_{\R^3}|h|(t,x,v)\d v &\lesssim \int_{\R^3\cap\{|v-\frac{x}{t+1}|\leq (1+t)^{-1}\}}|h| \d v+\int_{\R^3\cap\{|v-\frac{x}{t+1}|\geq (1+t)^{-1}\}}|h| \d v\\
&\lesssim (\int_{\R^3\cap\{|v-\frac{x}{t+1}|\leq (1+t)^{-1}\}}h^2 \d v)^{\frac{1}{2}}(\int_{\R^3\cap\{|v-\frac{x}{t+1}|\leq (1+t)^{-1}\}}1 \d v)^{\frac{1}{2}}\\
&\quad+ (\int_{\R^3\cap\{|v-\frac{x}{t+1}|\geq (1+t)^{-1}\}}|v-\frac{x}{t+1}|^{2n}h^2 \d v)^{\frac{1}{2}}(\int_{\R^3\cap\{|v-\frac{x}{t+1}|\geq (1+t)^{-1}\}}|v-\frac{x}{t+1}|^{-2n} \d v)^{\frac{1}{2}}\\
&\lesssim (1+t)^{-\frac{3}{2}}(\int_{\R^3}h^2\d v)^{\frac{1}{2}}+(1+t)^{-n}(\int_{\R^3}\jap{x-(t+1)v}^{2n}h^2\d v)^{\frac{1}{2}}(1+t)^{\frac{2n-3}{2}}\\
&\lesssim (1+t)^{-\frac{3}{2}}(\int_{\R^3}\jap{x-(t+1)v}^{2n}h^2\d v)^{\frac{1}{2}}.
\end{align*}
\end{proof}

\begin{lemma}\label{l.L1_to_L2_less_decay}
Let $h$ be a smooth function. Then we have that
$$\norm{h}_{L^1_v}(t,x)\lesssim (1+t)^{-1+}\norm{\jap{x-(t+1)v}h(t,x,v)\jap{v}}_{L^2_v}.$$ 
\end{lemma}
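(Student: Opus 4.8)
\textbf{Proof proposal for Lemma~\ref{l.L1_to_L2_less_decay}.}
The plan is to mimic the proof of \lref{L1_to_L2}, splitting the velocity integral into the region where $v$ is close to $x/(t+1)$ and its complement, but now with a threshold radius that is larger than $(1+t)^{-1}$ so that the volume bound on the near region is weaker (giving only decay $(1+t)^{-1+}$ rather than $(1+t)^{-3/2}$), while on the far region we exploit only \emph{one} power of the space-time weight $\jap{x-(t+1)v}$ together with the $\jap{v}$ weight. Concretely, fix a small parameter $\kappa\in(0,1)$ (which will produce the ``$+$'' loss) and write
\begin{align*}
\int_{\R^3}|h|(t,x,v)\d v&\lesssim \int_{\{|v-\frac{x}{t+1}|\leq (1+t)^{-\kappa}\}}|h|\d v+\int_{\{|v-\frac{x}{t+1}|\geq (1+t)^{-\kappa}\}}|h|\d v.
\end{align*}

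For the first integral apply Cauchy--Schwarz: the volume of the ball of radius $(1+t)^{-\kappa}$ is $\lesssim (1+t)^{-3\kappa}$, so this term is $\lesssim (1+t)^{-\frac{3\kappa}{2}}\norm{h}_{L^2_v}\lesssim (1+t)^{-\frac{3\kappa}{2}}\norm{\jap{x-(t+1)v}h\jap{v}}_{L^2_v}$. For the second integral, on the set $\{|v-\frac{x}{t+1}|\geq(1+t)^{-\kappa}\}$ we have $|x-(t+1)v|=(1+t)|v-\frac{x}{t+1}|\geq (1+t)^{1-\kappa}$, hence $\jap{x-(t+1)v}^{-1}\lesssim (1+t)^{-(1-\kappa)}|x-(t+1)v|^{0}$ is too crude; instead I would bound
\begin{align*}
\int_{\{|v-\frac{x}{t+1}|\geq (1+t)^{-\kappa}\}}|h|\d v&\lesssim \left(\int \jap{x-(t+1)v}^2|h|^2\jap{v}^2\d v\right)^{\frac12}\left(\int_{\{|v-\frac{x}{t+1}|\geq (1+t)^{-\kappa}\}}\jap{x-(t+1)v}^{-2}\jap{v}^{-2}\d v\right)^{\frac12},
\end{align*}
and estimate the last factor. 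On the far region $\jap{x-(t+1)v}^{-2}\lesssim (1+t)^{-2}|v-\frac{x}{t+1}|^{-2}$, so after the change of variables $w=v-\frac{x}{t+1}$ the integral is $\lesssim (1+t)^{-2}\int_{|w|\geq (1+t)^{-\kappa}}|w|^{-2}\jap{w+\frac{x}{t+1}}^{-2}\d w$. Splitting into $|w|\leq 1$ and $|w|\geq 1$: the inner part contributes $\lesssim (1+t)^{-2}\int_{(1+t)^{-\kappa}\leq|w|\leq1}|w|^{-2}\d w\lesssim (1+t)^{-2}$ (the $|w|^{-2}$ singularity in $\R^3$ is integrable, so this is $O((1+t)^{-2})$, in fact the lower cutoff is irrelevant here), while the outer part contributes $\lesssim (1+t)^{-2}\int_{|w|\geq1}|w|^{-2}\jap{w+\frac{x}{t+1}}^{-2}\d w\lesssim (1+t)^{-2}$ since $\int_{\R^3}|w|^{-2}\jap{w+a}^{-2}\d w$ is bounded uniformly in $a$ (a standard convolution-type estimate: $|w|^{-2}\in L^{3/2-}_{loc}$ and decays, $\jap{\cdot}^{-2}\in L^{3/2+}$, so their ``product integral'' is finite independent of the shift). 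Hence the second integral is $\lesssim (1+t)^{-1}\norm{\jap{x-(t+1)v}h\jap{v}}_{L^2_v}$.

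Combining the two pieces gives $\norm{h}_{L^1_v}\lesssim \big((1+t)^{-\frac{3\kappa}{2}}+(1+t)^{-1}\big)\norm{\jap{x-(t+1)v}h\jap{v}}_{L^2_v}\lesssim (1+t)^{-\frac{3\kappa}{2}}\norm{\jap{x-(t+1)v}h\jap{v}}_{L^2_v}$; choosing $\kappa$ close to $1$ (say $\kappa=2/3+\eta$ for tiny $\eta$, or more simply any $\kappa$ with $\frac{3\kappa}{2}$ as close to $1$ as desired from below) yields the claimed $(1+t)^{-1+}$ decay. The only mildly delicate point — and the step I expect to need the most care — is the uniform-in-$x$ bound $\int_{\R^3}|w|^{-2}\jap{w+a}^{-2}\d w\lesssim 1$ ensuring the far-region weight integral is genuinely $O((1+t)^{-2})$ rather than growing in $x$; this is what forces us to carry the extra $\jap{v}$ weight in the statement (without it the tail in $w$ would only be $|w|^{-2}$, which is not integrable at infinity in $\R^3$), and it is the structural reason the lemma is stated with $\jap{v}$ on the right-hand side.
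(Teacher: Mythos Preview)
Your proof is correct, but the route differs from the paper's. The paper also splits the $v$-integral into near and far regions, but at the fixed radius $(1+t)^{-1}$, and on the far region applies a three-factor H\"older inequality with exponents $2$, $3+\nu$, $q$ (where $\tfrac12+\tfrac1{3+\nu}+\tfrac1q=1$): the factor $|v-\tfrac{x}{t+1}|^{-1}$ is placed in $L^{3+\nu}$ and the factor $\jap{v}^{-1}$ in $L^q$ separately. The $L^{3+\nu}$ norm over $\{|v-\tfrac{x}{t+1}|\geq(1+t)^{-1}\}$ contributes the growth $(1+t)^{\nu/(3+\nu)}$, which is precisely the source of the ``$+$'' loss in the statement. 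Your Cauchy--Schwarz approach instead keeps the two weights together and reduces the far-region bound to the uniform-in-$a$ estimate $\int_{\R^3}|w|^{-2}\jap{w+a}^{-2}\d w\lesssim 1$; this claim is correct (it follows e.g.\ from the Lorentz-space Young inequality since $|\cdot|^{-2}\in L^{3/2,\infty}$ and $\jap{\cdot}^{-2}\in L^{3,1}$, or by a direct case split on $|a|$), though the one-line justification you give via ``$L^{3/2-}_{loc}$ and $L^{3/2+}$'' is not quite the right mechanism and would benefit from being spelled out. Note that your far-region bound is in fact exactly $(1+t)^{-1}$ with no loss, so taking $\kappa=2/3$ (rather than ``$\kappa$ close to $1$'') you actually obtain the sharper inequality $\|h\|_{L^1_v}\lesssim (1+t)^{-1}\|\jap{x-(t+1)v}h\jap{v}\|_{L^2_v}$; the ``$+$'' in the stated lemma is an artifact of the paper's three-way H\"older splitting, not of the underlying estimate.
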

\begin{proof}
We again break the region of integration to obtain decay from the $\jap{x-(t+1)v}$ weights. The for every $\nu>0$ and for $\frac{1}{q}+\frac{1}{3+\nu}=\frac{1}{2}$ we have that
\begin{align*}
\int_{\R^3}|h|(t,x,v)\d v &\lesssim \int_{\R^3\cap\{|v-\frac{x}{t+1}|\leq (1+t)^{-1}\}}|h| \d v+\int_{\R^3\cap\{|v-\frac{x}{t+1}|\geq (1+t)^{-1}\}}|h| \d v\\
&\lesssim (\int_{\R^3\cap\{|v-\frac{x}{t+1}|\leq (1+t)^{-1}\}}h^2 \d v)^{\frac{1}{2}}(\int_{\R^3\cap\{|v-\frac{x}{t+1}|\leq (1+t)^{-1}\}}1 \d v)^{\frac{1}{2}}\\
&\quad+ (\int_{\R^3\cap\{|v-\frac{x}{t+1}|\geq (1+t)^{-1}\}}|v-\frac{x}{t+1}|^2h^2\jap{v}^2 \d v)^{\frac{1}{2}}\\
&\qquad\times(\int_{\R^3\cap\{|v-\frac{x}{t+1}|\geq (1+t)^{-1}\}}|v-\frac{x}{t+1}|^{-3-\nu} \d v)^{\frac{1}{3+\nu}} (\int_{\R^3\cap\{|v-\frac{x}{t+1}|\geq (1+t)^{-1}\}}\jap{v}^{-q} \d v)^{\frac{1}{q}}\\
&\lesssim (1+t)^{-\frac{3}{2}}(\int_{\R^3}h^2\d v)^{\frac{1}{2}}+(1+t)^{-1}(\int_{\R^3}\jap{x-(t+1)v}^{2}h^2\jap{v}^2\d v)^{\frac{1}{2}}(1+t)^{\frac{\nu}{3+\nu}}\\
&\lesssim (1+t)^{-1^+}(\int_{\R^3}\jap{x-(t+1)v}^2h^2\jap{v}^2\d v)^{\frac{1}{2}}.
\end{align*}
\end{proof}
\begin{corollary}\label{c.combining_decay_1}
Let $h$ be a smooth function, then we have
$$\norm{h}_{L^\infty_xL^1_v}(t)\lesssim \sum_{|\omega|\leq 3}(1+t)^{-3}\norm{Y^\omega h\jap{x-(t+1)v}^2}_{L^2_xL^2_v}(t).$$
\end{corollary}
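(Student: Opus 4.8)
\textbf{Proof proposal for Corollary~\ref{c.combining_decay_1}.} The plan is to chain the two embedding results \lref{vect_trick} and \lref{L1_to_L2} together, using $\jap{x-(t+1)v}^2$ weights to absorb the spatial $L^\infty$ estimate. First I would fix $t$ and $x$ and apply \lref{L1_to_L2} with the choice $n=2$ (which is $>\frac32$) to the function $h(t,x,\cdot)$, obtaining
\[
\norm{h}_{L^1_v}(t,x)\lesssim (1+t)^{-\frac{3}{2}}\norm{\jap{x-(t+1)v}^2 h(t,x,v)}_{L^2_v}.
\]
Now take the $L^\infty_x$ norm of both sides. The right-hand side becomes $(1+t)^{-3/2}$ times $\norm{\jap{x-(t+1)v}^2 h}_{L^\infty_x L^2_v}(t)$, so it remains to bound this mixed norm by the $Y^\omega$-sum.

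For the second step I would apply \lref{vect_trick} to the function $\tilde h := \jap{x-(t+1)v}^2 h$ (which is again smooth), giving
\[
\norm{\jap{x-(t+1)v}^2 h}_{L^\infty_x L^2_v}(t)\lesssim (1+t)^{-\frac{3}{2}}\sum_{|\omega|\leq 3}\norm{Y^\omega\!\left(\jap{x-(t+1)v}^2 h\right)}_{L^2_x L^2_v}(t).
\]
The key structural point that makes this work cleanly is that each $Y_i=(t+1)\part_{x_i}+\part_{v_i}$ commutes with $\jap{x-(t+1)v}$ (this is exactly the observation used repeatedly in \sref{main_est}, e.g.\ in Case~3 of \pref{pen_term_main} and in the penultimate-order analysis: $\jap{x-(t+1)v}^2 Y_l\partial^{\alpha''}_x\partial^{\beta''}_v Y^{\omega''}g = Y_l\bar G$). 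Hence $Y^\omega(\jap{x-(t+1)v}^2 h)=\jap{x-(t+1)v}^2 Y^\omega h$, and the right-hand side is precisely $\sum_{|\omega|\leq 3}\norm{Y^\omega h\,\jap{x-(t+1)v}^2}_{L^2_x L^2_v}(t)$.

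Combining the two displays gives the factor $(1+t)^{-3/2}\cdot(1+t)^{-3/2}=(1+t)^{-3}$, which is exactly the claimed decay, so the corollary follows. There is really no serious obstacle here — the only thing to be careful about is the order of operations (apply the pointwise-in-$x$ estimate \lref{L1_to_L2} first, then the $L^\infty_x$-to-$L^2_x$ estimate \lref{vect_trick} on the weighted function) and the commutation of $Y$ with the weight, which is elementary. One could alternatively apply \lref{vect_trick} first and \lref{L1_to_L2} second in each fiber; either ordering produces the same bound.
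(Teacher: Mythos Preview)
Your proof is correct and follows essentially the same approach as the paper: apply \lref{L1_to_L2} with $n=2$ pointwise in $x$, then apply \lref{vect_trick} to the weighted function, and finally use that $Y_i$ annihilates $\jap{x-(t+1)v}$ to pass the $Y^\omega$ onto $h$.
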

\begin{proof}
Using \lref{L1_to_L2} with $n=2$, we get for every $x\in \R^3$, $$\norm{h(t,x,v)}_{L^1_v}\lesssim (1+t)^{-\frac{3}{2}}\norm{h(t,x,v)\jap{x-(t+1)v}^2}_{L^2_v}.$$

Next using \lref{vect_trick}, we get that $$\norm{h\jap{x-(t+1)v}^2}_{L^\infty_xL^2_v}\lesssim (1+t)^{-\frac{3}{2}} \sum_{|\omega|\leq 3}\norm{Y^\omega(h\jap{x-(t+1)v}^2)}_{L^2_xL^2_v}.$$

Finally noting that $Y(\jap{x-(t+1)v})=0$, we get the required result.
\end{proof}

\begin{corollary}\label{c.combining_decay_2}
Let $h$ be a smooth function, then we have
$$\norm{\jap{x-(t+1)v}h}_{L^\infty_xL^1_v}(t)\lesssim (1+t)^{-\frac{5}{2}+}\sum_{|\omega|\leq 3}\norm{Y^\omega h\jap{x-(t+1)v}^2\jap{v}}_{L^2_xL^2_v}(t).$$
\end{corollary}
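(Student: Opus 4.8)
The plan is to mimic the proof of \cref{combining_decay_1}, but substituting \lref{L1_to_L2_less_decay} for \lref{L1_to_L2} so as to trade one power of $\jap{x-(t+1)v}$ for the slightly weaker decay rate $(1+t)^{-1+}$ in the $L^1_v\to L^2_v$ step, and then following with the vector field trick (\lref{vect_trick}) exactly as before.

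First I would fix $(t,x)$ and apply \lref{L1_to_L2_less_decay} to the function $v\mapsto \jap{x-(t+1)v}h(t,x,v)$ in place of $h$, which gives
\begin{equation*}
\norm{\jap{x-(t+1)v}h}_{L^1_v}(t,x)\lesssim (1+t)^{-1+}\norm{\jap{x-(t+1)v}^2\jap{v}h(t,x,v)}_{L^2_v}.
\end{equation*}
Taking the supremum over $x\in\R^3$ yields $\norm{\jap{x-(t+1)v}h}_{L^\infty_xL^1_v}(t)\lesssim (1+t)^{-1+}\norm{\jap{x-(t+1)v}^2\jap{v}h}_{L^\infty_xL^2_v}(t)$. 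Next I would apply \lref{vect_trick} to the function $\jap{x-(t+1)v}^2\jap{v}h$, producing the extra factor $(1+t)^{-3/2}$ together with a sum of $\norm{Y^\omega(\jap{x-(t+1)v}^2\jap{v}h)}_{L^2_xL^2_v}$ over $|\omega|\leq 3$, so that the two decay factors combine to $(1+t)^{-1+}(1+t)^{-3/2}=(1+t)^{-5/2+}$.

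It remains to move the vector fields past the weights. As in \cref{combining_decay_1} one uses $Y_i\jap{x-(t+1)v}=0$, so the $\jap{x-(t+1)v}^2$ weight commutes with $Y^\omega$ outright. For the $\jap{v}$ weight there is a genuine commutator: since $Y_i=(t+1)\part_{x_i}+\part_{v_i}$, we have $Y_i(\jap{v}h)=\jap{v}Y_ih+\tfrac{v_i}{\jap{v}}h$, and $|v_i/\jap{v}|\leq 1$, so $|Y_i(\jap{v}h)|\lesssim \jap{v}(|Y_ih|+|h|)$; iterating for $|\omega|\leq 3$ gives $|Y^\omega(\jap{v}h)|\lesssim \jap{v}\sum_{|\omega'|\leq|\omega|}|Y^{\omega'}h|$. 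Combining these two observations gives $|Y^\omega(\jap{x-(t+1)v}^2\jap{v}h)|\lesssim \jap{x-(t+1)v}^2\jap{v}\sum_{|\omega'|\leq 3}|Y^{\omega'}h|$, and substituting into the bound above finishes the proof. No step here is a serious obstacle — the only thing needing any care is the bookkeeping of the harmless $\jap{v}$-commutator and verifying that the exponents $-1^+$ and $-\tfrac32$ add up to $-\tfrac52^+$.
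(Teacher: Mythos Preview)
Your proposal is correct and follows essentially the same approach as the paper's own proof: apply \lref{L1_to_L2_less_decay} to $\jap{x-(t+1)v}h$ to gain $(1+t)^{-1+}$ and convert to an $L^2_v$ norm with weight $\jap{x-(t+1)v}^2\jap{v}$, then apply \lref{vect_trick} for the additional $(1+t)^{-3/2}$, and finally commute $Y^\omega$ past the weights using $Y\jap{x-(t+1)v}=0$ and $|Y\jap{v}|\lesssim 1$. Your treatment of the $\jap{v}$-commutator is in fact more explicit than the paper's, which simply notes $Y\jap{v}\lesssim 1$.
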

\begin{proof}
Using \lref{L1_to_L2_less_decay}, we get for every $x\in \R^3$, $$\norm{h(t,x,v)}_{L^1_v}\lesssim (1+t)^{-1^+}\norm{h(t,x,v)\jap{x-(t+1)v}^2\jap{v}}_{L^2_v}.$$

Next using \lref{vect_trick}, we get that $$\norm{h\jap{x-(t+1)v}^2}_{L^\infty_xL^2_v}\lesssim (1+t)^{-\frac{3}{2}} \norm{Y^3(h\jap{x-(t+1)v}^2\jap{v})}_{L^2_xL^2_v}.$$

Finally noting that $Y(\jap{x-(t+1)v})=0$ and that $Y\jap{v}\lesssim 1$, we get the required result.
\end{proof}
\begin{lemma}\label{l.partial_derv_interpol}
For any $\eta\in (0,1)$ and $n\in \mathbb{N}\cup \{0\}$, we have the following interpolation result\\
$$\norm{f}_{H^{n+\eta}_v}\lesssim \norm{ f}_{H^n_v}+\norm{\part^\alpha_v f}_{ L^2_v}^{1-\eta}\norm{\part_{v_i}\part^\alpha_{v} f}^{\eta}_{L^2_v},$$
where $|\alpha|=n$.
\end{lemma}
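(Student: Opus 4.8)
\textbf{Proof proposal for \lref{partial_derv_interpol}.}

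The plan is to reduce the fractional statement to a one-parameter interpolation on the Fourier side. First I would pass to the Fourier transform and write $\norm{f}_{H^{n+\eta}_v}^2 = \int \jap{\xi}^{2n+2\eta} |\hat f(\xi)|^2 \d \xi$. The idea is to split this integral into the region $\{|\xi| \leq 1\}$, where $\jap{\xi}^{2n+2\eta} \lesssim \jap{\xi}^{2n}$ and so the contribution is bounded by $\norm{f}_{H^n_v}^2$, and the region $\{|\xi| > 1\}$, where $\jap{\xi} \approx |\xi|$ and I can afford to distribute the $2\eta$ powers of $|\xi|$ across a single direction. Concretely, on $\{|\xi|>1\}$ one has $\jap{\xi}^{2\eta} \lesssim |\xi|^{2\eta} \lesssim \sum_i |\xi_i|^{2\eta}$, and for a fixed coordinate $i$ we can use the elementary bound $|\xi_i|^{2\eta} \leq (\jap{\xi}^{2n})^{... }$— more carefully, I would instead write, for the multi-index $\alpha$ with $|\alpha|=n$, $\jap{\xi}^{2n} \approx \jap{\xi^\alpha}^2$ up to the low-frequency correction, so that the high-frequency piece is controlled by $\int |\xi|^{2\eta} |\widehat{\part_v^\alpha f}(\xi)|^2 \d\xi$.

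The second step is the actual interpolation. For a single frequency-space function $g$ (which will be $\widehat{\part_v^\alpha f}$) I would use Hölder's inequality in $\xi$ with exponents $\frac{1}{1-\eta}$ and $\frac{1}{\eta}$ applied to the splitting $|\xi|^{2\eta}|g(\xi)|^2 = |g(\xi)|^{2(1-\eta)} \cdot \big(|\xi|^2 |g(\xi)|^2\big)^{\eta}$, which gives
\begin{align*}
\int |\xi|^{2\eta}|g(\xi)|^2 \d\xi &\leq \Big(\int |g(\xi)|^2 \d\xi\Big)^{1-\eta}\Big(\int |\xi|^2 |g(\xi)|^2 \d\xi\Big)^{\eta}.
\end{align*}
Taking $g = \widehat{\part_v^\alpha f}$ and recognizing $\int |g|^2 = \norm{\part_v^\alpha f}_{L^2_v}^2$ and $\int |\xi|^2|g|^2 = \sum_i \norm{\part_{v_i}\part_v^\alpha f}_{L^2_v}^2$ by Plancherel, one obtains the stated bound with $\norm{\part_{v_i}\part_v^\alpha f}_{L^2_v}$ in place of $\sum_i$ (the sum over $i$ is harmless up to constants, or one simply uses $|\xi|^2 = \sum_i |\xi_i|^2$ and absorbs the finite sum). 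Adding back the low-frequency contribution $\norm{f}_{H^n_v}$ and taking square roots of the whole inequality $\norm{f}_{H^{n+\eta}_v}^2 \lesssim \norm{f}_{H^n_v}^2 + \norm{\part_v^\alpha f}_{L^2_v}^{2(1-\eta)}\norm{\part_{v_i}\part_v^\alpha f}_{L^2_v}^{2\eta}$ gives the result, using $\sqrt{a+b}\lesssim \sqrt a + \sqrt b$.

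I do not expect a genuine obstacle here; this is a standard Gagliardo–Nirenberg / Fourier interpolation estimate and the only mild care needed is the bookkeeping that separates the low-frequency region (where $H^{n+\eta}$ is simply dominated by $H^n$, so no derivative gain is claimed there and the full $H^n$ norm appears on the right) from the high-frequency region (where the homogeneous interpolation with a single extra derivative in one coordinate suffices, because $\jap{\xi}\approx|\xi|$ there and $|\xi|^{2n+2\eta}\lesssim |\xi^\alpha|^2 \cdot |\xi|^{2\eta}$). If one prefers to avoid Fourier analysis entirely, an alternative is to invoke the general interpolation inequality $\norm{u}_{H^{a+\eta b}} \lesssim \norm{u}_{H^a}^{1-\eta}\norm{u}_{H^{a+b}}^\eta$ applied with $a = n$, $b=1$, $u = f$, and then note $\norm{f}_{H^{n+1}_v} \lesssim \norm{f}_{H^n_v} + \sum_{|\alpha|=n}\norm{\part_{v_i}\part_v^\alpha f}_{L^2_v}$; splitting into whether the first or second term dominates then yields the displayed form. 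Either route is short and self-contained.
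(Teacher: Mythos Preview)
The paper states this lemma without proof, treating it as a standard interpolation fact, so there is nothing to compare against. Your Fourier-side argument---low/high frequency split followed by H\"older with exponents $\tfrac{1}{1-\eta}$ and $\tfrac{1}{\eta}$ applied to $|\xi|^{2\eta}|\widehat{\part_v^\alpha f}|^2$---is the standard route and is correct.

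One bookkeeping remark: your sentence ``$|\xi|^{2n+2\eta}\lesssim |\xi^\alpha|^2\cdot|\xi|^{2\eta}$'' on $\{|\xi|>1\}$ is only valid after summing over all multi-indices $\alpha$ with $|\alpha|=n$; for a single $\alpha$ the monomial $\xi^\alpha$ can vanish while $|\xi|$ is large. You already note the analogous implicit sum over $i$ for $\part_{v_i}$, and the paper's convention (repeated indices summed) together with how the lemma is applied downstream make clear that both sums are intended. Once that is made explicit the argument is complete; your alternative route via $\norm{f}_{H^{n+\eta}}\lesssim\norm{f}_{H^n}^{1-\eta}\norm{f}_{H^{n+1}}^\eta$ also works and avoids this bookkeeping entirely.
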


\begin{lemma}\label{l.kernel_boot_est}
For $\gamma+2s\in(0,2]$ and $|\alpha|+|\beta|+|\omega|\leq10$, then for every $T\in [0,T_{\boot})$ we have the following bound,
\begin{align*}
\sum_{\substack{|\alpha'|+|\alpha''|=|\alpha|\\|\beta'|+|\beta''|+|\beta'''|=|\beta|\\ |\omega'|+|\omega''|+|\omega'''|=|\omega|}}\int_{0}^{T_*}\int_{\R^3}\int_{\R^3}&\Gamma_{\beta''',\omega'''}(\derv{'}{'}{'} g,\derv{''}{''}{''} g)\jap{x-(t+1)v}^4 \der g\\
&\lesssim \eps^2(1+T)^{2|\beta|}.
\end{align*}
\end{lemma}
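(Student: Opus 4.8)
\textbf{Proof proposal for \lref{kernel_boot_est}.}

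The plan is to invoke \lref{main_boltz_estimates} to reduce the left-hand side to a sum of the twelve explicit multilinear integrals $\mathbb{I}$ through $\mathbb{XII}$, and then bound each one by $\eps^2(1+T)^{2|\beta|}$ using the bootstrap assumption \eref{boot_assumption_1}, the decay lemmas of this section, and the interpolation lemma \lref{partial_derv_interpol}. The overall strategy for each $\mathbb{N}$-term is the same: the integrand is a product of three factors, two of which carry the $L^2_xL^2_v$-norm with the space-time weight $\jap{x-(t+1)v}^2$ and the velocity weight $\jap{v}$ (and a time prefactor $(1+t)^{-\frac{1+\delta}{2}}$ or $(1+t)^{-\frac{1+2\delta}{2}}$), and one of which is a lower-order factor in a norm like $L^\infty_xL^1_v$ or $L^\infty_x H^{s}_v$. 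For the two $L^2_t$-in-time factors I would absorb the time prefactor and use \lref{L2_time_in_norm} to get a bound of $\eps^{3/4}(1+T)^{|\beta|}$ for each (the $(1+T)^{|\beta|}$ coming from the fact that the intermediate multi-indices satisfy $|\beta'|+|\beta''|+|\beta'''|=|\beta|$, so the worst case distributes all of $|\beta|$ onto one factor and \lref{L2_time_in_norm} produces exactly that power). For the lower-order factor I would use the combination of the vector-field trick \lref{vect_trick}, the dispersive $L^1_v\to L^2_v$ gain \lref{L1_to_L2} (or \lref{L1_to_L2_less_decay}), packaged as \cref{combining_decay_1} and \cref{combining_decay_2}, together with the bootstrap assumption to get a power of $(1+t)$ that decays fast enough to beat the explicit $(1+t)$-prefactor in front of each $\mathbb{N}$-term and render the $t$-integral convergent, yielding an extra factor of $\eps^{3/4}$; three half-powers of $\eps^{3/4}$ give $\eps^{9/4}\le \eps^2$ for $\eps$ small.

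Concretely, for $\mathbb{I}$ the lower-order factor is $\norm{\jap{x-(t+1)v}^2\derv{'}{'}{'}g}_{L^\infty_xL^2_v}$ (or the $L^\infty_x$-placed factor when $|\alpha'|+|\beta'|+|\omega'|\ge 8$); applying \lref{vect_trick} this is $\lesssim (1+t)^{-3/2}$ times a $Y^\omega$-sum of $Y$-norms, which by \eref{boot_assumption_1} is $\lesssim \eps^{3/4}(1+t)^{-3/2}(1+T)^{|\beta'|}$, and the $(1+t)^{1+\delta}(1+t)^{-3/2}=(1+t)^{-1/2+\delta}$ is still not integrable, so here one additionally uses that the two $L^2_t$-factors come with $(1+t)^{-\frac12-\frac\delta2}$ each and applies Cauchy–Schwarz in $t$ directly rather than $L^\infty_t$; the product $(1+t)^{1+\delta}\cdot(1+t)^{-\frac12-\frac\delta2}\cdot(1+t)^{-\frac12-\frac\delta2}=1$, so the $t$-integral of the two $L^2_xL^2_v$-factors is handled by \lref{L2_time_in_norm} and is $\lesssim \eps^{3/2}(1+T)^{2|\beta|}$, while the remaining $L^\infty_xL^2_v$ factor is bounded in $L^\infty_t$ by $\eps^{3/4}$. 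For $\mathbb{III}$, $\mathbb{IV}$ (the terms with $(1+t)^{2+\delta}$ and $(1+t)^{3-\delta}$) the lower-order factor carries extra space-time weights $\jap{x-(t+1)v}$ or $\jap{x-(t+1)v}^{2\delta}$ in $L^\infty_xL^1_v$, and here \cref{combining_decay_2} gives the decay $(1+t)^{-5/2+}$, enough to beat $(1+t)^{2+\delta}$ after Cauchy–Schwarz in $t$ with the two weighted $L^2$ factors; for $\mathbb{IV}$ one uses \cref{combining_decay_1} with the $2\delta$-weight, noting $2\delta<1$ so the weight is harmless, giving $(1+t)^{-3}$ against $(1+t)^{3-\delta}$ which is integrable. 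The terms $\mathbb{V}$–$\mathbb{XII}$ are treated the same way: the Sobolev-type velocity norms $H^s_v$, $H^{2s}_v$, $H^{(2s-1)^+}_v$ on the top/penultimate-order factor are controlled by \lref{partial_derv_interpol} which trades the fractional derivative for a $\jap{v}$-weighted $L^2$-norm of one extra $\part_v$ (this is exactly why the energy norm $E^{10}_T$ carries 10 derivatives and $\jap{v}$ weights), and the lower-order $L^1_v$-factors are again converted via \cref{combining_decay_1}/\cref{combining_decay_2} to decaying $L^2$-norms controlled by \eref{boot_assumption_1}.

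The main obstacle is the bookkeeping of powers of $(1+T)$ and the matching of time decay rates: one must check for \emph{each} of the twelve terms, and for each admissible split of the multi-indices (in particular the split between the $|\alpha'|+|\beta'|+|\omega'|\le 7$ case where Sobolev embedding in $x$ costs two $x$-derivatives placed via $L^\infty_x\hookrightarrow H^2_x$, and the $\ge 8$ case where the roles of the factors are swapped so that the low-regularity factor sits in $L^2_x$), that after using \lref{L2_time_in_norm} (which is responsible for every factor of $(1+T)^{|\beta|}$) the remaining time integral genuinely converges — i.e.\ the explicit $(1+t)$-prefactor is strictly dominated by the product of the decay rates coming from \lref{vect_trick} and \cref{combining_decay_1}/\cref{combining_decay_2} and the negative time powers built into the $E_T$-norm. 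This is where the precise choice $\delta=\min\{\frac{1-s}{4},\frac{1}{10},\frac{\gamma+2s}{8}\}$ enters: it guarantees $2s-1<1$, $2\delta<\gamma+2s$, and enough room in the exponents that e.g.\ $(1+t)^{3-\delta}\cdot(1+t)^{-\frac{5}{2}+}\cdot(1+t)^{-\frac{1+\delta}{2}}\cdot(1+t)^{-\frac{1+\delta}{2}}$ has a negative exponent and is integrable. Once the rate check passes for every term, collecting the three $\eps^{3/4}$ factors (or $\eps^{3/2}\cdot\eps^{3/4}$) and all the $(1+T)^{|\beta|}$ factors gives the claimed bound $\lesssim \eps^2(1+T)^{2|\beta|}$ for $\eps\le\eps_0$ small.
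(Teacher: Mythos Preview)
Your proposal is correct and follows essentially the same approach as the paper: invoke \lref{main_boltz_estimates} to reduce to $\mathbb{I}$--$\mathbb{XII}$, then for each term place two factors in $L^2_t$ (controlled by the bootstrap norm $E_T$, which is what you are effectively using when you cite \lref{L2_time_in_norm}) and one lower-order factor in $L^\infty_t$ with time decay supplied by \lref{vect_trick} and \cref{combining_decay_1}/\cref{combining_decay_2}, using \lref{partial_derv_interpol} together with \lref{L2_time_in_norm} for the fractional-derivative terms $\mathbb{VII}$, $\mathbb{VIII}$, $\mathbb{X}$, $\mathbb{XII}$. One small slip: for $\mathbb{IV}$ the product $(1+t)^{3-\delta}(1+t)^{-3}=(1+t)^{-\delta}$ is not integrable but only bounded, which is all you need since that factor sits in $L^\infty_t$; similarly the paper carries out the $(1+T)^{|\beta|}$ bookkeeping by pulling out $(1+T)^{2|\beta|}$ and distributing $(1+T)^{-|\beta'|}$, $(1+T)^{-|\beta''|}$, $(1+T)^{-|\beta|}$ across the three factors, so that each bracket is individually $\lesssim\eps^{3/4}$.
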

\begin{proof}
Using \lref{main_boltz_estimates}, it suffices to get a bound for the terms $\mathbb{I}$ to $\mathbb{XII}.$

We begin by bounding $\mathbb{I}$ from \eref{main_bound_no_sing}. For the first half we have that $|\alpha'|+|\beta'|+|\omega'|\leq 7$, thus using \lref{vect_trick} and \eref{boot_assumption_1}, we have
\begin{align*}
&\int_0^{T_*}(1+t)^{1+\delta}\norm{(1+t)^{-\frac{1}{2}-\frac{\delta}{2}}\jap{x-(t+1)v}^2\jap{v}\derv{''}{''}{''} g}_{L^2_xL^2_v} \\
&\qquad \times \norm{\jap{x-(t+1)v}^2\derv{'}{'}{'}g}_{L^\infty_xL^2_v}\norm{(1+t)^{-\frac{1}{2}-\frac{\delta}{2}}\jap{x-(t+1)v}^2\jap{v}\der g}_{L^2_xL^2_v}\d t\\
&\lesssim (1+T)^{2|\beta|}\left((1+T)^{-|\beta''|}\norm{(1+t)^{-\frac{1}{2}-\frac{\delta}{2}}\jap{x-(t+1)v}^2\jap{v}\derv{''}{''}{''} g}_{L^2([0,T];L^2_xL^2_v}\right)\\
&\qquad\times (1+T)^{-|\beta'|}\left(\sup_{t\in [0,T]}((1+t)^{1+\delta}\norm{\jap{x-(t+1)v}^2\derv{'}{'}{'} g}_{L^\infty_xL^2_v}\right)\\
&\qquad \times \left((1+T)^{-|\beta|}\norm{(1+t)^{-\frac{1}{2}-\frac{\delta}{2}}\jap{x-(t+1)v}^2\jap{v}\der g}_{L^2([0,T];L^2_xL^2_v}\right)\\
&\lesssim (1+T)^{2|\beta|}\eps^{\frac{3}{4}}\times\eps^{\frac{3}{4}}\\
&\qquad\times (1+T)^{-|\beta'|}\left(\sup_{t\in [0,T]}((1+t)^{1+\delta}(1+t)^{-\frac{3}{2}}\sum_{|\omega'''|\leq|\omega'|+3}\norm{\jap{x-(t+1)v}^2\derv{'}{'}{'''} g}_{L^2_xL^2_v}\right)\\
&\lesssim (1+T)^{2|\beta|}\eps^{\frac{3}{2}}\times \eps^{\frac{3}{2}}=(1+T)^{2|\beta|}\eps^{\frac{9}{4}}.
\end{align*}
For the second half we proceed in the same way as before, except that we apply \lref{vect_trick} to $\norm{(1+t)^{-\frac{1+\delta}{2}}\jap{v}\jap{x-(t+1)v}^2\derv{''}{''}{''} g}_{L^\infty_xL^2_v}$. More precisely,
\begin{align*}
&\int_0^{T_*}(1+t)^{1+\delta}\norm{(1+t)^{-\frac{1}{2}-\frac{\delta}{2}}\jap{x-(t+1)v}^2\jap{v}\derv{''}{''}{''} g}_{L^\infty_xL^2_v} \\
&\qquad \times \norm{\jap{x-(t+1)v}^2\derv{'}{'}{'}g}_{L^2_xL^2_v}\norm{(1+t)^{-\frac{1}{2}-\frac{\delta}{2}}\jap{x-(t+1)v}^2\jap{v}\der g}_{L^2_xL^2_v}\d t\\
&\lesssim \int_{0}^{T}(1+t)^{-\frac{3}{2}}(1+t)^{1+\delta}\left(\sum_{|\omega'''|\leq|\omega''|+3}\norm{(1+t)^{-\frac{1}{2}-\frac{\delta}{2}}\jap{x-(t+1)v}^2\jap{v}\derv{''}{''}{'''} g}_{L^2_xL^2_v}\right)\\
&\qquad\times \norm{\jap{x-(t+1)v}^2\derv{'}{'}{'} g}_{L^2_xL^2_v}\\
&\qquad \times\norm{(1+t)^{-\frac{1}{2}-\frac{\delta}{2}}\jap{x-(t+1)v}^2\jap{v}\der g}_{L^2_xL^2_v}\d t\\
&\lesssim(1+T)^{2|\beta|}\left((1+T)^{-|\beta''|}\norm{(1+t)^{-\frac{1}{2}-\frac{\delta}{2}}\jap{x-(t+1)v}^2\jap{v}Y^3\derv{''}{''}{''} g}_{L^2([0,T];L^2_xL^2_v)}\right)\\
&\qquad\times (1+T)^{-|\beta'|}\sup_{t\in [0,T]}\left((1+t)^{-\frac{3}{2}}(1+t)^{1+\delta}\norm{\jap{x-(t+1)v}^2\derv{'}{'}{'} g}_{L^2_xL^2_v}\right)\\
&\qquad \times \left((1+T)^{-|\beta|}\norm{(1+t)^{-\frac{1}{2}-\frac{\delta}{2}}\jap{x-(t+1)v}^2\jap{v}\der g}_{L^2([0,T];L^2_xL^2_v)}\right)\\
&\lesssim (1+T)^{2|\beta|}\eps^{\frac{3}{4}}\times\eps^{\frac{3}{4}}\times\eps^{\frac{3}{4}}. 
\end{align*}
 Thus, after making $\eps$ smaller if needed, we get that,
$$\mathbb{I}\lesssim (1+T)^{2|\beta|}\eps^2.$$

For $\mathbb{II}$, we first use Cauchy--Schwarz in time to get,
\begin{align*}
\mathbb{II}&\lesssim (1+T)^{2|\beta|}\norm{(1+t)^{-\frac{1+2\delta}{2}-|\beta''|}\jap{x-(t+1)v}^2 \jap{v}\derv{''}{''}{''} g}_{L^2([0,T];L^2_xL^2_v)}\\
&\qquad\times\norm{(1+t)^{-\frac{1+2\delta}{2}-|\beta|}\jap{x-(t+1)v}^2\jap{v} \der g}_{L^2([0,T];L^2_xL^2_v)}\\
&\qquad\times \sup_{t\in[0,T]}\left((1+t)^{1+2\delta}(1+t)^{-|\beta'|}\norm{\jap{v}^{\gamma+2s}\derv{'}{'}{'} f}_{L^\infty_xW^{2,1}_v}^s\norm{\jap{v}^{\gamma+2s}\derv{'}{'}{'} f}_{L^\infty_xL^1_v}^{1-s}\right).
\end{align*}
Next using \cref{combining_decay_1} and \lref{exp_bound} followed by bootstrap assumption \eref{boot_assumption_1} we get,
\begin{align*}
(1+t)^{1+2\delta-|\beta'|}\norm{\jap{v}^{\gamma+2s}&\derv{'}{'}{'} f}_{L^\infty_xW^{2,1}_v}^s\norm{\jap{v}^{\gamma+2s}\derv{'}{'}{'} f}_{L^\infty_xL^1_v}^{1-s}\\
&\lesssim \sum_{|\omega'''|\leq |\omega'|+3}(1+t)^{1+2\delta-|\beta'|}(1+t)^{-3}\norm{\jap{v}^{\gamma+2s}\jap{x-(t+1)v}^2\derv{'}{'}{'''} f}_{L^2_xH^2_v}^s\\
&\hspace{5em}\times\norm{\jap{v}^{\gamma+2s}\jap{x-(t+1)v}^2\derv{'}{'}{'} f}_{L^2_xL^2_v}^{1-s}\\
&\lesssim \sum_{|\omega'''|\leq |\omega'|+3}(1+t)^{1+2\delta-|\beta'|}(1+t)^{-3}\norm{\jap{x-(t+1)v}^2\derv{'}{'}{'''} g}_{L^2_xH^2_v}^s\\
&\hspace{5em}\times\norm{\jap{v}^{\gamma+2s}\jap{x-(t+1)v}^2\derv{'}{'}{'} g}_{L^2_xL^2_v}^{1-s}\\
&\lesssim \eps^{\frac{3}{4}}(1+t)^{-3+2s+1+2\delta}\\
&\lesssim \eps^{\frac{3}{4}},
\end{align*}
where we used the fact that $\delta<1-s$.\\
Finally using the bootstrap assumption \eref{boot_assumption_1}, we get 
$$\norm{(1+t)^{-\frac{1+2\delta}{2}-|\beta''|}\jap{x-(t+1)v}^2\jap{v} \derv{''}{''}{"} g}_{L^2([0,T];L^2_xL^2_v)}\lesssim \eps^{\frac{3}{4}}$$
and 
$$\norm{(1+t)^{-\frac{1+2\delta}{2}-|\beta|}\jap{x-(t+1)v}^2\jap{v} \der g}_{L^2([0,T];L^2_xL^2_v)}\lesssim \eps^{\frac{3}{4}}.$$
Combining all of this we get the desired result,
$$\mathbb{II}\lesssim \eps^{2}(1+T)^{2|\beta|}.$$

\textbf{Henceforth we will only concern ourselves with the first half of every remaining term (i.e. for $|\alpha'|+|\beta'|+|\omega'|\leq 7$). For the remaining half, we can just proceed in the same way as for the first half with the only change that we apply \lref{vect_trick} to the term with $\derv{''}{''}{''} g$.}

Next for $\mathbb{III}$, we use \cref{combining_decay_2} and \eref{boot_assumption_1} to get,
\begin{align*}
\int_0^{T}&(1+t)^{2+\delta}\norm{(1+t)^{-\frac{1+\delta}{2}}\jap{x-(t+1)v}^2\jap{v}\derv{''}{''}{''} g}_{L^2_xL^2_v}\\
&\qquad\times \norm{\jap{x-(t+1)v}\derv{'}{'}{'} g}_{L^\infty_xL^1_v}\norm{(1+t)^{-\frac{1+\delta}{2}}\jap{x-(t+1)v}^2\jap{v}\der g}_{L^2_xL^2_v}\d t\\
&\lesssim (1+T)^{2|\beta|}\left((1+T)^{-|\beta''|}\norm{(1+t)^{-\frac{1+\delta}{2}}\jap{x-(t+1)v}^2\jap{v}\derv{''}{''}{''} g}_{L^2([0,T];L^2_xL^2_v)}\right)\\
&\qquad\times (1+T)^{-|\beta'|}\left(\sup_{t\in[0,T]} (1+t)^{2+\delta}\norm{\jap{x-(t+1)v}\derv{'}{'}{'} g}_{L^\infty_xL^1_v}\right)\\
&\times \left((1+T)^{-|\beta|}\norm{(1+t)^{-\frac{1+\delta}{2}}\jap{x-(t+1)v}^2\jap{v}\der g}_{L^2([0,T];L^2_xL^2_v)}\right)\\
&\lesssim (1+T)^{2|\beta|}\eps^{\frac{3}{4}}\times \eps^{\frac{3}{4}}\\
&\qquad\times (1+T)^{-|\beta'|}\sup_{t\in[0,T]}\left((1+t)^{2+\delta}(1+t)^{-\frac{5}{2}+}\sum_{|\omega'''|\leq|\omega'|+3} \norm{\jap{x-(t+1)v}\derv{'}{'}{'''} g}_{L^2_xL^2_v}\right)\\
&\lesssim (1+T)^{2|\beta|}\eps^{\frac{9}{4}}.
\end{align*}

For $\mathbb{IV}$, we use \cref{combining_decay_1}, \eref{boot_assumption_1} and that $\delta<\frac{1}{4}$ to get,
\begin{align*}
\int_0^{T}&(1+t)^{3-\delta}\norm{(1+t)^{-\frac{1+\delta}{2}}\jap{x-(t+1)v}^2\jap{v}\derv{''}{''}{''} g}_{L^2_xL^2_v}\\
&\qquad\times \norm{\jap{x-(t+1)v}^{2\delta}\derv{'}{'}{'} g}_{L^\infty_xL^1_v}\norm{(1+t)^{-\frac{1+\delta}{2}}\jap{x-(t+1)v}^2\jap{v}\der g}_{L^2_xL^2_v}\\
&\lesssim (1+T)^{2|\beta|}\left((1+T)^{-|\beta''|}\norm{(1+t)^{-\frac{1+\delta}{2}}\jap{x-(t+1)v}^2\jap{v}\derv{''}{''}{''} g}_{L^2([0,T];L^2_xL^2_v)}\right)\\
&\qquad\times (1+T)^{-|\beta'|}\left(\sup_{t\in[0,T]} (1+t)^{3-\delta}\norm{\jap{x-(t+1)v}^{2\delta}\derv{'}{'}{'} g}_{L^\infty_xL^2_v}\right)\\
&\times \left((1+T)^{-|\beta|}\norm{(1+t)^{-\frac{1+\delta}{2}}\jap{x-(t+1)v}^2\jap{v}\der g}_{L^2([0,T];L^2_xL^2_v)}\right)\\
&\lesssim (1+T)^{2|\beta|}\eps^{\frac{3}{4}}\times \left((1+t)^{3-\delta}(1+t)^{-3}\sup_{t\in[0,T]}\sum_{|\omega'''|\leq|\omega'|+3} \norm{\jap{x-(t+1)v}^2\derv{'}{'}{'''} g}_{L^2_xL^2_v}\right)\times \eps^{\frac{3}{4}}\\
&\lesssim (1+T)^{2|\beta|}\eps^{\frac{9}{4}}.
\end{align*}

For $\mathbb{V}$, we use \cref{combining_decay_1}, \eref{boot_assumption_1}, that $\delta<1-s$ and the fact that $s\geq \frac{1}{2}$.
\begin{align*}
\int_0^{T}&(1+t)^{2+\delta}\norm{g}^{1-s}_{L^\infty_xL^1_v}\norm{\part_{v_i}g}^{s}_{L^\infty_xL^1_v}\norm{(1+t)^{-\frac{1}{2}-\frac{\delta}{2}}\jap{v}^{\frac{2s+\gamma}{2}}\der g}^2_{L^2_xL^2_v}\d t\\
&\lesssim\sup_{t\in[0,T]}\left((1+t)^{2+\delta}(1+t)^{-3}\sum_{|\omega'|\leq 3}\norm{\jap{x-(t+1)v}^2Y^{\omega'} g}^{1-s}_{L^2_xL^2_v}\norm{\jap{x-(t+1)v}^2 Y^{\omega'}\part_{v_i}g}^{s}_{L^2_xL^2_v}\right)\\
&\qquad \times\norm{(1+t)^{-\frac{1}{2}-\frac{\delta}{2}}\jap{v}^{\frac{2s+\gamma}{2}}\der g}^2_{L^2([0,T];L^2_xL^2_v)}\\
&\lesssim \color{black}{\sup_{t\in[0,T]}((1+t)^{-1+\delta}(1+t)^s\eps^{\frac{3}{4}})\times (1+T)^{2|\beta|}\eps^{\frac{3}{2}}}\\
&\lesssim (1+T)^{2|\beta|}\eps^{\frac{9}{4}}.
\end{align*}
Similarly, we can get the bound 
\begin{align*}
\int_0^{T}&(1+t)^{2+\delta}\norm{g}^{\frac{3}{2}-s}_{L^\infty_xL^1_v}\norm{\part_{v_i}g}^{s-\frac{1}{2}}_{L^\infty_xL^1_v}\norm{(1+t)^{-\frac{1}{2}-\frac{\delta}{2}}\jap{v}^{\frac{2s+\gamma}{2}}\der g}^2_{L^2_xL^2_v}\d t\\
&\lesssim (1+T)^{2|\beta|}\eps^{\frac{9}{4}}.
\end{align*}
Hence,
$$\mathbb{V}\lesssim (1+T)^{2|\beta|}\eps^2.$$

\textcolor{black}{For $\mathbb{VI}$, we use \cref{combining_decay_1} and \eref{boot_assumption_1} to get,
\begin{align*}
\int_0^{T}&(1+t)^{1+\delta}\norm{\part_{v_i} \derv{'}{'}{'}g}_{L^\infty_xL^1_v}\norm{(1+t)^{-\frac{1+\delta}{2}}\jap{x-(t+1)v}^2\jap{v}\derv{''}{''}{''} g}_{L^2_xL^2_v}\\
&\qquad\times\norm{(1+t)^{-\frac{1+\delta}{2}}\jap{x-(t+1)v}^2\jap{v}\der g}_{L^2_xL^2_v} \d t\\
&\lesssim \sup_{t\in[0,T]}\left((1+t)^{1+\delta}(1+t)^{-3}\sum_{|\omega'''|\leq |\omega|+3}\norm{\jap{x-(t+1)v}^2\part_{v_i} \derv{'}{'}{'''} g}_{L^2_xL^2_v}\right)\\
&\qquad\times \norm{(1+t)^{-\frac{1+\delta}{2}}\jap{x-(t+1)v}^2\jap{v}\der g}_{L^2([0,T];L^2_xL^2_v)}\\
&\qquad\times \norm{(1+t)^{-\frac{1+\delta}{2}}\jap{x-(t+1)v}^2\jap{v}\derv{''}{''}{''} g}_{L^2([0,T];L^2_xL^2_v)}\\
&\lesssim \sup_{t\in[0,T]}\left((1+t)^{1+\delta}(1+t)^{-3}(1+t)^{1+|\beta'|}\eps^{\frac{3}{4}}\right)\times (1+T)^{|\beta|+|\beta''|}\eps^{\frac{3}{2}}\\
&\lesssim (1+T)^{2|\beta|}\eps^{\frac{9}{4}}.
\end{align*}}
Thus,
$$\mathbb{VI}\lesssim (1+T)^{2|\beta|}\eps^2.$$

For $\mathbb{VII}$, recall that $|\alpha'|+|\beta'|+|\omega'|=1$.  First note that by  \lref{partial_derv_interpol} followed by H{\"o}lder's and \lref{L2_time_in_norm} we have,
\begin{align*}
&\norm{(1+t)^{-\frac{1+2\delta}{2}}(1+t)^{-|\beta''|-(2s-1)+}\jap{v}\jap{x-(t+1)v}^2\derv{''}{''}{''} g}_{L^2([0,T];L^2_xH^{(2s-1)^+}_v)}\\
&\lesssim [\norm{(1+t)^{-\frac{1+2\delta}{2}}(1+t)^{-|\beta''|-(2s-1)+}\jap{v}\jap{x-(t+1)v}^2\derv{''}{''}{''} g}_{L^2([0,T];L^2_xL^2_v)}\\
&\quad+\norm{(1+t)^{-\frac{1+2\delta}{2}}(1+t)^{-|\beta''|-1}\jap{v}\jap{x-(t+1)v}^2\part_{v_i}\derv{''}{''}{''} g}^{2s-1+}_{L^2([0,T];L^2_xL^2_v)}\\
&\qquad\times\norm{(1+t)^{-\frac{1+2\delta}{2}}(1+t)^{-|\beta''|}\jap{v}\jap{x-(t+1)v}^2\derv{''}{''}{''} g}^{2-2s-}_{L^2([0,T];L^2_xL^2_v)}]\\
&\lesssim \eps^{\frac{3}{4}}.
\end{align*}
Now using \cref{combining_decay_1}, \eref{boot_assumption_1}, $\delta<1-s$ and that $s\geq \frac{1}{2}$ we get
\begin{align*}
&\int_0^{T}(1+t)^{1+2\delta}\norm{(1+t)^{-\frac{1+2\delta}{2}}\jap{v}\jap{x-(t+1)v}^2\derv{''}{''}{''} g}_{L^2_xH^{(2s-1)^+}_v}\\
&\hspace{5em}\times\norm{\part_{v_i} \derv{'}{'}{'} g}_{L^\infty_xL^1_v}\norm{(1+t)^{-\frac{1+2\delta}{2}}\jap{v}\jap{x-(t+1)v}^2\der g}_{L^2_xL^2_v}\d t\\
&\lesssim (1+T)^{2|\beta|}\left(\norm{(1+t)^{-|\beta''|-(2s-1)+}(1+t)^{-\frac{1+\delta}{2}}\jap{v}\jap{x-(t+1)v}^2\derv{''}{''}{''} g}_{L^2([0,T];L^2_xH^{(2s-1)^+}_v)}\right)\\
&\qquad\times \sup_{t\in [0,T]}\left((1+t)^{-|\beta'|+(2s-1)+}(1+t)^{1+2\delta}(1+t)^{-3}\sum_{|\omega'''|\leq|\omega|+3}\norm{\jap{x-(t+1)v}^2\part_{v_i} \derv{'}{'}{'''} g}_{L^2_xL^2_v}\right)\\
&\qquad\times \left((1+T)^{-|\beta|}\norm{(1+t)^{-\frac{1+2\delta}{2}}\jap{v}\jap{x-(t+1)v}^2\der  g}_{L^2([0,T];L^2_xL^2_v)}\right)\\
&\lesssim (1+T)^{2|\beta|}\eps^{\frac{3}{4}}\times \sup_{t\in [0,T]}\left((1+t)^{-|\beta'|+(2s-1)+}(1+t)^{-3}(1+t)^{|\beta'|+1}(1+t)^{1+2\delta}\eps^{\frac{3}{4}}\right)\times \eps^{\frac{3}{4}}\\
&\lesssim (1+T)^{2|\beta|}\eps^{\frac{3}{4}}\times \sup_{t\in [0,T]}\left((1+t)^{[(2s-1)+]+2\delta-1}\eps^{\frac{3}{4}}\right)\times \eps^{\frac{3}{4}}\\
&\lesssim (1+T)^{2|\beta|}\eps^{\frac{9}{4}}.
\end{align*}
The second half can be estimated in the same manner.
As a result,
$$\mathbb{VII}\lesssim (1+T)^{2|\beta|}\eps^2.$$

For $\mathbb{VIII}$, we have that $|\alpha'|+|\beta'|+|\omega'|=1$. By \lref{partial_derv_interpol} followed by H{\"o}lder's and \lref{L2_time_in_norm} we have,
\begin{align*}
&\norm{(1+t)^{-\frac{1+2\delta}{2}}(1+t)^{-|\beta''|-s}\jap{v}\jap{x-(t+1)v}^2\derv{''}{''}{''} g}_{L^2([0,T];L^2_xH^s_v)}\\
&\lesssim [\norm{(1+t)^{-\frac{1+2\delta}{2}}(1+t)^{-|\beta''|-s}\jap{v}\jap{x-(t+1)v}^2\derv{''}{''}{''} g}_{L^2([0,T];L^2_xL^2_v)}\\
&\quad+\norm{(1+t)^{-\frac{1+2\delta}{2}}(1+t)^{-|\beta''|-1}\jap{v}\jap{x-(t+1)v}^2\part_{v_i}\derv{''}{''}{''} g}^{s}_{L^2([0,T];L^2_xL^2_v)}\\
&\qquad\times\norm{(1+t)^{-\frac{1+2\delta}{2}}(1+t)^{-|\beta''|}\jap{v}\jap{x-(t+1)v}^2\derv{''}{''}{''} g}^{1-s}_{L^2([0,T];L^2_xL^2_v)}]\\
&\lesssim \eps^{\frac{3}{4}}.
\end{align*}
Next, using \cref{combining_decay_1}, \eref{boot_assumption_1} and that $\delta<1-s$, we have
\begin{align*}
&\int_0^{T}(1+t)^{|\beta'|+1+2\delta}\norm{(1+t)^{-\frac{1+2\delta}{2}}\jap{v}\jap{x-(t+1)v}^2\derv{''}{''}{''} g}^2_{L^2_xH^s_v}\\
&\qquad\times\norm{\derv{'}{'}{'} g}_{L^\infty_xL^1_v}\d t\\
&\lesssim (1+T)^{2|\beta|}\left(\norm{(1+t)^{-|\beta''|-s}(1+t)^{-\frac{1+2\delta}{2}}\jap{v}\jap{x-(t+1)v}^2\derv{''}{''}{''} g}^2_{L^2([0,T];L^2_xH^{s}_v)}\right)\\
&\qquad\times \sup_{t\in [0,T]}\left((1+t)^{-2|\beta'|}(1+t)^{2s}(1+t)^{|\beta'|+1+2\delta}(1+t)^{-3}\right.\\
&\hspace{8em}\times\left.\sum_{|\omega'''|\leq|\omega|+3}\norm{\jap{x-(t+1)v}^2 \derv{'}{'}{'''} g}_{L^2_xL^2_v}\right)\\
&\lesssim (1+T)^{2|\beta|}\eps^{\frac{6}{4}}\times \sup_{t\in [0,T]}\left((1+t)^{-|\beta'|+2s}(1+t)^{-3}(1+t)^{|\beta'|}(1+t)^{1+2\delta}\eps^{\frac{3}{4}}\right) \\
&\lesssim (1+T)^{2|\beta|}\eps^{\frac{3}{4}}\times \sup_{t\in [0,T]}\left((1+t)^{2s-2+2\delta}\eps^{\frac{3}{4}}\right)\\
&\lesssim (1+T)^{2|\beta|}\eps^{\frac{9}{4}}.
\end{align*}
Thus we get that,
$$\mathbb{VIII}\lesssim (1+T)^{2|\beta|}\eps^2.$$

For $\mathbb{IX}$, we again have that $|\alpha'|+|\beta'|+|\omega'|=1$. As in $\mathbb{II}$, we have the bound, 
\begin{align*}
 \norm{\jap{v}^{\gamma+2s}&\derv{'}{'}{'} f}_{L^\infty_xW^{2,1}_v}^s\norm{\jap{v}^{\gamma+2s}\derv{'}{'}{'} f}_{L^\infty_xL^1_v}^{1-s}\lesssim \eps^{\frac{3}{4}}(1+t)^{|\beta'|}(1+t)^{-3+2s}.
\end{align*}
Now using \eref{boot_assumption_1} we get,
\begin{align*}
 \int_0^{T}&(1+t)^{|\beta'|+1+\delta} \norm{\jap{v}^{\gamma+2s}\derv{'}{'}{'} f}_{L^\infty_xW^{2,1}_v}^s\norm{\jap{v}^{\gamma+2s}\derv{'}{'}{'} f}_{L^\infty_xL^1_v}^{1-s}\\
&\quad\times\norm{(1+t)^{-\frac{1+\delta}{2}}\jap{x-(t+1)v}^2\jap{v}\derv{''}{''}{''} g}^2_{L^2_xL^2_v}\\
&\lesssim (1+T)^{2|\beta|}\left((1+T)^{-2|\beta''|}\norm{(1+t)^{-\frac{1+\delta}{2}}\jap{x-(t+1)v}^2\jap{v}\derv{''}{''}{''} g}^2_{L^2([0,T];L^2_xL^2_v)}\right)\\
&\quad\times (1+T)^{-2|\beta'|}\left(\sup_{t\in[0,T]} (1+t)^{|\beta'|+1+\delta}\norm{\jap{v}^{\gamma+2s}\derv{'}{'}{'} f}_{L^\infty_xW^{2,1}_v}^s\norm{\jap{v}^{\gamma+2s}\derv{'}{'}{'} f}_{L^\infty_xL^1_v}^{1-s}\right)\\
&\lesssim (1+T)^{2|\beta|}\eps^{\frac{3}{4}}\times (1+T)^{-2|\beta'|}\left(\sup_{t\in[0,T]} (1+t)^{|\beta'|+1+\delta}(1+t)^{-3+2s}(1+t)^{|\beta'|}\eps^{\frac{3}{4}}\right)\times \eps^{\frac{3}{4}}\\
&\lesssim (1+T)^{2|\beta|}\eps^{\frac{9}{4}},
\end{align*}
where we used the fact that $\delta<2-2s.$

Putting this together we get,
$$\mathbb{IX}\lesssim (1+T)^{2|\beta|}\eps^2.$$

For $\mathbb{X}$, we first use the bound established for $\mathbb{VII}$, i.e.
\begin{align*}
\norm{(1+t)^{-\frac{1+2\delta}{2}}(1+t)^{-|\beta''|-(2s-1)+}\jap{v}\jap{x-(t+1)v}^2\derv{''}{''}{''} g}_{L^2([0,T];L^2_xH^{(2s-1)^+}_v)}\lesssim \eps^{\frac{3}{4}}.
\end{align*}
Using this and \eref{boot_assumption_1}, we get,
\begin{align*}
&\int_0^{T}(1+t)^{2+2\delta}\norm{(1+t)^{-\frac{1+2\delta}{2}}\jap{v}\jap{x-(t+1)v}^2\derv{''}{''}{''} g}_{L^2_xH^{(2s-1)^+}_v}\\
&\qquad\times\norm{\derv{'}{'}{'} g}_{L^\infty_xL^1_v}\norm{(1+t)^{-\frac{1+2\delta}{2}}\jap{v}\jap{x-(t+1)v}^2\der g}_{L^2_xL^2_v}\d t\\
&\lesssim (1+T)^{2|\beta|}\left(\norm{(1+t)^{-|\beta''|-(2s-1)+}(1+t)^{-\frac{1+2\delta}{2}}\jap{v}\jap{x-(t+1)v}^2\derv{''}{''}{''} g}_{L^2([0,T];L^2_xH^{(2s-1)^+}_v)}\right)\\
&\qquad\times \sup_{t\in [0,T]}\left((1+t)^{-|\beta'|+(2s-1)+}(1+t)^{2+2\delta}(1+t)^{-3}\sum_{|\omega'''|\leq|\omega|+3}\norm{\jap{x-(t+1)v}^2 \derv{'}{'}{'''} g}_{L^2_xL^2_v}\right)\\
&\qquad\times \left((1+T)^{-|\beta|}\norm{(1+t)^{-\frac{1+2\delta}{2}}\jap{v}\jap{x-(t+1)v}^2\der  g}_{L^2([0,T];L^2_xL^2_v)}\right)\\
&\lesssim (1+T)^{2|\beta|}\eps^{\frac{3}{4}}\times \sup_{t\in [0,T]}\left((1+t)^{-|\beta'|+(2s-1)+}(1+t)^{-3}(1+t)^{|\beta'|}(1+t)^{2+2\delta}\eps^{\frac{3}{4}}\right)\times \eps^{\frac{3}{4}}\\
&\lesssim (1+T)^{2|\beta|}\eps^{\frac{3}{4}}\times \sup_{t\in [0,T]}\left((1+t)^{[(2s-1)+]+2\delta-1}\eps^{\frac{3}{4}}\right)\times \eps^{\frac{3}{4}}\\
&\lesssim (1+T)^{2|\beta|}\eps^{\frac{9}{4}}.
\end{align*}
The second half involving $H^{2s-1}_v$ norm is estimated similarly. Hence,
$$\mathbb{X}\lesssim (1+T)^{2|\beta|}\eps^2.$$

For $\mathbb{XI}$, we use \cref{combining_decay_1} and \eref{boot_assumption_1} to get,
\begin{align*}
\int_0^{T}&(1+t)^{1+2\delta}\norm{(1+t)^{-\frac{1+2\delta}{2}}\jap{x-(t+1)v}^2\jap{v}\part_{v_i} \derv{''}{''}{''} g}_{L^2_xL^2_v}\\
&\qquad \times\norm{\derv{'}{'}{'} g}_{L^\infty_xL^1_v}\norm{(1+t)^{-\frac{1+2\delta}{2}}\jap{x-(t+1)v}^2\jap{v}\der g}_{L^2_xL^2_v}\d t\\
&\lesssim (1+T)^{2|\beta|}\left(\norm{(1+t)^{-\frac{1+2\delta}{2}}(1+t)^{-|\beta''|-1}\jap{x-(t+1)v}^2\jap{v}\part_{v_i}\derv{''}{''}{''} g}_{L^2([0,T];L^2_xL^2_v)}\right)\\
&\qquad\times\sup_{t\in[0,T]}  \left((1+t)^{-|\beta'|+1}(1+t)^{1+2\delta}\norm{\derv{'}{'}{'} g}_{L^\infty_xL^1_v}\right)\\
&\times \norm{(1+t)^{-\frac{1+2\delta}{2}}(1+t)^{-|\beta|}\jap{x-(t+1)v}^2\jap{v}\der g}_{L^2([0,T];L^2_xL^2_v)}\\
&\lesssim (1+T)^{2|\beta|}\eps^{\frac{3}{4}}\times \sup_{t\in[0,T]} \left((1+t)^{1+2\delta}(1+t)^{-3}(1+t)^{-|\beta'|+1}\right.\\
&\hspace{10em}\times\left.\sum_{|\omega'''|\leq|\omega'|+3} \norm{\jap{x-(t+1)v}\derv{'}{'}{'''} g}_{L^2_xL^2_v}\right)\times \eps^{\frac{3}{4}}\\
&\lesssim (1+T)^{2|\beta|}\eps^{\frac{3}{4}}\times \sup_{t\in[0,T]} \left((1+t)^{2\delta-1}\eps^{\frac{3}{4}}\right)\times \eps^{\frac{3}{4}}\\
&\lesssim (1+T)^{2|\beta|}\eps^{\frac{9}{4}}.
\end{align*}
These bounds together imply,
$$\mathbb{XI}\lesssim (1+T)^{2|\beta|}\eps^2.$$

For $\mathbb{XII}$, we first use \lref{partial_derv_interpol} followed by H{\"o}lder's and \lref{L2_time_in_norm} to get,
\begin{align*}
&\norm{(1+t)^{-\frac{1+2\delta}{2}}(1+t)^{-|\beta''|-2s}\jap{v}\jap{x-(t+1)v}^2\derv{''}{''}{''} g}_{L^2([0,T];L^2_xH^{2s}_v)}\\
&\lesssim [\norm{(1+t)^{-\frac{1+2\delta}{2}}(1+t)^{-|\beta''|-2s}\jap{v}\jap{x-(t+1)v}^2\derv{''}{''}{''} g}_{L^2([0,T];L^2_xH^1_v)}\\
&\quad+\norm{(1+t)^{-\frac{1+2\delta}{2}}(1+t)^{-|\beta''|-2}\jap{v}\jap{x-(t+1)v}^2\part^2_{v_iv_j}\derv{''}{''}{''} g}^{2s-1}_{L^2([0,T];L^2_xL^2_v)}\\
&\qquad\times\norm{(1+t)^{-\frac{1+2\delta}{2}}(1+t)^{-|\beta''|-1}\jap{v}\jap{x-(t+1)v}^2\part_{v_i}\derv{''}{''}{''} g}^{2-2s}_{L^2([0,T];L^2_xL^2_v)}]\\
&\lesssim \eps^{\frac{3}{4}}.
\end{align*}

Now using \cref{combining_decay_1}, \eref{boot_assumption_1}, $\delta<1-s$ and that $s\geq \frac{1}{2}$ we get
\begin{align*}
\int_0^{T}&(1+t)^{1+2\delta}\norm{(1+t)^{-\frac{1+2\delta}{2}}\jap{v}\jap{x-(t+1)v}^2\derv{''}{''}{''} g}_{L^2_xH^{2s}_v}\\
&\qquad\times\norm{ \derv{'}{'}{'} g}_{L^\infty_xL^1_v}\norm{(1+t)^{-\frac{1+2\delta}{2}}\jap{v}\jap{x-(t+1)v}^2\der g}_{L^2_xL^2_v}\d t\\
&\lesssim (1+T)^{2|\beta|}\left(\norm{(1+t)^{-|\beta''|-2s}(1+t)^{-\frac{1+2\delta}{2}}\jap{v}\jap{x-(t+1)v}^2\derv{''}{''}{''} g}_{L^2([0,T];L^2_xH^{2s}_v)}\right)\\
&\qquad\times \sup_{t\in [0,T]}\left((1+t)^{-|\beta'|+2s}(1+t)^{1+2\delta}(1+t)^{-3}\sum_{|\omega'''|\leq|\omega|+3}\norm{\jap{x-(t+1)v}^2\derv{'}{'}{'''} g}_{L^2_xL^2_v}\right)\\
&\qquad\times \left((1+T)^{-|\beta|}\norm{(1+t)^{-\frac{1+2\delta}{2}}\jap{v}\jap{x-(t+1)v}^2\der  g}_{L^2([0,T];L^2_xL^2_v)}\right)\\
&\lesssim (1+T)^{2|\beta|}\eps^{\frac{3}{4}}\times \sup_{t\in [0,T]}\left((1+t)^{-|\beta'|+2s}(1+t)^{-3}(1+t)^{|\beta'|}(1+t)^{1+2\delta}\eps^{\frac{3}{4}}\right)\times \eps^{\frac{3}{4}}\\
&\lesssim (1+T)^{2|\beta|}\eps^{\frac{3}{4}}\times \sup_{t\in [0,T]}\left((1+t)^{2s-2+2\delta}\eps^{\frac{3}{4}}\right)\times \eps^{\frac{3}{4}}\\
&\lesssim (1+T)^{2|\beta|}\eps^{\frac{9}{4}}.
\end{align*}
Thus we get that,
$$\mathbb{XII}\lesssim (1+T)^{2|\beta|}\eps^2.$$

Putting all these bound together, we get the desired lemma.
\end{proof}
\begin{lemma}\label{l.comm_1}
For $|\alpha|+|\beta|+|\omega|\leq 10$. Then for every $\eta>0$, there exists a constant $C_\eta>0$ (depending only on $d_0$, $\gamma$, $s$ and $\eta$) such that the term $\text{Comm}_1$ as in \eref{comm_1} if bounded as follows for all $T\in[0,T_{\boot})$,
\begin{align*}
\text{Comm}_1&\lesssim \eta\norm{\jap{x-(t+1)v}^2\der g}^2_{L^\infty([0,T];L^2_xL^2_v)}\\
&\quad+C_\eta T^2\sum_{\substack{|\alpha'|\leq |\alpha|+1\\ |\beta'|\leq |\beta|-1}}\norm{\jap{x-(t+1)v}^2\derv{'}{'}{'} g}^2_{L^\infty([0,T];L^2_xL^2_v)}.
\end{align*}
\end{lemma}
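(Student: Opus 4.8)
The plan is to bound $\text{Comm}_1$ by a combination of the Cauchy--Schwarz inequality in $(x,v)$, H\"older's inequality in time, and Young's inequality. Recall from \eref{comm_1} that $\text{Comm}_1$ is a finite sum, over multi-indices with $|\alpha'|\le|\alpha|+1$ and $|\beta'|\le|\beta|-1$, of the quantities $\norm{\jap{x-(t+1)v}^4|\der g|\,|\derv{'}{'}{} g|}_{L^1([0,T];L^1_xL^1_v)}$. For each such term I would split the weight as $\jap{x-(t+1)v}^4=\jap{x-(t+1)v}^2\cdot\jap{x-(t+1)v}^2$ and apply the Cauchy--Schwarz inequality first in $v$ and then in $x$, which gives, for each fixed $t$,
\[
\int_{\R^3}\int_{\R^3}\jap{x-(t+1)v}^4|\der g|\,|\derv{'}{'}{} g|\,\d v\,\d x\le \norm{\jap{x-(t+1)v}^2\der g}_{L^2_xL^2_v}\,\norm{\jap{x-(t+1)v}^2\derv{'}{'}{} g}_{L^2_xL^2_v}.
\]

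Next I would integrate over $t\in[0,T]$ and bound each of the two factors on the right by its supremum in time; since $\int_0^T\d t=T$, this yields
\[
\text{Comm}_1\lesssim T\,\norm{\jap{x-(t+1)v}^2\der g}_{L^\infty([0,T];L^2_xL^2_v)}\sum_{\substack{|\alpha'|\le|\alpha|+1\\ |\beta'|\le|\beta|-1}}\norm{\jap{x-(t+1)v}^2\derv{'}{'}{} g}_{L^\infty([0,T];L^2_xL^2_v)},
\]
where the implicit constant depends only on the number of multi-indices appearing in the sum (finite, dimension-dependent). Finally, to each summand I would apply Young's inequality in the form $Tab\le\eta' a^2+C_{\eta'}T^2b^2$, taking $a=\norm{\jap{x-(t+1)v}^2\der g}_{L^\infty([0,T];L^2_xL^2_v)}$ and $b$ the corresponding norm of $\derv{'}{'}{} g$; summing over the finitely many pairs $(\alpha',\beta')$ and choosing $\eta'$ proportional to $\eta$ divided by the number of terms (so as to absorb both this count and the implicit constant from the previous display) produces exactly the claimed bound, with $C_\eta$ depending only on $d_0,\gamma,s$ and $\eta$ (note that $\derv{'}{'}{} g$ here is $\derv{'}{'}{'} g$ with $\omega'=\omega$, consistent with the statement).

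The argument is soft and presents no real obstacle; the only points deserving a word of care are that the time integral is taken over the bounded interval $[0,T]$, which is precisely what converts the elementary bilinear estimate into the $T$-dependence and, after Young's inequality, the $T^2$-dependence in the statement, and the purely combinatorial observation that the commutator $[\part_t+v_i\part_{x_i},\der]g$ produces only finitely many terms of the stated form (one extra $x$-derivative, one fewer $v$-derivative), so that carrying the sum through Cauchy--Schwarz and Young costs only a dimensional constant. The same scheme, after first peeling off the factor $\jap{v}^{1/2}$ via the elementary inequality $\jap{v}^{1/2}ab\le\eta(\jap{v}^2a^2+\jap{v}^2b^2)+C_\eta(a^2+b^2)$, will likewise handle $\text{Comm}_2$ from \eref{comm_2}.
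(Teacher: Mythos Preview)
Your proof is correct and follows essentially the same approach as the paper: split the weight, apply Cauchy--Schwarz (the paper calls it H\"older) in $(x,v)$ to get a product of two $L^2_xL^2_v$ norms, pass to the supremum in time picking up a factor of $T$, and conclude with Young's inequality $Tab\le \eta a^2+C_\eta T^2 b^2$. Your additional remarks on the combinatorics and the handling of $\text{Comm}_2$ are also in line with the paper.
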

\begin{proof}
By H\"{o}lder's inequality, we have that,
\begin{align*}
&\sum_{\substack{|\alpha'|\leq |\alpha|+1\\ |\beta'|\leq |\beta|-1}}\norm{\jap{x-(t+1)v}^4|\der g|\cdot |\derv{'}{'}{'} g|}_{L^1([0,T];L^1_xL^1_v)}\\
&\lesssim \sum_{\substack{|\alpha'|\leq |\alpha|+1\\ |\beta'|\leq |\beta|-1}}T\norm{\jap{x-(t+1)v}^2\der g}_{L^\infty([0,T];L^2_xL^2_v)}\norm{\jap{x-(t+1)v}^2\derv{'}{'}{'} g}_{L^\infty([0,T];L^2_xL^2_v)}.
\end{align*}
Now the required result follows by Young's inequality.
\end{proof}
\begin{lemma}\label{l.comm_2}
For $|\alpha|+|\beta|+|\omega|\leq 10$. Then for every $\eta>0$, there exists a constant $C_\eta>0$ (depending only on $d_0$, $\gamma$, $s$ and $\eta$) such that the term $\text{Comm}_2$ as in \eref{comm_2} if bounded as follows for all $T\in[0,T_{\boot})$,
\begin{align*}
\text{Comm}_2&\lesssim \eta\norm{(1+t)^{-\frac{1+\delta}{2}}\jap{v}\jap{x-(t+1)v}^2\der g}^2_{L^2([0,T];L^2_xL^2_v)}\\
&\quad+C_\eta\sum_{\substack{|\beta'|\leq |\beta|, |\omega'|\leq |\omega|\\|\beta'|+|\omega'|\leq |\beta|+|\omega|-1}}\norm{(1+t)^{-\frac{1+\delta}{2}}\jap{v}\jap{x-(t+1)v}^2\derv{'}{'}{'} g}^2_{L^2([0,T];L^2_xL^2_v)}.
\end{align*}
\end{lemma}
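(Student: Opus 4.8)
The plan is to estimate the term $\text{Comm}_2$ from \eref{comm_2} directly by exploiting the extra factor of $(1+t)^{-1-\delta}$ and the fact that the weights in the definition of $\text{Comm}_2$ involve only $\jap{v}^{1/2}$ rather than the full $\jap{v}$. First I would recall that
$$\text{Comm}_2=\sum_{\substack{|\beta'|\leq |\beta|,|\omega'|\leq |\omega|\\ |\beta'|+|\omega'|\leq |\beta|+|\omega|-1}}\norm{\frac{\jap{v}^{\frac{1}{2}}}{(1+t)^{1+\delta}}\jap{x-(t+1)v}^4|\der g||\derv{'}{'}{} g|}_{L^1([0,T];L^1_xL^1_v)},$$
and split the weight $\jap{x-(t+1)v}^4=\jap{x-(t+1)v}^2\cdot\jap{x-(t+1)v}^2$, distributing one factor to $\der g$ and one to $\derv{'}{'}{} g$. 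Then by Cauchy--Schwarz in $x$ and $v$ I would bound the integrand pointwise in $t$ by
$$\frac{1}{(1+t)^{1+\delta}}\norm{\jap{v}^{\frac12}\jap{x-(t+1)v}^2\der g}_{L^2_xL^2_v}\norm{\jap{v}^{\frac12}\jap{x-(t+1)v}^2\derv{'}{'}{} g}_{L^2_xL^2_v}.$$

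Next I would use Young's inequality $ab\leq \eta a^2+C_\eta b^2$ to split this product, being careful to keep the full time weight $(1+t)^{-\frac{1+\delta}{2}}$ on each factor: writing $\frac{1}{(1+t)^{1+\delta}}=(1+t)^{-\frac{1+\delta}{2}}\cdot(1+t)^{-\frac{1+\delta}{2}}$ and distributing one copy to each factor, we get
$$\frac{1}{(1+t)^{1+\delta}}\norm{\cdots\der g}_{L^2_xL^2_v}\norm{\cdots\derv{'}{'}{} g}_{L^2_xL^2_v}\leq \eta (1+t)^{-1-\delta}\norm{\jap{v}^{\frac12}\jap{x-(t+1)v}^2\der g}^2_{L^2_xL^2_v}+C_\eta(1+t)^{-1-\delta}\norm{\jap{v}^{\frac12}\jap{x-(t+1)v}^2\derv{'}{'}{} g}^2_{L^2_xL^2_v}.$$
Integrating in $t$ over $[0,T]$ and bounding $\jap{v}^{\frac12}\leq \jap{v}$ then gives exactly the desired bound, since the norms that appear are precisely $\norm{(1+t)^{-\frac{1+\delta}{2}}\jap{x-(t+1)v}^2\der g}^2_{L^2([0,T];L^2_xL^2_v)}$ (with $\jap{v}$ absorbed) and the analogous sum over the lower-order multi-indices.

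The only mild subtlety — and what I would flag as the ``main obstacle,'' though it is quite minor here — is the bookkeeping of which $\jap{v}$ power one is allowed to use: the statement as written places a bare $\jap{x-(t+1)v}^2$ on the right-hand side norms without a $\jap{v}$, so one must check that the $\jap{v}^{\frac12}$ appearing on both factors can indeed be absorbed. For the $\der g$ factor we keep the $\jap{v}^{\frac12}$ and note $\jap{v}^{\frac12}\le\jap{v}$, which is harmless since the $L^2([0,T])$ norm with the full $\jap{v}$ weight is controlled by the energy; alternatively, one observes that the $X$-type norm in \eref{Energy_norm} already carries a $\jap{v}$. For the $\derv{'}{'}{} g$ factor the same remark applies. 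Everything else is a routine application of H\"older and Young's inequalities exactly as in the proof of \lref{comm_1}, so no genuinely new estimate is needed.
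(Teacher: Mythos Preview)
Your proof is correct and follows exactly the same route as the paper, whose entire proof reads ``The result follows by Cauchy--Schwarz and Young's inequality.'' Your flagging of the $\jap{v}^{1/2}$ bookkeeping is apt: as written the lemma's right-hand side omits the $\jap{v}$ weight, but when the lemma is actually used in \pref{int_est} the corresponding terms carry a full $\jap{v}$, which absorbs the $\jap{v}^{1/2}$ exactly as you describe.
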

\begin{proof}
The result follows by Cauchy-Schwarz and Young's inequality.
\end{proof}
\section{Putting everything together}\label{s.put}
Combining \lref{eng_set_up} with \lref{kernel_boot_est}, \lref{comm_1} and \lref{comm_2}, we get
\begin{proposition}\label{p.int_est}
Let $|\alpha|+|\beta|+|\omega|\leq 10$. Then for every $\eta>0$, there is a constant $C_\eta>0$(depending on $\eta$, $d_0$, $\gamma$ and $s$) such that the following estimate holds for all $T\in [0,T_{\boot})$:
\begin{align*}
&\norm{\jap{x-(t+1)v}^{2}\der g}_{L^\infty([0,T];L^2_xL^2_v)}^2+\norm{(1+t)^{-\frac{1}{2}-\frac{\delta}{2}}\jap{v}\jap{x-(t+1)v}^{2}\der g}_{L^2([0,T];L^2_xL^2_v)}^2\\
&\leq C_\eta(\eps^2(1+T)^{2|\beta|}+\sum_{\substack{|\beta'|\leq|\beta|, |\omega'|\leq |\omega|\\ |\beta'|+|\omega'|\leq |\beta|+|\omega|-1}} \norm{(1+t)^{-\frac{1+\delta}{2}}\jap{v}\jap{x-(t+1)v}^{2}\derv{}{'}{'} g}^2_{L^2([0,T];L^2_xL^2_v)}\\
&\quad+(1+T)^{2}\sum_{\substack{|\alpha'|\leq|\alpha|+1\\ |\beta'|\leq |\beta|-1}} \norm{\jap{x-(t+1)v}^{2}\derv{'}{'}{} g}_{L^\infty([0,T];L^2_xL^2_v)}^2)\\
&\quad+\eta \norm{(1+t)^{-\frac{1+\delta}{2}}\jap{v}\jap{x-(t+1)v}^{2}\der g}^2_{L^2([0,T];L^2_xL^2_v)}\\
&\quad+\eta\norm{\jap{x-(t+1)v}^2\der g}_{L^\infty([0,T];L^2_xL^2_v)}^2.
\end{align*}
\end{proposition}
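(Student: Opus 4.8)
The plan is to assemble \pref{int_est} directly from the three ingredients already established: the energy identity of \lref{eng_set_up}, the kernel estimate of \lref{kernel_boot_est}, and the two commutator estimates \lref{comm_1} and \lref{comm_2}. So the first step is simply to invoke \lref{eng_set_up} with $g$ solving \eref{boltz_gaussian} (which holds under the bootstrap assumptions of \tref{boot}), giving
\begin{align*}
&\norm{\jap{x-(t+1)v}^2\der g}^2_{L^\infty([0,T];L^2_xL^2_v)}+\norm{(1+t)^{-\frac{1+\delta}{2}}\jap{v}\jap{x-(t+1)v}^2\der g}^2_{L^2([0,T];L^2_xL^2_v)}\\
&\qquad\leq \norm{\jap{x-v}^2\der g_{\ini}}^2_{L^2_xL^2_v}+\text{Comm}_1+\text{Comm}_2\\
&\qquad\quad +\mathlarger{\sum}\int_0^{T}\int_{\R^3}\int_{\R^3}\Gamma_{\beta''',\omega'''}(\derv{'}{'}{'}g,\derv{''}{''}{''}g)\jap{x-(t+1)v}^4\der g\,\d v\,\d x\,\d t.
\end{align*}
Then I would bound the initial-data term by $\eps^2$ using the hypothesis on $f_{\ini}$ in \tref{global}/\tref{boot} together with \lref{exp_bound} (to pass from $g_{\ini}=e^{2d_0\jap{v}^2}f_{\ini}$-type weights to the stated data norm), noting $d(0)=2d_0$.

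Next I would substitute the bound from \lref{kernel_boot_est} for the $\Gamma$ sum, which is exactly $\lesssim \eps^2(1+T)^{2|\beta|}$, and absorb it into the $C_\eta\eps^2(1+T)^{2|\beta|}$ term. Then I would apply \lref{comm_1} to $\text{Comm}_1$ and \lref{comm_2} to $\text{Comm}_2$. \lref{comm_1} produces a piece $\eta\norm{\jap{x-(t+1)v}^2\der g}^2_{L^\infty([0,T];L^2_xL^2_v)}$ plus $C_\eta T^2$ times a sum over $|\alpha'|\leq|\alpha|+1$, $|\beta'|\leq|\beta|-1$ of $L^\infty_tL^2_{x,v}$ norms of $\derv{'}{'}{'}g$; since $T^2\leq (1+T)^2$ this matches the $(1+T)^2\sum\cdots$ term in \pref{int_est}. \lref{comm_2} produces $\eta\norm{(1+t)^{-\frac{1+\delta}{2}}\jap{x-(t+1)v}^2\der g}^2_{L^2([0,T];L^2_xL^2_v)}$ — which is dominated by the corresponding norm with an extra $\jap{v}$ weight, hence absorbable into the $\eta\norm{(1+t)^{-\frac{1+\delta}{2}}\jap{v}\jap{x-(t+1)v}^2\der g}^2_{L^2}$ term — plus $C_\eta$ times a sum over $|\beta'|\leq|\beta|$, $|\omega'|\leq|\omega|$, $|\beta'|+|\omega'|\leq|\beta|+|\omega|-1$ of $L^2_t L^2_{x,v}$ norms of $\derv{'}{'}{'}g$ weighted by $\jap{x-(t+1)v}^2$; inserting a harmless extra $\jap{v}$ (bounding from above) gives the lower-order sum appearing in the statement. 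Collecting the two $\eta$-terms and the two $C_\eta$-sums then yields precisely the claimed inequality.

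A small bookkeeping point I would address: in \lref{comm_2} the lower-order sum carries only $\jap{x-(t+1)v}^2$ and not the $\jap{v}$ weight present in \pref{int_est}, so I would remark that since $\jap{x-(t+1)v}^2|\derv{'}{'}{'}g|\le \jap{v}\jap{x-(t+1)v}^2|\derv{'}{'}{'}g|$ pointwise, the estimate only improves, and similarly the $(1+t)^{-\frac{1+\delta}{2}}$ factor is already present. The same triviality handles the gap between the $L^2$ $\eta$-term produced by \lref{comm_2} and the $\jap v$-weighted $L^2$ $\eta$-term in the conclusion. Finally I would note the roles of $\eta$ and $C_\eta$ are consistent across the three lemmas (all constants depend only on $d_0,\gamma,s,\eta$), so relabelling $\eta\mapsto C\eta$ for a fixed absolute $C$ absorbs the implicit constants in the ``$\lesssim$'' of \lref{comm_1}, \lref{comm_2}.

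Honestly, there is no real obstacle here: \pref{int_est} is a pure assembly step, and all the analytic content has been discharged in \lref{eng_set_up}, \lref{kernel_boot_est}, \lref{comm_1}, \lref{comm_2}. The only mild care needed is (i) making sure the initial-data term is correctly bounded by $\eps^2$ using the weight normalization $d(0)=2d_0$ and \lref{exp_bound}, and (ii) the weight/time-factor monotonicity comparisons described above so that every term produced by the three lemmas lands inside one of the five terms on the right-hand side of \pref{int_est}. I would present it as a short paragraph: "Apply \lref{eng_set_up}; bound the data term; insert \lref{kernel_boot_est}, \lref{comm_1}, \lref{comm_2}; regroup."
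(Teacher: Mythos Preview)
Your proposal is correct and follows exactly the paper's approach: the paper's proof is the single line ``Combining \lref{eng_set_up} with \lref{kernel_boot_est}, \lref{comm_1} and \lref{comm_2}, we get,'' and you have spelled out precisely this assembly together with the minor bookkeeping (initial data bound via $d(0)=2d_0$, $T^2\le(1+T)^2$, and the harmless $\jap{v}$-weight insertions). Your write-up is in fact more detailed than the paper's own proof.
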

\begin{proposition}\label{p.ene_est_1}
Let $|\alpha|+|\beta|+|\omega|\leq 10$. Then the following estimate holds for all $T\in [0,T_{\boot})$:
\begin{align*}
&\norm{\jap{x-(t+1)v}^{2}\der g}_{L^\infty([0,T];L^2_xL^2_v)}^2\\
&\quad+\norm{(1+t)^{-\frac{1}{2}-\frac{\delta}{2}}\jap{v}\jap{x-(t+1)v}^{2}\der g}_{L^2([0,T];L^2_xL^2_v)}^2\\
&\lesssim \eps^2(1+T)^{2|\beta|}+\sum_{\substack{|\beta'|\leq|\beta|, |\omega'|\leq |\omega|\\ |\beta'|+|\omega'|\leq |\beta|+|\omega|-1}} \norm{(1+t)^{-\frac{1+\delta}{2}}\jap{v}\jap{x-(t+1)v}^{2}\derv{}{'}{'} g}^2_{L^2([0,T];L^2_xL^2_v)}\\
&\quad+(1+T)^{2}\sum_{\substack{|\alpha'|\leq|\alpha|+1\\ |\beta'|\leq |\beta|-1}} \norm{\jap{x-(t+1)v}^2\derv{'}{'}{} g}_{L^\infty([0,T];L^2_xL^2_v)}^2.
\end{align*}
Here, by our convention, if $|\beta|+|\omega|=0$, then the last two terms on the RHS are not present.
\end{proposition}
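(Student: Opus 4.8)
The plan is to deduce \pref{ene_est_1} from \pref{int_est} by an absorption argument together with routine repackaging of the lower-order terms. Observe first that the left-hand side of \pref{ene_est_1} is exactly the left-hand side of \pref{int_est}, and that the two terms on the right-hand side of \pref{int_est} carrying the free parameter $\eta$ — namely $\eta\norm{(1+t)^{-\frac{1+\delta}{2}}\jap{v}\jap{x-(t+1)v}^2\der g}^2_{L^2([0,T];L^2_xL^2_v)}$ and $\eta\norm{\jap{x-(t+1)v}^2\der g}^2_{L^\infty([0,T];L^2_xL^2_v)}$ — are just copies of the two terms on that left-hand side. Since the left-hand side of \pref{int_est} enters with coefficient $1$, the first step is to fix $\eta$ to be a \emph{small absolute constant} (say $\eta=\tfrac14$, so that $C_\eta$ then depends only on $d_0$, $\gamma$, $s$) and move these two $\eta$-weighted copies to the left, absorbing them and dividing through by $1-2\eta>0$. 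The only genuinely delicate point of the whole argument is to be sure this $\eta$ is indeed absolute and not secretly tied to the solution; this is guaranteed because it originates from Young's inequality inside \lref{comm_1}, \lref{comm_2} and the $\eta$-splitting in \lref{main_boltz_estimates}, all of which are purely functional-analytic.

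After this absorption one is left with the left-hand side of \pref{ene_est_1} bounded by a constant (depending only on $d_0,\gamma,s$) times $\eps^2(1+T)^{2|\beta|}$ plus the two lower-order sums of \pref{int_est}, the first in $L^2([0,T];L^2_xL^2_v)$ with the weight $(1+t)^{-\frac{1+\delta}{2}}\jap{v}\jap{x-(t+1)v}^2$ and the second, with weight $\jap{x-(t+1)v}^2$ and the prefactor $(1+T)^2$, in $L^\infty([0,T];L^2_xL^2_v)$. To match the mixed norms asserted in \pref{ene_est_1}, the second step is to rerun the commutator estimates for $\text{Comm}_1$, $\text{Comm}_2$ from \lref{eng_set_up} with a mildly different bookkeeping: distribute the weight $\jap{x-(t+1)v}^4$ unevenly between the two factors and apply H\"older in $x$ (using that $\jap{x-(t+1)v}^{-2}\in L^2(\R^3_x)$ uniformly in $t,v$) so as to land the factor $\derv{'}{'}{} g$ in $L^2([0,T];L^\infty_xL^2_v)$ and the factor $\derv{}{'}{'} g$ in $L^\infty([0,T];L^2_xL^2_v)$, at the advertised cost of a $(1+T)^2$ and a $(1+T)$ power respectively; a final Young's inequality again returns a small multiple of the left-hand side, which is absorbed as before. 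All passages $L^\infty_t\leftrightarrow L^2_t$ and $L^2_x\leftrightarrow L^\infty_x$ used here cost only powers of $(1+T)$ of the same type as the $(1+T)^{2|\beta|}$ already present, so the final exponents are as claimed.

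Finally, the degenerate case $|\beta|+|\omega|=0$ needs no work: then the index constraints inside $\text{Comm}_1$ and $\text{Comm}_2$ (which force $|\beta'|\le|\beta|-1$, respectively $|\beta'|+|\omega'|\le|\beta|+|\omega|-1$) cannot be met, so both commutators vanish, and the estimate reduces to the left-hand side $\lesssim\eps^2$, exactly the stated convention. I expect the main obstacle to be purely organisational rather than analytic: one must check carefully that the H\"older redistribution of the $\jap{x-(t+1)v}$-weights and the mixed-norm conversions never raise the number of $\part_x,\part_v,Y$ derivatives beyond $10$, that the $\jap{v}$-weights are handled so that they only appear against the $L^2$-in-time part of the norms, and that the $\eta$ produced by every Young step remains an absolute constant; modulo that bookkeeping the proof is immediate from \pref{int_est}, \lref{eng_set_up}, \lref{kernel_boot_est}, \lref{comm_1} and \lref{comm_2}.
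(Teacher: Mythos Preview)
Your first step is exactly the paper's proof: apply \pref{int_est} with a fixed small $\eta$ (the paper takes $\eta=\tfrac12$) and absorb the two $\eta$-weighted copies of the left-hand side. That is the \emph{entire} argument.

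Your second step is both unnecessary and, as written, incorrect. The apparent mismatch you detected between the mixed norms in the stated right-hand sides of \pref{int_est} and \pref{ene_est_1} --- $L^2_tL^2_xL^2_v$ versus $L^\infty_tL^2_xL^2_v$ in the first sum, and $L^\infty_tL^2_xL^2_v$ versus $L^2_tL^\infty_xL^2_v$ in the second --- is a typo in the statement of \pref{ene_est_1}. This is confirmed by the paper's own two-line proof (which does nothing beyond the absorption) and by the way \pref{ene_est_1} is invoked immediately afterwards in the proof of \pref{ene_est}, where the sums are written with exactly the $L^2([0,T];L^2_xL^2_v)$ norms of \pref{int_est}. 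So no conversion is required.

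Moreover, the conversion you sketch would not work: the observation that $\jap{x-(t+1)v}^{-2}\in L^2(\R^3_x)$ uniformly lets you pass from $L^\infty_x$ down to $L^2_x$ (multiply an $L^\infty$ function by an $L^2$ weight), not the other way. Upgrading $\derv{'}{'}{}g$ from $L^2_x$ to $L^\infty_x$ genuinely costs $x$-derivatives via Sobolev, which would push the total derivative count past $10$ in the top-order cases. So drop the second step entirely; after absorption the proof is finished.
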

\begin{proof}
We just apply \pref{int_est} with $\eta=\frac{1}{2}$. Then the following term is absorbed on the LHS
\begin{align*}
\frac{1}{2}\norm{(1+t)^{-\frac{1+\delta}{2}}\jap{v}^{\frac{1}{2}}\jap{v}^{\nu_{\alpha,\beta,\omega}}\jap{x-(t+1)v}^{\omega_{\alpha,\beta,\omega}}\der g}_{L^2([0,T];L^2_xL^2_v)}^2.
\end{align*}
Since $\eta$ is fixed, $C_\eta$ is just a constant depending on $d_0$ and $\gamma$. We thus get the desired inequality.
\end{proof}
\begin{proposition}\label{p.ene_est}
Let $|\alpha|+|\beta|+|\omega|\leq 10$. Then the following estimate holds for all $T\in [0,T_{\boot})$:
\begin{align*}
&\norm{\jap{x-(t+1)v}^{2}\der g}_{L^\infty([0,T];L^2_xL^2_v)}^2\\
&+\norm{(1+t)^{-\frac{1}{2}-\frac{\delta}{2}}\jap{v}\jap{x-(t+1)v}^{2}\der g}_{L^2([0,T];L^2_xL^2_v)}^2\lesssim \eps^2(1+T)^{2|\beta|}.
\end{align*}
\end{proposition}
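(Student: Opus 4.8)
The plan is to deduce \pref{ene_est} from \pref{ene_est_1} by a finite induction on the integer $n:=|\beta|+|\omega|$, which ranges only over $n=0,1,\dots,10$; no new analytic input is needed. The one essential feature of \pref{ene_est_1} is that both error sums on its right-hand side are supported on multi-indices of strictly smaller $n$: the first sum keeps $\alpha$ and lowers $|\beta|+|\omega|$ by at least one, while the second sum lowers $|\beta|$ by one and raises $|\alpha|$ by at most one (so the total order stays $\le 10$ and $n$ drops by one). The key point is that the second sum carries the explicit prefactor $(1+T)^{2}$, and this is precisely the amount by which the target bound $\eps^{2}(1+T)^{2|\beta|}$ gets larger when $|\beta|$ is raised back up by one; consequently the factor $(1+T)^{2}$ is absorbed at each step rather than accumulated, and that balance is exactly what makes the whole hierarchy close.

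For $|\alpha|+|\beta|+|\omega|\le 10$ set
\begin{multline*}
\mathcal A_{\alpha,\beta,\omega}(T):=\norm{\jap{x-(t+1)v}^{2}\der g}_{L^\infty([0,T];L^2_xL^2_v)}^{2}\\
+\norm{(1+t)^{-\frac12-\frac\delta2}\jap{v}\jap{x-(t+1)v}^{2}\der g}_{L^2([0,T];L^2_xL^2_v)}^{2},
\end{multline*}
and write $\mathcal A=\mathcal A^{\infty}+\mathcal A^{2}$, where $\mathcal A^{\infty}$ is the first ($L^\infty$-in-time) summand and $\mathcal A^{2}$ the second; then \pref{ene_est} is the assertion $\mathcal A_{\alpha,\beta,\omega}(T)\lesssim \eps^{2}(1+T)^{2|\beta|}$ for all admissible $(\alpha,\beta,\omega)$ and all $T\in[0,T_{\boot})$, which I prove by induction on $n=|\beta|+|\omega|$. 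If $n=0$, i.e. $\beta=\omega=0$, then by the convention in \pref{ene_est_1} both error sums are absent and $\mathcal A_{\alpha,0,0}(T)\lesssim \eps^{2}$. For the inductive step fix $(\alpha,\beta,\omega)$ with $n\ge1$ and apply \pref{ene_est_1}. Each summand of its first sum is $\mathcal A^{2}_{\alpha,\beta',\omega'}(T)$ for some $(\beta',\omega')$ with $|\beta'|\le|\beta|$ and $|\beta'|+|\omega'|\le n-1$, so by the inductive hypothesis it is $\lesssim \eps^{2}(1+T)^{2|\beta'|}\le \eps^{2}(1+T)^{2|\beta|}$. Each summand of its second sum is $(1+T)^{2}\mathcal A^{\infty}_{\alpha',\beta',\omega}(T)$ for some $\alpha',\beta'$ with $|\alpha'|\le|\alpha|+1$, $|\beta'|\le|\beta|-1$, hence $|\alpha'|+|\beta'|+|\omega|\le 10$ and $|\beta'|+|\omega|\le n-1$; by the inductive hypothesis it is $\lesssim (1+T)^{2}\,\eps^{2}(1+T)^{2|\beta'|}\le \eps^{2}(1+T)^{2(|\beta|-1)+2}=\eps^{2}(1+T)^{2|\beta|}$. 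Adding these to the $\eps^{2}(1+T)^{2|\beta|}$ already present, and noting that there are only finitely many admissible multi-indices — so the implied constants, all depending only on $d_{0},\gamma,s$, stay bounded even though they compound through the at most ten levels of the recursion — yields $\mathcal A_{\alpha,\beta,\omega}(T)\lesssim \eps^{2}(1+T)^{2|\beta|}$, which completes the induction and the proof.

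Since \pref{ene_est_1} has already been established, this step is entirely combinatorial, and the main difficulty has in fact been dealt with upstream: it is the borderline nature of the cancellation just used. The reason the energy norm $E_T$ carries exactly the weights $(1+T)^{-|\beta|}$, and the reason the top- and penultimate-order estimates of \sref{main_est} had to be proved with precisely those powers of $(1+t)$ and not one more, is so that the $(1+T)^{2}$ attached to the extra-$x$-derivative terms in \pref{ene_est_1} is matched by the one-step gain in the $v$-derivative count; any additional loss of $(1+t)$ anywhere in those estimates would leave the recursion unbounded. The inductive scheme above also makes visible why the order count cannot run away: although $|\alpha|$ can be raised, it is raised only when $|\beta|$ is simultaneously lowered, so every multi-index met in the recursion satisfies $|\alpha|+|\beta|+|\omega|\le 10$ and the process terminates after at most ten steps.
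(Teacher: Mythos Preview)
Your proof is correct and follows essentially the same approach as the paper: both argue by induction on $n=|\beta|+|\omega|$, use \pref{ene_est_1} for the inductive step, and close the recursion by observing that the first error sum has $|\beta'|\le|\beta|$ while the second has $|\beta'|\le|\beta|-1$ with an extra $(1+T)^{2}$ that exactly compensates. Your additional commentary on why the $(1+T)^{2}$ balance is sharp is sound exposition but not part of the paper's proof.
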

\begin{proof}
The proof proceeds by induction on $|\beta|+|\omega|$.\\
\emph{Step 1: Base Case: $|\beta|+|\omega|=0$.} Applying \pref{ene_est_1} when $|\beta|+|\omega|=0$, the last two terms on the RHS are not present. Hence we immediately have
\begin{align*}
\norm{\jap{x-(t+1)v}^{2}\part^{\alpha}_x g}_{L^\infty([0,T];L^2_xL^2_v)}^2+\norm{(1+t)^{-\frac{1}{2}-\frac{\delta}{2}}\jap{v}\jap{x-(t+1)v}^{2}\part^{\alpha}_x g}_{L^2([0,T];L^2_xL^2_v)}^2\lesssim \eps^2.
\end{align*}
\emph{Step 2: Inductive step.} Assume by induction that there exists a $B\in \N$ such that whenever $|\alpha|+|\beta|+|\omega|\leq 10$ and $|\beta|+|\omega|\leq B-1$,
\begin{align*}
&\norm{\jap{x-(t+1)v}^{2}\der g}_{L^\infty([0,T];L^2_xL^2_v)}^2\\
&+\norm{(1+t)^{-\frac{1}{2}-\frac{\delta}{2}}\jap{v}\jap{x-(t+1)v}^{2}\der g}_{L^2([0,T];L^2_xL^2_v)}^2\lesssim \eps^2(1+T)^{2|\beta|}.
\end{align*}
Now take some multi-indices $\alpha,$ $\beta$ and $\omega$ such that $|\alpha|+|\beta|+|\omega|\leq 10$ and $|\beta|+|\omega|=B$. We will show that the estimate as in the statement of the proposition holds for this choice of $(\alpha,\beta,\omega)$.\\
By  \pref{ene_est_1} and the inductive hypothesis,
\begin{align*}
&\norm{\jap{x-(t+1)v}^{2}\der g}_{L^\infty([0,T];L^2_xL^2_v)}^2\\
&\quad+\norm{(1+t)^{-\frac{1}{2}-\frac{\delta}{2}}\jap{v}\jap{x-(t+1)v}^{2}\der g}_{L^2([0,T];L^2_xL^2_v)}^2\\
&\lesssim \eps^2(1+T)^{2|\beta|}+\sum_{\substack{|\beta'|\leq|\beta|, |\omega'|\leq |\omega|\\ |\beta'|+|\omega'|\leq |\beta|+|\omega|-1}} \norm{(1+t)^{-\frac{1+\delta}{2}}\jap{v}\jap{x-(t+1)v}^{2}\derv{}{'}{'} g}^2_{L^2([0,T];L^2_xL^2_v)}\\
&\quad+(1+T)^{2}\sum_{\substack{|\alpha'|\leq|\alpha|+1\\ |\beta'|\leq |\beta|-1}} \norm{\jap{x-(t+1)v}^{2}\derv{'}{'}{} g}_{L^\infty([0,T];L^2_xL^2_v)}^2\\
&\lesssim \eps^2(1+T)^{2|\beta|}+\eps^2(\sum_{|\beta'|\leq |\beta|}(1+T)^{2|\beta'|})+\eps^2(\sum_{|\beta'|\leq |\beta|-1}(1+T)^{2}(1+T)^{2|\beta'|})\\
&\lesssim \eps^2(1+T)^{2|\beta|}.
\end{align*}
We thus get the desired result by induction.
\end{proof}
\pref{ene_est} also completes the proof of \tref{boot}.
\section{Proof of \tref{global}}
\begin{proof}[Proof of \tref{global}]
Let 
\begin{align*}
T_{\max}:=\sup\{T\in[0,\infty):&\text{ there exists a unique solution } f:[0,T]\times \R^3\times \R^3 \text{ to } \eref{boltz} \text{ with}\\
& f\geq 0,\: f|_{t=0}=f_{\ini} \text{ and satisfying }\eref{local} \\
& \text{ such that the bootstrap assumption } \eref{boot_assumption_1} \text{ holds}\}.
\end{align*}
Note that by \lref{local}, $T_{\max}>0$.

We will prove that $T_{\max}=\infty$. Assume for the sake of contradiction that $T_{\max}<\infty$.\\
From the definition of $T_{\max}$, we have that the assumptions of \tref{boot} hold for $T_{\boot}=T_{\max}$.\\
Therefore, by \tref{boot} (with $|\omega|=0$) we get 
\begin{equation}\label{e.uni}
\sum \limits_{|\alpha|+|\beta|\leq 10}\norm{\jap{x-(t+1)v}^{2}\part^{\alpha}_x\part^\beta_v(e^{d(t)\jap{v}^2}f)}_{L^\infty([0,T_{\max});L^2_xL^2_v)}\lesssim \eps.
\end{equation}

Take an increasing sequence $\{t_n\}_{n=1}^\infty\subset [0,T_{\max})$ such that  $t_n \to T_{\max}$. By the uniform bound \eref{uni} and the local existence result \lref{local}, there exists $T_{\sm}\in (0,1]$ such that the unique solution exists on $[0,t_n+T_{\sm}]\times \R^3\times \R^3$. In particular, taking $n$ sufficiently large, we have constructed a solution beyond the time $T_{\max}$, up to, $T_{\max}+\frac{1}{2}T_{\sm}$. The solution moveover satisfies \eref{local}.

We next prove that the estimate \eref{boot_assumption_1} holds slightly beyond $T_{\max}$. To that end we employ \tref{boot},
\begin{equation}\label{e.boot_cont}
\text{the estimate \eref{boot_assumption_1} holds in }[0,T_{\max}) \text{ with } \eps^{\frac{3}{4}} \text{ replaced by }C_{d_0,\gamma,s}\eps.
\end{equation}

By the local existence result in \lref{local}, for $|\alpha|+|\beta|\leq 10$, $$\jap{x-(t+1)v}^{2}\part^\alpha_x\part^\beta_v g(t,x,v)\in C^0([0,T_{\max}+\frac{1}{2}T_{\sm}];L^2_xL^2_v).$$
Since $Y=(t+1)\part_x+\part_v$, for all $|\alpha|+|\beta|+|\omega|\leq 10$, we also have  $$\jap{x-(t+1)v}^{2}\der g(t,x,v)\in C^0([0,T_{\max}+\frac{1}{2}T_{\sm}];L^2_xL^2_v).$$

Using \eref{uni}, after choosing $ \eps_0$ smaller (so that $\eps$ is sufficiently small) if necessary, there exists $T_{\text{ext}}\in (T_{\max},T_{\max}+\frac{1}{2}T_{\sm}]$ such that \eref{boot_assumption_1} hold upto $T_{\text{ext}}$.\\
This is a contradiction to the definition of $T_{\max}$. Thus we deduce that $T_{\max}=+\infty$.
\end{proof}


\begin{thebibliography}{10}
\bibitem{AlDeViWe00}
R.~Alexandre, L.~Desvillettes, C.~Villani, and B.~Wennberg.
\newblock Entropy dissipation and long-range interactions.
\newblock {\em Arch. Ration. Mech. Anal.}, 152(4):327--355, 2000.

\bibitem{AlEl05}
Radjesvarane Alexandre and Mouhamad El Safadi.
\newblock Littlewood--Paley theory and regularity issues in {B}oltzmann homogeneous equations. I. Non-cutoff case and Maxwellian molecules.
\newblock {\em Math. Models Methods Appl. Sci.}, 15, 2005.

\bibitem{AlVi04}
Radjesvarane Alexandre and C.~Villani.
\newblock On the {B}oltzmann equation for long-range interactions.
\newblock {\em Communications on Pure and Applied Mathematics}, 55, 2004.

\bibitem{AMUXY10}
Radjesvarane Alexandre, Yoshinori Morimoto, Seiji Ukai, Chao-Jiang Xu, and Tong
  Yang.
\newblock Regularizing effect and local existence for the non-cutoff
  {B}oltzmann equation.
\newblock {\em Arch. Ration. Mech. Anal.}, 198(1):39--123, 2010.

\bibitem{AMUXY12.3}
R.~Alexandre, Y.~Morimoto, S.~Ukai, C.-J. Xu, and T.~Yang.
\newblock The {B}oltzmann equation without angular cutoff in the whole space:
  {II}, {G}lobal existence for hard potential.
\newblock {\em Anal. Appl. (Singap.)}, 9(2):113--134, 2011.

\bibitem{AMUXY12}
R.~Alexandre, Y.~Morimoto, S.~Ukai, C.-J. Xu, and T.~Yang.
\newblock Global existence and full regularity of the {B}oltzmann equation
  without angular cutoff.
\newblock {\em Comm. Math. Phys.}, 304(2):513--581, 2011.

\bibitem{AMUXY12.2}
R.~Alexandre, Y.~Morimoto, S.~Ukai, C.-J. Xu, and T.~Yang.
\newblock The {B}oltzmann equation without angular cutoff in the whole space:
  {I}, {G}lobal existence for soft potential.
\newblock {\em J. Funct. Anal.}, 262(3):915--1010, 2012.

\bibitem{AMUXY13}
R.~Alexandre, Y.~Morimoto, S.~Ukai, C.-J. Xu, and T.~Yang.
\newblock Local existence with mild regularity for the {B}oltzmann
equation.
\newblock {\em J. Funct. Anal.Kinet. Relat. Models}, 6(4):1011–1041, 2013.

\bibitem{AlGa09}
Ricardo~J. Alonso and Irene~M. Gamba.
\newblock Distributional and classical solutions to the {C}auchy {B}oltzmann
  problem for soft potentials with integrable angular cross section.
\newblock {\em J. Stat. Phys.}, 137(5-6):1147--1165, 2009.

\bibitem{Ar11}
Diogo Ars\'enio.
\newblock On the global existence of mild solutions to the {B}oltzmann equation
  for small data in {$L^D$}.
\newblock {\em Comm. Math. Phys.}, 302(2):453--476, 2011.

\bibitem{BaDe85}
C.~Bardos and P.~Degond.
\newblock Global existence for the {V}lasov-{P}oisson equation in {$3$} space
  variables with small initial data.
\newblock {\em Ann. Inst. H. Poincar\'e Anal. Non Lin\'eaire}, 2(2):101--118,
  1985.

\bibitem{BaDeGo84}
C.~Bardos, P.~Degond, and F.~Golse.
\newblock A priori estimates and existence results for the {V}lasov and
  {B}oltzmann equations.
\newblock In {\em Nonlinear systems of partial differential equations in
  applied mathematics, {P}art 2 ({S}anta {F}e, {N}.{M}., 1984)}, volume~23 of
  {\em Lectures in Appl. Math.}, pages 189--207. Amer. Math. Soc., Providence,
  RI, 1986.

\bibitem{BaGaGoLe16}
Claude Bardos, Irene~M. Gamba, Fran{\c{c}}ois Golse, and C.~David Levermore.
\newblock Global solutions of the {B}oltzmann equation over {$\mathbb R^D$}
  near global {M}axwellians with small mass.
\newblock {\em Communications in Mathematical Physics}, 346(2):435--467, Sep
  2016.

\bibitem{BeTo85}
N.~Bellomo and G.~Toscani.
\newblock On the {C}auchy problem for the nonlinear {B}oltzmann equation:
  global existence, uniqueness and asymptotic stability.
\newblock {\em J. Math. Phys.}, 26(2):334--338, 1985.

\bibitem{Bi17}
L\'eo Bigorgne.
\newblock Asymptotic properties of small data solutions of the {V}lasov-{M}axwell
  system in high dimensions.
\newblock {\em arXiv:1712.09698, preprint}, 2017.

\bibitem{Bi18}
L\'eo Bigorgne.
\newblock Sharp asymptotics for the solutions of the three-dimensional massless {V}lasov-{M}axwell system with small data.
\newblock {\em arXiv:1812.09716, preprint}, 2018.

\bibitem{Bi19.1}
L\'eo Bigorgne.
\newblock Sharp asymptotic behavior of solutions of the 3d {V}lasov-{M}axwell system with small data.
\newblock {\em arXiv:1812.11897 , preprint}, 2019.

\bibitem{Bi19.2}
L\'eo Bigorgne.
\newblock Asymptotic properties of the solutions to the {V}lasov-{M}axwell system in the exterior of a light cone.
\newblock {\em arXiv:1902.00764 , preprint}, 2019.

\bibitem{Bi19.3}
L\'eo Bigorgne.
\newblock A vector field method for massless relativistic transport equations and applications.
\newblock {\em arXiv:1907.03121 , preprint}, 2019.

\bibitem{BiFaJoSmTh20}
L\'eo Bigorgne, David Fajman, J\'er\'emie Joudioux, Jacques Smulevici, and Maximilian Thaller.
\newblock Asymptotic Stability of {M}inkowski Space-Time with non-compactly supported massless {V}lasov matter
\newblock {\em arXiv:2003.03346, preprint}, 2020.

\bibitem{Cer88}
C. Cercignani.
\newblock The {B}oltzmann equation and its applications.
\newblock{Applied mathematical sciences, Springer-Verlag}, 67, 1988.

\bibitem{CerIllPul94}
C. Cercignani, R. Illner, and M. Pulvirenti. 
\newblock The mathematical theory of dilute gases.
\newblock{Applied mathematical sciences, Springer-Verlag}, 106,1994.

\bibitem{Cha19}
Sanchit Chaturvedi.
\newblock {L}ocal existence for the {L}andau Equation with hard potentials.
\newblock {arXiv:1910.11866, preprint}, 2019.

\bibitem{Cha20}
Sanchit Chaturvedi.
\newblock Stability of vacuum for the {L}andau Equation with hard potentials.
\newblock {arXiv:2001.07208, preprint}, 2020.

\bibitem{De95}
Laurent Desvillettes.
\newblock About the regularizing properties of the non-cut-off {K}ac equation.
\newblock {\em Comm. Math. Phy.}, 168, 1995.

\bibitem{DeWe04}
Laurent Desvillettes and Brent Wennberg.
\newblock Smoothness of the solution of the spatially homogeneous Boltzmann equation without cutoff.
\newblock {\em Comm. Math. Phy.}, 29, 2004.

\bibitem{DeMo09}
Laurent Desvillettes and Cl\'ement Mouhot.
\newblock Stability and uniqueness for the spatially homogeneous {B}oltzmann equation with long-range interactions.
\newblock {\em Arch. Ration. Mech. Anal.}, 193, 2009.

\bibitem{DeVi00}
Laurent Desvillettes and C\'edric Villani.
\newblock On the spatially homogeneous {L}andau equation for hard potentials.
  {I}. {E}xistence, uniqueness and smoothness.
\newblock {\em Comm. Partial Differential Equations}, 25(1-2):179--259, 2000.

\bibitem{DeVi00.2}
Laurent Desvillettes and C\'edric Villani.
\newblock On the spatially homogeneous {L}andau equation for hard potentials.
  {II}. {$H$}-theorem and applications.
\newblock {\em Comm. Partial Differential Equations}, 25(1-2):261--298, 2000.

\bibitem{DeVi05}
Laurent Desvillettes and C\'edric Villani.
\newblock On the trend to global equilibrium for spatially inhomogeneous kinetic systems: the {B}oltzmann equation.
\newblock {\em Invent. Math.}, 159, 2005.

\bibitem{DiLi89}
R. J. DiPerna and P. L. Lions. 
\newblock On the {C}auchy problem for {B}oltzmann equation: global existence and weak stability.
\newblock{Ann. Math}, 130, 1989.

\bibitem{FaJoSm17}
David Fajman, J\'er\'emie Joudioux, and Jacques Smulevici.
\newblock The stability of the {M}inkowski space for the {E}instein--{V}lasov
  system.
\newblock {\em arXiv:1707.06141, preprint}, 2017.

\bibitem{FaJoSm17.1}
David Fajman, J\'er\'emie Joudioux, and Jacques Smulevici.
\newblock A vector field method for relativistic transport equations with
  applications.
\newblock {\em Anal. PDE}, 10(7):1539--1612, 2017.

\bibitem{GlSc88}
R.~T. Glassey and J.~W. Schaeffer.
\newblock Global existence for the relativistic {V}lasov-{M}axwell system with
  nearly neutral initial data.
\newblock {\em Comm. Math. Phys.}, 119(3):353--384, 1988.

\bibitem{GlSt87}
Robert~T. Glassey and Walter~A. Strauss.
\newblock Absence of shocks in an initially dilute collisionless plasma.
\newblock {\em Comm. Math. Phys.}, 113(2):191--208, 1987.

\bibitem{Go97}
T.~Goudon.
\newblock Generalized invariant sets for the {B}oltzmann equation.
\newblock {\em Math. Models Methods Appl. Sci.}, 7(4):457--476, 1997.

\bibitem{Grad63}
H. Grad.
\newblock Asymptotic theory of the {B}oltzmann equation II, rarefied gas dynamics
\newblock{Proc. 3rd Internat. Sympos., Palais de l'UNESCO, Paris, 1962}, Vol. I, Academic Press, New York, 1963, pp. 26-59.

\bibitem{GrSt11}
Philip~T. Gressman and Robert~M. Strain.
\newblock Global classical solutions of the {B}oltzmann equation without
  angular cut-off.
\newblock {\em J. Amer. Math. Soc.}, 24(3):771--847, 2011.

\bibitem{GrSt11.2}
Philip~T. Gressman and Robert~M. Strain.
\newblock Sharp anisotropic estimates for the {B}oltzmann collision operator and its entropy
production.
\newblock {\em Adv. Math},  227(6):2349–2384, 2011.

\bibitem{Guo01}
Yan Guo.
\newblock The {V}lasov-{P}oisson-{B}oltzmann system near vacuum.
\newblock {\em Comm. Math. Phys.}, 218(2):293--313, 2001.

\bibitem{Guo02.1}
Yan Guo.
\newblock The {L}andau equation in a periodic box.
\newblock {\em Communications in Mathematical Physics}, 231(3):391–434,
2002.

\bibitem{Guo02}
Yan Guo.
\newblock The {V}lasov-{P}oisson-{B}oltzmann system near {M}axwellians.
\newblock {\em Comm. Pure Appl. Math.}, 55(9):1104--1135, 2002.

\bibitem{Guo03.2}
Yan Guo.
\newblock Classical solutions to the {B}oltzmann equation for molecules with an
  angular cutoff.
\newblock {\em Arch. Ration. Mech. Anal.}, 169(4):305--353, 2003.

\bibitem{Guo03}
Yan Guo.
\newblock The {V}lasov-{M}axwell-{B}oltzmann system near {M}axwellians.
\newblock {\em Invent. Math.}, 153(3):593--630, 2003.

\bibitem{Guo12}
Yan Guo.
\newblock The {V}lasov-{P}oisson-{L}andau system in a periodic box.
\newblock {\em J. Amer. Math. Soc.}, 25(3):759--812, 2012.

\bibitem{Ha85}
Kamel Hamdache.
\newblock Existence in the large and asymptotic behaviour for the {B}oltzmann
  equation.
\newblock {\em Japan J. Appl. Math.}, 2(1):1--15, 1985.

\bibitem{HeJi17}
Lingbing He and Jin-Cheng Jiang.
\newblock Well-posedness and scattering for the {B}oltzmann equations: soft
  potential with cut-off.
\newblock {\em J. Stat. Phys.}, 168(2):470--481, 2017.

\bibitem{HeYa14}
Lingbing He and Xiongfeng Yang.
\newblock Well-Posedness and Asymptotics of Grazing Collisions Limit of {B}oltzmann Equation with {C}oulomb Interaction
\newblock {\em SIAM Journal on Mathematical Analysis.}, 10.1137/140965983, 2014.

\bibitem{HeSnTa17}
Christopher Henderson, Stanley Snelson, and Andrei Tarfulea.
\newblock Local existence, lower mass bounds, and a new continuation criterion
  for the {L}andau equation.
\newblock {\em Journal of Differential Equations}, https://doi.org/10.1016/j.jde.2018.08.005, 2019.


\bibitem{HeSnTa19}
Christopher Henderson, Stanley Snelson, and Andrei Tarfulea.
\newblock Local well-posedness of the {B}oltzmann equation with polynomially decaying initial data.
\newblock {\em arxiv:1910.07138, preprint}, 2019.

\bibitem{IlSh84}
Reinhard Illner and Marvin Shinbrot.
\newblock The {B}oltzmann equation: global existence for a rare gas in an
  infinite vacuum.
\newblock {\em Comm. Math. Phys.}, 95(2):217--226, 1984.

\bibitem{ImMoSi18}
Cyril Imbert, Cl\'ement Mouhot, and Luis Silvestre.
\newblock  Decay estimates for large velocities in the {B}oltzmann equation without
cut-off.
\newblock {\em arXiv:1804.06135, preprint}, 2018.

\bibitem{ImMoSi19}
Cyril Imbert, Cl\'ement Mouhot, and Luis Silvestre.
\newblock  Gaussian lower bounds for the {B}oltzmann equation without cut-off.
\newblock {\em  arXiv:1903.11278, preprint}, 2019.

\bibitem{ImbSil16}
Cyril Imbert and Luis Silvestre.
\newblock The weak Harnack inequality for the {B}oltzmann equation without cut-off.
\newblock {\em arxiv:1608.07571, preprint}, 2016.

\bibitem{ImbSil18}
Cyril Imbert and Luis Silvestre.
\newblock The Schauder estimate for kinetic integral equations.
\newblock {\em  arXiv:1812.11870, preprint}, 2018.

\bibitem{ImbSil19}
Cyril Imbert and Luis Silvestre.
\newblock Global regularity estimates for the {B}oltzmann equation without cut-off.
\newblock {\em arXiv:1909.12729 , preprint}, 2019.

\bibitem{Kla85}
Sergiu Klainerman.
\newblock Uniform decay estimates and the Lorentz invariance of the classical wave equation.
\newblock {\em Comm. Pure Appl. Math.}, 38,3, 1985.

\bibitem{Levermore}
C.~David Levermore.
\newblock Global {M}axwellians over all space and their relation to conserved
  quantites of classical kinetic equations.
\newblock {\em preprint, available online}, 2012.

\bibitem{Li94}
P. L. Lions. 
\newblock On {B}oltzmann and {L}anddau equations.
\newblock{Phil. Trans. Roy. Soc. London Ser. A}, 346, 1994, no. 1, 191-204.

\bibitem{LiTa17}
Hans Lindblad and Martin Taylor.
\newblock Global stability of {M}inkowski space for the {E}instein--{V}lasov
  system in the harmonic gauge.
\newblock {\em Ann. PDE 3, 9},doi:10.1007/s40818-017-0026-8, 2017.

\bibitem{Lu18}
Jonathan Luk.
\newblock {S}tability of vacuum for the {L}andau equation with moderately soft potentials.
\newblock {Ann. PDE}, 5: 11. https://doi.org/10.1007/s40818-019-0067-2, 2019.

\bibitem{Po88}
Jacek Polewczak.
\newblock Classical solution of the nonlinear {B}oltzmann equation in all
  {${\bf R}^3$}: asymptotic behavior of solutions.
\newblock {\em J. Statist. Phys.}, 50(3-4):611--632, 1988.

\bibitem{Sil14}
Luis Silvestre.
\newblock A new regularization mechanism for the {B}oltzmann equation without cut-off.
\newblock {\em Commun. Math. Phys.}, 348: 69, 2016.

\bibitem{Sm16}
Jacques Smulevici.
\newblock Small data solutions of the {V}lasov-{P}oisson system and the vector
  field method.
\newblock {\em Ann. PDE}, 2(2):Art. 11, 55, 2016.

\bibitem{Stein}
Elias Stein.
\newblock Singular Integrals and Differentiability Properties of Functions.
\newblock{Princeton, U.P} , 1970.

‌

\bibitem{StGu04}
Robert~M. Strain and Yan Guo.
\newblock Stability of the relativistic {M}axwellian in a collisional plasma.
\newblock {\em Comm. Math. Phys.}, 251(2):263--320, 2004.

\bibitem{StGu06}
Robert~M. Strain and Yan Guo.
\newblock Almost exponential decay near {M}axwellian.
\newblock {\em Comm. Partial Differential Equations}, 31(1-3):417--429, 2006.

\bibitem{StGu08}
Robert~M. Strain and Yan Guo.
\newblock Exponential decay for soft potentials near {M}axwellian.
\newblock {\em Arch. Ration. Mech. Anal.}, 187(2):287--339, 2008.

\bibitem{Ta17}
Martin Taylor.
\newblock The global nonlinear stability of {M}inkowski space for the massless
  {E}instein-{V}lasov system.
\newblock {\em Ann. PDE}, 3(1):Art. 9, 177, 2017.

\bibitem{To86}
G.~Toscani.
\newblock On the nonlinear {B}oltzmann equation in unbounded domains.
\newblock {\em Arch. Rational Mech. Anal.}, 95(1):37--49, 1986.

\bibitem{To87}
G.~Toscani.
\newblock {$H$}-theorem and asymptotic trend of the solution for a rarefied gas
  in the vacuum.
\newblock {\em Arch. Rational Mech. Anal.}, 100(1):1--12, 1987.

\bibitem{To88}
G.~Toscani.
\newblock Global solution of the initial value problem for the {B}oltzmann
  equation near a local {M}axwellian.
\newblock {\em Arch. Rational Mech. Anal.}, 102(3):231--241, 1988.

\bibitem{ToBe84}
G.~Toscani and N.~Bellomo.
\newblock Global existence, uniqueness and stability of the nonlinear
  {B}oltzmann equation with almost general gas-particles interaction potential.
\newblock In {\em Proceedings of the conference commemorating the 1st
  centennial of the {C}ircolo {M}atematico di {P}alermo ({I}talian) ({P}alermo,
  1984)}, number~8, pages 419--433, 1985.

\bibitem{Vil98}
C\'edric Villani.
\newblock {F}isher information estimates for {B}oltzmann's collision operator.
\newblock {\em Journal de Mathématiques Pures et Appliqués}, 10.1016/S0021-7824(98)80010-X, 1998.

\bibitem{Vil02}
C\'edric Villani.
\newblock A review of mathematical topics in collisional kinetic theory.
\newblock {\em Handbook of Mathematical Fluid Dynamics}, 10.1016/S1874-5792(02)80004-0, 2002.

\bibitem{Vil99}
C\'edric Villani.
\newblock Regularity estimates via the entropy dissipation for the spatially homogeneous {B}oltzmann equation without cut-off.
\newblock {\em Rev. Mat. Iberoamericana}, 15, 1999.

\bibitem{Wa18.2}
Xuecheng Wang.
\newblock Decay estimates for the 3{D} relativistic and non-relativistic
  {V}lasov--{P}oisson systems.
\newblock {\em arXiv:1805.10837, preprint}, 2018.

\bibitem{Wa18}
Xuecheng Wang.
\newblock Propagation of regularity and long time behavior of the 3{D} massive
  relativistic transport equation {I}: {V}lasov--{N}ordstr\o m system.
\newblock {\em arXiv:1804.06560, preprint}, 2018.

\bibitem{Wa18.1}
Xuecheng Wang.
\newblock Propagation of regularity and long time behavior of the 3{D} massive
  relativistic transport equation {II}: {V}lasov--{M}axwell system.
\newblock {\em arXiv:1804.06566, preprint}, 2018.

\bibitem{Wo18}
Willie Wai~Yeung Wong.
\newblock A commuting-vector-field approach to some dispersive estimates.
\newblock {\em Arch. Math. (Basel)}, 110(3):273--289, 2018.
\end{thebibliography}
\end{document}